\documentclass[12pt]{amsart}

\usepackage{amsfonts, amstext, amsmath, amsthm, amscd, amssymb, enumitem, url}
\usepackage{tikz-cd}
\usetikzlibrary{shapes.geometric, arrows}
\usepackage{xcolor}
\usepackage{svg}
\usepackage{pdfpages}
\usepackage{pdflscape}
\usepackage{spverbatim}

\setlength{\topmargin}{0.5cm}
\setlength{\oddsidemargin}{-0.2cm}
\setlength{\evensidemargin}{-0.2cm}
\textheight = 22cm  
\textwidth = 16.2cm

\usepackage{microtype}
\allowdisplaybreaks
\usepackage[parfill]{parskip}

\usepackage[colorlinks,citecolor=cyan,linkcolor=magenta]{hyperref}
\usepackage[nameinlink]{cleveref}

\usepackage{array}
\newcolumntype{C}[1]{>{\centering\let\newline\\\arraybackslash\hspace{0pt}}m{#1}}
\usepackage{multicol}
\usepackage{multirow}
\usepackage{makecell}
\usepackage{booktabs}

\newtheorem{thm}{Theorem}[section]
\newtheorem{cor}[thm]{Corollary}
\newtheorem{lemma}[thm]{Lemma}
\newtheorem{prop}[thm]{Proposition}
\newtheorem{claim}[thm]{Claim}
\newtheorem{conj}[thm]{Conjecture}

\theoremstyle{definition}
\newtheorem{defn}[thm]{Definition}
\newtheorem{rmk}[thm]{Remark}

\newtheorem{quest}[thm]{Question}

\numberwithin{equation}{section}

\renewcommand{\epsilon}{\varepsilon}

\usepackage{stmaryrd}
\newcommand{\cut}{\!\bbslash\!}

\newcommand{\Hu}{\Huge}

\newcommand{\La}{\Large}

\newcommand{\ns}{\normalsize}

\newcommand{\fns}{\footnotesize}

\newcommand{\s}{\text{\reflectbox{$s$}}}
\renewcommand{\t}{\text{\reflectbox{$t$}}}
\newcommand{\f}{\text{\reflectbox{$f$}}}

\newcommand{\salt}{\text{\reflectbox{$s^2_1$}}}

\begin{document}

\title[Normalized dilatations of fully-punctured pseudo-Anosov maps]{On the set of normalized dilatations of fully-punctured pseudo-Anosov maps}

\author{Chi Cheuk Tsang}
\address{University of California, Berkeley \\
    970 Evans Hall \#3840 \\
    Berkeley, CA 94720-3840}
\email{chicheuk@math.berkeley.edu}
\thanks{Chi Cheuk Tsang was partially supported by a grant from the Simons Foundation \#376200.}

\begin{abstract}
We improve the bound on the number of tetrahedra in the veering triangulation of a fully-punctured pseudo-Anosov mapping torus in terms of the normalized dilatation. When the mapping torus has only one boundary component, we can improve the bound further. Together with the author's work with Hironaka in the case when the mapping torus has at least two boundary components, this allows us to understand small elements of the set $\mathcal{D}$ of normalized dilatations of fully-punctured pseudo-Anosov maps using computational means. In particular, we certify that the minimum element of $\mathcal{D}$ is $\mu^2$ and the minimum accumulation point of $\mathcal{D}$ is $\mu^4$, where $\mu$ is the golden ratio.
\end{abstract}

\maketitle

\section{Introduction} \label{sec:intro}

An orientation-preserving surface homeomorphism $f:S \to S$ is \textit{pseudo-Anosov} if there exists a transverse pair of singular measured foliations $\ell^s$ and $\ell^u$ such that $f$ contracts the leaves of $\ell^s$ and expands the leaves of $\ell^u$ by a factor of $\lambda(f) > 1$. The number $\lambda(f)$ is called the \textit{dilatation} of $f$. 

In this case, $\ell^s$ and $\ell^u$ determine a conformal structure on $S$. Contracting and expanding the leaves of the two foliations deforms the conformal structure and determines a geodesic path on the \textit{Teichm\"uller space} of $S$ (with the \textit{Teichm\"uller metric}). In particular, $f$ determines a closed (possibly non-primitive) geodesic of length $\log \lambda(f)$ on the \textit{moduli space} $\mathcal{M}(S)$. Conversely, every closed geodesic on $\mathcal{M}(S)$ comes from a pseudo-Anosov map on $S$. See \cite{Abi80} for details.
This gives a natural motivation for

\begin{quest}[Minimum dilatation problem] \label{quest:mindil}
What is the minimum dilatation $\delta_{g,s}$ among all pseudo-Anosov maps defined on a given surface $S_{g,s}$ with genus $g$ and $s$ punctures?
\end{quest}

\Cref{quest:mindil} has been studied since at least \cite{Pen91}, but so far it has only been solved for a handful of surfaces with small values of $g$ and $s$. We refer to \cite{LT11a} and \cite{LT11b} for details and references.
We remark that between these known values and some upper bounds (see, for example, \cite{Hir10}, \cite{AD10}, \cite{KT13}), the pattern of these minimum dilatations seem erratic, and it is not even clear what a good set of conjectural values should be.

The situation becomes simpler if instead of asking for the minimum dilatation on specific surfaces, one considers the asymptotics of these minimum dilatations. In particular we have the following well-known conjecture by Hironaka. See also \cite[P.44 Question]{McM00}.

\begin{conj}[{Golden ratio conjecture, Hironaka \cite[Question 1.12]{Hir10}}] \label{conj:goldenratio}
The minimum dilatations $\delta_{g,0}$ on the closed orientable surfaces of genus $g$ grow as
$$\lim_{g \to \infty} \delta_{g,0}^g = \mu^2 \approx 2.618.$$
where $\mu = \frac{1 + \sqrt{5}}{2}$ is the golden ratio.
\end{conj}

This simplification is due to the fact that pseudo-Anosov maps naturally come in \textit{flow equivalence classes}. 
Given a pseudo-Anosov map $f:S \to S$, its \textit{mapping torus} is a $3$-manifold with a fibration over $S^1$ and a transverse \textit{suspension flow}. Two pseudo-Anosov maps $f_1:S_1 \to S_1$ and $f_2:S_2 \to S_2$ are \textit{flow equivalent} if their mapping torus is the same $3$-manifold $M$ and their suspension flows are the same.
Thurston-Fried fibered face theory states that maps of a single flow equivalence class correspond to interior rational points in a polyhedron $F$, and the \textit{normalized dilatation} $\lambda(f)^{|\chi(S)|}$ extends to a continuous convex function on $F$ that goes to infinity at $\partial F$. 
See \cite[Exposé 14]{FLP79}.

To simplify the problem further, one can restrict to \textit{fully-punctured} pseudo-Anosov maps. These are maps where all the singularities of the foliations are at the punctures of the surface. 
Given any pseudo-Anosov map, one can puncture at all the singularities to get a fully-punctured map with the same dilatation. On the level of mapping tori, this corresponds to drilling out the singular orbits of the suspension flow. Consequently, every flow equivalence class is contained in a fully-punctured one. 
If one understands the fully-punctured flow equivalence classes which give small dilatations, then one can hope to recover information about general flow equivalence classes by Dehn filling the corresponding $3$-manifolds and performing an analysis as in \cite{KKT13}.
This motivates

\begin{quest}[Fully-punctured normalized dilatation problem] \label{quest:fullypuncnormdil}
Let $\mathcal{D}$ be the set of normalized dilatations of fully-punctured pseudo-Anosov maps. What are the smallest elements of $\mathcal{D}$ and what are the maps that attain them?
\end{quest}

In this paper, we make some progress on \Cref{quest:fullypuncnormdil}.
Our main theorem is

\begin{thm} \label{thm:introdilthm}
The set $\mathcal{D}$ of normalized dilatations of fully-punctured pseudo-Anosov maps is the union of the isolated points
$$
\begin{array}{ccc}
    \frac{3+\sqrt{5}}{2} \approx 2.618, & \frac{4+\sqrt{12}}{2} \approx 3.732, & (\text{Lehmer's number})^9 \approx 4.311, \\
    \frac{5+\sqrt{21}}{2} \approx 4.791, & \left| LT_{1,2} \right|^3 \approx 5.107, & \frac{6+\sqrt{32}}{2} \approx 5.828,
\end{array}
$$
and a dense subset of $[\mu^4, \infty)$.
In particular the minimum element of $\mathcal{D}$ is $\mu^2 = \frac{3+\sqrt{5}}{2}$ and the minimum accumulation point of $\mathcal{D}$ is $\mu^4 = \frac{7+\sqrt{45}}{2} \approx 6.854$.

\begin{figure}[ht]
    \centering
    \fontsize{8pt}{8pt}\normalfont
    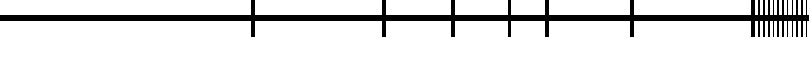
\end{figure}

Moreover, the fully-punctured pseudo-Anosov maps whose normalized dilatations attain the isolated points are those listed in \Cref{tab:introisolateddil}. The fully-punctured pseudo-Anosov maps whose normalized dilatations attain the minimum accumulation point are those listed in \Cref{tab:introminaccumdil}. 
\end{thm}

In \Cref{thm:introdilthm}, $|LT_{1,2}|$ is the largest real root of $t^4-t^3-t^2-t+1$ and \textit{Lehmer's number} is the largest real root of $t^{10}+t^9-t^7-t^6-t^5-t^4-t^3+t+1$. 

\begin{table}[ht]
    \centering
    \footnotesize
    \caption{The isolated points of $\mathcal{D}$, the fully-punctured pseudo-Anosov maps $f$ that attain them, and the corresponding layered veering triangulations.}
    \renewcommand{\arraystretch}{1.5}
    \begin{tabular}{|*3{>{\renewcommand{\arraystretch}{1}}c|}}
    \hline
    Normalized dilatation & Pseudo-Anosov maps & Veering triangulations \\
    \hline
    \hline
    \multirow{2}{*}{$\frac{3+\sqrt{5}}{2}$} & Map on $S_{1,1}$ induced by $\begin{bmatrix} 2 & 1 \\ 1 & 1 \end{bmatrix} = RL$ & \texttt{cPcbbbiht\_12} \\
    \cline{2-3}
    & Map on $S_{1,1}$ induced by $\begin{bmatrix} -2 & -1 \\ -1 & -1 \end{bmatrix} = -RL$ & \texttt{cPcbbbdxm\_10} \\
    \hline
    \multirow{2}{*}{$\frac{4+\sqrt{12}}{2}$} & Map on $S_{1,1}$ induced by $\begin{bmatrix} 3 & 2 \\ 1 & 1 \end{bmatrix} = R^2L$ & \texttt{dLQbccchhfo\_122} \\
    \cline{2-3}
    & Map on $S_{1,1}$ induced by $\begin{bmatrix} -3 & -2 \\ -1 & -1 \end{bmatrix} = -R^2L$ & \texttt{dLQbccchhsj\_122} \\
    \hline
    $(\text{Lehmer's number})^9$ & \makecell{Map on $S_{5,1}$ induced by quotient of \\ geodesic flow on $P(-2,3,7)$} & \texttt{dLQacccjsnk\_200}  \\
    \hline
    \multirow{2}{*}{$\frac{5+\sqrt{21}}{2}$} & Map on $S_{1,1}$ induced by $\begin{bmatrix} 4 & 3 \\ 1 & 1 \end{bmatrix} = R^3L$ & \texttt{eLMkbcdddhhhml\_1221} \\
    \cline{2-3}
    & Map on $S_{1,1}$ induced by $\begin{bmatrix} -4 & -3 \\ -1 & -1 \end{bmatrix} = -R^3L$ & \texttt{eLMkbcdddhhhdu\_1221} \\
    \hline
    \multirow{2}{*}{$\left| LT_{1,2} \right|^3$} & \makecell{Map on $S_{2,1}$ lifted from \\ minimum dilatation $5$-braid} & \texttt{eLPkaccddjnkaj\_2002} \\
    \cline{2-3}
    & \makecell{Map on $S_{2,1}$ lifted from \\ minimum dilatation $5$-braid} & \texttt{eLPkbcdddhrrcv\_1200} \\
    \hline
    \multirow{4}{*}{$\frac{6+\sqrt{32}}{2}$} & Map on $S_{1,1}$ induced by $\begin{bmatrix} 5 & 2 \\ 2 & 1 \end{bmatrix} = R^2L^2$ & \texttt{eLMkbcdddhhqqa\_1220} \\
    \cline{2-3}
    & Map on $S_{1,1}$ induced by $\begin{bmatrix} -5 & -2 \\ -2 & -1 \end{bmatrix} = -R^2L^2$ & \texttt{eLMkbcdddhhqxh\_1220} \\
    \cline{2-3}
    & Map on $S_{1,1}$ induced by $\begin{bmatrix} 5 & 4 \\ 1 & 1 \end{bmatrix} = R^4L$ & \texttt{fLMPcbcdeeehhhhkn\_12211} \\
    \cline{2-3}
    & Map on $S_{1,1}$ induced by $\begin{bmatrix} -5 & -4 \\ -1 & -1 \end{bmatrix} = -R^4L$ & \texttt{fLMPcbcdeeehhhhvc\_12211} \\
    \hline
    \end{tabular}
    \label{tab:introisolateddil}
\end{table}

\begin{table}[ht]
    \centering
    \footnotesize
    \caption{The fully-punctured pseudo-Anosov maps with normalized dilatation $\mu^4$, and the veering triangulations and Betti number of the corresponding mapping tori.}
    \renewcommand{\arraystretch}{1.5}
    \begin{tabular}{|*3{>{\renewcommand{\arraystretch}{1}}c|}}
    \hline
    Pseudo-Anosov maps & Veering triangulations & Betti number \\
    \hline
    \hline
    Map on $S_{1,1}$ induced by $\begin{bmatrix} 5 & 3 \\ 3 & 2 \end{bmatrix} = RLRL$ & \texttt{eLMkbcdddhxqdu\_1200} & 1 \\
    \hline
    Map on $S_{1,1}$ induced by $\begin{bmatrix} -5 & -3 \\ -3 & -2 \end{bmatrix} = -RLRL$ & \texttt{eLMkbcdddhxqlm\_1200} & 1 \\
    \hline
    \makecell{Map on $S_{1,2}$ induced by $\begin{bmatrix} 2 & 1 \\ 1 & 1 \end{bmatrix}$ \\ punctured at a period $2$ point} & \texttt{fLLQcbeddeehhnkhh\_21112} & 1 \\
    \hline
    Map on $S_{1,1}$ induced by $\begin{bmatrix} 6 & 5 \\ 1 & 1 \end{bmatrix} = R^5L$ & \texttt{gLMzQbcdefffhhhhhit\_122112} & 1 \\
    \hline
    Map on $S_{1,1}$ induced by $\begin{bmatrix} -6 & -5 \\ -1 & -1 \end{bmatrix} = -R^5L$ & \texttt{gLMzQbcdefffhhhhhpe\_122112} & 1 \\
    \hline
    Map on $S_{0,4}$ induced by $\begin{bmatrix} 2 & 1 \\ 1 & 1 \end{bmatrix} = RL$ & \texttt{eLMkbcdddhxqlm\_1200} & 2 \\
    \hline
    \makecell{Map on $S_{1,2}$ induced by $\begin{bmatrix} -2 & -1 \\ -1 & -1 \end{bmatrix}$ \\ punctured at two fixed points} & \texttt{fLLQcbeddeehhbghh\_01110} & 2 \\
    \hline
    \end{tabular}
    \label{tab:introminaccumdil}
\end{table}

We elaborate on the descriptions of the pseudo-Anosov maps in \Cref{tab:introisolateddil} and \Cref{tab:introminaccumdil}.
As before, we denote by $S_{g,s}$ the orientable surface with genus $g$ and $s$ punctures. 

We identify the once-punctured torus $S_{1,1}$ with $(\mathbb{R}^2 \backslash \mathbb{Z}^2)/\mathbb{Z}^2$ and the 4-punctured sphere $S_{0,4}$ with $(\mathbb{R}^2 \backslash \mathbb{Z}^2)/(\pm(2\mathbb{Z})^2)$.
Under this identification, every matrix in $\mathrm{SL}_2 \mathbb{Z}$ induces a map on $S_{1,0}$ and a map on $S_{0,4}$.
The dilatation of these maps is the largest eigenvalue of the matrix.
We also included a factorization of each element of $\mathrm{SL}_2 \mathbb{Z}$ in \Cref{tab:introisolateddil} and \Cref{tab:introminaccumdil} into a word in $R=\begin{bmatrix} 1 & 1 \\ 0 & 1 \end{bmatrix}$ and $L=\begin{bmatrix} 1 & 0 \\ 1 & 1 \end{bmatrix}$, which the experts may find convenient.

We now move on to the more sporadic examples. For the map defined on $S_{5,1}$ in \Cref{tab:introisolateddil}, recall that the double branched cover of $S^3$ over the pretzel knot $P(-2,3,7)$ is the unit tangent bundle over the orbifold $S^2(2,3,7)$. The deck transformation is the map induced by reflection of $S^2(2,3,7)$ across a curve $c$ dividing the orbifold into two triangles, see \cite{BS09}. The union of fibers lying over $c$, with the full lift of $c$ removed, is a Birkhoff section to the geodesic flow on $T^1 S^2(2,3,7)$. The monodromy on the quotient of this section is the described map.

For the two maps defined on $S_{2,1}$ in \Cref{tab:introisolateddil}, recall from \cite{HS07} that $\sigma_1 \sigma_2 \sigma_3 \sigma_4 \sigma_1 \sigma_2$ is the fully-punctured $5$-braid with minimum dilatation $|LT_{1,2}|$. The braid points are $1$-pronged singularities while the point at infinity is $3$-pronged. Consider the double cover $S_{2,6} \to S_{0,6}$ with degree two over each of the punctures. Lift the braid monodromy to a map on $S_{2,6}$. There are two choices here, which differ by the deck transformation of the double cover. For either choice, $5$ of the $6$ punctures of $S_{2,6}$ are $2$-pronged hence can be filled in. Together these give the described maps.

For the first map defined on $S_{1,2}$ in \Cref{tab:introminaccumdil}, we puncture the map induced by $\begin{bmatrix} 2 & 1 \\ 1 & 1 \end{bmatrix}$ on $S_{1,0}$ at a pair of points of period 2. There are two choices here for which pair of points to puncture but they give conjugate maps.
For the other map defined on $S_{1,2}$ in \Cref{tab:introminaccumdil}, we puncture the map induced by $\begin{bmatrix} -2 & -1 \\ -1 & -1 \end{bmatrix}$ on $S_{1,0}$ at two fixed points. Again, there are a number of choices here, but the resulting maps are all conjugate.

The columns labeled `veering triangulations' in \Cref{tab:introisolateddil} and \Cref{tab:introminaccumdil} indicate the isoSig code of the veering triangulation associated to each pseudo-Anosov map. See \Cref{subsec:layeredvt}, \Cref{subsec:isolatedpoints}, and \Cref{subsec:accumpoint}.

\subsection{Previous work}

It has been known since work of Thurston \cite{FLP79} that the dynamics of a fully-punctured pseudo-Anosov map $f$ can be encoded using the combinatorial tool of \textit{train tracks}.
More specifically, one can approximate the stable foliation $\ell^s$ using a train track $\tau$. The fact that $f$ contracts along $\ell^s$ and expands along $\ell^u$ translates to the fact that $f(\tau)$ can be obtained by folding $\tau$. One can then compute the dilatation of $f$ from the \textit{transition matrix} which record how the branches of $\tau$ fold over those of $f(\tau)$.

In \cite{Ago11}, Agol showed that one can choose $\tau$ such that there is a canonical periodic splitting sequence from $f(\tau)$ to $\tau$. The dual triangulations of this splitting sequence determines an ideal triangulation of the mapping torus, which we refer to as the \textit{veering triangulation} associated to $f$.
We will recall the definition of veering triangulations in \Cref{subsec:vt}. For the moment, it suffices to know that these are ideal triangulations satisfying certain combinatorial conditions which impose strong constraints on the local structure.
This makes it possible to enumerate veering triangulations up to 16 tetrahedra \cite{GSS}.

In \cite{AT24}, by studying the Perron-Frobenius components of the transition matrix associated to the splitting sequence, Agol and the author proved the following theorem.

\begin{thm}[{\cite{AT24}}] \label{thm:introAT}
Let $f:S \to S$ be a fully-punctured pseudo-Anosov map with normalized dilatation $\lambda^{-\chi} \leq P$, then the mapping torus of $f$ admits a veering triangulation with $\leq \frac{P^3-1}{2} (\frac{2 \log P^3}{\log(2 P^{-3}+1)}-1)$ tetrahedra.
\end{thm}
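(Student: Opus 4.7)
The plan is to analyze Agol's canonical periodic splitting sequence of train tracks from \cite{Ago11}: the layered veering triangulation is built by taking the dual tetrahedron of each splitting, so the number of tetrahedra equals the number of splittings within one period of the sequence. I would aim to factor this count as (number of branches of the train track $\tau$) $\times$ (maximum number of splittings per branch per period), with both factors controlled by Perron--Frobenius estimates for the transition matrix $M$, which satisfies $Mw = \lambda w$ where $w$ is the positive transverse-measure vector on branches.

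For the first factor, I would use the hypothesis $\lambda^{|\chi(S)|} \leq P$ together with Perron--Frobenius theory applied to the irreducible blocks of $M$. Taking a small fixed power $M^k$ that is strictly positive and comparing its entries should yield a bound $w_{\max}/w_{\min} \leq P^{3}$; this is the origin of the exponent $3$ appearing in the theorem. Combined with the normalization $\sum_e w_e = 1$ and the lower bound $w_e \geq w_{\min}$ on each branch, this gives a bound of roughly $(P^3 - 1)/2$ on the number of branches of $\tau$.

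For the second factor, I would analyze the effect of a single split on the weights. A split at a large branch $e$ of weight $w_e = a+b$ (with $a \leq b$ the weights of the adjacent small branches) replaces $w_e$ by $b-a$, so $w_e$ decreases by $2a \geq 2w_{\min}$ in absolute terms. Using $w_e \leq P^3 w_{\min}$, one converts this to a multiplicative decrease of $w_e$ by a factor at most $(1 + 2P^{-3})^{-1}$ per split. Since one period returns $\tau$ to itself up to the action of $f$, which scales weights by $\lambda^{-1}$, the total multiplicative change of $w_e$ over a period is bounded, yielding that the number of splits at a single branch is at most $\frac{2 \log P^3}{\log(2P^{-3}+1)} - 1$. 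Multiplying the two factors gives the claimed tetrahedron count. The hard part, I expect, is making the branch-count estimate $(P^3-1)/2$ precise: it requires carefully combining the Perron--Frobenius weight-ratio bound with the combinatorial constraints on a train track carrying the stable foliation, and justifying that the exponent $3$ in $P^3$ is the sharp one coming from the primitivity step for the relevant irreducible blocks of $M$.
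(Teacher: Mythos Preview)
Note first that this theorem is not proved in the present paper: it is quoted from \cite{AT22}, and the only hint given here is that the argument proceeds ``by studying the Perron--Frobenius components of the transition matrix associated to the splitting sequence.'' So there is no in-paper proof to compare your proposal against directly.

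On the merits, your first factor has a genuine gap. The number of branches of the train track $\tau$ is exactly $-3\chi(S)$ (for the trivalent spines arising in Agol's construction; the paper records the switch count $-2\chi(S)$ in \Cref{subsec:layeredvt}), and this is \emph{not} bounded in terms of $P$ alone: the hypothesis $\lambda^{-\chi}\le P$ permits $|\chi|$ to be arbitrarily large provided $\lambda$ is correspondingly close to $1$. Your weight-ratio argument cannot manufacture a branch-count bound: from $w_{\max}/w_{\min}\le P^3$ and $\sum_e w_e=1$ one gets only $w_{\min}\le 1/n$ and $w_{\max}\ge 1/n$, which says nothing about $n$. Relatedly, the exponent $3$ in $P^3$ has nothing to do with taking a primitive power of $M$: it is simply the fact that the number of branches is $-3\chi$, so $\lambda^{\#\text{branches}}=(\lambda^{-\chi})^3\le P^3$.

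The argument in \cite{AT22} does use both ingredients you name --- a Perron--Frobenius weight-ratio estimate and a per-split weight decrease --- but assembles them differently: the weight ratio feeds into a bound on the total number of splittings, not into a bound on the number of branches. The subtle point (and the gap in Agol's original \cite{Ago11} argument that \cite{AT22} repairs) is precisely that the transition matrix need not be primitive, so one must treat its several Perron--Frobenius components separately; your step ``take a small fixed power $M^k$ that is strictly positive'' presupposes primitivity and would not go through in general.
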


We remark that a non-quantitative version of this result was proved by Farb, Leininger, and Margalit \cite{FLM11} earlier. 

Notice that \Cref{quest:fullypuncnormdil} is theoretically solved by \Cref{thm:introAT}: 
Suppose one is interested in the elements of $\mathcal{D}$ smaller than some number $P$. Then one can look at all veering triangulations with $\leq \frac{P^3-1}{2} (\frac{2 \log P^3}{\log(2 P^{-3}+1)}-1)$ tetrahedra, and for each of them compute the normalized dilatations of the associated maps, using the \textit{Teichmüller polynomial} defined in \cite{McM00}, then read off those maps whose normalized dilatations are less than $P$.

However, this strategy is not actually feasible in practice. 
For example, with \Cref{conj:goldenratio} in mind, if one puts in $P=\mu^4$, then one has to look at all veering triangulations with $\leq 299193$ tetrahedra, which is much larger than any census of veering triangulations we can possibly generate currently.

A different idea was explored in \cite{HT22} by Hironaka and the author. We showed that if $f:S \to S$ is a fully-punctured pseudo-Anosov map with at least two puncture orbits, then one can take $\tau$ to be a \textit{standardly embedded train track}. Then by applying the theory of Perron-Frobenius digraphs, developed by McMullen in \cite{McM15}, on the real edges of $\tau$, we showed \Cref{thm:introHT} below.

\begin{thm}[{\cite{HT22}}] \label{thm:introHT}
Let $f:S \to S$ be a fully-punctured pseudo-Anosov map with at least two puncture orbits. Then the normalized dilatation $\lambda^{-\chi}$ of $f$ satisfies the inequality
$$\lambda^{-\chi} \geq 
\begin{cases}
\mu^4 \approx 6.854 & \text{if $|\chi(S)|=2$} \\
\left| LT_{1,\frac{|\chi(S)|}{2}} \right|^{|\chi(S)|} > \mu^4 & \text{if $|\chi(S)|$ is even and $\geq 4$} \\
8 > \mu^4 & \text{if $|\chi(S)|$ is odd} \\
\end{cases}$$

Moreover, for each $k \geq 1$, equality for the first two cases is achieved by some fully-punctured pseudo-Anosov maps.
\end{thm}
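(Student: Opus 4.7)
The plan is to combine two pieces: a structural result promoting the Agol train track $\tau$ to a \emph{standardly embedded} one under the two-puncture-orbit hypothesis, and a digraph-theoretic spectral bound in the spirit of McMullen \cite{McM15}. First I would verify that when $f$ has at least two puncture orbits, the invariant train track $\tau$ carrying $\ell^s$ can be arranged so as to be standardly embedded: its branches split into ``real'' branches, lying on an oriented horizontal multi-arc transverse to a non-separating system, and ``infinitesimal'' branches, confined to small triangles around the punctures. The extra puncture orbit is what supplies enough transversal structure to route all of $\tau$ along a single horizontal direction; with only one puncture orbit this step would fail, which is precisely what makes the two-orbit hypothesis essential.

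With $\tau$ standardly embedded, the transition matrix $M(f)$ acquires a block upper-triangular form with diagonal blocks $M_r$ and $M_i$ indexed by real and infinitesimal branches. The infinitesimal block $M_i$ is a permutation matrix (so spectral radius $1$), hence $\lambda(f)$ is the Perron eigenvalue of $M_r$. I would then reinterpret $M_r$ as the adjacency matrix of a weighted directed graph $\Gamma$ on the real branches, and count: each real branch contributes a definite amount to $|\chi(S)|$ through the structure of the standard embedding, giving an upper bound on the number of vertices of $\Gamma$ and on the total combinatorial complexity of its edge set in terms of $|\chi(S)|$.

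Next, I would invoke McMullen's Perron-Frobenius digraph bounds to minimize the Perron root of $M_r$ over all digraphs compatible with these topological constraints. The extremal digraphs are long directed cycles with a small number of additional chords; their characteristic polynomials are, up to normalization, the Lanneau-Thiffeault polynomials $t^{2b}-t^{b+a}-t^b-t^{b-a}+1$. A case split on the parity of $|\chi(S)|$ then produces the three stated bounds: the even case matches $LT_{1, |\chi|/2}$ with equality possible, the $|\chi|=2$ sub-case degenerates to $\mu^4$, and the odd case forces an asymmetric extremal digraph whose Perron root only satisfies the weaker $\lambda^{-\chi} \geq 8$. For the equality statements I would exhibit, for each $k\geq 1$, an explicit Penner-type construction of a fully-punctured pseudo-Anosov map on a surface with $|\chi|=2k$ whose standardly embedded train track realizes exactly the extremal digraph, and verify by direct computation that its normalized dilatation is $|LT_{1,k}|^{2k}$.

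The main obstacle I expect is the topological bookkeeping in the second paragraph: translating ``standardly embedded with at least two puncture orbits'' into precise combinatorial inequalities on the digraph $\Gamma$ that are \emph{tight} enough to pinpoint $LT_{1, |\chi|/2}$ rather than a weaker polynomial. A secondary subtlety is the odd case, where the parity obstruction on cycle lengths in $\Gamma$ rules out the symmetric Lanneau-Thiffeault extremizer, so the bound $8$ needs its own optimization argument, and one must verify that no odd-$|\chi|$ family actually achieves equality.
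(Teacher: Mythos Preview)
This theorem is not proved in the present paper; it is quoted from \cite{HT22} and used as a black box. The paper only sketches the \cite{HT22} argument in one sentence in the introduction: take $\tau$ to be a standardly embedded train track (possible when there are at least two puncture orbits) and apply McMullen's Perron--Frobenius digraph theory \cite{McM15} to the real edges. Your outline follows exactly this sketch, so there is nothing further to compare against here; a genuine assessment of your proposal would require checking it against \cite{HT22} itself rather than this paper.
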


We remind the reader that the condition of $f$ having at least two puncture orbits is equivalent to its mapping torus having at least two boundary components.
Since a mapping torus with at least two boundary components must have Betti number at least two, the examples in \Cref{thm:introHT} imply that the minimum accumulation point of $\mathcal{D}$ is at most $\mu^4$.
One can also deduce from \Cref{thm:introHT} that, in order to show \Cref{thm:introdilthm}, it remains to study pseudo-Anosov maps whose mapping tori has only one boundary component. (See \Cref{subsec:isolatedpoints} and \Cref{subsec:accumpoint} for details.)

\subsection{Improved bounds on veering tetrahedra}

The main contribution of this paper towards \Cref{thm:introdilthm} is an improvement of \Cref{thm:introAT}. We are able to improve the general bound from sextic ($+\epsilon$) to quadratic.

\begin{thm} \label{thm:introsinglehook}
Let $f:S \to S$ be a fully-punctured pseudo-Anosov map with normalized dilatation $\lambda^{-\chi} \leq P$. Then the mapping torus of $f$ admits a veering triangulation with less than or equal to $\frac{1}{2}P^2$ tetrahedra.
\end{thm}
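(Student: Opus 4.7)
The plan is to revisit and substantially sharpen the argument used for \Cref{thm:introAT} in \cite{AT22}. Recall the setup there: one takes the invariant train track $\tau$ associated to $f$, equips it with the transverse measure from $\ell^u$ so that $f$ acts on the weight vector $\vec{w}$ by multiplication by $\lambda$, and observes that the number of tetrahedra in the layered veering triangulation equals the number $N$ of splits in one period of the canonical splitting sequence from $\tau$ to $f(\tau)$. The transition matrix $A$ of this sequence is a nonnegative integer matrix with Perron eigenvalue $\lambda$, and $N$ is controlled by the entries of $A$. The previous bound proceeds through a multi-step estimate that loses several powers of $\lambda$ and a logarithmic factor, so the improvement to a clean quadratic bound $\tfrac12 P^2$ must come from a genuinely tighter accounting.

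The approach I would try is to bound $N$ by a direct convex-analytic identity rather than a chained inequality. Consider a positive quadratic (or bilinear) invariant of the weight vector, for example $Q(\vec{w}) = \sum_i w_i^2$, or the pairing $\langle \vec{w}, \vec{w}' \rangle$ of stable and unstable weights. After one period, the weights scale by $\lambda$ and hence $Q$ scales by $\lambda^2$. Along the canonical splitting sequence, each individual split replaces a branch of weight $w$ by two branches whose weights sum to $w$, and changes $Q$ by a quantity I would like to bound from below by a fixed amount (normalized so as to be at least $2$). Dividing the total change $(\lambda^2 - 1)\,Q(\vec{w}) \le \lambda^2 Q(\vec{w})$ by the per-split contribution would then yield $N \le \tfrac12 \lambda^2 Q(\vec{w}) / Q(\vec{w}) = \tfrac12 \lambda^2$ in the case $|\chi(S)|=1$, and more generally $N \le \tfrac12 \lambda^{2|\chi(S)|} = \tfrac12 P^2$ once $Q$ is suitably normalized by the number of branches (which is linear in $|\chi(S)|$) and one uses $\lambda^{-\chi}\le P$.

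The key geometric ingredient would be to use the veering condition to guarantee the positivity of the per-split contribution to $Q$: in a general splitting sequence one could lose the bound through cancellations, but the rigidity of veering triangulations restricts which branches can be split simultaneously and forces each split to genuinely increase the convex functional. I would likely organize the splits into Agol's maximal-splitting stages and argue stage by stage, using the veering combinatorics to control how the weights of the maximal branches interact with the neighbouring branches.

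The main obstacle is to identify the right convex functional $Q$ and to verify the per-split lower bound that yields the constant $\tfrac12$ exactly. This is where the proof differs most sharply from \cite{AT22}, which essentially bounds $\|A\|_\infty$ rather than an invariant that scales quadratically in $\lambda$. A secondary obstacle is confirming that the normalization absorbs the $|\chi(S)|$-dependence for all values of $|\chi(S)|\ge 1$, so that the final bound is uniform in $P$ without any residual factor from the Euler characteristic; this should follow from optimizing the estimate $N \le c\,|\chi(S)|\,\lambda^2$ against $\lambda = P^{1/|\chi(S)|}$, where the worst case is $|\chi(S)|=1$ and yields precisely $\tfrac12 P^2$.
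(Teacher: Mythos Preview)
Your proposal has a genuine gap in the scaling. A quadratic functional $Q(\vec w)$ on the train track scales by $\lambda^2$ over one period of the monodromy, so even a perfect per-split lower bound would yield at best $N \le C(\chi)\,\lambda^2$, with $C(\chi)$ proportional to the number of branches, hence to $|\chi|$. But the target inequality is $N \le \tfrac12\lambda^{-2\chi} = \tfrac12(\lambda^{-\chi})^2$, where the exponent on $\lambda$ is $2|\chi|$, not $2$. Your proposed fix, optimizing $c\,|\chi|\,P^{2/|\chi|}$ over $|\chi|\ge 1$, does not work: the function $n\mapsto n\,P^{2/n}$ tends to infinity as $n\to\infty$ (indeed it has a \emph{minimum} near $n=2\log P$), so the worst case is not $|\chi|=1$ and the bound is not uniform in $P$. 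There is also no mechanism in your sketch for a per-split increment of $Q$ that is bounded below \emph{independently of the normalization of $\vec w$}; both $Q$ and the increment scale with the eigenvector, so only their ratio is meaningful, and that ratio will again involve $|\chi|$.

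The paper's argument is structurally different and is where the correct exponent comes from. One works not with the fiber train track but with the stable branched surface $B$ and its dual graph $\Gamma$ in the mapping torus. An Eulerian circuit $c$ of $\Gamma$ has length $2N$ (each vertex is visited twice) and, crucially, intersects a fiber surface exactly $-2\chi$ times. Lifting $c$ to the $\mathbb{Z}$-cover and pushing it to a descending path on $\widehat B$ gives an additive identity $\lambda^{-2\chi}w = w + \sum_{j=1}^{2N} w_j$, where $w$ is the minimum sector weight and $w_j$ are the weights of sectors merging in. The exponent $-2\chi$ appears directly because the circuit crosses the fiber that many times; this is what your quadratic-functional approach cannot see. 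The remaining work is to choose $c$ to ``hook'' around a minimum-weight sector so that (after a pairing trick at B-resolved hook vertices) each $w_j$ contributes at least $w$ on average, giving $\lambda^{-2\chi}\ge 2N$. The hard part of the paper is then showing such a hook circuit exists for every layered veering triangulation except one explicit two-tetrahedron example, which is handled by hand.
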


Now repeating the strategy with \Cref{thm:introsinglehook} replacing \Cref{thm:introAT}, when one puts in $P=\mu^4$, one only has to look at all veering triangulations with $\leq 23$ tetrahedra. Unfortunately, this is still slightly out of reach of the current veering triangulation census.
But recall that \Cref{thm:introHT} already takes care of the case with at least two boundary components. In the case with one boundary component (and when $P=\mu^4$), we are able to further improve the bound, to the point where we can apply the current census.

\begin{thm} \label{thm:intro16tet}
Let $f:S \to S$ be a fully-punctured pseudo-Anosov map with normalized dilatation $\lambda^{-\chi} \leq 6.86$. Suppose the mapping torus of $f$ has only one boundary component, then the mapping torus of $f$ admits a veering triangulation with less than or equal to $16$ tetrahedra.
\end{thm}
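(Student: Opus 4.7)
The plan is to sharpen the hook-circuit argument of Theorem \ref{thm:introsinglehook} by case analysis on structural features of the dual graph $\Gamma$, tightening one or more of its loose estimates in each case. Recall that that proof lifts an Eulerian hook circuit of $\Gamma$ to a descending path $\alpha$ in $\widehat{B}$, bounds the logarithmic weight ratio between the endpoints of $\alpha$ in terms of $\log \lambda^{-\chi}$, and bounds the number of branch intersections of $\alpha$ in terms of the number $n$ of tetrahedra, combining them to obtain the quadratic bound $n \leq \tfrac{1}{2} P^2$. Under the stronger hypotheses $P \leq 6.86$ and one boundary component, many of these inequalities are very far from sharp, and the target is $n \leq 16$.

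The main case split I would perform is on the number of branch cycles $c$ of $B$ (equivalently, the cycle rank of $\Gamma$). When $c$ is large, $\Gamma$ admits many Eulerian circuits, and I would choose a hook circuit that either visits these cycles with minimal redundant backtracking, producing a shorter hook length, or else forces a larger per-step weight growth along $\alpha$; either way the resulting inequality improves on the quadratic estimate. When $c$ is small, the branch locus is close to a tree, which places strong constraints on the local combinatorics of the triangulation; here I would invoke the A-quad/B-quad dictionary of \Cref{sec:singlehook} to rule out enough edge colorings of $\Gamma$ to again push $n$ below $17$. For intermediate values of $c$, I would assemble several partial estimates, each valid in a subrange of $(c,n)$-space, organized along a flowchart (as previewed in \Cref{fig:flowchart}) whose leaves jointly cover every pair $(c,n)$ with $n \geq 17$ and certify incompatibility with $\lambda^{-\chi} \leq 6.86$.

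The single-boundary-component hypothesis should enter throughout, in two complementary ways. Combinatorially, it restricts the distribution of \emph{fans} around the one torus cusp of the veering triangulation, and hence the local structure of $\Gamma$. Cohomologically, it forces the image of $H_1(\partial M) \to H_1(M)$ to be one-dimensional, cutting down the space of admissible weight systems on $B$ and eliminating configurations that would otherwise survive. Both restrictions need to be tracked precisely in order to squeeze the constants in each branch of the case analysis.

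The main obstacle I anticipate is exactly the patchwork nature of the argument: the large-$c$ and small-$c$ refinements pull in opposite directions, and making them meet cleanly in the middle will require fine control of the constants appearing in the descending-path estimate, together with repeated use of the single-boundary-component hypothesis to exclude borderline configurations of $\Gamma$. I expect that a handful of small explicit triangulations will evade every general estimate and will need to be checked directly against the veering triangulation census of \cite{GSS} to confirm that none of them realizes a pseudo-Anosov map with $n \geq 17$ and $\lambda^{-\chi} \leq 6.86$.
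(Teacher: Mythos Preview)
Your proposal captures the coarse shape of the argument---a case split on the number of branch cycles, with different sharpenings of the single-hook estimate in each branch---but it misidentifies the key mechanisms in several places, and these are genuine gaps rather than details to be filled in.

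First, the number of branch cycles of $B$ is not the cycle rank of $\Gamma$; a branch cycle is a cycle taking only branching turns, and $\Gamma$ is always $(2,2)$-valent, so it is never ``close to a tree'' regardless of how few branch cycles there are. The small-$l$ case ($l=1$ or $2$) is not handled by tree-like combinatorics but by a much finer analysis: one shows the minimum-weight sector satisfies condition (FRC), then branches further on whether the sector is fan or toggle and on various incidence conditions ((TBT), (SBF), etc.), applying in most subcases the \emph{double hook circuit} bound $N\le \tfrac14\lambda^{-2\chi}+1$ of \Cref{sec:doublehook}. This double-hook technique is the single most important improvement over \Cref{thm:introsinglehook} and does not appear in your plan at all.

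Second, the one-boundary-component hypothesis is not used cohomologically via $H_1(\partial M)\to H_1(M)$. Its actual role is that all branch cycles are homotopic to the same (degeneracy) slope on the unique boundary torus, hence each branch cycle meets the fiber surface exactly $-\tfrac{2}{l}\chi$ times. This gives precise control on how far along a hook circuit one has descended after traversing any branch cycle, which is what drives the large-$l$ bound in \Cref{prop:manybranchcycle} and several of the small-$l$ estimates.

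Finally, no census check is part of the proof of \Cref{thm:intro16tet}; the census enters only afterward in deducing \Cref{thm:introdilthm}. The theorem is proved entirely by showing that each of seven explicit functions of $\lambda^{-\chi}$ appearing in \Cref{thm:EIIRP} is below $17$ on $[4\sqrt2,6.86]$, together with the observation that \Cref{thm:introsinglehook} already gives $N\le 16$ when $\lambda^{-\chi}\le 4\sqrt2$.
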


With \Cref{thm:intro16tet} in place, we can finally complete the proof of \Cref{thm:introdilthm} by going through the census \cite{GSS}.
Now, this in itself is still a nontrivial task since there are $51766$ layered veering triangulations in the census. To carry out the computation, we wrote up SageMath scripts which integrate the Veering code of Parlak, Schleimer, and Segerman (\cite{Veering}), to compute normalized dilatations. 
We include these scripts in the auxiliary files and provide a rundown of the code in \Cref{sec:code}.

\subsection{Ideas in the proof of \Cref{thm:introsinglehook} and \Cref{thm:intro16tet}}

For the rest of the introduction, we outline some technical ideas in the proof of \Cref{thm:introsinglehook} and \Cref{thm:intro16tet}. We assume that the reader is familiar with basic definitions for veering triangulations.

Let $\Delta$ be the veering triangulation associated to a fully punctured pseudo-Anosov map $f:S \to S$.
Let $B$ be the \textit{stable branched surface} of $\Delta$. One can lift $B$ to a branched surface $\widehat{B}$ in the infinite cyclic cover corresponding to the fibration associated to $f$. Note that $\widehat{B}$ is a measured branched surface. 

Recall that the \textit{dual graph} $\Gamma$ is a $(2,2)$-valent directed graph.
In particular, there are many Eulerian circuits of $\Gamma$. Each Eulerian circuit $c$ can be lifted and perturbed to a descending path $\alpha$ on $\widehat{B}$. 
The fact that $c$ is Eulerian implies that the `height' between the starting and ending sectors of $\alpha$, i.e. the deck transformation that takes the latter to the former, is determined by the Euler characteristic $\chi(S)$, thus the difference in weights between these sectors can be expressed in terms of the normalized dilatation.
Meanwhile, the number of times $\alpha$ intersects $\widehat{\Gamma}$ is exactly two times the number of tetrahedra.

We define a special type of circuit, called a \textit{hook circuit}, that allows one to bound the former in terms of the latter from below.
Essentially, the difference in weights between the starting and ending sector is the sum of weights of sectors that merge in at each intersection point between $\alpha$ and $\widehat{\Gamma}$.
The sectors that merge into $\alpha$ far away from the starting sector have low height thus large weight. The hook condition ensures that we can do a pairing trick on the remaining initial merging sectors.
When worked out precisely, this gives the bound in \Cref{thm:introsinglehook}. 
Finally, we demonstrate that in all but one layered veering triangulations, one can pick an Eulerian hook circuit. 

This last part makes heavy use of the knowledge of the local combinatorics of veering triangulations. 
We interpret the existence of Eulerian circuits of $\Gamma$ with certain prescribed subpaths as the connectedness of $\Gamma$ after certain resolutions at vertices.
We introduce the notion of \textit{A-} and \textit{B-quads}, which are dual to resolving $\Gamma$ at a vertex in an anti-branching or branching way respectively. In turn, resolving $\Gamma$ at a couple of vertices is dual to building a 2-complex $Q$ out of these quads. 
Using Alexander duality, the connectedness of $\Gamma$ after resolution can be interpreted in terms of the second homology map of the inclusion of $Q$ into the 3-manifold $M$.
The upshot is that the obstruction for Eulerian hook circuits is a specific set of edge identifications for $\Delta$. If these edge identifications hold everywhere, then $\Delta$ can only be the triangulation \texttt{cPcbbbdxm\_10}.
Upon analyzing this exceptional triangulation by hand, this shows \Cref{thm:introsinglehook}.
We refer to \Cref{sec:singlehook} for details.

Instead of a single Eulerian hook circuit, it is sometimes possible to pick double Eulerian hook circuits, i.e. two hook circuits that together pass through each edge exactly once. 
In this case, one can improve \Cref{thm:introsinglehook} by a factor of $2$ (\Cref{prop:doublehookbound}).
As above, one can show that the obstruction to double hook circuits is some set of edge identifications. 
See \Cref{sec:doublehook} for details.
We conjecture that double hook circuits always exist (\Cref{conj:doublehookexist}).

To show \Cref{thm:intro16tet}, we combine cases where we can show double hook circuits exist, along with cases where we can improve the estimates in the single hook circuit argument. 
The technical complication is that for the latter, the sources of improvement don't tend to exist at the same time. For example, certain arguments only work when $B$ has many branch cycles, and others only work when $B$ has few branch cycles. 
What we managed is an elaborate patchwork of ideas that cover all the cases and lowers the bound on the number of tetrahedra from $23$ to $16$. 
See \Cref{sec:oneboundary} for details.

{\bf Outline of paper.} In \Cref{sec:background} we provide some background for (layered) veering triangulations and set up some terminology. In \Cref{sec:singlehook} we explain the proof of \Cref{thm:introsinglehook}.

In \Cref{sec:doublehook} and \Cref{sec:oneboundary} we explain many ways to sharpen the argument in \Cref{sec:singlehook}, which will result in an improved bound in the one boundary case, as recorded in \Cref{thm:EIIRP}. There are some calculus computations that needs to be performed in the course of proving \Cref{thm:EIIRP} and deducing \Cref{thm:intro16tet} from \Cref{thm:EIIRP}. We defer these to \Cref{sec:calculus} so that the reader can focus on the main ideas.

In \Cref{sec:dilatation} we explain how to use results from computations to arrive at \Cref{thm:introdilthm}. Explanations for the code used for the computation are deferred to \Cref{sec:code}. Finally, in \Cref{sec:questions}, we discuss some future directions.

{\bf Acknowledgements.} A very special thanks to my PhD advisor Ian Agol for his guidance, support, and patience with me during my time as his student. 
We thank Eriko Hironaka for her collaboration on \cite{HT22}, and for many helpful conversations about pseudo-Anosov maps and dynamics in general.
We thank Anna Parlak, Saul Schleimer, and Henry Segerman for writing and maintaining the Veering code, as well as their advice on coding.
We thank Chris Leininger for comments on an earlier version of this paper.
Finally, we thank the anonymous referee for their suggestions on improving the organization of the paper.

{\bf Notational conventions.} Throughout this paper, 
\begin{itemize}
    \item $X \cut Y$ will denote the metric completion of $X \backslash Y$ with respect to the induced path metric from $X$. In addition, we will call the components of $X \cut Y$ the \textit{complementary regions} of $Y$ in $X$.
    \item If $G$ is a directed graph, we will denote an edge path as the sequence of edges $(e_1,...,e_n)$ that it traverses.
    \item Suppose $\alpha$ is a path, then $-\alpha$ will denote the path traversed in the opposite direction.
\end{itemize}

\section{Background} \label{sec:background}

\subsection{Veering triangulations} \label{subsec:vt}

We recall some basic definitions and facts about veering triangulations. For more details, see \cite[Chapter 1]{Tsathesis}. 

\begin{defn} \label{defn:vt}
A \textit{veering triangulation} is a transverse taut ideal triangulation with a coloring of the edges by red or blue, so that the four side edges of each tetrahedron starting from an endpoint of the front edge and going counterclockwise, are colored red, blue, red, blue, respectively. See \Cref{fig:veertet}.
\end{defn}

\begin{figure}[ht]
    \centering
    \fontsize{14pt}{14pt}\selectfont
    \resizebox{!}{3cm}{
\begingroup%
  \makeatletter%
  \providecommand\color[2][]{%
    \errmessage{(Inkscape) Color is used for the text in Inkscape, but the package 'color.sty' is not loaded}%
    \renewcommand\color[2][]{}%
  }%
  \providecommand\transparent[1]{%
    \errmessage{(Inkscape) Transparency is used (non-zero) for the text in Inkscape, but the package 'transparent.sty' is not loaded}%
    \renewcommand\transparent[1]{}%
  }%
  \providecommand\rotatebox[2]{#2}%
  \newcommand*\fsize{\dimexpr\f@size pt\relax}%
  \newcommand*\lineheight[1]{\fontsize{\fsize}{#1\fsize}\selectfont}%
  \ifx\svgwidth\undefined%
    \setlength{\unitlength}{325.82965819bp}%
    \ifx\svgscale\undefined%
      \relax%
    \else%
      \setlength{\unitlength}{\unitlength * \real{\svgscale}}%
    \fi%
  \else%
    \setlength{\unitlength}{\svgwidth}%
  \fi%
  \global\let\svgwidth\undefined%
  \global\let\svgscale\undefined%
  \makeatother%
  \begin{picture}(1,0.40856788)%
    \lineheight{1}%
    \setlength\tabcolsep{0pt}%
    \put(0,0){\includegraphics[width=\unitlength,page=1]{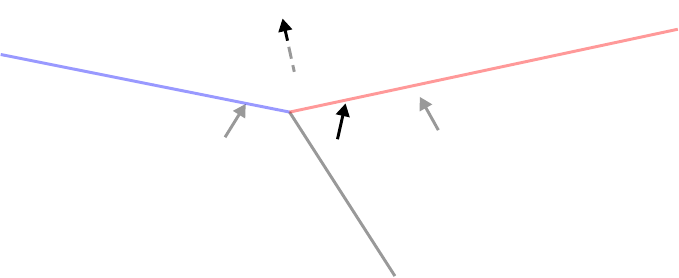}}%
    \put(0.45702311,0.36605622){\color[rgb]{0,0,0}\makebox(0,0)[lt]{\lineheight{1.25}\smash{\begin{tabular}[t]{l}$\pi$\end{tabular}}}}%
    \put(0.49904816,0.14324942){\color[rgb]{0,0,0}\transparent{0.40000001}\makebox(0,0)[lt]{\lineheight{1.25}\smash{\begin{tabular}[t]{l}$\pi$\end{tabular}}}}%
    \put(0.76168485,0.11769163){\color[rgb]{0,0,1}\makebox(0,0)[lt]{\lineheight{1.25}\smash{\begin{tabular}[t]{l}$0$\end{tabular}}}}%
    \put(0.31075696,0.09433859){\color[rgb]{1,0,0}\makebox(0,0)[lt]{\lineheight{1.25}\smash{\begin{tabular}[t]{l}$0$\end{tabular}}}}%
    \put(0.25420744,0.29133508){\color[rgb]{0,0,1}\transparent{0.40000001}\makebox(0,0)[lt]{\lineheight{1.25}\smash{\begin{tabular}[t]{l}$0$\end{tabular}}}}%
    \put(0.62556736,0.30455855){\color[rgb]{1,0,0}\transparent{0.40000001}\makebox(0,0)[lt]{\lineheight{1.25}\smash{\begin{tabular}[t]{l}$0$\end{tabular}}}}%
    \put(0,0){\includegraphics[width=\unitlength,page=2]{veertet.pdf}}%
  \end{picture}%
\endgroup%
}
    \caption{A tetrahedron in a veering triangulation. There are no restrictions on the colors of the top and bottom edges.} 
    \label{fig:veertet}
\end{figure}

The local structure of a veering triangulation is fairly restricted. To be more precise, we have \Cref{prop:togglefan} below, which describes the local combinatorics around each edge.

\begin{defn} \label{defn:togglefan}
Let $\Delta$ be a veering triangulation. A tetrahedron in $\Delta$ is called a \textit{toggle tetrahedron} if the colors on its top and bottom edges differ. It is called a \textit{red/blue fan tetrahedron} if both its top and bottom edges are red/blue respectively.
\end{defn}

\begin{prop}[{\cite[Observation 2.6]{FG13}}] \label{prop:togglefan}
Every edge $e$ in $\Delta$ has one tetrahedron above it, one tetrahedron below it, and two stacks of tetrahedra, in between the tetrahedra above and below, on either of its sides.

Each stack must be nonempty. Suppose $e$ is blue/red. If there is exactly one tetrahedron in one stack, then that tetrahedron is a blue/red fan tetrahedron respectively. If there are $n>1$ tetrahedron in one stack, then going from bottom to top, the tetrahedra in that stack are: one toggle tetrahedron, $n-2$ red/blue fan tetrahedra, and one toggle tetrahedron, respectively.
\end{prop}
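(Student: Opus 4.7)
My plan is to decompose the proposition into three sub-claims. First, the taut angle-sum condition ($2\pi$ at every edge, each dihedral angle $0$ or $\pi$) forces exactly two of the tetrahedra incident to $e$ to use $e$ as a $\pi$-edge, while the rest use $e$ as one of the four outer $0$-edges. Applying the transverse coorientation at the two $\pi$-edge tetrahedra places one above $e$ (with $e$ as bottom $\pi$-edge) and the other below (with $e$ as top $\pi$-edge), so these two special tetrahedra split the cyclic order of the remaining (outer) tetrahedra into the two stacks, one on each side of $e$. Nonemptiness of each stack should follow by contradiction: if a stack were empty, then $T_{\rm above}$ and $T_{\rm below}$ would be directly adjacent along a face containing $e$, and one can then run the veering rule on the pair of outer $0$-edges shared by the two tetrahedra; since the CCW convention in the veering rule flips between the two (because $e$ is top $\pi$-edge of one and bottom of the other), the two color assignments on those common edges become incompatible, yielding the contradiction.

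For the color pattern, fix a stack of $n$ outer tetrahedra $T_1, \ldots, T_n$ from bottom to top with $e$ blue (WLOG). The veering rule at each $T_i$ colors the two outer $0$-edges of $T_i$ adjacent to $e$ red and the one opposite to $e$ blue. At the interface between $T_{\rm below}$ and $T_1$, the shared face contains $e$ as a $\pi$-edge of $T_{\rm below}$ together with two outer $0$-edges of $T_{\rm below}$ that are adjacent in its outer 4-cycle, so one is red and one is blue; from $T_1$'s side, the same face contains $e$ as an outer $0$-edge, the bottom $\pi$-edge of $T_1$, and a red outer $0$-edge of $T_1$. Matching the three edges forces the bottom $\pi$-edge of $T_1$ to be blue, and symmetrically the top $\pi$-edge of $T_n$ is blue. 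At each interior interface between $T_i$ and $T_{i+1}$, the same matching procedure yields exactly two possibilities: a ``fan step'' where the top $\pi$-edge of $T_i$ equals the bottom $\pi$-edge of $T_{i+1}$, or a ``toggle step'' where the top $\pi$-edge of $T_i$ coincides with a red outer $0$-edge of $T_{i+1}$ and vice versa. Propagating the blue boundary data established at $T_1$ and $T_n$ through the stack then forces $T_1$ and $T_n$ to be toggle tetrahedra while $T_2, \ldots, T_{n-1}$ are blue fan tetrahedra; for $n = 1$ the single tetrahedron $T_1$ has both $\pi$-edges blue and is a blue fan. The $e$ red case is parallel.

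The hard part is the nonemptiness claim together with the face-by-face color bookkeeping in the stack. Nonemptiness requires one to work out how the orientation-dependent veering convention flips under the swap of $\pi$-edge assignment, which is what produces the color incompatibility. For the color propagation, the fan/toggle dichotomy must be pinned down carefully, and one needs to check that the forced $\pi$-edge colors are globally consistent with each stack tetrahedron's own veering coloring, which becomes intricate when $n$ is large and when one wants to rule out fan and toggle steps appearing in the wrong positions.
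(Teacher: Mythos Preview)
The paper does not prove this statement; it is quoted from \cite{FG13} without argument, so there is no in-paper proof to compare against.

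Your decomposition is sound, and your instinct that the orientation-sensitive CCW rule is what drives both nonemptiness and the stack pattern is exactly right. However, there is a genuine gap in the stack analysis, and it leads you to the wrong conclusion. At each interior interface $T_i$--$T_{i+1}$ you set up a dichotomy (``fan step'' versus ``toggle step'') and then assert that ``propagating the blue boundary data'' resolves it. It does not: colour matching alone only forces the colour of the top $\pi$-edge of $T_i$ to equal that of the bottom $\pi$-edge of $T_{i+1}$, leaving that common colour free. In particular, all-blue-fan is perfectly consistent with colour matching and with your boundary conditions at $T_1$ and $T_n$, so nothing in your argument rules it out.

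What actually pins the pattern down is the same orientation computation you invoked (but did not carry out) for nonemptiness. If one orients each face by its upward coorientation and reads off the cyclic order of its three edges, a \emph{top} face of any veering tetrahedron reads ($\pi$-edge, red outer, blue outer) while a \emph{bottom} face reads ($\pi$-edge, blue outer, red outer). Aligning these two cyclic orders at the shared blue outer edge $e$ forces the top $\pi$-edge of $T_i$ to coincide with the red outer edge of $T_{i+1}$ and vice versa---your ``toggle step''---at \emph{every} interior interface, never the ``fan step''. Hence for $e$ blue the interior tetrahedra have both $\pi$-edges red and are \emph{red} fan, not blue fan as you claimed. The same cyclic-order mismatch, applied when the $\pi$-edge of both tetrahedra is $e$ itself, is precisely the nonemptiness contradiction you gestured at.
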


\begin{defn} \label{defn:fans}
A side of an edge in $\Delta$ is said to be \textit{short} if the stack of tetrahedra to that side has exactly one tetrahedron, otherwise it is said to be \textit{long}.

In \Cref{fig:fans}, we illustrate a blue edge whose left side is short and right side is long. 
\end{defn}

\begin{figure}[ht]
    \centering
    \resizebox{!}{3.5cm}{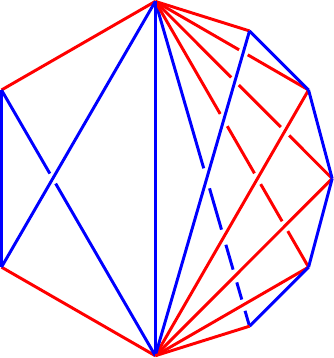}
    \caption{A blue edge whose left side is short and right side is long.}
    \label{fig:fans}
\end{figure}

Associated to a veering triangulation are its \textit{stable branched surface} and \textit{dual graph}. We recall their definitions in \Cref{defn:dualgraph} below.

\begin{defn} \label{defn:branchsurf}
Let $M$ be a 3-manifold. A \textit{branched surface} $B$ is a subset of $M$ locally of the form of one of the pictures in \Cref{fig:branchsurflocal}.
The set of points where $B$ is locally of the form of \Cref{fig:branchsurflocal} middle and right is called the \textit{branch locus} of $B$. The points where $B$ is locally of the form of \Cref{fig:branchsurflocal} right are called the \textit{vertices} of $B$. The complementary regions of the branch locus in $B$ are called the \textit{sectors}. 

\begin{figure}[ht]
    \centering
    \resizebox{!}{1.8cm}{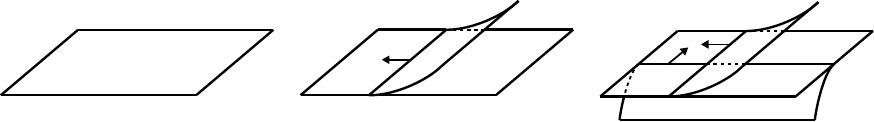} 
    \caption{The local models for branched surfaces. The arrows indicate the maw co-orientation of the branch locus.}
    \label{fig:branchsurflocal}
\end{figure}

The branch locus can be naturally considered as a 4-valent graph by taking the set of vertices to be the set of vertices of $B$ and taking the set of edges to be the complementary regions of the vertices. Each edge of the branch locus has a canonical co-orientation on $B$, which we call the \textit{maw co-orientation}, given locally by the direction from the side with more sectors to the side with less sectors.

Near a vertex $v$, $B$ can be considered as a disc with two sectors attached along two smooth arcs that intersect once at $v$. We call the two attached sectors the \textit{fins} of $B$ at $v$. In \Cref{fig:branchsurflocal} right these would be the topmost and bottommost sectors.
\end{defn}

\begin{defn} \label{defn:dualgraph}
Let $\Delta$ be a veering triangulation on a 3-manifold $M$. We define the \textit{stable branched surface} $B$ of $\Delta$ to be the branched surface which, in each tetrahedron $t$ in $\Delta$, consists of a quadrilateral with its 4 vertices on the top and bottom edges and the two side edges of the same color as the top edge of $t$, and a triangular fin for each side edge of the opposite color to the top edge, as in \Cref{fig:branchsurf} left.

\begin{figure}[ht]
    \centering
    \resizebox{!}{3.5cm}{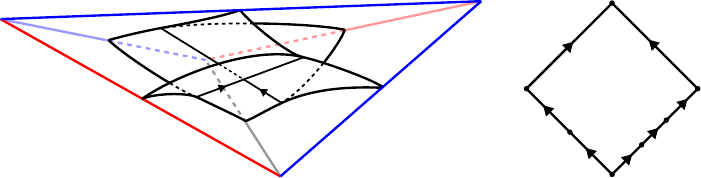}
    \caption{Left: The portion of the stable branched surface and the dual graph within each tetrahedron. Right: The form of each sector of the stable branched surface.}
    \label{fig:branchsurf}
\end{figure}

Consider the branch locus of the stable branched surface. We orient its edges to be positively transverse to the faces of $\Delta$. This defines a directed graph, which we call the \textit{dual graph} of $\Delta$ and denote by $\Gamma$.
\end{defn}

\begin{defn} \label{defn:turns}
Suppose $c$ is an edge path of $\Gamma$, then at a vertex $v$ of $c$, we say that $c$ takes a \textit{branching turn} at $v$ if it is smooth near $v$, otherwise we say that $c$ takes an \textit{anti-branching turn} at $v$. A cycle of $\Gamma$ that only takes branching turns is called a \textit{branch cycle}. A cycle of $\Gamma$ that only takes anti-branching turns is called an \textit{AB cycle}.
\end{defn}

We will in fact deal with the stable branched surface more than the triangulation itself in this paper, so we spend some time describing its combinatorics.

Each sector of the stable branched surface $B$ is dual to an edge of the triangulation $\Delta$, namely the one that passes through it. We define the \textit{color} of a sector to be the color of its dual edge. Also, each vertex of $\Gamma$ is dual to a tetrahedron of $\Delta$, namely the one that it sits inside of. In particular, the number of vertices of $B$ is equal to the number of tetrahedra in $\Delta$.
Under these dual correspondences, the first part of \Cref{prop:togglefan} translates to the following. See also \cite[Section 6.13]{SS19}.

\begin{prop} \label{prop:vbssector}
Each sector $s$ in $B$ is a diamond with the two top sides each being an edge of $\Gamma$ and the two bottom sides divided into edges of $\Gamma$. The number of edges a bottom side is divided into is equal to the number of tetrahedra to that side of the dual edge of $\Delta$. See \Cref{fig:branchsurf} right.
\end{prop}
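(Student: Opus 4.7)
The plan is to directly translate Proposition 2.7 into the dual picture, analyzing the sector $s$ dual to $e$ locally within each tetrahedron $t$ around $e$ using Definition 2.9. In each such $t$, $B \cap t$ consists of a quadrilateral $Q$ plus two fins, separated by two attaching arcs that cross at the central vertex. The piece $s \cap t$ is then determined by the role of $e$ in $t$: it is a quadrant of $Q$ (at the $Q$-corner on $e$) if $e$ is the top edge, the bottom edge, or a same-color side edge of $t$; and it is a fin if $e$ is an opposite-color side edge. These pieces glue along the faces of $\Delta$ around $e$, each face contributing a single edge of $\Gamma$ to $\partial s$, so $\partial s$ is a cycle alternating between edges of $\Gamma$ and vertices at the central vertices of tetrahedra around $e$.

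The key dichotomy to record next is that at the central vertex of $t$, the boundary $\partial s$ takes an anti-branching (corner) turn in the quadrant case, and a branching (smooth) turn in the fin case, since in the fin case the central vertex lies in the interior of the fin's attaching arc rather than at a corner of $s \cap t$. I would then enumerate the tetrahedra around $e$ using Proposition 2.7 and classify each: $t_\top$ and $t_\bot$ each give a corner; a stack of size $1$ consists of a single same-color fan (corner); a stack of size $n \geq 2$ has $n - 2$ middle opposite-color fan tetrahedra, which clearly give smooth turns, plus two end toggle tetrahedra, of which I expect exactly one to give a corner.

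The main obstacle will be verifying that in a stack of size $\geq 2$, it is the \emph{top} toggle (adjacent to $t_\top$) that gives the corner, while the \emph{bottom} toggle (adjacent to $t_\bot$) gives a smooth turn. This requires a careful analysis of the veering coloring: by examining the shared face between a toggle and its neighbor (either $t_\top$, $t_\bot$, or an adjacent middle fan), determining which side edge of that neighbor becomes the top edge of the toggle, and using the alternation of side-edge colors forced by Definition 2.4, one can deduce the color of each toggle's top edge relative to $e$ and hence which case of the dichotomy applies. Granting this, $s$ has exactly $4$ corners, located at $t_\top$, $t_\bot$, and the top tetrahedron of each stack, giving $s$ its diamond shape. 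Each of the two top sides is a single edge of $\Gamma$, corresponding to the unique face between $t_\top$ and the top of a stack, and each of the two bottom sides consists of $n$ edges of $\Gamma$, corresponding to the $n$ faces running from $t_\bot$ through the stack to its top tetrahedron, which matches the number of tetrahedra on that side of $e$.
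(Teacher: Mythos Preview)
Your approach is essentially what the paper does, though you have filled in considerably more detail: the paper does not give a proof of this proposition at all, merely stating that it is the translation of the first part of Proposition~2.7 (the structure of tetrahedra around an edge) into the dual branched-surface picture, and citing \cite[Section 6.13]{SS19}. Your plan of analyzing $s \cap t$ for each tetrahedron $t$ around $e$ according to the role of $e$ in $t$, and then reading off which tetrahedra contribute corners versus smooth turns, is exactly the right way to carry out this translation.

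The one place you should be careful is the ``main obstacle'' you flag. Your expectation is correct: for a stack of length $\geq 2$ along a blue edge $e$, the top toggle has $e$ as a same-color side edge (hence a corner) while the bottom toggle has $e$ as an opposite-color side edge (hence a fin, smooth turn). One clean way to see this without chasing face-gluings is to note that for a tetrahedron $t$ in the stack, $e$ is a same-color side edge of $t$ if and only if the top edge of $t$ has the same color as $e$; then use the second part of Proposition~2.7, which for a long stack along a blue edge lists the tetrahedra bottom-to-top as toggle, red fans, toggle, together with the observation that the top edge of each tetrahedron in the stack is the bottom edge of the next one up, to deduce that the bottom toggle has red top edge and the top toggle has blue top edge. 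This is consistent with the later Proposition~2.12, which asserts that on a bottom side of a blue sector the endpoint vertices are blue while the interior ones are red.
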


In particular we can talk about the \textit{top vertex}, the \textit{bottom vertex}, and the two \textit{side vertices} of a sector, the last term meaning the two vertices where a top side meets a bottom side.

\begin{defn} \label{defn:vertexcolor}
A vertex of $B$ is said to be \textit{blue} if $B$ is locally of the form as in \Cref{fig:vbslocal} left, and is said to be \textit{red} if $B$ is locally of the form as in \Cref{fig:vbslocal} right. Note that here we use the fact that the 3-manifold is oriented in order to distinguish the two pictures.
A useful mnemonic for this definition is to look at the fins of $v$. If they protrude out in a \textbf{L}eft/\textbf{R}ight-handed fashion then $v$ is b\textbf{L}ue/\textbf{R}ed, respectively. 
\end{defn}

\begin{figure}[ht]
    \centering
    \fontsize{12pt}{12pt}\selectfont
    \resizebox{!}{4cm}{
\begingroup%
  \makeatletter%
  \providecommand\color[2][]{%
    \errmessage{(Inkscape) Color is used for the text in Inkscape, but the package 'color.sty' is not loaded}%
    \renewcommand\color[2][]{}%
  }%
  \providecommand\transparent[1]{%
    \errmessage{(Inkscape) Transparency is used (non-zero) for the text in Inkscape, but the package 'transparent.sty' is not loaded}%
    \renewcommand\transparent[1]{}%
  }%
  \providecommand\rotatebox[2]{#2}%
  \newcommand*\fsize{\dimexpr\f@size pt\relax}%
  \newcommand*\lineheight[1]{\fontsize{\fsize}{#1\fsize}\selectfont}%
  \ifx\svgwidth\undefined%
    \setlength{\unitlength}{440.91404712bp}%
    \ifx\svgscale\undefined%
      \relax%
    \else%
      \setlength{\unitlength}{\unitlength * \real{\svgscale}}%
    \fi%
  \else%
    \setlength{\unitlength}{\svgwidth}%
  \fi%
  \global\let\svgwidth\undefined%
  \global\let\svgscale\undefined%
  \makeatother%
  \begin{picture}(1,0.4960678)%
    \lineheight{1}%
    \setlength\tabcolsep{0pt}%
    \put(0,0){\includegraphics[width=\unitlength,page=1]{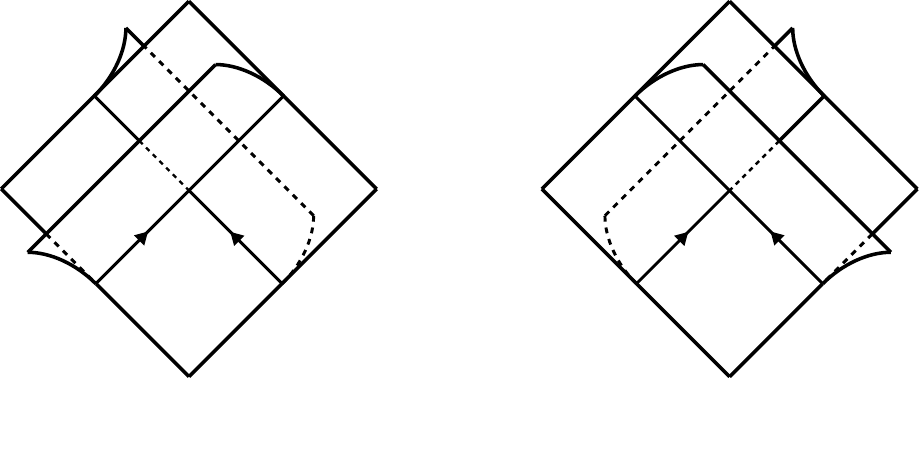}}%
    \put(0.19230895,0.008432){\color[rgb]{0,0,1}\makebox(0,0)[lt]{\lineheight{1.25}\smash{\begin{tabular}[t]{l}\Huge $L$\end{tabular}}}}%
    \put(0.78059007,0.008432){\color[rgb]{1,0,0}\makebox(0,0)[lt]{\lineheight{1.25}\smash{\begin{tabular}[t]{l}\Huge $R$\end{tabular}}}}%
  \end{picture}%
\endgroup%
}
    \caption{Defining the color of a vertex.}
    \label{fig:vbslocal}
\end{figure}

\begin{defn} \label{defn:vbstogglefan}
A sector of $B$ is said to be a \textit{toggle sector} if the colors on its top and bottom vertices differ. Otherwise it is called a \textit{fan sector}.
\end{defn}

With these definitions, an edge of $\Delta$ is blue/red iff its dual sector is blue/red iff the bottom vertex of this dual sector is blue/red, respectively. A sector is toggle/fan iff the tetrahedron dual to its top vertex is toggle/fan respectively. Hence the second part of \Cref{prop:togglefan} translates to the following.

\begin{prop} \label{prop:vbstogglefan}
Let $s$ be a blue/red sector of $B$. On a bottom side of $s$, the vertices at the endpoints are colored blue/red while the vertices in the interior are colored red/blue, respectively. 

Suppose a bottom side of $s$ is divided into edges $e_1,...,e_\delta$, listed from bottom to top. Let $s_i$ be the sector that has $e_i$ as a top side. If $\delta=1$, equivalently, that side of the dual edge in $\Delta$ is short, then $s_1$ is fan. If $\delta \geq 2$, equivalently, that side of the dual edge in $\Delta$ is long, then $s_1$ and $s_\delta$ are toggle while $s_i$ for $i=2,...,\delta-1$ are fan, moreover a top side of $s_i$ is equal to a bottom side of $s_{i+1}$ for $i=2,...,\delta-1$ and a top side of $s_1$ is a proper subset of a bottom side of $s_2$. 
\end{prop}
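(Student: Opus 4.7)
The statement is a direct dualization of Proposition~\ref{prop:togglefan} into the language of the stable branched surface $B$. The plan is to unwind the correspondence between the tetrahedra around $e$ and the local picture of the sector $s$ dual to $e$, and then apply Proposition~\ref{prop:togglefan} in each case.

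\emph{Setup.} Let $s$ be dual to $e$. By Proposition~\ref{prop:vbssector}, the four corners of $s$ lie inside the four distinguished tetrahedra around $e$: the top and bottom vertices of $s$ lie inside $t_\top, t_\bot$ (the tetrahedra above and below $e$), and each side vertex lies inside the topmost tetrahedron of the corresponding stack. Fix the side with stack $t_1, \dots, t_\delta$ (bottom to top), and set $t_0 := t_\bot$. The $\delta$ edges $e_1, \dots, e_\delta$ of $\Gamma$ subdividing the corresponding bottom side of $s$ are dual to the faces $F_i$ of $\Delta$ between $t_{i-1}$ and $t_i$, so the $\delta + 1$ vertices along this bottom side lie inside $t_0, t_1, \dots, t_\delta$ in order. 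Since $e_i$ is oriented upward from $t_{i-1}$ to $t_i$ in $\Gamma$ and is a top side of $s_i$, the top vertex of $s_i$ lies inside $t_i$.

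\emph{Toggle/fan and adjacency.} By the observation recorded just before the proposition that a sector is fan (resp.\ toggle) iff the tetrahedron dual to its top vertex is fan (resp.\ toggle), we get that $s_i$ is fan iff $t_i$ is fan. The classification in Proposition~\ref{prop:togglefan} therefore translates directly into the stated pattern: $s_1$ fan if $\delta = 1$, and $s_1, s_\delta$ toggle with $s_2, \dots, s_{\delta-1}$ fan if $\delta \geq 2$. For the adjacency claim, the other top side of $s_i$ is a $\Gamma$-edge dual to the remaining face of $t_i$ containing the top edge of $t_i$; for a middle fan $t_i$ ($2 \leq i \leq \delta - 1$) the top and bottom edges of $t_i$ are the same color, which forces the corresponding bottom side of $s_{i+1}$ to be a single $\Gamma$-edge coinciding with that top side of $s_i$. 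For $t_1$, which is toggle, the top edge of $t_1$ has a long stack on the relevant side, making the matching bottom side of $s_2$ consist of strictly more than one edge, which gives the proper-inclusion statement.

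\emph{Colors.} As noted in the text, a vertex of $B$ lying inside a tetrahedron $t$ is the bottom vertex of the sector dual to the top edge of $t$, hence carries the color of that top edge. The color claim thus reduces to identifying the top-edge colors of $t_0, t_1, \dots, t_\delta$. For $t_0$, the top edge is $e$ itself. For the middle fan tetrahedra $t_i$ ($2 \leq i \leq \delta - 1$, when $\delta \geq 3$), Proposition~\ref{prop:togglefan} asserts they are fans of the opposite color to $e$. For the toggle endpoints $t_1$ and $t_\delta$ (when $\delta \geq 2$), a short local check at the face $F_1$ using the veering constraint that every face of $\Delta$ consists of one red side edge, one blue side edge, and one top-or-bottom edge, forces the bottom edge of $t_1$ to share the color of $e$; since $t_1$ is toggle, its top edge then has the opposite color. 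A symmetric analysis at the face between $t_\delta$ and $t_\top$ gives the top edge of $t_\delta$ the color of $e$. Combining these yields the pattern claimed.

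The main obstacle is the color verification at the toggle endpoints $t_1$ and $t_\delta$, which relies on the face-coloring constraint from the veering structure; all other parts of the proof are routine tracking of the dualities once the tetrahedra along the bottom side of $s$ are identified.
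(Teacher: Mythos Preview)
Your approach is correct and is exactly the paper's: the proposition is presented there as a direct translation of Proposition~\ref{prop:togglefan} through the sector/edge and vertex/tetrahedron dictionary, with no further argument given. Your write-up is simply a more explicit unwinding of that translation, and the face-coloring check you use for the endpoint colors at $t_1$ and $t_\delta$ is valid.

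Two small slips in the adjacency paragraph are worth correcting. First, the two top sides of $s_i$ are the two \emph{incoming} $\Gamma$-edges at the top vertex of $s_i$ in $t_i$, hence dual to the two \emph{bottom} faces of $t_i$; so ``the remaining face of $t_i$ containing the top edge of $t_i$'' should read ``bottom edge of $t_i$'' (equivalently, the dual edge of $s_i$). Second, the edge whose stack controls the length of the relevant bottom side of $s_{i+1}$ is the dual edge of $s_{i+1}$, namely the bottom edge of $t_{i+1}$, which is a \emph{side} edge of $t_i$ --- not the top edge of $t_i$ (and likewise not the top edge of $t_1$ in the $i=1$ case). The clean justification is then: $t_i$ sits at the top of that stack, and for $2\le i\le\delta-1$ it is fan, so by Proposition~\ref{prop:togglefan} applied to that side edge the stack must be a singleton, giving the equality; for $i=1$, $t_1$ is toggle, so the same proposition forces that stack to have size at least two, giving the proper inclusion. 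With these fixes your argument is complete.
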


\begin{prop} \label{prop:vbscontradictions}
A branch cycle in $B$ must meet vertices of both colors.
\end{prop}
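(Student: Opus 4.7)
The plan is to argue by contradiction, assuming $c=(e_1,\ldots,e_n)$ is a branch cycle in $B$ with every vertex blue (the all-red case being symmetric). The first step is to unpack what a branching turn at a blue vertex looks like. Using \Cref{defn:vertexcolor} together with the identification of vertex color with the color of the top edge of the dual tetrahedron (a consequence of the remarks stringing together \Cref{defn:vbstogglefan} and \Cref{prop:vbstogglefan}), a blue vertex $v$ is dual to a tetrahedron $t$ whose top edge is blue. Hence the two fins of $B$ at $v$ lie on the two red side edges of $t$, and the two smooth arcs of the branch locus at $v$ are precisely their attachment arcs. A branching turn at $v$ then pairs an incoming half-edge (through a bottom face of $t$) with the outgoing half-edge (through the top face of $t$ sharing a red side edge with that bottom face). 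At a red vertex the analogous pairing uses the blue side edges, and the two handedness choices for the pairing differ by a reflection.

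The second step is to track how $c$ interacts with the fins at each blue $v_i$. At $v_i$, $c$ uses one of the two smooth arcs, with a corresponding fin attached on its maw-source side; the maw coorientation transports continuously along $c$. The other fin at $v_i$ then sits on one specific side of $c$, dictated by the blue handedness established in Step~1. Consequently, if every $v_i$ is blue, all of these ``other fins'' lie on a single fixed side of $c$.

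The main obstacle I anticipate is converting this consistent handedness into a concrete combinatorial contradiction. My expected route is to exploit \Cref{prop:vbstogglefan}: the sectors of $B$ abutting $c$ on the fixed ``other-fin'' side have bottom sides that must thread through successive vertices of $c$, and \Cref{prop:vbstogglefan} forces the interior vertices of those bottom sides to have color opposite to the endpoints. Since the endpoints are vertices of $c$, hence blue, the interior vertices must be red; but by closing the loop one checks that at least one such interior vertex is itself a vertex $v_j$ of $c$, contradicting that every vertex of $c$ is blue. A more conceptual alternative, which might shorten the argument, is a rotation-number / Gauss-Bonnet-style count: the consistent blue handedness at each $v_i$ should contribute a nonzero turning to $c$, giving a total rotation incompatible with $c$ being a smoothly immersed closed curve in the orientable branched surface $B$. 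Either route concludes that at least one $v_i$ must be red.
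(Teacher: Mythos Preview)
You have the right intuition about handedness, but in Step 2 you track the wrong fin. The ``other fin'' at $v_i$ is attached along the arc \emph{transverse} to $c$, so that arc crosses $c$ at $v_i$ and the other fin does not sit on a well-defined side of $c$; the claims in Route A about its bottom sides and their endpoints then have no footing. The object to track is the ``corresponding fin'' $F_i$ you already identified at the start of Step 2: the fin attached along the very arc $c$ follows at $v_i$. This $F_i$ lies on the maw-source side of $c$ along both $e_i$ and $e_{i+1}$, and it is the sector dual to the red side edge $r_i$ of $t_i$ shared by the faces dual to $e_i$ and $e_{i+1}$.

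The paper's proof is then one line. The face dual to $e_{i+1}$, being a top face of the blue-top tetrahedron $t_i$, carries exactly one red edge; that edge is $r_i$, and viewed from the $t_{i+1}$ side it is equally $r_{i+1}$. Hence all $F_i$ coincide with a single red sector $s$, so $c \subset \partial s$. But $s$ is a diamond (\Cref{prop:vbssector}), hence a disc with a single boundary circle, and that circle is not a branch cycle: at each side vertex of $s$ the boundary makes an anti-branching turn. That is the whole contradiction. Your Route A via \Cref{prop:vbstogglefan} would at best confirm that the $v_i$ are interior vertices on a bottom side of $s$, hence that $s$ is red --- consistent, not contradictory. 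Route B does not apply: a branch cycle is by definition smoothly embedded in $B$, so there is no nonzero turning to sum.
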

\begin{proof}
Suppose otherwise, then there will be some sector that is a fin to every vertex on the branch cycle. That sector will contain the branch cycle as a boundary component. But this contradicts the fact that each sector is a diamond.
\end{proof}

Finally, we recall the notion of a descending path. These were introduced in \cite{LMT23}.

\begin{defn} \label{defn:descendingpath}
A \textit{descending path} on the stable branched surface $B$ is a path that intersects the branch locus of $B$ transversely and induces the maw co-orientation at each intersection, that is, it goes from a side with more sectors to a side with less sectors.
\end{defn}

Suppose $c$ is an edge path of the dual graph $\Gamma$. By pushing $c$ downwards slightly in $B$ and reversing its orientation, we obtain a descending path $\alpha$ on $B$. We illustrate a local picture of this procedure in \Cref{fig:descendingpath} left. In \Cref{fig:descendingpath} right, we show a bigger example, drawn in the style of the figures used in the rest of this paper.  

The intersection points of $\alpha$ with the branch locus of $B$ are in one-to-one correspondence with the vertices of $c$. At such an intersection point $x$, $\alpha$ meets the boundary of exactly one sector $s$ which it does not locally meets the interior of. We say that $s$ \textit{merges into} $\alpha$ at $x$. In particular $s$ is a fin of the vertex of $c$ corresponding to $x$.

\begin{figure}[ht]
    \centering
    \resizebox{!}{4.5cm}{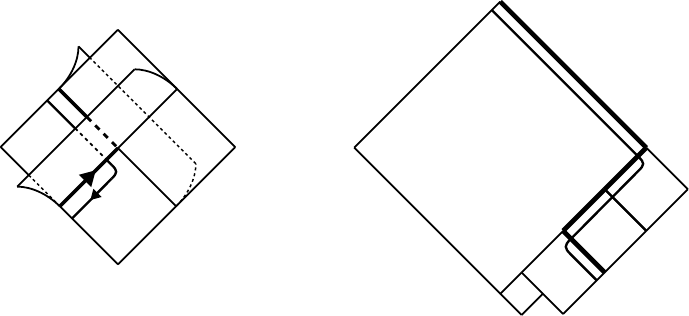}
    \caption{Pushing an edge path of the dual graph (thickened) downwards and reversing its orientation to get a descending path.}
    \label{fig:descendingpath}
\end{figure}

\subsection{Pseudo-Anosov mapping tori} \label{subsec:layeredvt}

We set up some terminology on pseudo-Anosov mapping tori and layered veering triangulations.

\begin{defn} \label{defn:pamap}
A homeomorphism $f$ on a finite-type surface $S$ is said to be \textit{pseudo-Anosov} if there exists a pair of transverse singular measured foliations $(l^s,\mu^s)$ and $(l^u,\mu^u)$ such that $f_* (l^s,\mu^s) = (l^s,\lambda^{-1} \mu^s)$ and $f_* (l^u,\mu^u) = (l^u,\lambda \mu^u)$ for some $\lambda >1$. 
See \cite[Exposé 9]{FLP79} or \cite[Definition 2.1]{HT22} for more details.

We say that $f$ is \textit{fully punctured} if the set of singular points of $l^s$ and $l^u$ is equal to the set of punctures and is nonempty.
\end{defn}

Let $f:S \to S$ be a fully-punctured pseudo-Anosov map. By blowing air into the leaves of the stable measured foliation that contain the punctures, we obtain a measured lamination, which we still denote by $(l^s,\mu^s)$. 

Let $T_f$ be the mapping torus of $f$. Recall this is constructed by taking $S \times [0,1]$ and gluing $S \times \{1\}$ to $S \times \{0\}$ by $f$. The suspension of the lamination $l^s$, that is, the image of $l^s \times [0,1]$, is a lamination on $T_f$, which we call the \textit{stable lamination} and denote by $\Lambda^s$.

\begin{thm}[{\cite{Ago11}, \cite[Theorem 9.1]{LT23}}] \label{thm:layeredvt}
There is a unique veering triangulation $\Delta_f$ on $T_f$ whose stable branched surface carries $\Lambda^s$. 
Moreover, one can find surfaces in $T_f$ in the same isotopy class as $S \times \{0\}$ that are positively transverse to the edges and does not pass through the vertices of the dual graph $\Gamma$ of $\Delta_f$.
\end{thm}

We refer to $\Delta_f$ as the \textit{layered veering triangulation} associated to $f$.
We refer to a surface in the second sentence of the theorem as a \textit{fiber surface}.
We will denote fiber surfaces by $S$ in this paper. This slight abuse of notation is justified because a fiber surface must be homeomorphic to the surface $f$ is defined on. 

\begin{prop} \label{prop:nonullcycles}
Let $S$ be a fiber surface and let $c$ be a cycle of the dual graph $\Gamma$, then $S$ and $c$ must intersect.
\end{prop}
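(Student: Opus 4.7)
The plan is to interpret the statement as a homological intersection result and deduce it from the layered structure of $\Delta$. By definition of a fiber surface, $S$ is positively transverse to the edges of $\Gamma$, so every point of $c \cap S$ contributes $+1$ to the algebraic intersection number $c \cdot S$. Hence it suffices to show $c \cdot S > 0$. Since $[S] \in H_2(T_f, \partial T_f; \ZZ)$ is the fiber class, Poincaré--Lefschetz dual to the fibration class $\phi \in H^1(T_f; \ZZ)$, one has $c \cdot S = \phi([c])$, so the problem reduces to verifying $\phi([c]) > 0$.

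To do this, I would lift $c$ to a path $\widehat{c}$ in the dual graph $\widehat{\Gamma} \subset \widehat{T_f}$ and exploit the layered structure from the proof of \Cref{thm:layeredvt}. Organize the tetrahedra of $\widehat{\Delta}$ as $\widehat{t_i}$ for $i \in \ZZ$, with $\widehat{t_i}$ sitting between the lifted layer fiber surfaces $\widehat{\delta_i}$ and $\widehat{\delta_{i+1}}$. The key combinatorial observation is that every face of $\widehat{\Delta}$ persists across some range of consecutive layers $\widehat{\delta_{i_1}}, \ldots, \widehat{\delta_{i_2}}$ with $i_1 \leq i_2$: it is created as an upward face of $\widehat{t_{i_1 - 1}}$ by the diagonal switch at step $i_1 - 1$, and destroyed as a downward face of $\widehat{t_{i_2}}$ by the switch at step $i_2$. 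Its two adjacent tetrahedra in $\widehat{\Delta}$ are therefore $\widehat{t_{i_1 - 1}}$ below and $\widehat{t_{i_2}}$ above, with $i_1 - 1 < i_2$. Since the edge of $\widehat{\Gamma}$ dual to this face is cooriented upward, it runs from $\widehat{t_{i_1 - 1}}$ to $\widehat{t_{i_2}}$, strictly raising the layer index.

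It follows that the successive vertices of the lifted path $\widehat{c}$ have strictly increasing layer indices, so $\widehat{c}$ cannot close up in $\widehat{\Gamma}$. Equivalently, $[c]$ has positive image under the fibration-induced map $\pi_1(T_f) \to \pi_1(S^1) = \ZZ$, i.e., $\phi([c]) > 0$. Combined with the first paragraph, this yields $|c \cap S| = c \cdot S = \phi([c]) > 0$, so in particular $c \cap S \neq \emptyset$.

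The main work lies in the combinatorial claim of the middle paragraph — pinning down which two tetrahedra of $\widehat{\Delta}$ are adjacent to a given face and checking that their layer indices are strictly ordered. Once this is established, the remainder is formal intersection theory, so this layer-monotonicity of edges of $\widehat{\Gamma}$ will be the principal point to make precise.
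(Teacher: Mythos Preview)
Your proof is correct and follows essentially the same approach as the paper's ``more straightforward'' argument: both use the layered construction from \Cref{thm:layeredvt} to see that every edge of $\Gamma$ crosses at least one layer $\delta_i$ positively, then pass from this to the given fiber surface $S$ (you via the homological identity $c\cdot S=\phi([c])$, the paper via isotopy of fiber surfaces). The paper also notes an alternative high-brow route through the cone duality results of \cite{LMT20}.
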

\begin{proof}
By \cite[Theorem 5.1]{LMT24}, the cone over the fibered face that $[S] \in H_2(T_f, \partial T_f)$ lies in is dual to the cone of all $\Gamma$-cycles in $H_1(T_f)$. By \cite[Theorem 5.15]{LMT24}, none of the $\Gamma$-cycles are null-homologous, so since $[S]$ lies in the interior of the cone, $[S]$ has positive intersection number with all $\Gamma$-cycles.

Alternatively, this also follows from the construction of $\Delta_f$ in \cite{Ago11}.
\end{proof}

For the rest of this section, we fix a fully-punctured pseudo-Anosov map $f:S \to S$. Let $\Delta_f$ be the associated veering triangulation. Let $B$ be the stable branched surface and $\Gamma$ be the dual graph of $\Delta_f$.  

The homology class of $S \times \{0\}$ determines a $\mathbb{Z}$-covering of $T_f$, which we denote by $\widehat{T_f}$. $\Lambda^s$ lifts to a lamination $\widehat{\Lambda^s}$ on $\widehat{T_f}$. Unlike $\Lambda^s$, $\widehat{\Lambda^s}$ has a natural transverse measure induced by $\mu^s$. We denote this measure as $\widehat{\mu^s}$. Let $g$ be the generator of the deck transformation group of $\widehat{T_f}$ for which $g_* \widehat{\mu^s} = \lambda \widehat{\mu^s}$, that is, $g$ shifts $\widehat{T_f}$ upwards.

Let $\widehat{B}$ be the lift of $B$ to $\widehat{T_f}$. $\widehat{B}$ carries $\widehat{\Lambda^s}$, hence we can define the \textit{weight} of a sector to be the total $\widehat{\mu^s}$-measure of the leaves that are carried by the sector.

Fix a fiber surface $S$.  
The intersection of $S$ with $B$ determines a train track $\tau$ on $S$. Meanwhile, $S$ intersects each sector $s$ of $B$ in disjoint \textit{arcs}. Each arc has endpoints on edges of $\Gamma$ that lie on different sides of $s$. We order the arcs from bottom to top, using terms such as `the bottommost arc on $s$'. Notice that these arcs, over all sectors, are exactly the branches of $\tau$. 

Similarly, the intersection points of $S$ with $B$ are exactly the switches of $\tau$.
Now, each complementary region of $\tau$ in $S$ is a once-punctured polygon. Recall that the \textit{index} of such a polygon with $n$-cusps on its boundary is $-\frac{n}{2}$. By the Poincaré-Hopf theorem, the sum of indices of the complementary regions is equal to the Euler characteristic of the entire surface $S$. From this, one can compute that the number of intersection points of $S$ with $B$ is $-2\chi(S)$.

Fix a homeomorphic lift of $S$ to $\widehat{T_f}$, which we denote by $S_0$. Write $S_r=g^r \cdot S_0$. Each $S_r$ is a separating surface in $\widehat{T_f}$, hence it makes sense to say that a set lies above or below some $S_r$. For each sector $\widehat{s}$ of $\widehat{B}$, suppose $r$ is the smallest integer such that $\widehat{s}$ lies above $S_{r-1}$. If $\widehat{s}$ intersects any $S_i$ at all, then $r$ is also the smallest integer such that $S_r$ intersects $\widehat{s}$. In this case we say that $\widehat{s}$ is \textit{at height $r$}. Notice that each sector $s$ of $B$ has a collection of lifts $\{\widehat{s_r}\}_{r \in \mathbb{Z}}$ where each $\widehat{s_r}$ is at height $r$. We define the \textit{weight} of $s$ to be the weight of $\widehat{s_0}$. We caution the reader that this does NOT make $B$ a measured branched surface, since this measure may not be additive across the branch locus.

Finally, let $\widehat{\Gamma}$ be the lift of $\Gamma$ to $\widehat{T_f}$. We say that a vertex $\widehat{v}$ of $\widehat{\Gamma}$ is \textit{at height $r$} if $r$ is the smallest integer such that $\widehat{v}$ lies above $S_{r-1}$. Notice that if a sector $\widehat{s}$ of $\widehat{B}$ has a vertex at height $r$ on its boundary, then $\widehat{s}$ is of height $\leq r$.

\section{Single hook circuits} \label{sec:singlehook}

In this section we will show \Cref{thm:introsinglehook}. The proof uses a special type of circuit in the dual graph, which we call hook circuits. The title of the section comes from the fact that we only make use of a single hook circuit in this section, as opposed to \Cref{sec:doublehook} where we use up to two hook circuits. 

In \Cref{subsec:singlehookbound}, we show how the existence of single hook circuits can be used to bound the number of tetrahedra. 
The rest of the section is then devoted to analyzing when single hook circuits exist. 
In \Cref{subsec:Alexanderduality}, we build up the machinery of A-quads and B-quads. The key proposition is \Cref{prop:Alexanderduality}, which states that via Alexander duality, the existence of certain circuits is related to the second homology map of the inclusion of certain 2-complexes built from these quads into the 3-manifold. 
In \Cref{subsec:singlehookexist}, we use this machinery to show that the obstruction to Eulerian hook circuits are certain sector identifications of the branched surface. If these obstructions exist in many places, then the global combinatorics is highly constrained. From this, we will show that the desired single hook circuits exist for all but one triangulation (\Cref{prop:singlehookexist}).

\subsection{Bounding the number of vertices using single hook circuits} \label{subsec:singlehookbound}

We fix the following setting. 
Let $T_f$ be the mapping torus of a fully-punctured pseudo-Anosov map $f:S \to S$. 
Let $\chi$ be the Euler characteristic of $S$ and $\lambda$ be the dilatation of $f$.
Let $\Delta$ be the veering triangulation on $T_f$ associated to $f$, let $B$ be the stable branched surface of $\Delta$, and let $\Gamma$ be the dual graph of $\Delta$. Finally, let $S$ be a fiber surface. We use notation as in \Cref{subsec:layeredvt}.

\begin{defn} \label{defn:circuit}
An edge path in a directed graph is \textit{simple} if it does not repeat edges (but it is allowed to repeat vertices). A \textit{circuit} is a simple cycle.  An \textit{Eulerian circuit} is a circuit that traverses each edge exactly once.
\end{defn}

\begin{defn} \label{defn:hookcircuit}
Let $s$ be a sector of $B$.  
Suppose the two bottom sides of $s$ are divided into $\delta_1$ and $\delta_2$ edges of $\Gamma$ respectively. We label these, from bottom to top, as $e^1_1,...,e^1_{\delta_1}$ and $e^2_1,...,e^2_{\delta_2}$ respectively. 
We also label the edge of $\Gamma$ that is the top side of $s$ above $e^\beta_{\delta_\beta}$ to be $e^\beta_{\delta_\beta+1}$, for $\beta = 1,2$. 
Suppose the bottommost arc on $s$ has endpoints on $e^1_{k_1}$ and $e^2_{k_2}$. The \textit{hook} of $s$ on the side $\beta$, which we denote by $h_\beta$, is the path $(e^\beta_{k_\beta}, e^\beta_{k_\beta+1},..., e^\beta_{\delta_\beta+1})$. If $k_\beta=1$ we say that the hook $h_\beta$ is \textit{deep}. We illustrate an example of a sector in \Cref{fig:hooks} whose left hook is deep but right hook is not. If $s$ does not have any arcs, we leave its hooks undefined.

\begin{figure}[ht]
    \centering
    \resizebox{!}{5cm}{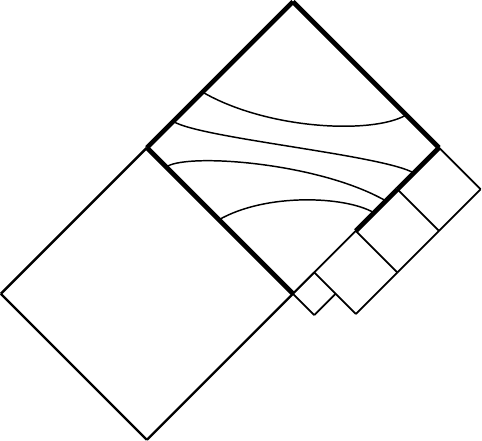}
    \caption{The two hooks (thickened) of a sector with the given placement of arcs. Here the left hook is deep but the right hook is not.}
    \label{fig:hooks}
\end{figure}

A circuit $c$ in $\Gamma$ is said to \textit{hook around $s$ on side $\beta$} if it contains $h_\beta$ as a sub-path. When $h_\beta$ is understood, we will refer to the vertices of $c$ in the interior of $h_\beta$ as the \textit{hook vertices} of $c$. 

There is a degenerate case here when $k_\beta \neq \delta_\beta+1$ and $e^\beta_{\delta_\beta+1}=e^\beta_{k_\beta}$ (which forces $k_\beta=1$ by \Cref{prop:vbstogglefan}). In this case, $h_\beta$ is not simple; we will say that the circuit $(e^\beta_i)_{i \in \mathbb{Z}/\delta_\beta}$ \textit{hooks around $s$ on side $\beta$}. In other words, we allow non-simple paths to be sub-paths of circuits if they traverse the same sequence of edges in the same order. In this case, all the vertices on $(e^\beta_i)_{i \in \mathbb{Z}/\delta_\beta}$ are hook vertices.

We refer to circuits that hook around a sector as \textit{hook circuits} in general.
\end{defn}

We will essentially only be interested in the case when $s$ is a sector of minimum weight. In this case, $s$ must contains arcs. One way to see this is to consider the lift $\widehat{s_0}$ of $s$ in $\widehat{B}$ that is at height 0. If $s$ does not contain arcs, the same is true for $\widehat{s_0}$, so the sector on top of $\widehat{s_0}$ is at height 0 as well, but of smaller weight, implying that the sector of $B$ it covers is of smaller weight than $s$. 

Similarly, in this case one can see that $k_1$ and $k_2$ in \Cref{defn:hookcircuit} must be at most $\delta_1$ and $\delta_2$ respectively. Otherwise, say $j_1 = \delta_1 + 1$, then the two sectors with $e_{\delta_1+1}$ along their bottom sides must have smaller weight by the same reasoning.

\begin{prop} \label{prop:singlehookbound}
Let $s$ be a sector of $B$ of minimum weight. Suppose there exists an Eulerian circuit $c$ of $\Gamma$ which hooks around $s$. Then the number of tetrahedra in the veering triangulation is $\leq \frac{1}{2} \lambda^{-2\chi}$.
\end{prop}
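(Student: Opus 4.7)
My plan is to lift the Eulerian circuit $c$ to the infinite cyclic cover and analyze a descending path obtained by perturbing it downward. First I would establish the key counting fact $[c]\cdot[S] = 2|\chi(S)|$: every edge of $\Gamma$ has non-negative intersection with the fiber surface $S$, and the total count of $S \cap \Gamma$ intersection points equals the number of switches of the train track $\tau = S \cap B$, which in turn equals $-2\chi(S)$ by the standard index argument on complementary regions of $\tau$ (each a once-punctured polygon, as noted in \Cref{subsec:layeredvt}).

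Then I would lift $c$ to a path $\widehat c$ in $\widehat \Gamma$ from some vertex $\widehat v_0$ at height $0$ to $g^{2|\chi(S)|}\widehat v_0$ at height $2|\chi(S)|$, and perturb $\widehat c$ downward and reverse orientation to obtain a descending path $\widehat \alpha$ in $\widehat B$ from a starting sector $\widehat s_{\mathrm{start}}$ at the top to an ending sector $\widehat s_{\mathrm{end}}$ at the bottom, intersecting the branch locus exactly $2N$ times (once per vertex of $c$, where $N$ is the number of tetrahedra so $|\Gamma$-edges$| = 2N$).

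The central quantity is the weight change $\Delta w = w(\widehat s_{\mathrm{end}}) - w(\widehat s_{\mathrm{start}})$, which equals the sum of the weights of the sectors merging into $\widehat\alpha$ at the $2N$ branch crossings. For the lower bound, each merging sector lifts some $B$-sector (of weight at least $w(s)$ by minimality) and sits at height at most $2|\chi(S)|$, so each contributes weight at least $\lambda^{-2|\chi(S)|} w(s)$, giving
\[
\Delta w \;\geq\; 2N \cdot \lambda^{-2|\chi(S)|}\, w(s).
\]

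For the upper bound, I would cyclically re-index $c$ so that $f_1 = e^\beta_{\delta_\beta+1}$ is the top edge of $s$ on side $\beta$; since the hook $h_\beta$ is a sub-path of $c$, the edge $f_{2N}$ placed directly before $f_1$ in the cyclic order equals $e^\beta_{\delta_\beta}$. Pushing $f_1$ downward lands in $s$ (the unique sector of which $f_1$ is a top side), making $\widehat s_{\mathrm{end}}$ a lift of $s$ near height $0$ with weight $w(s)$; pushing $f_{2N}$ downward lands in the sector $s_{\delta_\beta}$ (the sector of which $e^\beta_{\delta_\beta}$ is a top side), making $\widehat s_{\mathrm{start}}$ a lift of $s_{\delta_\beta}$ near height $2|\chi(S)|$ of weight $\lambda^{-2|\chi(S)|} w(s_{\delta_\beta}) \geq \lambda^{-2|\chi(S)|} w(s)$. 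Hence
\[
\Delta w \;=\; w(s) - \lambda^{-2|\chi(S)|} w(s_{\delta_\beta}) \;\leq\; w(s)\bigl(1 - \lambda^{-2|\chi(S)|}\bigr).
\]

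Combining the two inequalities yields $2N \leq \lambda^{2|\chi(S)|} - 1 < \lambda^{-2\chi}$, so $N \leq \tfrac{1}{2}\lambda^{-2\chi}$, as claimed. The main obstacle I expect is verifying that the endpoint lifts have the heights asserted above — in particular, confirming that $\widehat s_{\mathrm{end}}$ is a lift of $s$ of height exactly $0$ rather than of some height $-k$ with $k > 0$, since the latter would inflate its weight by $\lambda^k$ and break the upper bound. This ultimately hinges on the local combinatorics of the sector's vertices adjacent to the pushed-down source and must be reconciled with the height convention chosen for $\widehat B$.
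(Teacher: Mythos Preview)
Your worry in the final paragraph is exactly where the argument breaks, and it cannot be repaired within your framework. With your normalization (the side vertex $\widehat v_0$ at height $0$), the lift $\widehat s_{\mathrm{end}}$ of $s$ sits at height $-(b-m)$, where $b$ is the number of arcs on $s$ and $m$ is the number of arcs whose side-$\beta$ endpoint lies on $e^\beta_{\delta_\beta+1}$. Since $s$ has minimum weight, $k_\beta\le\delta_\beta$, so at least one arc ends strictly below the side vertex and hence $b-m\ge 1$. Thus $w(\widehat s_{\mathrm{end}})=\lambda^{\,b-m}w>w$, and your upper bound on $\Delta w$ fails. Renormalizing so that $\widehat s_{\mathrm{end}}=\widehat s_0$ does not help: the vertices of $\widehat c$ then live at heights between $b-m$ and $b-m+2|\chi|$, so your crude lower bound on each merging weight degrades to $\lambda^{-(2|\chi|+b-m)}w$, and you only recover $2N\le\lambda^{\,2|\chi|+b-m}-1$. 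The extra factor $\lambda^{\,b-m}$ is uncontrolled.

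The paper's proof uses the hook in an essentially stronger way. It takes the basepoint of $c$ at the \emph{top} vertex of $s$ and lifts so that $\widehat c$ \emph{ends} at the top vertex of $\widehat s_0$; the descending path $\alpha$ then \emph{starts} on $\widehat s_0$. The point of the hook is that the first leg of $-\widehat c$ is exactly $(-e^\beta_{\delta_\beta+1},\ldots,-e^\beta_{k_\beta})$, which carries the path from the top of $\widehat s_0$ down past the bottommost arc and hence below $S_0$. Every subsequent vertex of $-\widehat c$ therefore has non-positive height, so the corresponding merging sectors all have weight $\ge w$ --- a bound $\lambda^{2|\chi|}$ times stronger than yours. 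The hook vertices themselves (which sit at positive height) are handled by a separate pairing trick: for each B-resolved hook vertex one pairs the two visits of $c$ to the same $\Gamma$-vertex and shows the two merging weights sum to at least $2w$; only the single A-resolved hook vertex is thrown away. This yields $\sum w_j\ge(2N-1)w$ and hence $\lambda^{-2\chi}\ge 2N$. Your argument uses the hook only to name the endpoint sectors; the paper uses it to force almost all of $\alpha$ below $S_0$, and that is the idea you are missing.
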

\begin{proof}
Let the weight of $s$ be $w$. Let $\widehat{s_0}$ be the lift of $s$ in $\widehat{B}$ that is of height 0. Recall that the weight of $\widehat{s_0}$ is $w$. Take the basepoint of $c$ to be the top vertex of $s$. Lift $c$ to a path $\widehat{c}$ ending at the top vertex of $\widehat{s_0}$, reverse its orientation, then push it downwards to get a descending path $\alpha$.

Since $c$ is Eulerian, it passes through each vertex of $B$ two times, hence its length is two times the number of tetrahedra $N$ in the triangulation. Meanwhile, $\Gamma$ intersects the fiber surface $S$ for $-2\chi(S)$ times, hence $c$ intersects $S$ for $-2\chi(S)$ times as well. These facts imply that the starting point of $\alpha$ lies on a sector of weight $w$, the ending point of $\alpha$ lies on a sector of weight $\lambda^{-2\chi} w$, and $\alpha$ passes through the branch locus of $\widehat{B}$ for $2N$ times. Hence we get an equation
\begin{equation} \label{eq:singlehook}
    \lambda^{-2\chi} w = w + \sum_{j=1}^{2N} w_j
\end{equation}
Here $w_j$ is the weight of the $j^{\text{th}}$ sector merging into $\alpha$. Our task is to show a lower bound for $\sum_{j=1}^{2N} w_j$ in terms of $w$ and $N$.

We first set up some notation. For each vertex $v$, $c$ either takes an anti-branching turn or a branching turn at $v$ for the both times it visits $v$. We say that $v$ is \textit{A-resolved} or \textit{B-resolved} respectively in those cases. This terminology is motivated from the perspective that we are `resolving' the directed graph $\Gamma$ at each vertex to produce a 1-manifold, and will be consistent with our future discussion on existence of hook circuits.

Recall that each intersection point of $\alpha$ with $\widehat{\Gamma}$ corresponds naturally to a vertex of $\widehat{c}$. Under this correspondence, the term in $W=\sum_{j=1}^{2N} w_j$ that arises from an intersection point of $\alpha$ with $\widehat{\Gamma}$ is the weight of one of the fins of the corresponding vertex $v$ of $\widehat{c}$. The height of this sector is bounded above by the height $r_v$ of $v$, hence the weight of the sector is bounded below by $\lambda^{-r_v} w$. If $v$ does not cover a hook vertex of $c$, then $v$ will be at non-positive height, since $-\widehat{c}$ will have gone through $S_0$ after having traversed the hook at the start, hence the corresponding term is $\geq w$. This argument does not work if $v$ does cover a hook vertex of $c$ however, so we will need to modify our strategy a little bit.

What we will do is that for each B-resolved hook vertex $u$, we pair up the two terms in $W$ that correspond to the two vertices of $\widehat{c}$ covering the vertex of $\Gamma$ that $u$ lies at. See \Cref{fig:hookpairing}. Consider a lift $\widehat{u}$ of $u$. Let $d_u$ be the difference between the heights of the two fins of $\widehat{u}$. This quantity only depends on $u$ since any other lift is $g^r \cdot \widehat{u}$ for some $r$, and $g^r$ changes the heights of both fins by $r$. For the two terms of $W$ in such a pair, we can now bound their sum from below by $\lambda^{-r_1+d_u} w + \lambda^{-r_2-d_u} w$ where $r_i$ is the height of the fin opposite to the one whose weight is added (up to relabeling $r_1, r_2$ or changing the sign of $d_u$). 

\begin{figure}[ht]
    \centering
    \resizebox{!}{7cm}{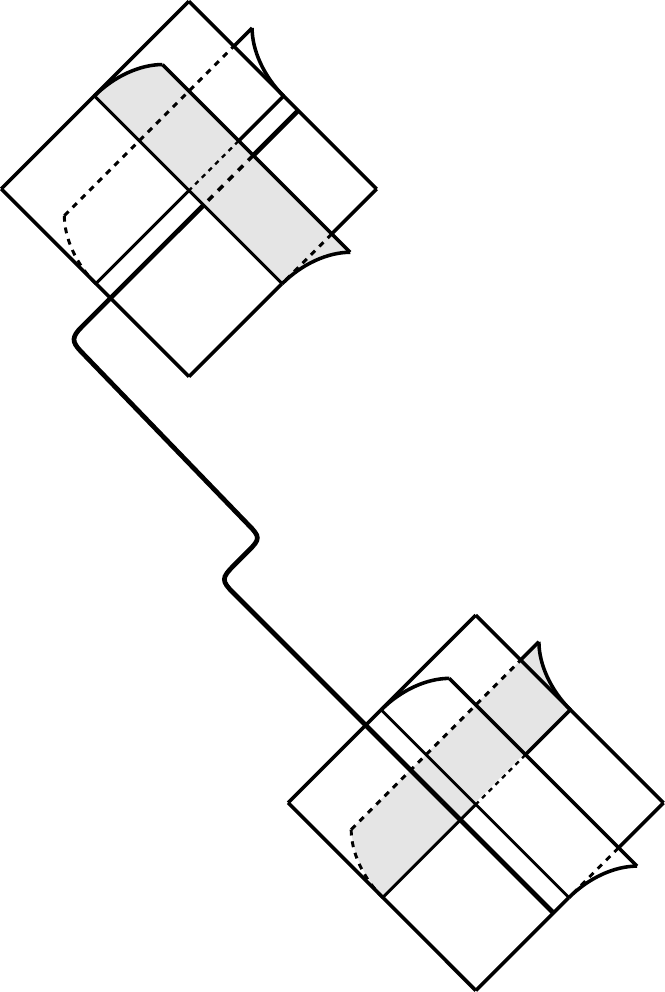}
    \caption{For each B-resolved hook vertex $u$, we pair up the two vertices of $\widehat{c}$ covering the vertex of $\Gamma$ that $u$ lies at. The two corresponding terms are the weights of sectors (shaded) covering the two fins at $u$.}
    \label{fig:hookpairing}
\end{figure}

We claim that $r_1$ and $r_2$ must be non-positive. If the corresponding vertex of $\widehat{c}$ covers a hook vertex, then this follows from the fact that $\widehat{s_0}$ is at height 0, since in this case the fin opposite to the one whose weight is added is $\widehat{s_0}$. Otherwise, this follows from the fact that the corresponding vertex of $\widehat{c}$ has non-positive height. Hence $\lambda^{-r_1+d_u} w + \lambda^{-r_2-d_u} w \geq \lambda^{d_u} w + \lambda^{-d_u} w \geq 2w$. 

Now, there is only one A-resolved hook vertex, so by doing this grouping for each B-resolved hook vertex and using the original argument for the other vertices of $\widehat{c}$ not covering a hook vertex, we see that $W=\sum_{j=1}^{2N} w_j \geq (2N-1)w$. Hence from \Cref{eq:singlehook}, we get $\lambda^{-2\chi} \geq 2N$ which implies the bound in the proposition.
\end{proof}

\subsection{Resolutions and Alexander duality} \label{subsec:Alexanderduality}

To utilize \Cref{prop:singlehookbound}, we need to be able to find hook circuits. The way we will do so is to change our perspective and study the connectedness of the dual graph after resolutions. In turn, the way we study this connectedness will be to use a version of Alexander duality. In this subsection we will explain these ideas.

\begin{defn} \label{defn:resolution}
A \textit{(2,2)-valent directed graph} is a directed graph for which every vertex has exactly two incoming edges and exactly two outgoing edges.

Let $G$ be a (2,2)-valent directed graph. Let $v$ be a vertex of $G$. Let $i_1, i_2$ be the incoming edges and $o_1, o_2$ be the outgoing edges at $v$. We define \textit{a resolution at $v$} to be the operation of deleting $v$ from $G$ then joining $i_\beta$ to $o_{\sigma(\beta)}$ for some permutation $\sigma$, which returns another (2,2)-valent directed graph. Notice that any vertex can be resolved in two ways.

Suppose $G'$ is obtained from $G$ by repeated resolutions of vertices. Then $G'$ is determined by a collection $I=\{(v_i,\sigma_i)\}$ where the $v_i$ are distinct vertices of $G$ and $\sigma_i$ are permutations dictating how the resolutions are performed at $v_i$. In this case we write $G'=G(I)$. Notice that there is a natural map $G' \to G$ defined by gluing back the resolved vertices. Also, notice that the notation is set up such that $G(I_1 \cup I_2)$ makes sense as long as $I_1$ and $I_2$ do not contain contradicting instructions on how to resolve a vertex.

A simple edge path or cycle $c$ of $G$ determines a resolution of $G$ by resolving every vertex in the interior of $c$ in a manner such that $c$ lifts to a subset of an edge in the resolved graph $G'$. In this case we use $c$ to denote the collection that determines the resolution, that is, we write $G'=G(c)$.
\end{defn}

For a general (2,2)-valent directed graph, there is no good way of referring to the two ways of resolving a vertex. But for dual graphs of veering triangulations, we can use the following terminology, which was already hinted at in the proof of \Cref{prop:singlehookbound}.

\begin{defn} \label{defn:ABresolution}
Let $v$ be a vertex of the dual graph of a veering triangulation. If in \Cref{defn:resolution}, $(i_\beta,o_{\sigma(\beta)})$ takes an anti-branching turn at $v$, then we call the resolution an \textit{A-resolution}. Otherwise, $(i_\beta,o_{\sigma(\beta)})$ takes a branching turn at $v$, and we call the resolution a \textit{B-resolution}.
\end{defn}

With this notation, the existence of an Eulerian hook circuit can be rephrased as

\begin{prop} \label{prop:singlehookresolution}
There exists an Eulerian circuit that hooks around a sector $s$ on side $\beta$ if and only if $\Gamma(h_\beta)$ is connected.
\end{prop}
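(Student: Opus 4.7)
The plan is to reduce the existence of an Eulerian circuit of $\Gamma$ containing $h_\beta$ to the existence of \emph{any} Eulerian circuit in $\Gamma(h_\beta)$, and then to apply the classical Eulerian criterion for directed graphs.

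First I would record that $\Gamma$ is $(2,2)$-valent: each vertex of $\Gamma$ sits inside a tetrahedron $t$ and has one edge of $\Gamma$ exiting through each of the four faces of $t$; two of these faces are cooriented outward (giving two outgoing edges) and two inward (giving two incoming edges) by the transverse taut structure. Moreover, any resolution of a $(2,2)$-valent directed graph is again $(2,2)$-valent, since a resolution only reconnects the two in-edges to the two out-edges at the resolved vertex while leaving the in/out valences at every surviving vertex untouched. In particular $\Gamma(h_\beta)$ is $(2,2)$-valent.

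Next I would invoke the standard Eulerian circuit theorem: a finite directed graph admits an Eulerian circuit if and only if its underlying undirected graph (restricted to non-isolated vertices) is connected and every vertex has equal in- and out-degree. Since the $(2,2)$-valent condition forces degree balance, Eulerian circuits of $\Gamma(h_\beta)$ exist if and only if $\Gamma(h_\beta)$ is connected.

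The main step is then to set up a bijection between Eulerian circuits of $\Gamma$ containing $h_\beta$ as a sub-path and Eulerian circuits of $\Gamma(h_\beta)$. Given such a circuit $c$ of $\Gamma$, at each interior vertex $v$ of $h_\beta$ the circuit $c$ visits $v$ exactly twice: one visit realizes the in/out pairing dictated by $h_\beta$, while the other visit uses the remaining in/out pair. This matches precisely the local pairing used to form $\Gamma(h_\beta)$, so contracting each interior vertex of $h_\beta$ according to $c$ sends $c$ to a closed walk in $\Gamma(h_\beta)$ traversing every edge exactly once, i.e.\ an Eulerian circuit. Conversely, an Eulerian circuit of $\Gamma(h_\beta)$ lifts to a closed walk in $\Gamma$ using every edge exactly once, and its passage through the single edge of $\Gamma(h_\beta)$ lying over $h_\beta$ traverses $h_\beta$ in order as a sub-path. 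Chaining the two equivalences yields the proposition.

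The only subtlety, which I would handle at the end, is the degenerate case of \Cref{defn:hookcircuit} in which $h_\beta$ is itself a closed loop $(e^\beta_i)_{i\in\mathbb{Z}/\delta_\beta}$. There the resolution splits this loop off as a cycle inside $\Gamma(h_\beta)$, so connectedness of $\Gamma(h_\beta)$ forces $\Gamma$ to consist solely of this loop; this matches exactly the convention that the circuit $(e^\beta_i)$ is itself the Eulerian circuit hooking around $s$.
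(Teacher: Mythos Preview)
Your argument is correct and follows essentially the same route as the paper: both directions go through the natural correspondence between Eulerian circuits of $\Gamma$ containing $h_\beta$ and Eulerian circuits of $\Gamma(h_\beta)$, together with Euler's theorem applied to the $(2,2)$-valent graph $\Gamma(h_\beta)$. Your write-up is more explicit (recording why $\Gamma$ and its resolutions are $(2,2)$-valent, and spelling out the bijection and the degenerate case), but the underlying proof is the same.
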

\begin{proof}
For the forward direction, the Eulerian hook circuit $c$ in $\Gamma$ lifts to an Eulerian circuit in $\Gamma(h_\beta)$, hence $\Gamma(h_\beta)$ is connected.
For the backward direction, since $\Gamma(h_\beta)$ is a connected (2,2)-valent directed graph, by a classical theorem of Euler, $\Gamma(h_\beta)$ has a Eulerian circuit $c$. The image of $c$ in $\Gamma$ is a Eulerian circuit that contains $h_\beta$ as a sub-path.
\end{proof}

This shift in perspective to resolutions transfers the question that one must answer to apply \Cref{prop:singlehookbound} into: how can one study the connectedness of resolutions of the dual graph? Here is where Alexander duality comes in.

Consider the general setting of having a cell complex $X$. For our purposes, this means that $X$ is obtained by gluing $n$-balls along faces on their boundary that are homeomorphic to $n-1$-balls by homeomorphisms. The \textit{dual graph} of $X$ is defined to be the graph with set of vertices equal to the set of $n$-balls and an edge between $e_1$ and $e_2$ for every pair of faces on $e_1$ and $e_2$ that are glued together. Then a general position argument shows that the number of components of $X$ is equal to the number of components of its dual graph.

When $\Delta$ is a veering triangulation, this definition of dual graph agrees with the definition in \Cref{defn:dualgraph} that we have used so far, with the edge orientations forgotten. In particular this shows that the dual graph of a veering triangulation is always connected.

Suppose $v$ is a vertex of the dual graph $\Gamma$ of a veering triangulation $\Delta$. Recall that $v$ is dual to a tetrahedron $t$ of $\Delta$. We define the \textit{A-quad} of $t$ to be a properly embedded quadrilateral-with-4-ideal-vertices with edges along the top edge, the 2 side edges of the opposite color as the top edge, and the bottom edge of $t$. We define the \textit{B-quad} of $t$ to be a properly embedded quadrilateral-with-4-ideal-vertices with edges along the top edge, the 2 side edges of the same color as the top edge, and the bottom edge of $t$. We will only consider A/B-quads up to isotopy hence these are uniquely defined for each tetrahedron.

If we now define a new cell complex $\Delta'$ by cutting $\Delta$ along the A/B-quad of $t$, then the dual graph of $\Delta'$ is exactly the A/B-resolution of $\Gamma$ at $v$, respectively (again, with the edge orientations forgotten). 

More generally, the graph $\Gamma(I)$ obtained by resolving according to some collection $I$ is the dual graph of the cell complex obtained by cutting $\Delta$ along the union of A/B-quads in the corresponding collection of tetrahedra, where we take the A/B-quad in a tetrahedron if we A/B-resolve the corresponding vertex respectively.

At this point, it is convenient for our discussion to compactify the triangulation. That is, we cut away a neighborhood of each end of $T_f$, so that it has torus boundary components instead of torus ends. Correspondingly, $\Delta$ becomes a triangulation by truncated tetrahedra, and each quad becomes an octagon with every other side lying along $\partial T_f$, however we will still refer to them as quads. 

In the above scenario, we let $Q=Q(I)$ be the union of A/B-quads, and let $\partial_v Q = Q \cap \partial T_f$. Here $Q$ is a 2-complex and via a version of Alexander duality, we can compute the number of components of $\Delta \cut Q$ using the homology of $Q$. For concreteness, let us use homology with $\mathbb{R}$-coefficients.

\begin{prop} \label{prop:Alexanderduality}
We have $\widetilde{H}^0(\Delta \cut Q) \cong \ker(H_2(Q, \partial_v Q) \to H_2(T_f, \partial T_f))$. In particular, the number of components of $\Gamma(I)$ is equal to $1+ \dim \ker(H_2(Q,\partial_v Q) \to H_2(T_f, \partial T_f))$
\end{prop}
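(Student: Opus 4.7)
The plan is to combine the long exact sequence of a triple with Lefschetz duality in order to convert the component count of $\Delta \cut Q$ into the kernel of a map in homology. First, I would take a regular neighborhood $N(Q)$ of $Q$ in $T_f$ (chosen so that $N(Q) \cap \partial T_f$ is a regular neighborhood of $\partial_v Q$ in $\partial T_f$) and set $V := \overline{T_f \setminus N(Q)}$. Then $V$ is a compact oriented $3$-manifold with boundary that is homeomorphic, and hence homotopy equivalent, to $\Delta \cut Q$. In particular the number of components of $\Gamma(I)$, which by the cell-complex duality explained just before the proposition equals the number of components of $\Delta \cut Q$, is $\dim H^0(V) = 1 + \dim \widetilde{H}^0(V)$.

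Next, I would write down the long exact sequence of the triple $(T_f, Q \cup \partial T_f, \partial T_f)$. Since $Q \cup \partial T_f$ relative to $\partial T_f$ is effectively $2$-dimensional, $H_3(Q \cup \partial T_f, \partial T_f)$ vanishes, and the relevant segment reads
\begin{equation*}
0 \to H_3(T_f, \partial T_f) \to H_3(T_f, Q \cup \partial T_f) \xrightarrow{\delta} H_2(Q \cup \partial T_f, \partial T_f) \to H_2(T_f, \partial T_f).
\end{equation*}
Two standard excisions identify the middle groups with familiar ones: excising the interior of $\partial T_f \setminus N(\partial_v Q)$ from the pair $(Q \cup \partial T_f, \partial T_f)$ yields $H_2(Q \cup \partial T_f, \partial T_f) \cong H_2(Q, \partial_v Q)$, while excising the interior of $N(Q)$ from the pair $(T_f, Q \cup \partial T_f)$ yields $H_3(T_f, Q \cup \partial T_f) \cong H_3(V, \partial V)$. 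Lefschetz duality for the compact oriented $3$-manifold $V$ then gives $H_3(V, \partial V) \cong H^0(V) \cong H^0(\Delta \cut Q)$, and similarly $H_3(T_f, \partial T_f) \cong H^0(T_f) = \mathbb{R}$.

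To conclude, I would identify the leftmost map $H_3(T_f, \partial T_f) \to H_3(T_f, Q \cup \partial T_f)$ under these isomorphisms with the restriction map $H^0(T_f) \to H^0(V)$ induced by the inclusion $V \hookrightarrow T_f$; by naturality of Lefschetz duality, this sends the generator $1 \in H^0(T_f)$ to the locally constant function that equals $1$ on every component of $V$. Therefore the cokernel of this map is $H^0(V)$ modulo the constants, which is precisely $\widetilde{H}^0(\Delta \cut Q)$. Exactness of the four-term sequence then identifies this cokernel with the image of $\delta$, which is $\ker(H_2(Q, \partial_v Q) \to H_2(T_f, \partial T_f))$, proving the isomorphism and giving the corollary about components. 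The main subtlety is the naturality identification in the last step (matching the inclusion-induced map with the restriction under Lefschetz duality); this is standard but needs to be traced through carefully because of the mixed boundary structure of $V$.
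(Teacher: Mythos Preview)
Your proposal is correct and follows essentially the same route as the paper: both arguments run the long exact sequence of the triple $(T_f,\,N(Q)\cup\partial T_f,\,\partial T_f)$, apply excision to identify $H_3(T_f,N(Q)\cup\partial T_f)\cong H_3(V,\partial V)$ and $H_2(N(Q)\cup\partial T_f,\partial T_f)\cong H_2(Q,\partial_v Q)$, and then invoke Poincar\'e--Lefschetz duality to turn $H_3(V,\partial V)$ into $H^0(\Delta\cut Q)$. The paper packages the naturality step you flagged into a single commutative ladder of long exact sequences (homology on top, cohomology on the bottom, vertical maps given by duality), which makes the identification of the first map with restriction automatic; your linear presentation is equivalent, and the one imprecision---excising $N(Q)$ from the pair $(T_f,Q\cup\partial T_f)$ rather than from $(T_f,N(Q)\cup\partial T_f)$---is fixed by the obvious deformation retraction.
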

\begin{proof}
Let $U$ be a regular neighborhood of $Q$ in $T_f$. Let $\partial_v U = U \cap \partial T_f$ and $\partial_h U = \partial U \cut \partial_v U$. $\partial_v U$ is a regular neighborhood of $\partial_v Q$ on $\partial T_f$.

Consider the commutative diagram of long exact sequences

\begin{center}
\begin{tikzcd}[
  column sep=small,row sep=small,
  ar symbol/.style = {draw=none,"\textstyle#1" description,sloped},
  cong/.style = {ar symbol={\cong}},
  ]
0 \ar[d,cong] & \mathbb{R} \ar[d,cong] & & H_2(Q,\partial_v Q) \ar[d,cong] & \\
H_3(U, \partial_v U) \ar[r] \ar[dd] & H_3(T_f,\partial T_f) \ar[r] \ar[dd] & H_3(T_f, U \cup \partial T_f) \ar[r] \ar[d,cong] & H_2(U,\partial_v U) \ar[r] \ar[dd] & H_2(T_f,\partial T_f) \ar[dd] \\
 & & H_3(T_f \cut U, \partial (T_f \cut U)) \ar[d] & & \\
H^0(U, \partial_h U) \ar[r] \ar[d,cong] & H^0(T_f) \ar[r] \ar[d,cong] & H^0(T_f \cut U) \ar[r] \ar[d,cong] & {H^1(U, \partial_h U)} \ar[r] & H^1(T_f) \\
0 & \mathbb{R} & H^0(T_f \cut Q) & & 
\end{tikzcd}
\end{center}

The vertical arrows are isomorphisms given by Poincaré (or Poincaré-Lefschetz) duality. Hence this shows that $$\widetilde{H}^0(T_f \cut Q) \cong \ker(H_2(Q, \partial_v Q) \to H_2(T_f, \partial T_f)) \eqno\qedhere$$
\end{proof}

In practice, we will need to know how the isomorphism in \Cref{prop:Alexanderduality} actually operates. For a given component $C$ of $\Delta \cut Q$, its boundary in $\Delta$ will consist of a collection of quads. If we co-orient each of these quads to point out of $C$, their sum will determine the corresponding 2-cycle in $\ker(H_2(Q,\partial_v Q) \to H_2(T_f, \partial T_f))$. Note that if both sides of a quad lie in $C$, then the quad will be included twice in the sum with opposite co-orientations, hence end up not appearing in the 2-cycle. In general, the coefficient of each quad that appears in the cycle will be $\pm 1$.

Conversely, given a nonzero 2-cycle in $\ker(H_2(Q,\partial_v Q) \to H_2(T_f, \partial T_f))$ with coefficients $\pm 1$, we can first co-orient the quads appropriately so that the coefficients are all 1. Then the quads will bound a component of $\Delta \cut Q$ for which they are co-oriented out of. 
The fact that we have $\widetilde{H}^0$ instead of $H^0$ on the left hand side of \Cref{prop:Alexanderduality} means that if we add together the 2-cycles corresponding to each of the components of $\Delta \cut Q$, the quads all cancel each other out and we get the zero sum.

\subsection{Existence of single hook circuits}  \label{subsec:singlehookexist}

The main goal of this subsection is to prove the following proposition.

\begin{prop} \label{prop:singlehookexist}
There exists a choice of fiber surface $S$ for which there is a Eulerian circuit that hooks around a sector of minimum weight, unless $\Delta=$ \texttt{cPcbbbdxm\_10}.
\end{prop}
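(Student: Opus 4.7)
The overall strategy is to rephrase the existence of an Eulerian hook circuit as a connectedness question via \Cref{prop:singlehookresolution}, and then to attack it using the Alexander duality in \Cref{prop:Alexanderduality}.

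Fix a sector $s$ of minimum weight, and for a choice of fiber surface $S$, let $h_1, h_2$ be its two hooks. By \Cref{prop:singlehookresolution} it suffices to find $S$ and $\beta \in \{1,2\}$ with $\Gamma(h_\beta)$ connected, equivalently with $\dim \ker(H_2(Q(h_\beta), \partial_v Q(h_\beta)) \to H_2(T_f, \partial T_f)) = 0$. A nontrivial element of the kernel corresponds (via the description following \Cref{prop:Alexanderduality}) to a $\pm 1$-weighted sum of quads forming the boundary of a component $C$ of $\Delta \cut Q(h_\beta)$. So the whole question becomes: can a hook-quad cycle bound in $T_f$?

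The first main step is to enumerate such bounding cycles. Each quad in $\partial C$ is an A-quad or B-quad of a hook tetrahedron (i.e.\ a tetrahedron dual to an interior vertex of $h_\beta$), whose type is determined by the turn $h_\beta$ takes there by \Cref{defn:ABresolution}. I would analyze how these quads fit together along their top, bottom, and side edges: since adjacent quads must meet along shared edges of $\Delta$ with compatible coorientations, closing up such a cycle forces a very rigid shape on $\partial C$. Using \Cref{prop:togglefan} and \Cref{prop:vbstogglefan} to constrain the colors and fan/toggle types of the relevant tetrahedra, I expect to show that any such $C$ must be ``glued'' to $s$ along (essentially) one of its bottom sides, producing a short explicit list of local models.

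Next, I would exploit the flexibility of choosing $S$. Modifying $S$ by pushing it along the flow changes which arcs are bottommost on $s$, and hence changes the depths $k_1, k_2$. Making a hook deeper (decreasing $k_\beta$) enlarges $h_\beta$ and augments $Q(h_\beta)$ by additional quads, which should slice through any putative bounding component $C$ and kill the obstruction. Concretely, if $h_1$ is not yet deep, I would replace $S$ by a push-forward so that $k_1 = 1$, and verify via the local models from the previous step that the extra quads intersect any candidate bounding surface transversely in nontrivial arcs. Playing this off between the two sides $\beta = 1, 2$ should eliminate all obstructions simultaneously in almost all cases.

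The main obstacle, and presumably the source of the exceptional case, is the residual situation where even after maximally deepening both hooks, both $\Gamma(h_1)$ and $\Gamma(h_2)$ remain disconnected. This forces an extremely restrictive coincidence between the two hook-quad cycles and the global topology of $T_f$; combining this with \Cref{prop:nonullcycles} (which forbids certain $\Gamma$-cycles from being null-homologous) should pin down the complexity of $\Delta$ to a very small number of tetrahedra, at which point a finite check against the small-tetrahedra portion of the veering triangulation census identifies the unique offender as \texttt{cPcbbbdxm\_10}. The bulk of the work is the combinatorial case analysis in the second and third paragraphs, made tractable by the narrow options afforded by \Cref{prop:togglefan} and \Cref{prop:vbstogglefan}.
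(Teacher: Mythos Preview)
Your overall framework---rephrasing via \Cref{prop:singlehookresolution} and analyzing $Q(h_\beta)$ through \Cref{prop:Alexanderduality}---matches the paper. But the central move in your third paragraph is in the wrong direction, and this is a genuine gap.

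You propose to deepen the hook (push $S$ so that $k_\beta=1$) in order to add quads that ``slice through'' a putative bounding component. In fact the opposite is true: the non-deep case is the easy one. When $h_\beta$ is not deep, $Q(h_\beta)$ contains at most one quad with a blue edge (the A-quad $q^\beta_{\delta_\beta}$), so no $2$-cycle can cancel that edge and $\Gamma(h_\beta)$ is automatically connected (this is the paper's \Cref{lemma:singlenondeephook}). Deepening the hook \emph{adds} the B-quad $q^\beta_1$, which carries a second blue edge, and it is exactly the possibility that these two blue edges coincide in $\Delta$ that creates the obstruction. So pushing $S$ to make both hooks deep is precisely how one \emph{loses} connectedness, not how one recovers it.

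The paper's use of fiber-surface flexibility is quite different: it proves (\Cref{prop:manyfibersurfaces}) that for any fixed sector $s$ one can choose $S$ so that $s$ is the \emph{only} deep sector. If any minimum-weight sector is then not $s$, it has a non-deep hook and we are done. Otherwise $s$ is the unique minimum-weight sector, and one must analyze the deep-hook obstruction for $s$ itself. Iterating this over all sectors forces first every fan sector to satisfy a coincidence condition (TBT), then every toggle sector to satisfy a further condition (BSBF); the endgame is a direct combinatorial argument that these conditions together pin down an explicit once-punctured torus fiber, identifying $\Delta$ as \texttt{cPcbbbdxm\_10}. \Cref{prop:nonullcycles} is not used here.
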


We remark that \texttt{cPcbbbdxm\_10} is a triangulation of the figure eight knot complement, and is in fact the first member of the veering triangulation census \cite{GSS}.

The proof of \Cref{prop:singlehookexist} is quite elaborate. It will first involve a multi-step analysis of the connectedness of resolutions of the dual graph determined by hooks, then go through an elimination process making use of a result on the flexibility of fiber surfaces. The proof will occupy the rest of this section.

We first set up some notation. Suppose $s$ is a sector of $B$ of minimum weight. Without loss of generality suppose that $s$ is blue. Label the edges of $\Gamma$ on the sides of $s$ as in \Cref{defn:hookcircuit}. Let $s^\beta_k$ be the sector that has $e^\beta_k$ as a top side. Let $v^\beta_k$ be the top vertex of $s^\beta_k$. In other words, $v^\beta_k$ is the vertex shared by $e^\beta_k$ and $e^\beta_{k+1}$. By extension, we also let $v^\beta_0$ be the bottom vertex of $s$ and write $v_0=v^\beta_0$ for both sides $\beta$. See \Cref{fig:sectorsetup}. For the figures in this section, we will take $\beta$ to be the left hand side of $s$.

\begin{figure}
    \centering
    \fontsize{8pt}{8pt}\selectfont
    \resizebox{!}{5cm}{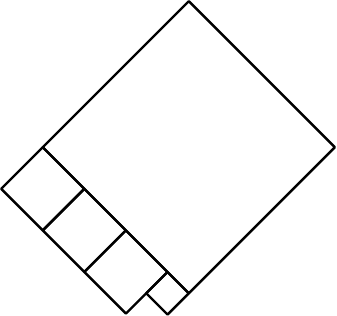}
    \caption{Setting up notation for the edges, sectors, and vertices adjacent to $s$. In this example, $\delta_\beta=4$.}
    \label{fig:sectorsetup}
\end{figure}

Notice that the vertices $v^\beta_k$ are not necessarily distinct, but if we have $v^\beta_{k_1}=v^\beta_{k_2}$ for distinct $k_1, k_2 = 1,...,\delta_\beta$, then the identification cannot be such that $e^\beta_{k_1} = e^\beta_{k_2}$, for otherwise the sector $s$ will not be embedded in its interior. Also, we cannot have $v^\beta_1=v^\beta_k$ for $2 \leq k \leq \delta_\beta$, since $v^\beta_1$ is the top vertex of a toggle sector, whereas $v^\beta_k$ are the top vertices of fan sectors for $2 \leq k \leq \delta_\beta-1$, and $v^\beta_{\delta_\beta}$ is of the opposite color as $v^\beta_1$, by \Cref{prop:vbstogglefan}. For the same reason, we cannot have $v^\beta_{\delta_\beta}=v^\beta_k$ for $1 \leq k \leq \delta_\beta-1$.

Let $h'_\beta$ be the edge path $(e^\beta_2,...,e^\beta_{\delta_\beta})$ and consider the resolution $\Gamma(h'_\beta)$ of $\Gamma$ determined by $h'_\beta$, for some chosen side $\beta$. Notice that all the resolutions in $h'_\beta$ are B-resolutions of red vertices. 

\begin{lemma} \label{lemma:singleprehook}
$\Gamma(h'_\beta)$ is always connected.
\end{lemma}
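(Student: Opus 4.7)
I would apply \Cref{prop:Alexanderduality}. Let $\mathcal{Q}_k$ denote the B-quad at the resolved vertex $v^\beta_k$ and set $Q = \bigcup_{k=2}^{\delta_\beta - 1} \mathcal{Q}_k$; then $\Gamma(h'_\beta)$ is connected if and only if
\[
\ker\bigl(H_2(Q, \partial_v Q) \to H_2(T_f, \partial T_f)\bigr) = 0.
\]
If $\delta_\beta \leq 2$ there are no interior vertices in $h'_\beta$ and the resolution is trivial, so $\Gamma(h'_\beta) = \Gamma$ is automatically connected; assume $\delta_\beta \geq 3$ from now on.

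First I would identify the interior $\Delta$-edges of each $\mathcal{Q}_k$. By \Cref{prop:vbstogglefan} and the discussion preceding this lemma, $s^\beta_k$ is a red fan sector for each $k \in \{2, \ldots, \delta_\beta - 1\}$, so $t^\beta_k$ is a red fan tetrahedron and $\mathcal{Q}_k$ has its four interior edges along the top edge $T_k$ of $t^\beta_k$, the bottom edge $T_{k-1}$, and the two red side edges of $t^\beta_k$. Consecutive quads $\mathcal{Q}_k$ and $\mathcal{Q}_{k+1}$ share the interior $\pi$-edge $T_k$ on the face of $\Delta$ dual to $e^\beta_{k+1}$, approached from opposite sides. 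With all the quads cooriented compatibly upward, for any class $Z = \sum_k c_k \mathcal{Q}_k$ in this kernel the cancellation of each shared $T_k$ in $\partial Z$ forces $c_k = c_{k+1}$; iterating, all $c_k$ equal a common constant $c$.

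It remains to show $c = 0$. For this I examine the outermost interior edge $T_1$, the bottom of $t^\beta_2$. Each $\Delta$-edge is the top (respectively, bottom) $\pi$-edge of a unique tetrahedron, and by the discussion before the lemma $v^\beta_1$ (top of the toggle sector $s^\beta_1$) differs from every $v^\beta_k$ with $k \geq 2$ (tops of fan sectors). Hence $T_1 \neq T_k$ and $T_1 \neq T_{k-1}$ for every $k \geq 3$ in the resolved set. Generically $T_1$ is also not identified with any red side edge appearing in $Q$, in which case its contribution to $\partial Z$ is $\pm c$, forcing $c = 0$. The main technical hurdle will be handling the degenerate configurations in which $T_1$ (or symmetrically $T_{\delta_\beta - 1}$) happens to coincide with a red side edge of some $t^\beta_{k^*}$; there one falls back on the symmetric outermost edge $T_{\delta_\beta - 1}$ or on a non-shared red side edge of $t^\beta_2$ lying on the face dual to $e^\beta_2$, using local combinatorics to rule out the simultaneous absorption of all these outer edges of $Q$.
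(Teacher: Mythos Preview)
Your approach via \Cref{prop:Alexanderduality} is genuinely different from the paper's, and as written it has a real gap.

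The paper does \emph{not} use Alexander duality for this lemma; that machinery is only brought in afterwards (for \Cref{lemma:singlenondeephook} onward), with \Cref{lemma:singleprehook} serving as the base case. Instead the paper argues directly on the graph: assuming a second component $C_1$ exists, it looks at the rectangle $s^\beta_2\cup\cdots\cup s^\beta_{\delta_\beta}$ and, using that every resolved vertex is a B-resolved red vertex, follows along the top and bottom sides of that rectangle to see which portions lie in which component. Either none of the rectangle's boundary meets $C_1$, in which case chasing the interior top-edges yields a branch cycle consisting entirely of red vertices (contradicting \Cref{prop:vbscontradictions}), or the side vertex $v$ of $s^\beta_{\delta_\beta}$ is itself some resolved $v^\beta_k$, and the color/toggle constraints force $k=\delta_\beta-1$, making $s^\beta_{\delta_\beta}$ non-embedded. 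This argument absorbs all the ``degenerate'' edge identifications automatically.

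In your approach, the chain argument that $c_k=c_{k+1}$ already assumes more than you have verified: it requires that $T_k$ appears in $Q$ only as the top of $\mathcal{Q}_k$ and the bottom of $\mathcal{Q}_{k+1}$, but $T_k$ is a red edge and every edge of every $\mathcal{Q}_j$ here is red, so nothing rules out $T_k$ coinciding with a side edge of some $\mathcal{Q}_j$. The same issue recurs at the outermost step for $T_1$, as you note, but you only sketch a fallback (``use $T_{\delta_\beta-1}$ or a non-shared red side edge'') without explaining why these cannot all be simultaneously absorbed. That is exactly the substance of the lemma, and the paper's branch-cycle contradiction is what actually pins it down; your proposal never invokes \Cref{prop:vbscontradictions} or an equivalent, so the degenerate cases are not closed off.
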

\begin{proof}
Notice that the lemma is vacuously true if $\delta_\beta=1$, so we can assume that $\delta_\beta \geq 2$.

Suppose that $\Gamma(h'_\beta)$ is not connected. Let $C_0$ be the component that contains $h'_\beta$ and let $C_1$ be a component that is not $C_0$. Let $J$ be the collection of indices $k$ such that $v^\beta_k$ meets the image of $C_1$ under the map $\Gamma(h'_\beta) \to \Gamma$. $J$ must be nonempty since the only resolutions we perform are at $v^\beta_k$.

Consider the union of sectors $s^\beta_2 \cup ... \cup s^\beta_{\delta_\beta}$, which is a rectangle. Since the only resolutions we perform are B-resolutions on red vertices, by following along the top sides of the rectangle, we see that the side vertex of $s^\beta_{\delta_\beta}$ other than $v^\beta_{\delta_\beta-1}$, which we denote by $v$, meets $C_0$. If this vertex does not meet $C_1$, then again by the fact that the only resolutions we perform are B-resolutions of red vertices, by following along the bottom side of the rectangle that meets $v$, we see that none of the sides of the rectangle meet $C_1$. This implies that for any $k \in J$, the bottom vertex of $s^\beta_{k+1}$ must be resolved and meets $C_1$, hence is equal to $v^\beta_{k'}$ for some $k' \in J$.

The assignment $k \mapsto k'$ thus defines a permutation $\sigma: J \to J$, under which $C_1$ is the union of the top sides of $s^\beta_k$ in the interior of the rectangle for $k \in J$, joined together according to $\sigma$. But then $C_1$ will be a branch cycle containing only red vertices, contradicting \Cref{prop:vbscontradictions}. 

Hence $v$ must be resolved and meets $C_1$, which implies that $v=v^\beta_k$ for some $k \in J$. Moreover, as seen above, the top sides of the rectangle lie in $C_0$, hence the identification of vertices must be in the manner such that $s^\beta_{\delta_\beta}=s^\beta_{k+1}$.

However, since the top vertex of $s^\beta_{\delta_\beta}$ is blue, we must have $k=\delta_\beta-1$. But this cannot be since it implies that $s^\beta_{\delta_\beta}$ is not embedded in its interior.
\end{proof}

\Cref{lemma:singleprehook} forms the basis of our first application of \Cref{prop:Alexanderduality} in the following \Cref{lemma:singlenondeephook}. 
Before that, we set up yet more notation. Let $q^\beta_k$ be the B-quad in the tetrahedron dual to $v^\beta_k$, for $k=1,...,\delta_\beta-1$, and let $q^\beta_{\delta_\beta}$ be the A-quad in the tetrahedron dual to $v^\beta_{\delta_\beta}$. 

We illustrate these quads in \Cref{fig:fanquad} right, in the case when $\beta$ is short at the top and in the case when $\beta$ is long at the bottom. In the figure, we lay out the quads from left to right by the tetrahedra in which they lie from top to bottom in the stack. For each quad, the top left edge is the top edge of the tetrahedron it lies in, while the bottom right edge is the bottom edge of the tetrahedron it lies in. In particular, for each $i$, the two edges on the right of $q^\beta_i$ are the same two edges on the left of $q^\beta_{i-1}$, so if we take the union $\bigcup_{i=1}^{\delta_\beta} q^\beta_i$ of all the quads, they fit together to give a big diamond with two blue edges and two red edges in the boundary, see \Cref{fig:fanquadunion}.

\begin{figure}
    \centering
    \fontsize{12pt}{12pt}\selectfont    
    \resizebox{!}{9cm}{
\begingroup%
  \makeatletter%
  \providecommand\color[2][]{%
    \errmessage{(Inkscape) Color is used for the text in Inkscape, but the package 'color.sty' is not loaded}%
    \renewcommand\color[2][]{}%
  }%
  \providecommand\transparent[1]{%
    \errmessage{(Inkscape) Transparency is used (non-zero) for the text in Inkscape, but the package 'transparent.sty' is not loaded}%
    \renewcommand\transparent[1]{}%
  }%
  \providecommand\rotatebox[2]{#2}%
  \newcommand*\fsize{\dimexpr\f@size pt\relax}%
  \newcommand*\lineheight[1]{\fontsize{\fsize}{#1\fsize}\selectfont}%
  \ifx\svgwidth\undefined%
    \setlength{\unitlength}{574.57228773bp}%
    \ifx\svgscale\undefined%
      \relax%
    \else%
      \setlength{\unitlength}{\unitlength * \real{\svgscale}}%
    \fi%
  \else%
    \setlength{\unitlength}{\svgwidth}%
  \fi%
  \global\let\svgwidth\undefined%
  \global\let\svgscale\undefined%
  \makeatother%
  \begin{picture}(1,0.68150071)%
    \lineheight{1}%
    \setlength\tabcolsep{0pt}%
    \put(0,0){\includegraphics[width=\unitlength,page=1]{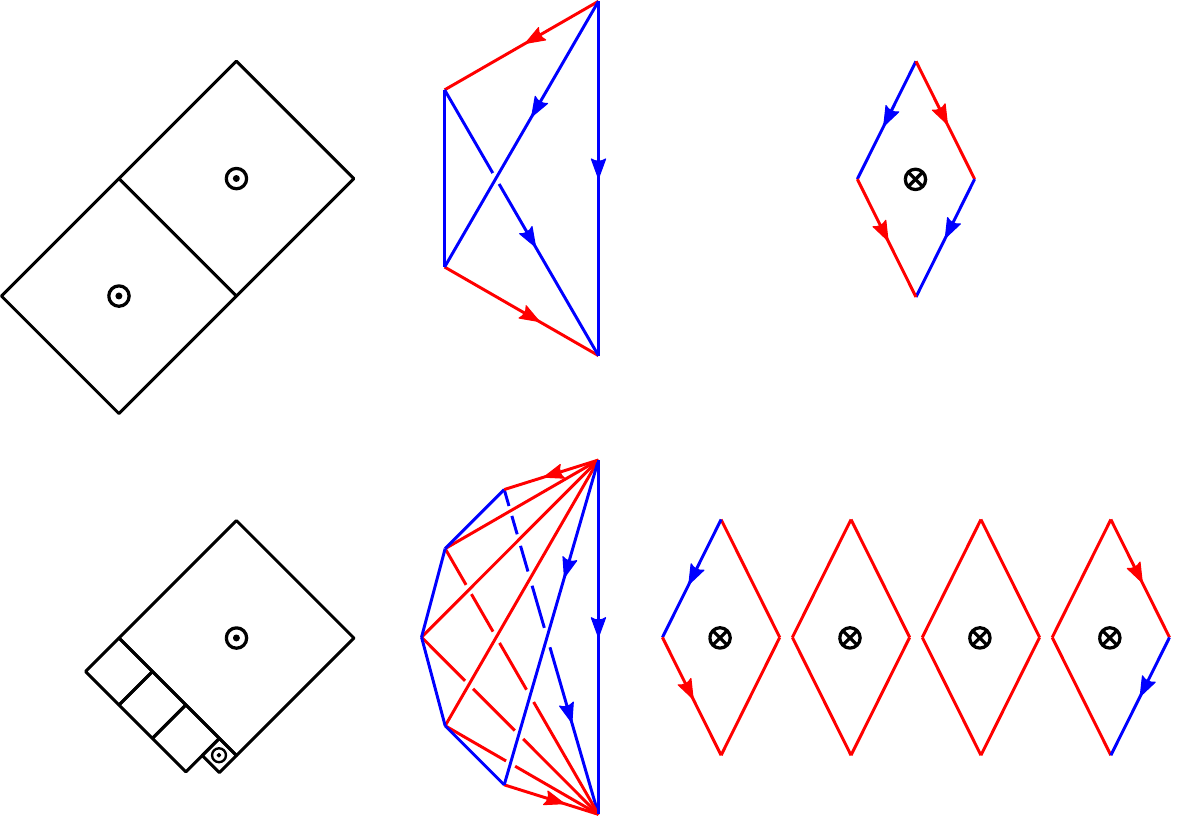}}%
    \put(0.74713634,0.39804067){\color[rgb]{0,0,0}\makebox(0,0)[lt]{\lineheight{1.25}\smash{\begin{tabular}[t]{l}\La $q^\beta_1$\end{tabular}}}}%
    \put(0.5851538,0.01478076){\color[rgb]{0,0,0}\makebox(0,0)[lt]{\lineheight{1.25}\smash{\begin{tabular}[t]{l}\La $q^\beta_4$\end{tabular}}}}%
    \put(0.69369055,0.01478076){\color[rgb]{0,0,0}\makebox(0,0)[lt]{\lineheight{1.25}\smash{\begin{tabular}[t]{l}\La $q^\beta_3$\end{tabular}}}}%
    \put(0.80222729,0.01478076){\color[rgb]{0,0,0}\makebox(0,0)[lt]{\lineheight{1.25}\smash{\begin{tabular}[t]{l}\La $q^\beta_2$\end{tabular}}}}%
    \put(0.91076403,0.01478076){\color[rgb]{0,0,0}\makebox(0,0)[lt]{\lineheight{1.25}\smash{\begin{tabular}[t]{l}\La $q^\beta_1$\end{tabular}}}}%
  \end{picture}%
\endgroup%
}
    \caption{The quads that can appear in $Q(h_\beta)$. Top: When $\beta$ is short. Bottom: When $\beta$ is long. We lay out the quads from left to right by the tetrahedra in which they lie from top to bottom in the stack. For each quad, the top left edge is the top edge of the tetrahedron it lies in, while the bottom right edge is the bottom edge of the tetrahedron it lies in. Moreover, we made a choice of orientation on some of the edges for ease of reference. We co-orient the quads as indicated whenever consistent.}
    \label{fig:fanquad}
\end{figure}

\begin{figure}[ht]
    \centering
    \resizebox{!}{6cm}{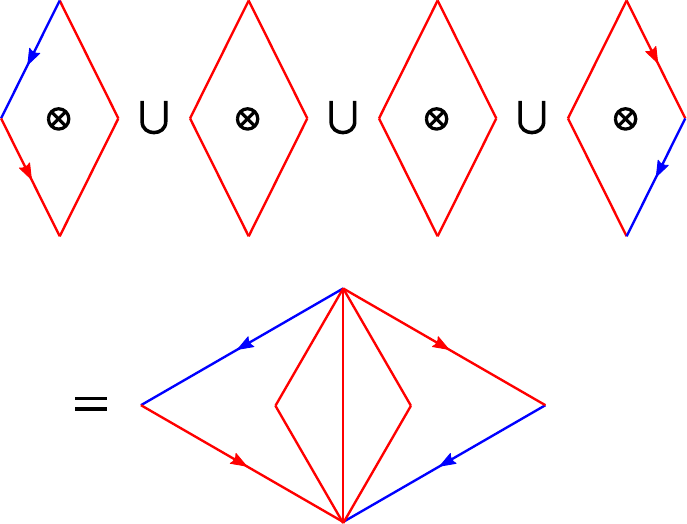}
    \caption{If we take the union $\bigcup_{i=1}^{\delta_\beta} q^\beta_i$, the quads fit together to give a big diamond with two blue edges and two red edges in the boundary. Here we illustrate the situation for \Cref{fig:fanquad} bottom.}
    \label{fig:fanquadunion}
\end{figure}

In the rest of this section, we will study the connectivity of $\Delta \cut \bigcup_{k \in J} q^\beta_k$ for various sets $J \subset \{1,...,\delta_\beta\}$. In this setting, each $q^\beta_k$, $k \in J$, will meet one or two components of $\Delta \cut \bigcup_{k \in J} q^\beta_k$, and at least one of the components it meets will contain the hook $h_\beta$. If some $q^\beta_k$ meets exactly two components, then we co-orient it to point out of the component containing $h_\beta$. On the other hand, if $q^\beta_k$ meets only one component, then there is no canonical way to co-orient it. More succinctly, this means that we co-orient the quads $q^\beta_k$, $k=1,...,\delta_\beta$, as indicated in \Cref{fig:fanquad} (that is, into the page) whenever consistent.

\begin{lemma} \label{lemma:singlenondeephook}
If $h_\beta$ is not deep, then $\Gamma(h_\beta)$ is connected.
\end{lemma}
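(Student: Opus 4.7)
The plan is to mimic the combinatorial contradiction argument in the proof of Lemma~\ref{lemma:singleprehook}, adapting it to account for the new A-resolution at the blue vertex $v^\beta_{\delta_\beta}$ (which was absent in the setup for $h'_\beta$) and leveraging the non-deepness assumption $k_\beta \geq 2$.

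First, I would suppose for contradiction that $\Gamma(h_\beta)$ is disconnected. Let $C_0$ be the component containing the lifted hook and $C_1$ any other component. Define $J = \{k \in \{k_\beta, \ldots, \delta_\beta\} : v^\beta_k \text{ meets the image of } C_1 \text{ under } \Gamma(h_\beta) \to \Gamma\}$. Since $\Gamma$ is connected and the only resolved vertices are $v^\beta_{k_\beta}, \ldots, v^\beta_{\delta_\beta}$, the set $J$ must be nonempty. The non-deepness assumption $k_\beta \geq 2$ ensures that the red vertex $v^\beta_1$ is unresolved and hence remains a single vertex in $\Gamma(h_\beta)$; this will prevent $C_1$ from escaping through the bottom of the hook.

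Next, I would consider the rectangle $R = s^\beta_{k_\beta+1} \cup \cdots \cup s^\beta_{\delta_\beta}$, which by Proposition~\ref{prop:vbstogglefan} is a well-defined planar region formed by the fans $s^\beta_{k_\beta+1}, \ldots, s^\beta_{\delta_\beta-1}$ stacking along matching sides and capped by the toggle $s^\beta_{\delta_\beta}$. All of the resolved vertices $v^\beta_{k_\beta}, \ldots, v^\beta_{\delta_\beta}$ lie on the boundary of $R$. Tracing along the top sides of $R$ emanating from $v^\beta_{\delta_\beta}$, and using that the A-resolution at $v^\beta_{\delta_\beta}$ draws $e^\beta_{\delta_\beta+1}$ into $C_0$ along with $e^\beta_{\delta_\beta}$, I would conclude that the side vertex $v$ of $s^\beta_{\delta_\beta}$ opposite to $v^\beta_{\delta_\beta-1}$ meets $C_0$.

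I would then split into two cases. If $v$ does not meet $C_1$: tracing along the bottom side of $R$ incident to $v$, and using that every interior vertex on this side is B-resolved at a red vertex pairing consecutive bottom-side edges into $C_0$, I deduce that no side of $R$ meets $C_1$; but $C_1$ contains $v^\beta_k$ for some $k \in J$, and every such vertex lies on the boundary of $R$, a contradiction. If $v$ does meet $C_1$: then $v = v^\beta_k$ for some $k \in J$, and the fact that the top sides of $R$ lie in $C_0$ forces an identification of the form $s^\beta_{\delta_\beta} = s^\beta_{k+1}$. Matching the blue color of the top vertex of $s^\beta_{\delta_\beta}$ against the color of the top vertex of $s^\beta_{k+1}$, which is blue only when $k+1 = \delta_\beta$, forces $k = \delta_\beta - 1$; but then $v = v^\beta_{\delta_\beta - 1}$, contradicting the definition of $v$ as the side vertex \emph{other than} $v^\beta_{\delta_\beta-1}$.

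The main obstacle I expect is correctly handling the A-resolution at the blue vertex $v^\beta_{\delta_\beta}$: its anti-branching chirality and distinct color from the other resolved vertices mean $C_1$ could a priori contain a blue vertex, unlike the all-red situation of Lemma~\ref{lemma:singleprehook}. Careful color- and chirality-tracking along the boundary of $R$, together with the non-deepness assumption keeping $v^\beta_1$ intact, will be essential to close this loophole.
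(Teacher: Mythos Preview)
There is a genuine gap at the step where you claim that the side vertex $v$ of $s^\beta_{\delta_\beta}$ opposite $v^\beta_{\delta_\beta-1}$ meets $C_0$. In Lemma~\ref{lemma:singleprehook} this worked because $v^\beta_{\delta_\beta}$ was \emph{unresolved}: starting from $e^\beta_{\delta_\beta}\in C_0$ one could pass freely through $v^\beta_{\delta_\beta}$ along the other top side of $s^\beta_{\delta_\beta}$ down to $v$. In your setting $v^\beta_{\delta_\beta}$ is A-resolved, and this A-resolution pairs $e^\beta_{\delta_\beta}$ with $e^\beta_{\delta_\beta+1}$ (both in $C_0$, as you note), but it pairs the \emph{other} top side of $s^\beta_{\delta_\beta}$ --- the one leading to $v$ --- with the remaining outgoing edge at $v^\beta_{\delta_\beta}$, over which you have no control. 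So the edge incident to $v$ could lie in $C_1$, and if $v$ happens to be unresolved it then meets only $C_1$. Your subsequent dichotomy collapses: in the case ``$v$ meets $C_1$'' you conclude $v=v^\beta_k$ for some $k$, but that inference needs $v$ to meet \emph{both} components, hence to be resolved, which you have not established.

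The paper's proof avoids this path-tracing altogether and is genuinely different in method. It invokes the Alexander-duality framework of Proposition~\ref{prop:Alexanderduality}: a disconnection of $\Gamma(h_\beta)$ corresponds to a nontrivial relative $2$-cycle in $Q(h_\beta)=\bigcup_{i=k_\beta}^{\delta_\beta}q^\beta_i$. Since $k_\beta\ge 2$, the B-quad $q^\beta_1$ is absent, so the only quad carrying a blue edge is the A-quad $q^\beta_{\delta_\beta}$, and that single blue edge cannot cancel in the boundary. Hence $q^\beta_{\delta_\beta}$ cannot appear in any such $2$-cycle, and what remains is a $2$-cycle supported on $Q(h'_\beta)$, which Lemma~\ref{lemma:singleprehook} (via duality) says is impossible. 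The colour-of-edges bookkeeping on the quads substitutes for the delicate chirality tracking you anticipated, and it is precisely this homological viewpoint that handles the extra A-resolution cleanly.
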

\begin{proof}
Recall that the assumption means that $h_\beta$ does not contain $v^\beta_1$ in its interior.
Consider the 2-complex $Q(h_\beta)$ as in \Cref{subsec:Alexanderduality}. In our notation, we have $Q(h_\beta) = \bigcup_{i=k_\beta}^{\delta_\beta} q^\beta_i$ for some $k_\beta \geq 2$. In particular, notice that since $v^\beta_1$ is not resolved, there is at most one quad with a blue edge, namely the A-quad $q_{\delta_\beta}$ which has exactly one blue edge (or no such quad exists if $\beta$ is short).

This implies that if we have a 2-cycle of $Q(h_\beta)$ that is homologically trivial in $T_f$ and where all the coefficients are $\pm 1$, then such a cycle cannot contain $q_{\delta_\beta}$, for otherwise the boundary of the cycle will contain a single blue edge with no other edge to cancel it out. But then the 2-cycle will in fact be a 2-cycle of $Q(h'_\beta)$ that is homologically trivial in $T_f$, which contradicts \Cref{lemma:singleprehook} by \Cref{prop:Alexanderduality}.
\end{proof}

Thus if we have a minimum weight sector that has a non-deep hook, then we are already done. Such a scenario, however, is not always true, so we have to analyze how \Cref{lemma:singlenondeephook} fails when $h_\beta$ is deep.

\begin{lemma} \label{lemma:singledeephookfan}
If $h_\beta$ is deep, then $\Gamma(h_\beta)$ is connected unless $v^\beta_{\delta_\beta}$ is equal to the bottom vertex of $s^\beta_1$, in the manner such that the bottom side of $s^\beta_1$ containing $e^\beta_{\delta_\beta+1}$ meets $v_0$ as well. See \Cref{fig:tbt} left.

In particular if $s$ is fan, then $v^\beta_{\delta_\beta}=v_0$, in the manner such that $(e^\beta_{\delta_\beta+1},e^\beta_1)$ takes an anti-branching turn at $v_0$. See \Cref{fig:tbt} right.
\end{lemma}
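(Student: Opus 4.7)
The plan is to apply Proposition \ref{prop:Alexanderduality} (Alexander duality) to the 2-complex $Q(h_\beta)$. Since $h_\beta$ is deep, $k_\beta = 1$ and $Q(h_\beta) = \bigcup_{k=1}^{\delta_\beta} q^\beta_k$, which by \Cref{fig:fanquadunion} assembles into a single diamond with four boundary edges off $\partial_v Q$ of prescribed colors (two blue and two red).

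Suppose $\Gamma(h_\beta)$ is disconnected. Then \Cref{prop:Alexanderduality} produces a nontrivial $\pm 1$-coefficient 2-cycle
$$\zeta \in \ker\bigl(H_2(Q(h_\beta), \partial_v Q(h_\beta)) \to H_2(T_f, \partial T_f)\bigr).$$
As in the proof of \Cref{lemma:singlenondeephook}, $\zeta$ must involve the quad $q^\beta_1$, for otherwise $\zeta$ would live in $Q(h'_\beta)$, contradicting \Cref{lemma:singleprehook}. Since $q^\beta_1$ is the B-quad at the bottom of the stack and the remaining quads chain together along the diamond, the only such cycle (up to sign) is the signed sum $\sum_{k=1}^{\delta_\beta} \pm q^\beta_k$ realizing the full diamond.

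For the full diamond to represent a relative 2-cycle, its four non-$\partial_v Q$ boundary edges must pair off and cancel as edges of $\Delta$. Tracing the local orientations in \Cref{fig:fanquad}, one of the two blue boundary edges lies along the bottom side of $s^\beta_1$ and contains $e^\beta_{\delta_\beta+1}$, while the other blue boundary edge emanates from $v^\beta_{\delta_\beta}$. Their required identification with opposite orientation forces $v^\beta_{\delta_\beta}$ to coincide with the bottom vertex of $s^\beta_1$, and arranges for the bottom side of $s^\beta_1$ containing $e^\beta_{\delta_\beta+1}$ to meet $v_0$, proving the first claim. The cancellation of the two red boundary edges is then automatic from the local veering combinatorics near the identified vertex.

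For the ``in particular'' part, when $s$ is fan its top vertex and $v_0$ share the same (blue) color. Applying \Cref{prop:vbstogglefan} to determine the colors of the bottom vertices of $s^\beta_1$, combined with the identification from the first part, rules out the possibility of $v^\beta_{\delta_\beta}$ being distinct from $v_0$, forcing $v^\beta_{\delta_\beta} = v_0$. The fan structure of $s$ then implies that the top vertex of $s$ is also identified with $v_0$, so that $(e^\beta_{\delta_\beta+1}, e^\beta_1)$ is a legitimate edge path at $v_0$; its turn is anti-branching because in the local picture the two edges lie on opposite sides of the branch locus at $v_0$, rather than continuing smoothly into one another. The main obstacle will be the detailed boundary-edge tracking in the third paragraph, which requires carefully bookkeeping the quad-orientations of \Cref{fig:fanquad} and verifying that the blue-edge identification is the only way to realize a relative 2-cycle, together with ruling out remaining degenerate vertex identifications in the fan case.
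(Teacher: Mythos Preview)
Your approach is the same as the paper's (Alexander duality applied to $Q(h_\beta)$, then tracking how blue boundary edges can cancel), but there is a genuine gap in the middle step.

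You assert that ``the only such cycle (up to sign) is the signed sum $\sum_{k=1}^{\delta_\beta} \pm q^\beta_k$ realizing the full diamond.'' This is neither proved nor needed, and in general it is not true: intermediate quads $q^\beta_k$ with $2\le k\le \delta_\beta-1$ can be absent from the $2$-cycle if there are coincidences among the $v^\beta_k$ (this is exactly the ``overlapped versus omitted'' dichotomy the paper analyzes later in Lemma~\ref{lemma:singledeephooktoggle}). Correspondingly, your remark that ``the cancellation of the two red boundary edges is then automatic'' is unjustified and unnecessary. Also, your reduction ``if $\zeta$ does not involve $q^\beta_1$ then $\zeta$ lives in $Q(h'_\beta)$'' is not literally correct, since $Q(h'_\beta)$ excludes \emph{both} $q^\beta_1$ and $q^\beta_{\delta_\beta}$; you still have to rule out $q^\beta_{\delta_\beta}$ separately.

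What the paper actually argues is only this: among all the quads in $Q(h_\beta)$, exactly two carry a blue edge, namely the B-quad $q^\beta_1$ and the A-quad $q^\beta_{\delta_\beta}$ (or, when $\delta_\beta=1$, the single A-quad carries two blue edges). Any nonzero $2$-cycle $x$ must therefore contain both of these quads, and their two blue edges must coincide as edges of $\Delta$ with opposite coorientation, since otherwise the blue part of $\partial x$ cannot vanish and one contradicts Lemma~\ref{lemma:singleprehook} exactly as in Lemma~\ref{lemma:singlenondeephook}. That single identification already says $v^\beta_{\delta_\beta}$ is the bottom vertex of $s^\beta_1$ in the stated manner; no statement about red edges or about the full diamond is required.

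For the ``in particular'' part, the paper's route is more direct than your color argument: when $s$ is fan, Proposition~\ref{prop:vbstogglefan} gives that $e^\beta_{\delta_\beta+1}$ is an entire bottom side of $s^\beta_1$, so the first-part conclusion specializes immediately to $v^\beta_{\delta_\beta}=v_0$ with the indicated anti-branching turn.
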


\begin{figure}[ht]
    \centering
    \fontsize{8pt}{8pt}\selectfont    
    \resizebox{!}{4.5cm}{
\begingroup%
  \makeatletter%
  \providecommand\color[2][]{%
    \errmessage{(Inkscape) Color is used for the text in Inkscape, but the package 'color.sty' is not loaded}%
    \renewcommand\color[2][]{}%
  }%
  \providecommand\transparent[1]{%
    \errmessage{(Inkscape) Transparency is used (non-zero) for the text in Inkscape, but the package 'transparent.sty' is not loaded}%
    \renewcommand\transparent[1]{}%
  }%
  \providecommand\rotatebox[2]{#2}%
  \newcommand*\fsize{\dimexpr\f@size pt\relax}%
  \newcommand*\lineheight[1]{\fontsize{\fsize}{#1\fsize}\selectfont}%
  \ifx\svgwidth\undefined%
    \setlength{\unitlength}{371.83383251bp}%
    \ifx\svgscale\undefined%
      \relax%
    \else%
      \setlength{\unitlength}{\unitlength * \real{\svgscale}}%
    \fi%
  \else%
    \setlength{\unitlength}{\svgwidth}%
  \fi%
  \global\let\svgwidth\undefined%
  \global\let\svgscale\undefined%
  \makeatother%
  \begin{picture}(1,0.43661924)%
    \lineheight{1}%
    \setlength\tabcolsep{0pt}%
    \put(0.22732591,0.23952257){\color[rgb]{0,0,0}\makebox(0,0)[lt]{\lineheight{1.25}\smash{\begin{tabular}[t]{l}\Large $s$\end{tabular}}}}%
    \put(0,0){\includegraphics[width=\unitlength,page=1]{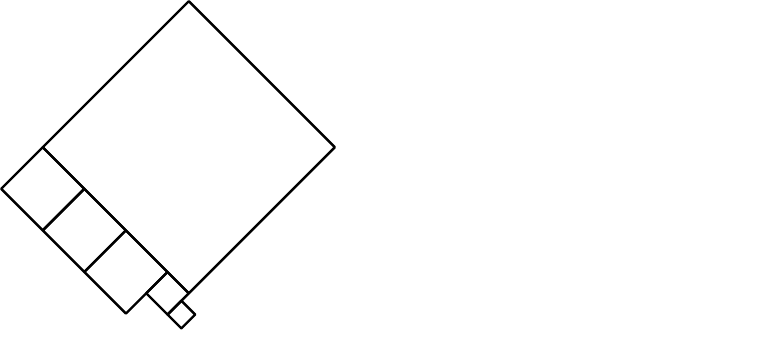}}%
    \put(0.7419068,0.23952257){\color[rgb]{0,0,0}\makebox(0,0)[lt]{\lineheight{1.25}\smash{\begin{tabular}[t]{l}\Large $s$\end{tabular}}}}%
    \put(0,0){\includegraphics[width=\unitlength,page=2]{tbt.pdf}}%
    \put(0.22871942,0.0252495){\color[rgb]{0,0,0}\makebox(0,0)[lt]{\lineheight{1.25}\smash{\begin{tabular}[t]{l}$s$\end{tabular}}}}%
    \put(0.75089435,0.02279611){\color[rgb]{0,0,0}\makebox(0,0)[lt]{\lineheight{1.25}\smash{\begin{tabular}[t]{l}\ns $s$\end{tabular}}}}%
    \put(0,0){\includegraphics[width=\unitlength,page=3]{tbt.pdf}}%
  \end{picture}%
\endgroup%
}
    \caption{$\Gamma(h_\beta)$ is connected unless the scenario depicted on the left occurs. If $s$ is fan, then the left figure specializes to the right figure, and we say that $s$ satisfies condition (TBT).}
    \label{fig:tbt}
\end{figure}

\begin{defn}
We say that a fan sector $s$ satisfies condition (TBT) (abbreviating \textbf{T}op = \textbf{B}ottom by \textbf{T}ranslation) if it satisfies the condition in the second paragraph of \Cref{lemma:singledeephookfan}.
\end{defn}

\begin{proof}[Proof of \Cref{lemma:singledeephookfan}]
Since $h_\beta$ is deep, the 2-complex $Q(h_\beta)=\bigcup_{i=1}^{\delta_\beta} q^\beta_i$. 
Suppose $\Gamma(h_\beta)$ is not connected. Consider the component containing $h_\beta$. This corresponds to some component $C$ of $\Delta \cut Q(h_\beta)$. Consider the 2-cycle $x$ of $Q(h_\beta)$ corresponding to $C$. Since we have co-oriented all the quads out of $C$ whenever possible, we can assume that all the coefficients in $x$ are 1. 

As opposed to the situation in \Cref{lemma:singlenondeephook}, if $\delta_\beta \geq 2$, we now have 2 quads that have a blue edge, namely the B-quad $q^\beta_1$ and the A-quad $q^\beta_{\delta_\beta}$, which each have exactly one blue edge, or if $\delta_\beta=1$, the single A-quad has exactly two blue edges.

The 2-cycle $x$ must contain both of the quads that have blue edges and the two blue edges must be the same edge in $\Delta$, for otherwise, as in \Cref{lemma:singlenondeephook}, there would be a contradiction to \Cref{lemma:singleprehook}. Moreover, the two blue edges have to be identified in such a way that they cancel each other out in the boundary of $x$.

Thus the first statement of the lemma follows from the fact that the 2 blue edges are dual to the sector with $v^\beta_{\delta_\beta}$ as the bottom vertex and $s^\beta_1$ respectively, and the manner of identification is as described in the lemma.

When $s$ is fan, $e^\beta_{\delta_\beta+1}$ will be a bottom side of $s^\beta_1$ by \Cref{prop:vbstogglefan}, hence the second statement follows.
\end{proof}

The intuition at this point is that $\Gamma(h_\beta)$ should be connected for `generic' sides $\beta$, since it is unlikely, for a `generic' triangulation, to have many edge identifications as described in \Cref{lemma:singledeephookfan}. However, notice that we require $\beta$ to be a side of a minimum weight sector in \Cref{prop:singlehookbound}, and at a first glance, minimum weight sectors might not seem `generic'. Fortunately, if we allow the fiber surface to vary, we have the following proposition that allows us to make an argument based on this intuition.

To simplify the language, let us call an arc on a sector \textit{deep} if both of its endpoints lie on $e^\beta_1$ for the two sides $\beta$, and let us call a sector \textit{deep} if it contains a deep arc. In other words, both hooks of a sector $s$ are deep if and only if $s$ is deep.

\begin{prop} \label{prop:manyfibersurfaces}
For any sector $s$, there exists a fiber surface $S$ such that $s$ is the only deep sector.
\end{prop}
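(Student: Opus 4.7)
The strategy is to exploit the flexibility of fiber surfaces within their isotopy class, adjusting the intersection of $S$ with the branched surface $B$ by local isotopies. The key observation is that a sector $s'$ is deep with respect to $S$ precisely when $S$ has just entered $s'$ from below through its bottom corner: the bottommost arc of $S \cap s'$ is then a small arc near the bottom vertex $v'_0$ of $s'$, running between the edges $e^1_1$ and $e^2_1$ of $s'$.

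Start with an arbitrary fiber surface $S_0$ in the isotopy class of $S$, and apply the suspension flow to obtain a one-parameter family $\{S_t\}$. By transversality, $S_t$ passes through $v_0$ at some moment $t_0$; for $t$ slightly larger than $t_0$, the surface $S_t$ has just entered $s$ from below, so $s$ is deep with respect to $S := S_t$. If $s$ is the only deep sector, we are done.

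Otherwise, for each additional deep sector $s' \neq s$, perform a local isotopy of $S$ supported in a small neighborhood $U_{s'}$ of $v'_0$ that pushes $S$ upward past $v'_0$. Note that each vertex of $B$ is the bottom vertex of at most one sector, since it determines and is determined by the dual tetrahedron whose top edge corresponds to that sector; in particular $v'_0 \neq v_0$ and the $v'_0$ are distinct across distinct $s'$. Hence the $U_{s'}$ can be chosen pairwise disjoint and disjoint from a small neighborhood of $v_0$. The effect of such an isotopy on $S \cap B$ is threefold: (i) the small deep arc on $s'$ is removed; (ii) the sector having $v'_0$ as its top corner gains a new topmost arc; (iii) on sectors having $v'_0$ in the interior of a side, the arcs near $v'_0$ are locally rerouted. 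Since the new arcs appear only near $v'_0$, and $v'_0$ is not a bottom corner of any affected sector besides $s'$, no new deep arcs are created, and $s$ remains deep because the neighborhood of $v_0$ is untouched. Iterating across all currently deep sectors $s' \neq s$ then yields the desired fiber surface.

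The main obstacle is verifying rigorously that the rerouting in case (iii) never produces a deep arc on an affected sector. For each sector $t$ with $v'_0$ in the interior of a side, one needs to check that the isotopy modifies only the arcs of $S \cap t$ ending on the two edges of $t$'s boundary incident to $v'_0$, and not the bottommost arc of $t$. This reduces to a local case analysis, using \Cref{prop:vbstogglefan} to enumerate the possible colors and toggle/fan types of $t$ and the position of $v'_0$ along its side, together with the local model of $B$ near a vertex given in \Cref{defn:vertexcolor}. In each case, one verifies that the bottommost arc of $t$ lies strictly below the region affected by the isotopy, so its endpoints on $e^1_1$ and $e^2_1$ are preserved, and no deep arc appears unless one was already there.
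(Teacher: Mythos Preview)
The basic move—pushing the fiber surface through the bottom vertex of an unwanted deep sector—matches the paper's approach, but two issues arise.

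First, a direction slip: since the deep arc of $s'$ sits just \emph{above} $v'_0$, you must push $S$ \emph{downward} through $v'_0$ to remove it (as the paper does), not upward. Your items (i)--(iii) in fact describe the effect of a downward push, so this is presumably just a mislabeling.

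Second, and more seriously, your termination argument has a genuine gap. You assert that pushing through $v'_0$ creates no new deep arcs, reducing this to the claim that for each affected sector $t$ the bottommost arc lies strictly below the isotopy region. But this can fail for a fin sector $F$ at $v'_0$, i.e.\ a sector with $v'_0$ in the interior of a bottom side. If $v'_0 = v^\beta_1$ of $F$ (one step above the bottom vertex of $F$), then the bottommost arc of $F$ may have its $\beta$-side endpoint on $e^\beta_2$ of $F$—which is one of the outgoing edges at $v'_0$ and hence lies \emph{inside} the isotopy region. The downward push slides this endpoint from $e^\beta_2$ to $e^\beta_1$ of $F$; if the other endpoint already sat on $e^{3-\beta}_1$, the arc becomes deep. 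Your proposed case analysis does not exclude this configuration, so the iteration need not terminate after one round, and a priori not at all.

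The paper avoids this problem by a different termination mechanism. It lifts to the infinite cyclic cover $\widehat{T_f}$, where each downward push moves the lifted surface $\widehat{F_i}$ monotonically past one vertex of $\widehat{B}$. Since one never pushes across the distinguished lift $\widehat{s_0}$ and the connected train track $\widehat{F_i}\cap\widehat{B}$ has only $-3\chi(S)$ branches, $\widehat{F_i}$ is trapped in a fixed compact region containing finitely many vertices. Monotonicity plus finiteness forces termination—without any need to control which sectors become deep along the way.
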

\begin{proof}
Fix some fiber surface $S'$. Consider the $\mathbb{Z}$-cover $\widehat{T_f}$. Recall that $S'_r$ is the lift of $S'$ at height $r$. Let $\widehat{s_0}$ be the lift of $s$ that is at height 0. Let $R'$ be the union of sectors of $\widehat{B}$ that can be reached from $\widehat{s_0}$ via a path passing through $\leq -3\chi(S)$ sectors. $R'$ is a compact set, so there exists $r_1<r_2$ so that the region $R$ bounded by $S'_{r_1}$ and $S'_{r_2}$ contains $R'$.

Now starting at $\widehat{F_0}=S'_{r_2}$, inductively perform the following procedure: The image $F_i$ of $\widehat{F_i}$ in $T_f$ is some fiber surface. If some sector $t$ other than $s$ has a deep arc $a$, push $F_i$ downwards near $a$ through a vertex of $B$ to get another fiber surface $F_{i+1}$. In $\widehat{T_f}$, we have pushed $\widehat{F_i}$ across a vertex of $\widehat{B}$. Since the intersection of $\widehat{F_i}$ with $\widehat{B}$ is a train track with $-3 \chi(S)$ branches, and since we never push across $\widehat{s_0}$, $\widehat{F_i}$ must stay within $R$. Since there are finitely many vertices in $R$, this procedure ends eventually and we have the desired fiber surface.
\end{proof}

We pause for a moment to examine the progress we have made towards \Cref{prop:singlehookexist}:
For each sector $s$ of $B$, we take a fiber surface $S$ as in \Cref{prop:manyfibersurfaces} and see if there is a minimum weight sector that is not deep. If so, \Cref{lemma:singlenondeephook} and \Cref{prop:singlehookresolution} implies \Cref{prop:singlehookexist}. If not, that is, every minimum weight sector is deep, then $s$ is the only minimum weight sector. Consider the two hooks $h_\beta$ of $s$. If $s$ is fan, then either some $\Gamma(h_\beta)$ is connected, in which case \Cref{prop:singlehookresolution} implies \Cref{prop:singlehookexist}, or \Cref{lemma:singledeephookfan} shows that $s$ satisfies (TBT). By applying this argument to all fan sectors $s$, we see that either \Cref{prop:singlehookexist} is true, or all fan sectors satisfy (TBT). The next ingredient we need in the proof is the following proposition which concerns the toggle sectors in the latter case.

\begin{lemma} \label{lemma:singledeephooktoggle}
Suppose condition (TBT) is satisfied for all fan sectors. Suppose $s$ is a toggle sector and let $h_\beta$ be a hook of $s$. Then $\Gamma(h_\beta)$ is connected unless $e^\beta_{\delta_\beta+1}=e^\beta_1$. See \Cref{fig:sbf} left.
\end{lemma}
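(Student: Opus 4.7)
The plan is to mirror the proof of \Cref{lemma:singledeephookfan} and then refine the conclusion using the hypothesis that condition (TBT) holds at every fan sector. If $h_\beta$ is not deep, \Cref{lemma:singlenondeephook} already gives connectedness, so we may assume $h_\beta$ is deep; then $Q(h_\beta)=\bigcup_{i=1}^{\delta_\beta} q^\beta_i$, exactly the 2-complex that appeared in \Cref{lemma:singledeephookfan}.

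Next, suppose for contradiction that $\Gamma(h_\beta)$ is disconnected. By \Cref{prop:Alexanderduality} applied to the component of $\Delta \cut Q(h_\beta)$ containing $h_\beta$, we obtain a 2-cycle $x\in\ker(H_2(Q(h_\beta),\partial_v Q(h_\beta))\to H_2(T_f,\partial T_f))$ whose nonzero coefficients are all $1$ (after coorienting the quads outward from the component containing $h_\beta$ wherever consistent). The same blue-edge count as in \Cref{lemma:singledeephookfan} shows that $x$ must contain both $q^\beta_1$ (which contributes the blue edge $e^\beta_1$) and $q^\beta_{\delta_\beta}$ (which contributes the blue edge $e^\beta_{\delta_\beta+1}$), and that the two blue edges must be identified as a single edge of $\Delta$ with opposite induced orientations for $\partial x$ to vanish; otherwise we would contradict \Cref{lemma:singleprehook}, exactly as in the fan case.

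The new ingredient in the toggle case is to combine this cancellation requirement with the hypothesis that condition (TBT) holds at every fan sector. Because $s$ is a blue toggle sector rather than a blue fan sector, the top vertex of $s$ has color red rather than blue, so the color configuration near the top edge $e^\beta_{\delta_\beta+1}$ is different from the fan case. For each intermediate fan sector $s^\beta_k$ with $2\le k\le \delta_\beta-1$ (and its analog on the opposite side of the stack), condition (TBT) pins down specific rigid identifications of its top and bottom edges, and propagating these along the stack leaves essentially one remaining degree of freedom, which must be used to resolve the blue-edge cancellation. In the fan case this resolution corresponded to the vertex identification $v^\beta_{\delta_\beta}=$ bottom vertex of $s^\beta_1$; in the toggle case the altered color configuration at the top of $s$ means that $e^\beta_{\delta_\beta+1}$ is already a top edge of a toggle tetrahedron rather than a bottom edge of a fan tetrahedron, which forces the identification to be the direct coincidence $e^\beta_{\delta_\beta+1}=e^\beta_1$, i.e., exactly the scenario depicted in \Cref{fig:sbf} left.

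The main technical obstacle I anticipate is the bookkeeping of edge identifications: one must carefully trace the consequences of (TBT) along the stack of fan sectors, verify the degenerate cases $\delta_\beta\in\{1,2\}$ (where there is only one quad, respectively no intermediate fan sectors to which (TBT) can be applied), and systematically rule out alternative ways in which the two blue edges could be identified. The observations made at the start of the subsection—that the $v^\beta_k$ cannot coincide with $v^\beta_1$ or $v^\beta_{\delta_\beta}$ for other values of $k$, together with the coloring constraints of \Cref{prop:vbstogglefan}—should combine with (TBT) to leave only the direct identification $e^\beta_{\delta_\beta+1}=e^\beta_1$ as a possible source of disconnection.
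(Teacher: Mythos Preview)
Your proposal has a genuine gap in identifying the mechanism by which the conclusion $e^\beta_{\delta_\beta+1}=e^\beta_1$ is reached. You correctly reduce to the deep case and correctly argue (as in \Cref{lemma:singledeephookfan}) that the two \emph{blue} edges of the big diamond $\bigcup_{i=1}^{\delta_\beta} q^\beta_i$ must coincide in $\Delta$. But that blue-edge identification is exactly the conclusion of \Cref{lemma:singledeephookfan}; it constrains $v^\beta_{\delta_\beta}$ relative to $s^\beta_1$ and says nothing directly about the $\Gamma$-edges $e^\beta_1$ and $e^\beta_{\delta_\beta+1}$. Your claim that ``the altered color configuration at the top of $s$ \dots\ forces the identification to be the direct coincidence $e^\beta_{\delta_\beta+1}=e^\beta_1$'' conflates edges of $\Delta$ (sides of the quads) with edges of $\Gamma$, and the vague ``propagation of (TBT) along the stack'' does not supply the missing link.

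What the paper actually does is different in kind, not just in detail. It introduces a dichotomy for quads absent from the 2-cycle $x$: a quad is \emph{overlapped} if it coincides with another $q^\beta_{k'}$, and \emph{omitted} otherwise. The role of (TBT) is then not to propagate an edge identification, but to run a downward induction on $k$ showing that no $q^\beta_k$ can be \emph{omitted}: for each fan sector $s^\beta_k$, (TBT) pins its top and bottom vertices together, and one follows along the sides of $s^\beta_k$ to see that if $q^\beta_{k-1}$ were omitted then $v^\beta_k$ would only meet the component $C$, forcing $q^\beta_k$ to be overlapped, which in turn makes $q^\beta_{k-1}$ overlapped rather than omitted. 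Once no quad is omitted, every quad either appears in $x$ or cancels against an overlapping copy, so the \emph{entire} boundary of the big diamond---all four edges, not just the two blue ones---must cancel pairwise. It is the cancellation of the two \emph{red} edges, together with \Cref{prop:vbstogglefan} and the fact that $s$ is toggle, that yields $e^\beta_{\delta_\beta+1}=e^\beta_1$. Your outline never isolates the red edges and has no analogue of the omitted/overlapped argument, so as written it cannot reach the stated conclusion.
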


\begin{figure}[ht]
    \centering
    \fontsize{12pt}{12pt}\selectfont    
    \resizebox{!}{4cm}{
\begingroup%
  \makeatletter%
  \providecommand\color[2][]{%
    \errmessage{(Inkscape) Color is used for the text in Inkscape, but the package 'color.sty' is not loaded}%
    \renewcommand\color[2][]{}%
  }%
  \providecommand\transparent[1]{%
    \errmessage{(Inkscape) Transparency is used (non-zero) for the text in Inkscape, but the package 'transparent.sty' is not loaded}%
    \renewcommand\transparent[1]{}%
  }%
  \providecommand\rotatebox[2]{#2}%
  \newcommand*\fsize{\dimexpr\f@size pt\relax}%
  \newcommand*\lineheight[1]{\fontsize{\fsize}{#1\fsize}\selectfont}%
  \ifx\svgwidth\undefined%
    \setlength{\unitlength}{313.33245705bp}%
    \ifx\svgscale\undefined%
      \relax%
    \else%
      \setlength{\unitlength}{\unitlength * \real{\svgscale}}%
    \fi%
  \else%
    \setlength{\unitlength}{\svgwidth}%
  \fi%
  \global\let\svgwidth\undefined%
  \global\let\svgscale\undefined%
  \makeatother%
  \begin{picture}(1,0.37747226)%
    \lineheight{1}%
    \setlength\tabcolsep{0pt}%
    \put(0,0){\includegraphics[width=\unitlength,page=1]{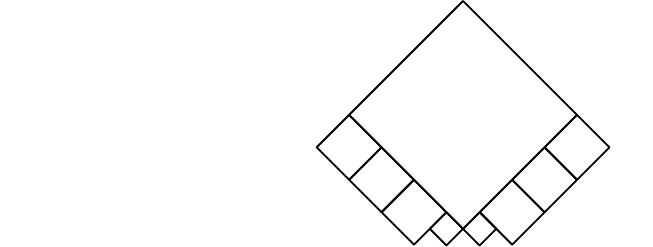}}%
    \put(0.21005702,0.18550792){\color[rgb]{0,0,0}\makebox(0,0)[lt]{\lineheight{1.25}\smash{\begin{tabular}[t]{l}\Large $s$\end{tabular}}}}%
    \put(0,0){\includegraphics[width=\unitlength,page=2]{sbf.pdf}}%
    \put(0.69371918,0.18551075){\color[rgb]{0,0,0}\makebox(0,0)[lt]{\lineheight{1.25}\smash{\begin{tabular}[t]{l}\Large $s$\end{tabular}}}}%
    \put(0,0){\includegraphics[width=\unitlength,page=3]{sbf.pdf}}%
    \put(0.19060149,0.01800013){\color[rgb]{0,0,0}\makebox(0,0)[lt]{\lineheight{1.25}\smash{\begin{tabular}[t]{l}$\s$\end{tabular}}}}%
    \put(0.6742484,0.01801829){\color[rgb]{0,0,0}\makebox(0,0)[lt]{\lineheight{1.25}\smash{\begin{tabular}[t]{l}$\s$\end{tabular}}}}%
    \put(0.72521294,0.01801835){\color[rgb]{0,0,0}\makebox(0,0)[lt]{\lineheight{1.25}\smash{\begin{tabular}[t]{l}$\s$\end{tabular}}}}%
  \end{picture}%
\endgroup%
}
    \caption{If condition (TBT) is satisfied for all fan sectors, $\Gamma(h_\beta)$ is connected unless the scenario depicted on the left occurs, in which case we say that $s$ satisfies (SBF) on the side $\beta$. If $s$ satisfies (SBF) on both sides, then we have the right figure, and we say that $s$ satisfies the condition (BSBF).}
    \label{fig:sbf}
\end{figure}

\begin{defn}
We say that a sector $s$ satisfies condition (SBF) (abbreviating \textbf{S}ide = \textbf{B}ottom by \textbf{F}lip) on the side $\beta$ if it satisfies the condition in \Cref{lemma:singledeephooktoggle}.

We say that a sector $s$ satisfies condition (BSBF) (abbreviating \textbf{B}oth \textbf{S}ides = \textbf{B}ottom by \textbf{F}lips) if it satisfies (SBF) on both sides. See \Cref{fig:sbf} right.
\end{defn}

\begin{proof}[Proof of \Cref{lemma:singledeephooktoggle}]
Suppose $\Gamma(h_\beta)$ is not connected. Consider the component containing $h_\beta$. This corresponds to some component $C$ of $\Delta \cut Q(h_\beta)$, which in turn corresponds under \Cref{prop:Alexanderduality} to some 2-cycle $x$ of $Q(h_\beta)$.

Recall the construction of $x$. We look at the union of quads in the boundary of $C$ in $\Delta$ and take their sum. Since $C$ contains $h_\beta$, every quad $q^\beta_k$ appears in the union. After taking the sum, the quad $q^\beta_k$ does not appear in $x$ if and only if it appears twice in the union, necessarily with opposite co-orientations. There is a dichotomy here. A quad $q^\beta_k$ could appear twice because $q^\beta_k = q^\beta_{k'}$ for some $k' \neq k$, or it could appear twice even if it is distinct from all other $q^\beta_{k'}$. In the former case, we will say that $q^\beta_k$ is \textit{overlapped}, while in the latter case, we will say that $q^\beta_k$ is \textit{omitted}. 

We claim that under the assumption of the lemma, no $q^\beta_k$ is omitted.
First notice that as reasoned in the proof of \Cref{lemma:singledeephookfan}, $q^\beta_1$ and $q^\beta_{\delta_\beta}$ must appear in $x$, so they are not omitted. We show that $q^\beta_k$, for $k=2,...,\delta_\beta-1$, are not omitted by downwards induction. 

Consider the sector $s^\beta_{\delta_\beta-1}$. It is fan, so by assumption it satisfies (TBT). Hence it is also the sector having $v^\beta_{\delta_\beta-1}$ as its bottom vertex, and contains $v^\beta_{\delta_\beta}$ on its bottom side that does not meet $s^\beta_{\delta_\beta-2}$. Now if $q^\beta_{\delta_\beta-1}$ is omitted, the bottom vertex of $s^\beta_{\delta_\beta}$ must be resolved, otherwise by following along the sides of $s^\beta_{\delta_\beta-1}$, we see that $v^\beta_{\delta_\beta}$ does not meet any component of $\Gamma(h_\beta)$ other than $C$, hence $q^\beta_{\delta_\beta}$ must be omitted in $x$, contradicting what we established above. So we suppose that this bottom vertex is equal to $v^\beta_j$ for some $j$. This identification cannot be so that $v^\beta_{\delta_\beta-1}=v^\beta_{j+1}$, for otherwise since $v^\beta_{\delta_\beta-1}$ is red, we must have $j \leq \delta_\beta-2$, and $q^\beta_{\delta_\beta-1}$ would be overlapped (by $q^\beta_{j+1}$), not omitted. But in the other manner of identification we also have that $v^\beta_{\delta_\beta}$ does not meet any component of $\Gamma(h_\beta)$ other than $C$, giving us a contradiction as above.

Inductively, suppose we have shown that $q^\beta_k$ is not omitted for some $3 \leq k \leq \delta_\beta-1$. Consider the sector $s^\beta_k$. Since it satisfies (TBT), both its top and bottom vertices are $v^\beta_k$. If $v^\beta_{k-1}$ is omitted, then by following along the sides of $s^\beta_k$ containing $v^\beta_{k-1}$, we see that $v^\beta_k$ does not meet any component of $\Gamma(h_\beta)$ other than $C$. So $q^\beta_k$ must be overlapped, say $q^\beta_k=q^\beta_j$. But then since $v^\beta_{k-1}$ is red, we must have $j \leq \delta_\beta-2$, and $q^\beta_{k-1}$ would be overlapped (by $q^\beta_{j+1}$), not omitted. By induction this proves our claim that no $q^\beta_k$ is omitted.

The upshot here is that the edges in the boundary of the union $\bigcup_{k=1}^{\delta_\beta} q^\beta_k$ must cancel themselves out. This is because the same is true for the 2-cycle $x$, and the edges in the boundary of quads not appearing in $x$ cancel out in pairs of overlapped quads. 

As pointed out before \Cref{lemma:singlenondeephook}, the boundary of $\bigcup_{k=1}^{\delta_\beta} q^\beta_k$ consists of 4 edges, two of which are blue and two are red. The blue edges cancelling each other out is the content of \Cref{lemma:singledeephookfan}. Now we also know that the two red edges cancel each other out, which by \Cref{prop:vbstogglefan} and the fact that $s$ is toggle, implies the current lemma.
\end{proof}

Returning to our proof of \Cref{prop:singlehookexist}, which was suspended before \Cref{lemma:singledeephooktoggle}, by now taking $s$ to be each toggle sector, we see that either some $\Gamma(h_\beta)$ is connected, in which case \Cref{prop:singlehookresolution} implies \Cref{prop:singlehookexist}, or all toggle sectors $s$ satisfies (BSBF). Hence our proof of \Cref{prop:singlehookexist} is finally concluded by the following proposition.

\begin{prop} \label{prop:m003}
Let $\Delta$ be a veering triangulation and $B$ be its stable branched surface. If every fan sector of $B$ satisfies (TBT) and every toggle sector of $B$ satisfies (BSBF), then $\Delta = \texttt{cPcbbbdxm\_10}$.
\end{prop}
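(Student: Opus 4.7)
The plan is to combine the two conditions into a tight system of edge identifications and show that only a 2-tetrahedron triangulation can satisfy all of them simultaneously. I would start by taking stock of what (TBT) and (BSBF) say in terms of the sector structure: (TBT) on a fan sector $s$ identifies $v^\beta_{\delta_\beta}=v_0$ on each side $\beta$ with the specific anti-branching matching, so the edge $e^\beta_{\delta_\beta+1}$ at the top of $s$ becomes the edge $e^\beta_1$ at the bottom; (BSBF) on a toggle sector does the analogous flip identification on both sides. Together, each sector of $B$ then has only a very small number of distinct edges of $\Delta$ in its boundary.

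Next I would propagate these forced identifications through $B$. Starting at any fan sector $s$ and walking to the fan sector $s^\beta_i$ immediately above/below $e^\beta_i$, the (TBT) identifications for $s^\beta_i$ are compatible with the identifications inherited from $s$ only in a highly restricted way. In particular, using \Cref{prop:vbstogglefan} (the stack structure of fans between two toggles) together with the fact that sectors must be embedded in their interiors (so we cannot identify $e^\beta_k=e^\beta_{k'}$ or $v^\beta_1=v^\beta_k$, as noted after \Cref{fig:sectorsetup}), I would show that every "long" side of every edge forces a collision that is ruled out. This should force $\delta_\beta=1$ for every side of every sector, i.e., every edge of $\Delta$ has exactly one tetrahedron on each of its two sides.

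From the local structure, this means every tetrahedron is a toggle tetrahedron and its top and bottom edges have matching colors forced by (BSBF). Using the fact that the number of vertices of $\Gamma$ is equal to the number of tetrahedra and that $\Gamma$ is $(2,2)$-valent, combined with the identifications from (TBT)/(BSBF), I would count: each tetrahedron contributes six edge-sides, but the forced identifications collapse them onto a small fixed number of distinct edges of $\Delta$. Combining this with the Euler characteristic constraint $\chi(\partial T_f)=0$ for the cusp tori, I expect to reduce to $N=2$ tetrahedra with no other option, and then identify the resulting combinatorial type as \texttt{cPcbbbdxm\_10} by directly matching the gluing data.

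The main obstacle will be the second step: convincing oneself that no larger configuration (say a chain of three or four fan-satisfying-(TBT) sectors meeting along consistent toggle-satisfying-(BSBF) sectors) can close up into a valid veering triangulation. A clean way to organize this will likely be to pick a fan sector $s$ of lowest weight and use the (TBT) identification, which realizes the bottom vertex $v_0$ as the top vertex of $s$, to build a local covering argument: the forced identifications around $s$ already account for every face of $\Delta$ touching the tetrahedra dual to the vertices adjacent to $s$, leaving no room for further tetrahedra. A potential shortcut, worth checking, is to exhibit an explicit branch cycle of $\Gamma$ containing only vertices of a single color under these hypotheses, contradicting \Cref{prop:vbscontradictions} and giving a quick reduction to the minimal case.
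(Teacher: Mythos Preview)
Your plan has a genuine gap at the second step. You aim to show that the hypotheses force $\delta_\beta=1$ on every side of every sector, and then conclude that every tetrahedron is toggle. But these two claims are incompatible: by \Cref{prop:togglefan}, a stack with exactly one tetrahedron consists of a \emph{fan} tetrahedron, so $\delta_\beta=1$ everywhere would mean there are no toggle tetrahedra at all. More seriously, the target triangulation \texttt{cPcbbbdxm\_10} does \emph{not} have $\delta_\beta=1$ on any side: it has two tetrahedra and two edges, each edge has degree six, so $\delta_1+\delta_2=4$ and in fact $\delta_1=\delta_2=2$ on both sides of both edges. So the very configuration you are trying to reach is excluded by your proposed intermediate step. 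The subsequent counting/Euler-characteristic argument is therefore not grounded.

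The paper's proof takes a completely different route. It picks any toggle tetrahedron $t$, argues (using (TBT) on fan sectors) that the sector $s$ with bottom vertex dual to $t$ must itself be toggle, and then uses (BSBF) on $s$ and on the sector below $t$ to force the two blue side edges of $t$ to be identified in parallel, and likewise the two red side edges. An equatorial quadrilateral in $t$ then glues up to a once-punctured torus $T$ in $T_f$; the train track $B\cap T$ has a specific simple form, and results of Landry show $T$ is a fiber surface. The problem is thereby reduced to once-punctured torus bundles, whose layered veering triangulations are completely classified via the Farey tessellation. In that classification one checks that (TBT) never holds for a fan sector and that (BSBF) on a single toggle sector forces the period-two path, i.e.\ \texttt{cPcbbbdxm\_10}. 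The key idea you are missing is this passage to a concrete once-punctured-torus fiber, which replaces your combinatorial elimination by an appeal to an explicit classification.
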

\begin{proof}
Let $t$ be a toggle tetrahedron in $\Delta$. Suppose the top edge of $t$ is blue. Then the vertex $v$ of $B$ dual to $t$ is the bottom vertex of some blue sector $s$. $s$ must be toggle, otherwise since every fan sector satisfies (TBT), the sector below $s$ must be fan, hence blue, contradicting the fact that the bottom edge of $t$ is red. 

Hence $s$ satisfies (BSBF), so the two blue side edges of $t$ must be equal (and in fact equals the top edge of $t$) and be identified in a parallel way.

Meanwhile, let $s'$ be the sector that has $v$ as the top vertex. Similarly as above, since $s'$ satisfies (BSBF), the two red side edges of $t$ must be equal (and in fact equals to the bottom edge of $t$) and be identified in a parallel way.

Now consider a quadrilateral-with-4-ideal-vertices properly embedded in $t$ with its 4 sides along the side edges of $t$. By our reasoning above, the sides match up in pairs to form a once-punctured torus $T$ in $T_f$. The quadrilateral can be chosen so that the intersection of $B$ with $T$ is a train track $\tau$ of the form illustrated in \Cref{fig:m003}. In particular, using the language of \cite{Lan19}, no splitting of $\tau$ can contain a \textit{stable loop}. Thus by \cite[Proposition 4.4 and Proposition 4.6]{Lan19}, $T$ is a fiber surface. 

\begin{figure}[ht]
    \centering
    \resizebox{!}{2.5cm}{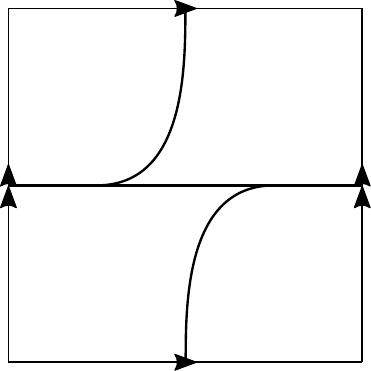}
    \caption{If every fan sector satisfies (TBT) and every toggle sector satisfies (BSBF), then we can find an equatorial quadrilateral that meets $B$ in the depicted train track.}
    \label{fig:m003}
\end{figure}

Once-punctured torus bundles and their veering triangulations are well-studied. These triangulations are encoded by a periodic path in the Farey tessellation, with each edge corresponding to a turn in the path; blue edges for left turns and red edges for right turns. We refer to \cite{Gue06} for details. In particular, it can be checked that no fan sector of these can satisfy (TBT). Moreover, (BSBF) for a single toggle sector implies that the path in the Farey tessellation has period 2, and that the triangulation is \texttt{cPcbbbdxm\_10}.
\end{proof}

\begin{rmk} \label{rmk:m003}
\Cref{prop:singlehookexist} is sharp in the sense that when $\Delta=$ \texttt{cPcbbbdxm\_10}, $\Gamma(h_\beta)$ is disconnected for every deep hook $h_\beta$.
\end{rmk}

\Cref{prop:singlehookbound} and \Cref{prop:singlehookexist} imply \Cref{thm:introsinglehook}, which we restate as follows.

\begin{thm} \label{thm:singlehook}
Let $f:S \to S$ be a fully-punctured pseudo-Anosov map with normalized dilatation $\lambda^{-\chi}$. Then the mapping torus of $f$ admits a veering triangulation with less than or equal to $\frac{1}{2} \lambda^{-2\chi}$ tetrahedra.
\end{thm}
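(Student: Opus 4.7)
The plan is to simply combine \Cref{prop:singlehookbound} and \Cref{prop:singlehookexist}, handling the one exceptional triangulation by direct inspection.

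Let $\Delta$ be the veering triangulation on $T_f$ that carries $\Lambda^s$, let $B$ be its stable branched surface, and let $\Gamma$ be its dual graph. If $\Delta \neq \texttt{cPcbbbdxm\_10}$, then by \Cref{prop:singlehookexist} we may choose a fiber surface $S$ for which there is an Eulerian circuit $c$ of $\Gamma$ hooking around a minimum weight sector $s$. Applying \Cref{prop:singlehookbound} to this pair $(s,c)$ yields immediately that the number of tetrahedra in $\Delta$ is $\leq \tfrac{1}{2}\lambda^{-2\chi}$, as required.

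It remains to deal with the exceptional case $\Delta = \texttt{cPcbbbdxm\_10}$, which is the figure eight knot complement and has exactly $2$ tetrahedra. Here I would argue by direct computation: the monodromy of this triangulation is the pseudo-Anosov map on $S_{1,1}$ with dilatation equal to the golden ratio squared, so $\lambda^{-\chi} = \mu^2 = \tfrac{3+\sqrt{5}}{2}$ and $\tfrac{1}{2}\lambda^{-2\chi} = \tfrac{1}{2}\mu^4 \approx 3.427$. Since $2 \leq 3.427$, the desired bound holds in this case as well.

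The only step with any substance is the reduction to \Cref{prop:singlehookexist} and \Cref{prop:singlehookbound}, both of which have already been established in this section. The main obstacle — showing that a suitable Eulerian hook circuit exists — has been fully handled by the $Q(h_\beta)$ / Alexander duality machinery and the flexibility of fiber surfaces proved in \Cref{prop:manyfibersurfaces}; once we grant those, the theorem is immediate modulo the one-triangulation exception, which is dispatched numerically.
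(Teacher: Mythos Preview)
Your proof is correct and follows essentially the same approach as the paper: split into the generic case, where \Cref{prop:singlehookexist} and \Cref{prop:singlehookbound} apply directly, and the single exceptional triangulation \texttt{cPcbbbdxm\_10}, which is handled by the explicit computation $\lambda^{-\chi}=\mu^2$, $2 \leq \tfrac{1}{2}\mu^4$. The paper adds only the remark that this exceptional $T_f$ fibers uniquely (so the monodromy, hence $\lambda^{-\chi}$, is well-defined), which you implicitly use.
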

\begin{proof}
If $\Delta \neq$ \texttt{cPcbbbdxm\_10}, then by \Cref{prop:singlehookexist}, there exists an Eulerian circuit that hooks around a sector of minimum weight, thus the theorem follows from \Cref{prop:singlehookbound}.

If $\Delta=$ \texttt{cPcbbbdxm\_10}, then $T_f$ fibers in a unique way as a once-punctured torus bundle. In this case, it is straightforward to calculate that $\lambda=\mu^2$ and $\chi=-1$, so we can check that the number of tetrahedra, which is 2, is less than $\frac{1}{2} \lambda^{-2\chi} = \frac{1}{2} \mu^4 \approx 3.43$.
\end{proof}

For non-fully-punctured pseudo-Anosov maps, we have the following corollary.

\begin{cor} \label{cor:singlehook}
Let $f:S_{g,s} \to S_{g,s}$ be a pseudo-Anosov map on the surface $S_{g,s}$ with genus $g$ and $s$ punctures with dilatation $\lambda$. Suppose $\lambda^{2g-2+\frac{2}{3}s} \leq P$, then the fully-punctured mapping torus of $f$ admits a veering triangulation with less than or equal to $\frac{1}{2}P^6$ tetrahedra.
\end{cor}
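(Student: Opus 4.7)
The plan is to reduce to the fully-punctured setting and apply \Cref{thm:singlehook}. Given the pseudo-Anosov map $f:S_{g,s} \to S_{g,s}$ with dilatation $\lambda$, puncturing the interior singularities of the stable and unstable foliations yields a fully-punctured pseudo-Anosov map $f':S_{g,s+k} \to S_{g,s+k}$ with the same dilatation $\lambda$, whose mapping torus is by definition the fully-punctured mapping torus of $f$. Here $k$ denotes the number of interior singularities of the foliations of $f$.

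The key step is to bound $k$ (and hence $|\chi(S_{g,s+k})|$) solely in terms of $g$ and $s$. This is a standard Euler--Poincar\'e computation: extending the foliation across the punctures onto the closed surface of genus $g$, where each puncture becomes a $q_j$-pronged singularity for some $q_j \ge 1$, and letting $p_1,\ldots,p_k \ge 3$ denote the prong numbers of the interior singularities, one has
\[ \sum_{i=1}^{k}(p_i - 2) + \sum_{j=1}^{s}(q_j - 2) = 4g - 4. \]
Since $q_j - 2 \ge -1$, this gives $\sum_i (p_i - 2) \le 4g - 4 + s$, and since each $p_i - 2 \ge 1$ we conclude $k \le 4g - 4 + s$.

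Therefore $|\chi(S_{g,s+k})| = 2g - 2 + s + k \le 6g - 6 + 2s = 3(2g - 2 + \tfrac{2}{3}s)$, so $\lambda^{|\chi(S_{g,s+k})|} \le \lambda^{3(2g-2+\frac{2}{3}s)} \le P^{3}$. Applying \Cref{thm:singlehook} to $f'$ now yields
\[ N \;\le\; \tfrac{1}{2}\lambda^{-2\chi(S_{g,s+k})} \;=\; \tfrac{1}{2}\bigl(\lambda^{|\chi(S_{g,s+k})|}\bigr)^{2} \;\le\; \tfrac{1}{2}P^{6}, \]
which is the desired bound. There is no real obstacle here beyond getting the Euler--Poincar\'e bookkeeping right; in particular the factor $\tfrac{2}{3}$ in the hypothesis is engineered precisely so that the bound $k \le 4g-4+s$ (with its single factor of $s$ rather than $3s$) lines up cleanly with the cube on the right-hand side.
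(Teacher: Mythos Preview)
Your proof is correct and follows the same approach as the paper: puncture at the interior singularities, bound the number of new punctures by $4g-4+s$ via an index/Euler--Poincar\'e count, deduce $|\chi(S^\circ)| \le 3(2g-2+\tfrac{2}{3}s)$, and then invoke \Cref{thm:singlehook}. The paper's version is terser (it simply writes the bound $2(-\chi(S_{g,s})-\tfrac{s}{2})=4g-4+s$ in one line), whereas you spell out the Euler--Poincar\'e bookkeeping explicitly; the content is identical.
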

\begin{proof}
We can fully puncture $f$ at its singularities to get $f^\circ:S^\circ \to S^\circ$. Since each singularity is at least 3-pronged, and each puncture on $S_{g,n}$ is at least 1-pronged, we at most puncture at $2(-\chi(S_{g,n})-\frac{n}{2})=4g-4+n$ points. Hence $\chi(S^\circ) \leq -\chi(S_{g,n})+(4g-4+n)=3(2g-2+\frac{2}{3}n)$ and the corollary follows from \Cref{thm:singlehook}.
\end{proof}

\section{Double hook circuits} \label{sec:doublehook}

As explained in the introduction, we must further improve the bound of \Cref{thm:introsinglehook} (at least in the one boundary component case) in order to prove \Cref{thm:introdilthm}. In this section we present one possible approach for obtaining an improvement. The idea is to use two hook circuits or a circuit that hooks twice. In contrast with using single hook circuits in \Cref{sec:singlehook}, we refer to this approach as using double hook circuits. The resulting improved bound is recorded as \Cref{prop:doublehookbound}, which we will explain in \Cref{subsec:doublehookbound}.

Unfortunately, we are unable to show that such double hook circuits always exist. In \Cref{subsec:doublehookobstruct}, we describe the obstructions to their existence. This understanding of obstructions will be used in \Cref{sec:oneboundary} to show that when $T_f$ has only one boundary component, we can sometimes bypass the obstructions and use the sharper bound in \Cref{prop:doublehookbound}.

\subsection{Bounding the number of vertices using double hook circuits} \label{subsec:doublehookbound}

We continue the notation from \Cref{sec:singlehook}. In addition, we introduce two more pieces of terminology.

\begin{defn} \label{defn:Euleriancollection}
Let $\{c_1,...,c_n\}$ be a collection of circuits in a directed graph. The collection is said to be \textit{Eulerian} if every edge is traversed exactly once by some $c_i$.
\end{defn}

\begin{defn} \label{defn:doublehook}
Let $s$ be a sector of $B$. A circuit of $\Gamma$ is said to \textit{hook around $s$ twice} if it contains both hooks of $s$. If the sector $s$ is understood, we say that a vertex of $c$ is a hook vertex if it lies in the interior of $h_1$ or $h_2$.
\end{defn}

\begin{prop} \label{prop:doublehookbound}
Let $s$ be a sector of $B$ of minimum weight. Suppose one of the following two scenarios is true:
\begin{enumerate}[label=(\roman*)]
    \item There exists an Eulerian collection of two circuits $\{c_1,c_2\}$ in $\Gamma$ where both $c_1$ and $c_2$ hook around $s$
    \item There exists an Eulerian circuit $c$ in $\Gamma$ that hooks around $s$ twice
\end{enumerate}
Then the number of tetrahedra in the veering triangulation is $\leq \frac{1}{4} \lambda^{-2\chi} + 1$.
\end{prop}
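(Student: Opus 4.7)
My plan is to derive two separate weight equations---one for each hook of $s$---and to multiply the resulting bounds. In case~(i), I would take the two circuits $c_1, c_2$ of the Eulerian collection (with WLOG $c_i$ containing $h_i$), cyclically shifted so that each $c_i$ ends with $h_i$ at the common endpoint $v_\top$. In case~(ii), since the Eulerian circuit $c$ contains both hooks and both end at $v_\top$, $c$ visits $v_\top$ twice, so I would split it as a concatenation $c = c_1' c_2'$ of two sub-cycles at $v_\top$, each ending with one hook. In either situation, write $n_i$ and $m_i$ for the length of the $i$-th cycle and its number of intersections with $S$, so that $n_1+n_2 = 2N$ and $m_1+m_2 = -2\chi$.

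For each $i$, I would lift the $i$-th cycle so that its end is at the top of $\widehat{s_0}$ and form the descending path $\alpha_i$; this produces the weight equation
$$\lambda^{m_i}w = w + W^{(i)},$$
exactly as in the proof of \Cref{prop:singlehookbound}. The core step is then to establish the individual bound $W^{(i)} \geq (n_i-1)w$---equivalently $\lambda^{m_i} \geq n_i$---for each $i$ separately. Running the pairing analysis of \Cref{prop:singlehookbound} within each cycle: at each $B$-resolved hook vertex of $h_i$, the hook visit in $c_i$ contributes at least $w$ to $W^{(i)}$ via the pairing bound (using that the opposite fin at a hook visit is $\widehat{s_0}$ at height $0$); each non-hook visit of $c_i$ contributes at least $w$ by the non-positive-height bound (still valid because $c_i$ ends with a hook); and only the single $A$-resolved hook vertex $v^i_{\delta_i}$ accounts for a loss of $w$.

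Multiplying then yields $\lambda^{-2\chi} = \lambda^{m_1}\lambda^{m_2} \geq n_1 n_2$. Since each hook has at least $\delta_\beta - k_\beta + 2 \geq 2$ edges, both $n_i \geq 2$, and subject to $n_1+n_2 = 2N$ this forces $n_1 n_2 \geq 2(2N-2) = 4N-4$. Therefore $\lambda^{-2\chi} \geq 4N-4$, which rearranges to $N \leq \tfrac14 \lambda^{-2\chi} + 1$. The hard part will be rigorously verifying the individual bound $W^{(i)} \geq (n_i-1)w$ when the two visits of a $B$-resolved hook vertex of $h_i$ are split between the two cycles: one must check that each side of the pair contributes its expected $\geq w$ share to its own $W^{(i)}$ rather than only to the combined sum $W^{(1)}+W^{(2)}$, which requires carefully tracking the heights and the fin height difference $d_u$ in the pairing bound of \Cref{prop:singlehookbound}.
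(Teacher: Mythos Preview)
Your overall strategy---splitting into two cycles, writing a weight equation for each, and multiplying---is exactly the paper's approach. The gap is in the step where you claim the individual bound $W^{(i)} \geq (n_i-1)w$. You try to obtain this by running the pairing argument of \Cref{prop:singlehookbound} \emph{inside} each $c_i$, asserting that ``the hook visit in $c_i$ contributes at least $w$ to $W^{(i)}$ \ldots\ using that the opposite fin at a hook visit is $\widehat{s_0}$ at height $0$.'' But knowing the opposite fin is at height $0$ only tells you the merging fin is at height $d_u$; if $d_u>0$ (which does happen---the lift of $s^\beta_k$ adjacent to $\widehat{s_0}$ can sit strictly above $S_0$), that contribution is $\lambda^{-d_u}w<w$. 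The pairing bound in \Cref{prop:singlehookbound} only ever gives $\lambda^{d_u}w+\lambda^{-d_u}w\geq 2w$ for the \emph{pair}; it does not give $\geq w$ for each half. So when a $B$-resolved hook vertex has its two visits split between $c_1$ and $c_2$, your per-cycle bound can genuinely fail, and the ``hard part'' you flag is not just a matter of careful bookkeeping.

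The paper avoids this by not attempting the individual bound at all. It runs the pairing over the \emph{combined} sum $W=W^{(1)}+W^{(2)}$ to get $W\geq(2N-2)w$, i.e.\ the two factors $(1+\tfrac{1}{w}W^{(i)})$ have sum $\geq 2N$. Then it supplies a separate, independent argument that each factor is $\geq 2$: since the start and end of $\alpha_i$ lie on adjacent sectors in $\widehat{B}$, a one-step descending path shows $\lambda^{m_i}w-w\geq w$. With sum $\geq 2N$ and each factor $\geq 2$, the product is $\geq 2(2N-2)$, giving the same conclusion you were aiming for. Your hook-length observation $n_i\geq 2$ is not needed (and does not substitute for the missing ingredient). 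Also note the small wrinkle in case~(ii): if the top vertex of $s$ happens to lie in the interior of $h_1$, the cut-and-paste does not produce a genuine hook circuit on that side, and a short extra check is required; the paper handles this explicitly.
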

\begin{proof}
We first show the bound in case (i). The argument reuses a lot of the ideas from \Cref{prop:singlehookbound}. 

Suppose $c_i$ contains the hook $h_i$ of $s$, for $i=1,2$. Let $\widehat{s_0}$ be the lift of $s$ in $\widehat{B}$ that is of height 0. For each $i$, take the basepoint of $c_i$ to be the top vertex of $s$. Lift $c_i$ to a path $\widehat{c_i}$ ending at the top vertex of $\widehat{s_0}$, reverse its orientation, then push it downwards to get a descending path $\alpha_i$.

Let the length of $c_i$ be $n_i$, and suppose it intersects the fiber surface $S$ for $p_i$ times. Then since $\{c_1,c_2\}$ is Eulerian, $n_1+n_2$ is equal to two times the number of tetrahedra $N$ in the triangulation, while $p_1+p_2$ is equal to $-2\chi(S)$. 

As in the proof of \Cref{prop:singlehookbound}, we get two equations
\begin{equation} \label{eq:doublehook}
    \lambda^{p_i} w = w + \sum_{j=1}^{n_i} w_{i,j}
\end{equation}
where $w_{i,j}$ is the weight of the $j^{\text{th}}$ sector merging into $\alpha_i$.
We again say that a vertex $v$ in $\Gamma$ is \textit{A-resolved} if the $c_i$ take an anti-branching turn the two times they visit $v$, and say that $v$ is \textit{B-resolved} otherwise. 

As in \Cref{prop:singlehookbound}, for each term in $W = \sum_{j=1}^{n_1} w_{1,j} + \sum_{j=1}^{n_2} w_{2,j}$ corresponding to a vertex $v$ of $c$ not covering a hook vertex of $c_1$ or $c_2$, we can use the lower bound $\lambda^{-r_v} w \geq w$ where $r_v$ is the height of $v$. 
For each B-resolved hook vertex $u$, we group together the two terms in $W$ that correspond to the two vertices of $\widehat{c}$ covering the vertex of $\Gamma$ that $u$ lies at. The same argument shows that the sum of the pair of terms is bounded below by $\lambda^{-r_1+d_u}w + \lambda^{-r_2-d_u}w \geq 2w$. 

In this setting, there are two A-resolved hook vertices, so the argument implies that $W \geq (2N-2)w$. Meanwhile, we can multiply \Cref{eq:doublehook} for $i=1,2$ to get $$\lambda^{-2\chi} = (1+\frac{1}{w} \sum_{j=1}^{n_1} w_{1,j}) (1+\frac{1}{w} \sum_{j=1}^{n_2} w_{2,j})$$
where $(1+\frac{1}{w} \sum_{j=1}^{n_1} w_{1,j}) + (1+\frac{1}{w} \sum_{j=1}^{n_2} w_{2,j}) = 2+\frac{1}{w}W \geq 2N$.

Here is where we bring in a new ingredient: We claim that $\frac{1}{w} \sum_{j=1}^{n_i} w_{i,j} \geq 1$ for each $i$. If the hook $h_i$ is simple, that is, the degenerate case in \Cref{defn:hookcircuit} does not occur, then this is because the term corresponding to the first non-hook vertex is the weight of a sector at non-positive height, hence is $\geq w$. If $h_i$ is not simple, then observe that the starting point and ending point of $\alpha_i$ belong to adjacent sectors. We can take a descending path from the former to the latter that intersects the branch locus of $\widehat{B}$ exactly once, and the weight added along that intersection is the weight of a sector at non-positive height, hence is $\geq w$. See \Cref{fig:doublehookbound1} where we highlight this descending path in orange. Hence $\sum_{j=1}^{n_i} w_{i,j} = \lambda^{p_i}w - w \geq w$.

\begin{figure}[ht]
    \centering
    \fontsize{12pt}{12pt}\selectfont
    \resizebox{!}{4.5cm}{
\begingroup%
  \makeatletter%
  \providecommand\color[2][]{%
    \errmessage{(Inkscape) Color is used for the text in Inkscape, but the package 'color.sty' is not loaded}%
    \renewcommand\color[2][]{}%
  }%
  \providecommand\transparent[1]{%
    \errmessage{(Inkscape) Transparency is used (non-zero) for the text in Inkscape, but the package 'transparent.sty' is not loaded}%
    \renewcommand\transparent[1]{}%
  }%
  \providecommand\rotatebox[2]{#2}%
  \newcommand*\fsize{\dimexpr\f@size pt\relax}%
  \newcommand*\lineheight[1]{\fontsize{\fsize}{#1\fsize}\selectfont}%
  \ifx\svgwidth\undefined%
    \setlength{\unitlength}{119.98707448bp}%
    \ifx\svgscale\undefined%
      \relax%
    \else%
      \setlength{\unitlength}{\unitlength * \real{\svgscale}}%
    \fi%
  \else%
    \setlength{\unitlength}{\svgwidth}%
  \fi%
  \global\let\svgwidth\undefined%
  \global\let\svgscale\undefined%
  \makeatother%
  \begin{picture}(1,1.19230921)%
    \lineheight{1}%
    \setlength\tabcolsep{0pt}%
    \put(0,0){\includegraphics[width=\unitlength,page=1]{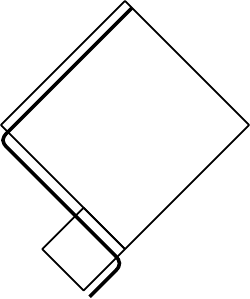}}%
    \put(0.44604661,0.65538456){\color[rgb]{0,0,0}\makebox(0,0)[lt]{\lineheight{1.25}\smash{\begin{tabular}[t]{l}\La $s$\end{tabular}}}}%
    \put(0.3133551,0.1775091){\color[rgb]{0,0,0}\makebox(0,0)[lt]{\lineheight{1.25}\smash{\begin{tabular}[t]{l}$\s$\end{tabular}}}}%
    \put(0,0){\includegraphics[width=\unitlength,page=2]{doublehookbound1.pdf}}%
  \end{picture}%
\endgroup%
}
    \caption{If $h_i$ is not simple, we can use a descending path (in orange) to show that $\frac{1}{w} \sum_{j=1}^{n_i} w_{i,j} \geq 1$.}
    \label{fig:doublehookbound1}
\end{figure}

The claim implies that $$\lambda^{-2\chi} = (1+\frac{1}{w} \sum_{j=1}^{n_1} w_{1,j}) (1+\frac{1}{w} \sum_{j=1}^{n_2} w_{2,j}) \geq 2(2N-2)$$ which implies the bound in the proposition.

Now for case (ii), if the top vertex of $s$ is not a hook vertex of $c$, then we can cut and paste $c$ at that point to get an Eulerian collection of circuits $\{c_1,c_2\}$, where each $c_i$ hooks around $s$, so we reduce to case (i). 

If the top vertex of $s$ is a hook vertex, say it lies in the interior of $h_1$, then we can do the cut and paste operation anyway to get an Eulerian collection of circuits $\{c_1,c_2\}$. See \Cref{fig:doublehookbound2}. The difference here is that the component containing the vertices in the interior of $h_1$, say $c_1$, may no longer contain the entirety of $h_1$ hence not hook around $s$ by definition. However the above argument still works since the only place we use the fact that $c_1$ contains the entirety of $h_1$ is to say that the terms $w_{1,j}$ corresponding to vertices of $\widehat{c_1}$ not covering hook vertices are weights of sectors of non-positive height (hence are $\geq w$). In this case there are simply no such vertices and so we do not have to worry about establishing the lower bounds for them. Thus the argument in case (i) carries through to show the same bound.
\end{proof}

\begin{figure}[ht]
    \centering
    \fontsize{12pt}{12pt}\selectfont
    \resizebox{!}{4.5cm}{
\begingroup%
  \makeatletter%
  \providecommand\color[2][]{%
    \errmessage{(Inkscape) Color is used for the text in Inkscape, but the package 'color.sty' is not loaded}%
    \renewcommand\color[2][]{}%
  }%
  \providecommand\transparent[1]{%
    \errmessage{(Inkscape) Transparency is used (non-zero) for the text in Inkscape, but the package 'transparent.sty' is not loaded}%
    \renewcommand\transparent[1]{}%
  }%
  \providecommand\rotatebox[2]{#2}%
  \newcommand*\fsize{\dimexpr\f@size pt\relax}%
  \newcommand*\lineheight[1]{\fontsize{\fsize}{#1\fsize}\selectfont}%
  \ifx\svgwidth\undefined%
    \setlength{\unitlength}{119.00710699bp}%
    \ifx\svgscale\undefined%
      \relax%
    \else%
      \setlength{\unitlength}{\unitlength * \real{\svgscale}}%
    \fi%
  \else%
    \setlength{\unitlength}{\svgwidth}%
  \fi%
  \global\let\svgwidth\undefined%
  \global\let\svgscale\undefined%
  \makeatother%
  \begin{picture}(1,1.19511825)%
    \lineheight{1}%
    \setlength\tabcolsep{0pt}%
    \put(0,0){\includegraphics[width=\unitlength,page=1]{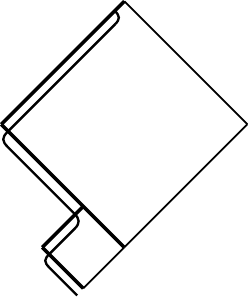}}%
    \put(0.45843079,0.67492285){\color[rgb]{0,0,0}\makebox(0,0)[lt]{\lineheight{1.25}\smash{\begin{tabular}[t]{l}\La $s$\end{tabular}}}}%
    \put(0.30668107,0.18350811){\color[rgb]{0,0,0}\makebox(0,0)[lt]{\lineheight{1.25}\smash{\begin{tabular}[t]{l}$s$\end{tabular}}}}%
  \end{picture}%
\endgroup%
}
    \caption{In case (ii), if $e^2_{\delta_2+1}=e^1_1$, then we can cut and paste $c$ to get an Eulerian collection of circuits $\{c_1,c_2\}$. $c_1$ will not contain the hook $h_1$ (thickened) but the argument goes through.}
    \label{fig:doublehookbound2}
\end{figure}

\subsection{Obstructions to double hook circuits} \label{subsec:doublehookobstruct}

We have the following analogue of \Cref{prop:singlehookresolution} for applying \Cref{prop:doublehookbound}.

\begin{prop} \label{prop:doublehookresolution}
Let $s$ be a sector of $B$. Suppose one of the following two statements about the resolved dual graph $\Gamma(h_1 \cup h_2)$ is true:
\begin{enumerate}[label=(\Roman*)]
    \item $\Gamma(h_1 \cup h_2)$ is connected
    \item $\Gamma(h_1 \cup h_2)$ has two components, with one component containing $h_1$ and the other containing $h_2$
\end{enumerate}
Then the hypothesis of \Cref{prop:doublehookbound} is true.
\end{prop}
\begin{proof}
In case (I), we can find an Eulerian circuit in $\Gamma(h_1 \cup h_2)$ whose image in $\Gamma$ is an Eulerian circuit that hooks around $s$ twice, so (ii) is true.
In case (II), we can find an Eulerian circuit in each component of $\Gamma(h_1 \cup h_2)$ whose images in $\Gamma$ form an Eulerian collection of circuits that each hook around $s$, so (i) is true.
\end{proof}

For the rest of this section, we will state and prove some conditions on $s$ under which we can show that (I) or (II) in \Cref{prop:doublehookresolution} is true. Some of these will be analogues of statements in \Cref{subsec:singlehookexist}, and our approach in fact will closely mirror that in \Cref{subsec:singlehookexist}. 

We fix a sector $s$ of $B$. Suppose without loss of generality that $s$ is blue. We will use the same notation as in \Cref{subsec:singlehookexist} for the edges, sectors, and vertices adjacent to $s$.
Recall that $h'_\beta = (e^\beta_2,...,e^\beta_{\delta_\beta})$. Consider the resolution $\Gamma(h'_1 \cup h'_2)$ determined by the two paths $h'_1$ and $h'_2$. As in \Cref{lemma:singleprehook}, all the resolutions are B-resolutions of red vertices.

\begin{lemma} \label{lemma:doubleprehook}
$\Gamma(h'_1 \cup h'_2)$ is either connected or has two components. In the latter case, $v^1_{\delta_1}=v^2_{\delta_2}$ and the component not containing $h'_1 \cup h'_2$ contains at least one branch cycle.
\end{lemma}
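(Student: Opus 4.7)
The plan is to adapt the rectangle argument from Lemma 3.11 to handle both sides of $s$ simultaneously.

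First I would observe that $h'_1$ and $h'_2$ must lie in a common component $C_0$ of $\Gamma(h'_1 \cup h'_2)$: the endpoints $v^\beta_1$ and $v^\beta_{\delta_\beta}$ of $h'_\beta$ are unresolved (since $h'_\beta$ only calls for B-resolutions at $v^\beta_2,\dots,v^\beta_{\delta_\beta-1}$), and each $v^\beta_1$ is connected to the common vertex $v_0$ via the unresolved edge $e^\beta_1$. Hence any additional component $C_1$ is disjoint from $h'_1 \cup h'_2$. Moreover, since $\Gamma$ itself is connected, $C_1$ must contain a preimage of some resolved vertex $v^\beta_k$ (with $\beta \in \{1,2\}$ and $k \in \{2,\ldots,\delta_\beta-1\}$); let $J_\beta = J_\beta(C_1) \subseteq \{2,\ldots,\delta_\beta-1\}$ collect these indices, and without loss of generality assume $J_1 \neq \emptyset$.

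Next I would run the rectangle argument of Lemma 3.11 on side 1 (and symmetrically on side 2 if $J_2 \neq \emptyset$). Since only B-resolutions at red vertices are performed, following along the top sides of the rectangle $R_1 = s^1_2 \cup \cdots \cup s^1_{\delta_1}$ shows that its outer boundary lies entirely in $C_0$. Consequently, for each $k \in J_1$, the top side of $s^1_k$ not shared with $s$ lies in $C_1$, and the bottom vertex of $s^1_{k+1}$ must be a resolved vertex whose bottom preimage lies in $C_1$. In the one-sided setting of Lemma 3.11 this produced a permutation $J_1 \to J_1$ and made $C_1$ into a branch cycle of purely red vertices, contradicting Proposition 2.17. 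In the two-sided setting the bottom vertex of $s^1_{k+1}$ may instead be identified with a vertex $v^2_{k''}$ on side 2, so iterating yields a cycle in $C_1$ built entirely from the red interior vertices of the two bottom sides of $s$, joined across both sides. By Proposition 2.17 no such monochromatic branch cycle can exist, so the cycle must close through a blue vertex. The only way this can occur in the present geometry is if the two blue side vertices $v^1_{\delta_1}$ and $v^2_{\delta_2}$ of $s$ are identified, yielding the condition $v^1_{\delta_1} = v^2_{\delta_2}$ of the lemma.

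Finally, under the identification $v^1_{\delta_1} = v^2_{\delta_2}$, I would trace that $C_1$ is a single branch cycle passing through this shared blue vertex (hence meeting both colors as required by Proposition 2.17), and that there is at most one such extra component: a second independent $C_1$ would require a second cross-side identification, which is incompatible with $v^1_{\delta_1} = v^2_{\delta_2}$ being the unique gluing locus of the two rectangles. Equivalently this can be phrased via Proposition 3.8 as a bound $\dim \ker(H_2(Q(h'_1 \cup h'_2), \partial_v Q) \to H_2(T_f, \partial T_f)) \leq 1$ in the presence of this identification. The main obstacle I anticipate is the cross-side bookkeeping in the previous step --- specifically, ensuring that when the assignment $J_1 \cup J_2 \to J_1 \cup J_2$ moves across sides it does not accidentally pass through a blue vertex other than the shared $v^1_{\delta_1} = v^2_{\delta_2}$, and that the cycle forced on $C_1$ is genuinely a branch cycle rather than a pair of edges traversed with opposite orientations.
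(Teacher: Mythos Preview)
Your overall plan --- run the rectangle argument on both sides and invoke the ``no monochromatic branch cycle'' fact --- matches the paper. But the crucial middle step, where you deduce $v^1_{\delta_1}=v^2_{\delta_2}$, is not argued correctly, and the last paragraph misidentifies what needs to be shown.

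You write that the cycle in $C_1$ ``must close through a blue vertex'' and that this forces the identification. But the blue side vertices $v^\beta_{\delta_\beta}$ are endpoints of $h'_\beta$, not interior vertices, so they are never resolved; nothing in $C_1$ passes through them. The actual contradiction is that if your permutation on $J$ were total, $C_1$ would contain a branch cycle meeting \emph{only} red vertices --- so the permutation must fail somewhere, not ``close blue''. The paper pins down exactly where it can fail: the \emph{outer side vertex} $v^\beta$ of the top sector $s^\beta_{\delta_\beta}$ (the side vertex other than $v^\beta_{\delta_\beta-1}$) is itself red and may coincide with one of the resolved $v^\sigma_k$. One shows at least one of $v^1,v^2$ is resolved and meets $C_1$; if $v^1=v^\sigma_k$, the blueness of the top vertex $v^1_{\delta_1}$ of $s^1_{\delta_1}$ forces $k=\delta_\sigma-1$, and embeddedness of $s^1_{\delta_1}$ forces $\sigma=2$. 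This gives $s^1_{\delta_1}=s^2_{\delta_2}$, whence $v^1_{\delta_1}=v^2_{\delta_2}$ as their common top vertex --- a consequence of the \emph{sector} identification, not of a blue vertex lying on the cycle.

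Your final paragraph also drifts. For ``at most two components'' the paper does not invoke Alexander duality; once $s^1_{\delta_1}=s^2_{\delta_2}$ it checks directly (in two subcases depending on whether $v^2$ meets a component other than $C_0$) that the non-$C_0$ component meeting $v^1$ is uniquely determined. For ``$C_1$ contains a branch cycle'' you are overcomplicating things: since every resolution performed is a B-resolution, each branch cycle of $\Gamma$ survives intact in $\Gamma(h'_1\cup h'_2)$ and hence lies in a single component; as the branch cycles cover all edges, any edge of $C_1$ drags an entire branch cycle into $C_1$. You do not need (and it is not true in general) that $C_1$ \emph{is} a single branch cycle through $v^1_{\delta_1}$.
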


\begin{defn} \label{defn:frc}
We say that a sector $s$ satisfies condition (FRC) (abbreviating \textbf{F}an \textbf{R}esolution \textbf{C}onnected) if the resolution $\Gamma(h'_1 \cup h'_2)$ is connected for $s$.
\end{defn}

\begin{proof}[Proof of \Cref{lemma:doubleprehook}]
If some $\delta_\beta=1$, then this reduces to \Cref{lemma:singlenondeephook}, so we can assume that $\delta_1,\delta_2 \geq 2$. 
Suppose that $\Gamma(h'_1 \cup h'_2)$ is not connected. By following the bottom sides of $s$, we see that $h'_1$ and $h'_2$ lie in the same component $C_0$. Let $C_1$ be a component that is not $C_0$. 
As in \Cref{lemma:singleprehook}, let $J$ be the collection of $(\beta,k)$ such that $v^\beta_k$ meets $C_1$.

Let $v^\beta$ be the side vertex of $s^\beta_{\delta_\beta}$ other than $v^\beta_{\delta_\beta-1}$. The same argument as in \Cref{lemma:singleprehook} shows that at least one of $v^\beta$, say, $v^1$, must be resolved and meets $C_1$, otherwise a permutation on $J$ would give rise to a branch cycle only meeting red vertices, contradicting \Cref{prop:vbscontradictions}.
Suppose $v^1 = v^\sigma_k$. The same argument as in \Cref{lemma:singleprehook} shows that, since $v^1_{\delta_1}$ is blue, $k$ must be $\delta_{\sigma}-1$. Thus $\sigma \neq 1$ otherwise $s^1_{\delta_1}$ is not embedded in its interior. This shows that $s^1_{\delta_1}=s^2_{\delta_2}$ and $v^1_{\delta_1}=v^2_{\delta_2}$. 

If $v^2$ only meets $C_0$ and no other component of $\Gamma(h'_1 \cup h'_2)$, then $C_1$ is uniquely determined to be the component which $v^1$ meets other than $C_0$. Since we picked $C_1$ arbitrarily at the start, this shows that $\Gamma(h'_1 \cup h'_2)$ has exactly two components in this case.

If $v^2$ meets some component $C_2 \neq C_0$, then the bottom vertex of $s^1_{\delta_1}=s^2_{\delta_2}$ does not meet $C_0$, hence is not resolved. Thus $C_1=C_2$, and $C_1$ is again uniquely determined to be the component which $v^1$ meets other than $C_0$, showing that $\Gamma(h'_1 \cup h'_2)$ has exactly two components.

Since the only resolutions we perform are B-resolutions, any branch cycle either lies completely in $C_0$ or completely in $C_1$, so $C_1$ contains at least one branch cycle.
\end{proof}

In the one boundary component case, we will often be able to bypass the obstruction described in \Cref{lemma:doubleprehook} and have (FRC). With this in mind, for the rest of this section, we prove some lemmas providing some sets of conditions, which when taken together with (FRC), guarantee that the assumptions of \Cref{prop:doublehookresolution} are satisfied.

To prove these lemmas, we use the A/B-quads as described in \Cref{subsec:Alexanderduality} and \Cref{subsec:singlehookexist}. We continue the notation from there. Since we consider both sides of $s$ now, we have two groups of quads, one group for each side of $s$ as laid out in \Cref{fig:fanquad}.

We have the following analogue of \Cref{lemma:singledeephookfan}.

\begin{lemma} \label{lemma:doubledeephookfan}
Suppose $s$ satisfies (FRC), then $\Gamma(h_1 \cup h_2)$ is connected unless some $h_\beta$ is deep and $v^{\beta'}_{\delta_{\beta'}}$ is equal to the bottom vertex of $s^\beta_1$ for some $\beta'$, in the manner such that the bottom side of $s^\beta_1$ containing $e^{\beta'}_{\delta_{\beta'}+1}$ meets $v_0$ as well. See \Cref{fig:doubledeephookfan}.

In particular if $s$ is fan, then the top vertex of $s$ is equal to the bottom vertex of $s$.
\end{lemma}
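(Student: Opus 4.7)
The strategy mirrors the proof of \Cref{lemma:singledeephookfan}, now applied to the two-sided 2-complex $Q(h_1 \cup h_2)$. Assume for contradiction that $\Gamma(h_1 \cup h_2)$ is disconnected. \Cref{prop:Alexanderduality} then produces a nonzero $\pm 1$-coefficient 2-cycle $x$ of $(Q(h_1 \cup h_2), \partial_v Q(h_1 \cup h_2))$ that is null-homologous in $(T_f, \partial T_f)$. The hypothesis (FRC) says the analogous kernel for $Q(h'_1 \cup h'_2)$ is trivial, so $x$ cannot be supported inside $Q(h'_1 \cup h'_2)$; hence $x$ uses at least one quad from $Q(h_1 \cup h_2) \setminus Q(h'_1 \cup h'_2)$. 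These ``extras'' are exactly the A-quads $q^\beta_{\delta_\beta}$ (always present, since $v^\beta_{\delta_\beta}$ is an interior vertex of $h_\beta$) together with the B-quads $q^\beta_1$ (present iff $h_\beta$ is deep).

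By the blue-edge counts recalled in the proofs of \Cref{lemma:singlenondeephook} and \Cref{lemma:singledeephookfan}, the extras are exactly the quads of $Q(h_1 \cup h_2)$ carrying blue edges: each $q^\beta_{\delta_\beta}$ contributes one blue edge when $\delta_\beta \geq 2$ and two when $\delta_\beta = 1$, each present $q^\beta_1$ contributes one, and every remaining B-quad contributes none. Since $\partial x$ vanishes modulo $\partial_v Q$, all blue edges appearing in the boundaries of the extras contained in $x$ must cancel in pairs via identifications in $\Delta$. Recalling from the proof of \Cref{lemma:singledeephookfan} that the blue edge of $q^\beta_1$ is dual to $s^\beta_1$ while the blue edge of $q^\beta_{\delta_\beta}$ is dual to the blue sector having $v^\beta_{\delta_\beta}$ as bottom vertex, each cancelling pair translates into a specific vertex/edge identification in the triangulation.

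The heart of the argument is then a case analysis on which extras participate in $x$. The cancellation constraints, together with orientation bookkeeping on $\partial x$, must be shown to force at least one $q^\beta_1$ to appear (so some $h_\beta$ is deep), with its blue edge (dual to $s^\beta_1$) paired with the blue edge of some $q^{\beta'}_{\delta_{\beta'}}$ (dual to the blue sector whose bottom vertex is $v^{\beta'}_{\delta_{\beta'}}$). Translating this pairing back through the dualities of the two blue edges yields precisely the claimed identification: $v^{\beta'}_{\delta_{\beta'}}$ coincides with the bottom vertex of $s^\beta_1$, and the identification is oriented so that the bottom side of $s^\beta_1$ containing $e^{\beta'}_{\delta_{\beta'}+1}$ meets $v_0$.

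The \emph{in particular} fan case then follows immediately: by \Cref{prop:vbstogglefan}, when $s$ is fan each top side of $s$ on side $\beta'$ is the single edge $e^{\beta'}_{\delta_{\beta'}+1}$ with endpoints $v^{\beta'}_{\delta_{\beta'}}$ and the top vertex of $s$, so the general identification forces the top vertex of $s$ to coincide with $v_0$, the bottom vertex of $s$. The main obstacle I expect is the case analysis of the third paragraph --- in particular, ruling out pairings that use only the two A-quads $q^1_{\delta_1}, q^2_{\delta_2}$ (which would not require any $h_\beta$ to be deep), and handling the degenerate $\delta_\beta = 1$ situation where a single extra already carries two blue edges. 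Both sub-cases require careful attention to the orientations of blue edges in the quad boundaries and to the local sector combinatorics coming from \Cref{prop:vbstogglefan}.
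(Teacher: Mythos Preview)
Your overall approach matches the paper's: use \Cref{prop:Alexanderduality} to get a nonzero 2-cycle $x$, invoke (FRC) to force $x$ to use the ``extra'' quads carrying blue edges, and then analyze how those blue edges can cancel. You also correctly anticipate the two delicate sub-cases.

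However, there is a genuine gap in your third paragraph. You assert that ``translating this pairing back through the dualities of the two blue edges yields precisely the claimed identification \dots and the identification is oriented so that the bottom side of $s^\beta_1$ containing $e^{\beta'}_{\delta_{\beta'}+1}$ meets $v_0$.'' This is not automatic. The edge equality forces $v^{\beta'}_{\delta_{\beta'}}$ to equal the bottom vertex of $s^\beta_1$, but there are \emph{two} manners in which this vertex identification can occur, and the cancellation constraint on $\partial x$ does not by itself rule out the wrong one (unlike in \Cref{lemma:singledeephookfan}, where both quads come from the same side with compatible coorientations). The paper handles the wrong manner by a separate connectivity argument: following along the sides of $s^\beta_1$ one sees that $v^{\beta'}_{\delta_{\beta'}}$ meets only the component containing $h_\beta$ and $h_{\beta'}$, contradicting that $q^{\beta'}_{\delta_{\beta'}}$ appears in $x$. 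This argument in turn requires knowing that $h_\beta$ and $h_{\beta'}$ lie in the same component, which can fail when $\beta \neq \beta'$ and the bottom vertex of $s$ is itself resolved; that sub-case needs its own treatment.

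For the case you flag as an obstacle --- ruling out a pairing using only $q^1_{\delta_1}$ and $q^2_{\delta_2}$ --- the key observation is that if their blue edges are identified then the two quads are literally the same A-quad (both sit in the tetrahedron dual to $v^1_{\delta_1}=v^2_{\delta_2}$), so the blue edge does not cancel in $\partial x$ and one is forced to include some $q^\beta_1$ after all. The symmetric observation handles a putative $q^1_1 \leftrightarrow q^2_1$ pairing.
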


\begin{figure}[ht]
    \centering
    \fontsize{12pt}{12pt}\selectfont
    \resizebox{!}{4.5cm}{
\begingroup%
  \makeatletter%
  \providecommand\color[2][]{%
    \errmessage{(Inkscape) Color is used for the text in Inkscape, but the package 'color.sty' is not loaded}%
    \renewcommand\color[2][]{}%
  }%
  \providecommand\transparent[1]{%
    \errmessage{(Inkscape) Transparency is used (non-zero) for the text in Inkscape, but the package 'transparent.sty' is not loaded}%
    \renewcommand\transparent[1]{}%
  }%
  \providecommand\rotatebox[2]{#2}%
  \newcommand*\fsize{\dimexpr\f@size pt\relax}%
  \newcommand*\lineheight[1]{\fontsize{\fsize}{#1\fsize}\selectfont}%
  \ifx\svgwidth\undefined%
    \setlength{\unitlength}{234.45150781bp}%
    \ifx\svgscale\undefined%
      \relax%
    \else%
      \setlength{\unitlength}{\unitlength * \real{\svgscale}}%
    \fi%
  \else%
    \setlength{\unitlength}{\svgwidth}%
  \fi%
  \global\let\svgwidth\undefined%
  \global\let\svgscale\undefined%
  \makeatother%
  \begin{picture}(1,0.60590819)%
    \lineheight{1}%
    \setlength\tabcolsep{0pt}%
    \put(0,0){\includegraphics[width=\unitlength,page=1]{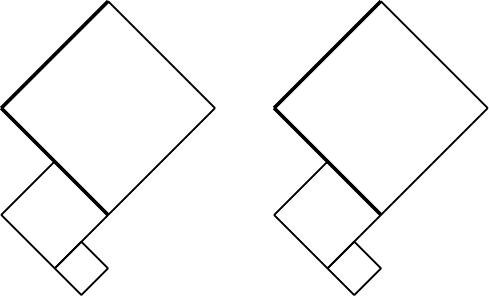}}%
    \put(0.2029977,0.3671753){\color[rgb]{0,0,0}\makebox(0,0)[lt]{\lineheight{1.25}\smash{\begin{tabular}[t]{l}\La $s$\end{tabular}}}}%
    \put(0.76167421,0.3671753){\color[rgb]{0,0,0}\makebox(0,0)[lt]{\lineheight{1.25}\smash{\begin{tabular}[t]{l}\La $s$\end{tabular}}}}%
    \put(0.15154007,0.04094201){\color[rgb]{0,0,0}\makebox(0,0)[lt]{\lineheight{1.25}\smash{\begin{tabular}[t]{l}$s$\end{tabular}}}}%
    \put(0.71021654,0.04094201){\color[rgb]{0,0,0}\makebox(0,0)[lt]{\lineheight{1.25}\smash{\begin{tabular}[t]{l}$\s$\end{tabular}}}}%
  \end{picture}%
\endgroup%
}
    \caption{If $s$ satisfies (FRC) but $\Gamma(h_1 \cup h_2)$ is not connected, then some $h_\beta$ (thickened) is deep and $v^{\beta'}_{\delta_{\beta'}}$ is equal to the bottom vertex of $s^\beta_1$ for some $\beta'$, in a manner such that the bottom side of $s^\beta_1$ containing $e^{\beta'}_{\delta_{\beta'}+1}$ meets $v_0$ as well. In these examples $\beta$ is the left side, and $\beta'$ is the left/right side in the left/right figure respectively.}
    \label{fig:doubledeephookfan}
\end{figure}

\begin{proof}
The 2-complex $Q(h_1 \cup h_2)$ corresponding to the resolution is $\bigcup_{i=k_1}^{\delta_1} q^1_i \cup \bigcup_{i=k_2}^{\delta_2} q^2_i$. There are at most four blue sides in $Q(h_1 \cup h_2)$, belonging to at most four quads, namely, $q^1_1$, $q^1_{\delta_1}$, $q^2_1$, and $q^2_{\delta_2}$. For simplicity let us assume that $\delta_1,\delta_2 \geq 2$, so that these four quads are distinct. The same argument will work for $\delta_1$ or $\delta_2=1$ with some more careful wording. We let the reader fill this out themselves.

Since we assumed that $s$ satisfies (FRC), if $\Gamma(h_1 \cup h_2)$ is not connected, then the 2-cycle $x$ of $Q(h_1 \cup h_2)$ corresponding to a component must involve at least two of these four quads and identify two of their blue edges. 

If the blue edge of $q^1_{\delta_1}$ is identified with that of $q^2_{\delta_2}$, then $q^1_{\delta_1}$ and $q^2_{\delta_2}$ must be the same quad. Hence the 2-cycle $x$ needs to contain at least one of $q^1_1$ and $q^2_1$ and identify one of their blue edges to the blue edge of $q^1_{\delta_1}=q^2_{\delta_2}$ in order to cancel it out.

Similarly, if the blue edge of $q^1_1$ is identified with that of $q^2_1$, then $q^1_1$ and $q^2_1$ must be the same quad. Hence the 2-cycle $x$ needs to contain at least one of $q^1_{\delta_1}$ and $q^2_{\delta_2}$ and identify one of their blue edges to the blue edge of $q^1_1=q^2_1$.

Hence we conclude that the blue edge of some $q^\beta_1$ must be identified with the blue edge of some $q^{\beta'}_{\delta_{\beta'}}$. This in particular implies that $q^\beta_1$ is contained in $Q(h_1 \cup h_2)$, hence $h_\beta$ is deep.

The edge identification implies that $v^{\beta'}_{\delta_{\beta'}}$ is equal to the bottom vertex of $s^\beta_1$. There are two manners of identification here. One is such that $e^{\beta'}_{\delta_{\beta'}+1}$ and $v_0$ lie on the same side of $s^\beta_1$, which is the statement of the lemma, while the other is such that they lie on different sides of $s^\beta_1$, which we tackle for the rest of the proof.

If $h_\beta$ and $h_{\beta'}$ lie in the same component $C_0$ of $\Gamma(h_1 \cup h_2)$, then by following along the sides of $s^\beta_1$, we see that $v^{\beta'}_{\delta_{\beta'}}$ only meets $C_0$ and no other component, contradicting the fact that $q^{\beta'}_{\delta_{\beta'}}$ is included in the 2-cycle. 

The only case where $h_\beta$ and $h_{\beta'}$ do not lie in the same component is if $\beta \neq \beta'$, and the bottom vertex of $s$ is resolved, which implies that it is equal to $v^\beta_{\delta_\beta}$ or $v^{\beta'}_{\delta_{\beta'}}$, since these are the only two blue vertices we resolve.

In the former case, the identification must be so that $s=s^\beta_1$, otherwise $h_\beta$ and $h_{\beta'}$ lie in the same component, but then the statement of the lemma is still true. See \Cref{fig:doubledeephookfan2} left (where we take $\beta$ to be the left side). Similarly, in the latter case, the identification must be so that $s=s^{\beta'}_1$. But since we are assuming that $e^{\beta'}_{\delta_{\beta'}+1}$ lies on a bottom side of $s^\beta_1$, we must have $s^\beta_1=s$ so we are in fact the former case again. See \Cref{fig:doubledeephookfan2} right. 

\begin{figure}[ht]
    \centering
    \fontsize{12pt}{12pt}\selectfont
    \resizebox{!}{4.5cm}{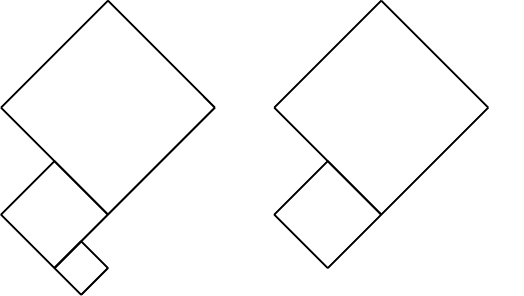}
    \caption{Reasoning that \Cref{lemma:doubledeephookfan} holds when $h_\beta$ and $h_{\beta'}$ do not lie in the same component.}
    \label{fig:doubledeephookfan2}
\end{figure}

Finally, the second statement follows from \Cref{prop:vbstogglefan} as in \Cref{lemma:singleprehook}.
\end{proof}

Here are two more situations where we can show that $\Gamma(h_1 \cup h_2)$ is connected.

\begin{lemma} \label{lemma:doublehooktophingesequal}
Suppose that:
\begin{itemize}
    \item $s$ satisfies (FRC)
    \item $v^1_{\delta_1} = v^2_{\delta_2}$
    \item $e^1_{\delta_1+1} \neq e^1_1$ and $e^2_{\delta_2+1} \neq e^2_1$
\end{itemize}
Then $\Gamma(h_1 \cup h_2)$ is connected, i.e. (I) of \Cref{prop:doublehookresolution} is true.
\end{lemma}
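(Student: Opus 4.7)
The plan is to argue by contradiction along the lines of \Cref{lemma:doubledeephookfan}, using the hypothesis $v^1_{\delta_1} = v^2_{\delta_2}$ to reduce the number of distinct blue-edged quads available in the 2-complex. Suppose $\Gamma(h_1 \cup h_2)$ is disconnected; then by \Cref{prop:Alexanderduality} there is a nontrivial 2-cycle $x$ in $Q(h_1 \cup h_2)$ with $\pm 1$ coefficients that vanishes in $H_2(T_f, \partial T_f)$. Since $s$ satisfies (FRC), the cycle $x$ must involve the extra B-quad $q^\beta_1$ for some deep hook $h_\beta$; otherwise $x$ would descend to a nontrivial 2-cycle in $Q(h'_1 \cup h'_2)$, contradicting (FRC).

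The key simplification coming from $v^1_{\delta_1} = v^2_{\delta_2}$ is that the tetrahedra dual to these two vertices coincide, and since each tetrahedron admits a unique A-quad, $q^1_{\delta_1}$ and $q^2_{\delta_2}$ are literally the same quad $q^*$ in $T_f$. Thus $Q(h_1 \cup h_2)$ has at most three distinct blue-edged quads: $q^*$, $q^1_1$ (if $h_1$ is deep), and $q^2_1$ (if $h_2$ is deep), one fewer than in the generic setting of \Cref{lemma:doubledeephookfan}. The vanishing of $\partial x$ on the blue edges forces cancellations among these, and following the case analysis in the proof of \Cref{lemma:doubledeephookfan}, some cancellation must identify the blue edge of some $q^\beta_1$ with the blue edge of $q^*$.

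Translating this identification back to the geometry, it says $s^\beta_1 = \overline{s}$, where $\overline{s}$ is the sector having $v^* := v^1_{\delta_1} = v^2_{\delta_2}$ as its bottom vertex. Since $v^*$ is a side vertex of $s$ on both sides, $\overline{s}$ has both $e^1_{\delta_1+1}$ and $e^2_{\delta_2+1}$ emanating from $v^*$ along its two bottom sides. The identification $s^\beta_1 = \overline{s}$ equates the top side $e^\beta_1$ of $s^\beta_1$ with a top side of $\overline{s}$ while forcing $v_0$ to be a side vertex of $\overline{s}$ on a bottom side containing some $e^{\beta''}_{\delta_{\beta''}+1}$. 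Tracing the correspondence of edges around this diamond then yields an equality $e^{\beta''}_{\delta_{\beta''}+1} = e^{\beta''}_1$ for some $\beta'' \in \{1,2\}$, contradicting the hypothesis.

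The main obstacle is the subcase where both $h_1$ and $h_2$ are deep, in which three distinct blue-edged quads are in play; there one must rule out the possibility that the blue edges of $q^1_1$ and $q^2_1$ cancel each other without involving $q^*$. I would handle this by observing that in such a scenario the blue edge of $q^*$ would still need to cancel, which forces another identification between $q^*$ and some $q^\beta_1$, reducing to the preceding analysis.
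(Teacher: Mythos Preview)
Your approach has a genuine gap in the ``main obstacle'' paragraph. You assert that if the blue edges of $q^1_1$ and $q^2_1$ cancel each other, then ``the blue edge of $q^*$ would still need to cancel''. But this presumes $q^*$ appears in the 2-cycle $x$ at all, and nothing forces that: a 2-cycle could perfectly well be supported on $q^1_1$, $q^2_1$, and some interior B-quads while omitting $q^*$ entirely. In fact this is exactly the scenario the paper's proof confronts, and it does not dispose of it by dragging $q^*$ back in. Relatedly, your third paragraph asserts that the identification $s^\beta_1 = \overline{s}$ forces $e^{\beta''}_{\delta_{\beta''}+1} = e^{\beta''}_1$; this step is not justified, and since the same identification is precisely the \emph{obstruction} in \Cref{lemma:doubledeephookfan} rather than a contradiction, it cannot by itself contradict the hypotheses here.

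The paper takes a cleaner route that you should adopt. It first uses the third hypothesis to show that $h_1$ and $h_2$ lie in the \emph{same} component $C$ of $\Gamma(h_1 \cup h_2)$: if the bottom vertex $v_0$ of $s$ is unresolved this is immediate, and if it is resolved then $v_0 = v^1_{\delta_1} = v^2_{\delta_2}$ and the assumption $e^\beta_{\delta_\beta+1} \neq e^\beta_1$ forces the identification $e^2_{\delta_2+1} = e^1_1$, from which one reads off $C_1 = C_2$ by following the sides of $s$. Once both hooks lie in $C$, the vertex $v^* = v^1_{\delta_1} = v^2_{\delta_2}$ is met only by $C$, so the quad $q^* = q^1_{\delta_1} = q^2_{\delta_2}$ is \emph{omitted} from the 2-cycle bounding $C$. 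By (FRC) the cycle must then contain $q^1_1$ and $q^2_1$ with their blue edges identified, giving $v^1_1 = v^2_1$; but now the same omission argument applies at this vertex, so $q^1_1 = q^2_1$ is excluded too, and the cycle collapses --- contradiction. The crucial move you are missing is this preliminary step establishing $C_1 = C_2$, which is what makes the omission argument available.
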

\begin{proof}
Let $C_i$ be the component of $\Gamma(h_1 \cup h_2)$ containing $h_i$, for $i=1,2$. We first claim that $C_1=C_2$.
If the bottom vertex of $s$ is not resolved, then this is clear. Otherwise this bottom vertex must be $v^1_{\delta_1} = v^2_{\delta_2}$. The third condition in the assumption implies that the identification must be such that $e^2_{\delta_2+1} = e^1_1$, so by following along the sides of $s$, we see that $C_1=C_2$ in this case as well.

Now suppose $\Gamma(h_1 \cup h_2)$ is not connected. Consider the 2-cycle of $Q(h_1 \cup h_2)$ corresponding to $C_1$. Since $v^1_{\delta_1} = v^2_{\delta_2}$, $C_1$ is the only component of $\Gamma(h_1 \cup h_2)$ that meets this vertex, so the quad $q^1_{\delta_1}=q^2_{\delta_2}$ will not be included in the 2-cycle. Since $s$ satisfies (FRC), the 2-cycle must then contain the quads $q^1_1$ and $q^2_1$ and identify their blue edges. But this implies that $v^1_1=v^2_1$, and that this common vertex only meets $C_1$. So these quads are not in the 2-cycle as well, giving us a contradiction.
\end{proof}

\begin{lemma} \label{lemma:doublehookbottomhingesequal}
Suppose that:
\begin{itemize}
    \item $s$ satisfies (FRC)
    \item $v^1_1 = v^2_1$
    \item $e^1_{\delta_1+1} \neq e^1_1$ and $e^2_{\delta_2+1} \neq e^2_1$
\end{itemize}
Then $\Gamma(h_1 \cup h_2)$ is connected, i.e. (I) of \Cref{prop:doublehookresolution} is true.
\end{lemma}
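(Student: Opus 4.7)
My plan is to mirror the proof of \Cref{lemma:doublehooktophingesequal}, swapping the roles of the A-quads $q^\beta_{\delta_\beta}$ at the top of the tetrahedral stack below $s$ with the B-quads $q^\beta_1$ at the bottom of that stack. The hypothesis $v^1_1=v^2_1$ here plays exactly the role that $v^1_{\delta_1}=v^2_{\delta_2}$ played there, and the third hypothesis $e^\beta_{\delta_\beta+1}\neq e^\beta_1$ is shared.

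First I would show $C_1=C_2$, where $C_i$ is the component of $\Gamma(h_1\cup h_2)$ containing $h_i$. Both hooks share the top vertex of $s$ as an endpoint, and this vertex is not interior to either hook; thus unless it is identified with some interior hook vertex $v^\beta_k$ it stays unresolved in $\Gamma(h_1\cup h_2)$, and $C_1=C_2$ is immediate. The residual case (where the top vertex of $s$ does get identified) is dealt with as in \Cref{lemma:doublehooktophingesequal}, using $e^\beta_{\delta_\beta+1}\neq e^\beta_1$ to rule out the non-simple wrap-around and then following the sides of $s$.

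Now suppose $\Gamma(h_1\cup h_2)$ is disconnected, and let $x$ be the nontrivial 2-cycle of $Q(h_1\cup h_2)$ corresponding to $C:=C_1=C_2$. Because $v^1_1=v^2_1$, the B-quads $q^1_1$ and $q^2_1$ coincide; the B-resolution at this common vertex pairs $(e^1_1,e^1_2)$ and $(e^2_1,e^2_2)$, which belong to $h_1\subseteq C$ and $h_2\subseteq C$ respectively, so both local sides of this quad map into $C$, forcing it to drop out of $x$. By (FRC), $x$ must contain some quad outside $Q(h'_1\cup h'_2)$, that is, one of the four boundary quads $q^1_1,q^1_{\delta_1},q^2_1,q^2_{\delta_2}$; these are also the only quads of $Q(h_1\cup h_2)$ carrying blue edges. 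Since $q^1_1=q^2_1$ is not in $x$, the only way the blue edges in $\partial x$ can cancel is to have both $q^1_{\delta_1}$ and $q^2_{\delta_2}$ in $x$ with their blue edges identified, the third hypothesis ruling out the degenerate alternative pairings.

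But this forced identification yields $v^1_{\delta_1}=v^2_{\delta_2}$, so $q^1_{\delta_1}=q^2_{\delta_2}$ is a single A-quad, and the same ``both sides land in $C$'' argument applied to the A-resolution pairing $(e^1_{\delta_1},e^1_{\delta_1+1})$ and $(e^2_{\delta_2},e^2_{\delta_2+1})$ then shows that this A-quad is also absent from $x$, a contradiction. The step I expect to require the most care is the first: enumerating the degenerate identifications of the top vertex of $s$ with an interior hook vertex and verifying that the simplicity hypothesis still rescues $C_1=C_2$ in each, parallel to the brief bottom-vertex case analysis in \Cref{lemma:doublehooktophingesequal}.
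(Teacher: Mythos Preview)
Your proposal is correct and takes essentially the same approach as the paper: both mirror \Cref{lemma:doublehooktophingesequal}, first showing $h_1,h_2$ lie in a common component $C$, then arguing that $q^1_1=q^2_1$ drops out of the 2-cycle, forcing $q^1_{\delta_1},q^2_{\delta_2}$ in with their blue edges identified, whence $v^1_{\delta_1}=v^2_{\delta_2}$ and these quads also drop out, a contradiction. The only cosmetic difference is that you connect $h_1$ and $h_2$ via the \emph{top} vertex of $s$, whereas the paper invokes ``the same argument as in \Cref{lemma:doublehooktophingesequal}'', which runs through the \emph{bottom} vertex $v_0$; both anchor points work and your own caveat that the residual identified case needs the most care is accurate for either choice.
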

\begin{proof}
This lemma is similar to \Cref{lemma:doublehooktophingesequal}. 
The same argument as in \Cref{lemma:doublehooktophingesequal} shows that $h_1$ and $h_2$ lie in a common component $C$. 
If $\Gamma(h_1 \cup h_2)$ is not connected, consider the 2-cycle of $Q(h_1 \cup h_2)$ corresponding to $C$. Since $v^1_1 = v^2_1$, $C$ is the only component of $\Gamma(h_1 \cup h_2)$ that meets this vertex, so the quad $q^1_1=q^2_1$ will not be included in the 2-cycle. Since $s$ satisfies (FRC), the 2-cycle must then contain the quads $q^1_{\delta_1}$ and $q^2_{\delta_2}$ and identify their blue edges. But this implies that $v^1_{\delta_1}=v^2_{\delta_2}$, and that this common vertex only meets $C$. So these quads are not in the 2-cycle as well, giving us a contradiction.
\end{proof}

Now we state two sets of conditions under which we can show the other scenario in \Cref{prop:doublehookresolution} is true. 

\begin{lemma} \label{lemma:doublehookoneshortcycle}
Suppose that:
\begin{itemize}
    \item $\Gamma(h_1 \cup h'_2)$ is connected (in particular $s$ satisfies (FRC))
    \item $e^2_{\delta_2+1}=e^2_1$
\end{itemize}
Then (II) in \Cref{prop:doublehookresolution} is true.
\end{lemma}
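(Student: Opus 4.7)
Apply the Alexander-duality framework of \Cref{subsec:Alexanderduality} to the 2-complex $Q(h_1 \cup h_2) = Q(h_1 \cup h'_2) \cup q^2_1 \cup q^2_{\delta_2}$ and show that $\dim \ker\bigl(H_2(Q(h_1 \cup h_2), \partial_v Q) \to H_2(T_f, \partial T_f)\bigr) = 1$. Combined with the fact that the cycle $h_2$ is an isolated component of $\Gamma(h_1 \cup h_2)$ containing none of the edges of $h_1$, \Cref{prop:Alexanderduality} then gives exactly two components of $\Gamma(h_1 \cup h_2)$, with $h_1$ and $h_2$ in different components, which is (II) of \Cref{prop:doublehookresolution}.

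\textbf{Isolation of $h_2$.} The hypothesis $e^2_{\delta_2+1} = e^2_1$ (as directed $\Gamma$-edges) forces $v^2_{\delta_2} = v_0$ and identifies the top vertex of $s$ with $v^2_1$, so that the head of $e^2_{\delta_2}$ coincides with the tail of $e^2_1$. Thus $h_2 = (e^2_1, \ldots, e^2_{\delta_2})$ is a simple closed cycle visiting $v_0, v^2_1, \ldots, v^2_{\delta_2-1}$. In $\Gamma(h_1 \cup h_2)$, every one of these vertices is resolved in the manner separating the cycle from the rest, so $h_2$ is an isolated connected component. Since the $\Gamma$-edges on $h_2$ are distinct from those on $h_1$, the hook $h_1$ lies in a different component.

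\textbf{Computing the kernel.} The component of $\Delta \cut Q(h_1 \cup h_2)$ corresponding to the $h_2$-component is the stack $R$ consisting of the tetrahedra dual to $v^2_1, \ldots, v^2_{\delta_2}$ (with face identifications coming from $e^2_{\delta_2+1} = e^2_1$), bounded by $q^2_1, \ldots, q^2_{\delta_2}$. By the explicit description after \Cref{prop:Alexanderduality}, the associated 2-cycle is the ``big diamond'' $D = \sum_{i=1}^{\delta_2} q^2_i$ with the coorientations of \Cref{fig:fanquad}, which bounds $R$ and is therefore null-homologous in $(T_f, \partial T_f)$. This gives $\dim \ker \geq 1$. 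For the reverse inequality, take any null-homologous 2-cycle $x$ and write $x = y + a q^2_1 + b q^2_{\delta_2}$ with $y \in C_2(Q(h_1 \cup h'_2))$. The $\Delta$-edges of $\partial(a q^2_1 + b q^2_{\delta_2})$ not already in the $1$-skeleton of $Q(h_1 \cup h'_2)$ must cancel out in $\partial x$; a local analysis analogous to that in \Cref{lemma:singledeephookfan} shows that the identification $e^2_{\delta_2+1} = e^2_1$ pairs the blue external edge of $q^2_1$ with that of $q^2_{\delta_2}$, and similarly pairs the two red external edges, with cancelling coorientations. This forces $a = b$, so $x - aD$ lies in $C_2(Q(h_1 \cup h'_2))$ and is null-homologous in $T_f$; by the hypothesis that $\Gamma(h_1 \cup h'_2)$ is connected (so its kernel is trivial), $x - aD = 0$. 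Hence $x = aD$, and the kernel is exactly $1$-dimensional.

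\textbf{Main obstacle.} The delicate point is the local combinatorial check that the identification $e^2_{\delta_2+1} = e^2_1$ induces precisely the required cancellations of the ``new'' $\Delta$-edges of $\partial q^2_1$ and $\partial q^2_{\delta_2}$ forcing $a = b$. Geometrically this reflects that the $\Delta$-face dual to the common $\Gamma$-edge sits simultaneously as the top face of the tetrahedron dual to $v^2_{\delta_2}$ and as the bottom face of the tetrahedron dual to $v^2_1$, so its three edges appear in both $\partial q^2_1$ and $\partial q^2_{\delta_2}$, enabling the required cancellations.
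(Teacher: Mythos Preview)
Your Alexander-duality route can in principle be made to work, but as written the step forcing $a=b$ has a genuine gap, and the paper's own proof avoids the whole issue with a much more elementary argument.

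\textbf{The paper's argument.} The paper does not touch $H_2$ at all. It simply observes that $\Gamma(h_1\cup h_2)$ is obtained from the connected graph $\Gamma(h_1\cup h'_2)$ by resolving two additional vertices, $v^2_1$ and $v^2_{\delta_2}$, so it has at most three components. The cycle $(e^2_i)_{i\in\mathbb{Z}/\delta_2}$ is visibly one component. If there were three, then the two non-cycle components $C_1,C_2$ would be separated precisely by the two new resolutions, with $e^1_1\in C_1$ (via $v^2_{\delta_2}=v_0$) and $e^1_{\delta_1+1}\in C_2$ (via $v^2_1=\text{top vertex of }s$); but following along the sides of $s$ connects $e^1_1$ to $e^1_{\delta_1+1}$, giving $C_1=C_2$, a contradiction. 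That is the whole proof.

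\textbf{The gap in your argument.} Your deduction that $a=b$ rests on the assertion that the blue (or red) ``external'' $\Delta$-edge of $q^2_1$ and $q^2_{\delta_2}$---call it $E_b$, which is the edge dual to $s$---lies outside the $1$-skeleton of $Q(h_1\cup h'_2)$, so that its coefficient in $\partial x$ involves only $a$ and $b$. But $Q(h_1\cup h'_2)$ contains the quads $q^1_i$ from the other hook, and those can carry blue edges too: specifically $q^1_{\delta_1}$ always has one, and $q^1_1$ has one when $h_1$ is deep. Nothing in the hypotheses rules out $v^1_{\delta_1}=v_0$ or $s=s^1_1$, either of which would make $E_b$ an edge of some $q^1_i$; in that case the coefficient of $E_b$ in $\partial y$ need not vanish, and you no longer get $a=b$ for free. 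The situation is genuinely different from \Cref{lemma:singledeephookfan}: there the $2$-complex came from a single hook and had at most two blue edges total, whereas here $Q(h_1\cup h_2)$ can have up to four, so the ``only two blue edges, hence they must match'' trick does not port over directly. You would need an additional case analysis on possible coincidences between the boundary edges of the big diamond on side~$2$ and edges of $Q(h_1)$---doable, but it is exactly the kind of combinatorial chase the elementary counting argument sidesteps entirely.

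A minor point: you silently take $h_2$ to be deep (writing $h_2=(e^2_1,\ldots,e^2_{\delta_2})$). The paper does the same, justifying it by noting that un-resolving vertices cannot increase the number of components; you should say this explicitly.
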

\begin{proof}
We can assume that $h_2$ is deep here, since resolving less vertices cannot create more components for the resolved dual graph.

Since $\Gamma(h_1 \cup h'_2)$ is connected, by resolving two more points $v^2_{\delta_2}$ and $v^2_1$, we get at most 3 components in $\Gamma(h_1 \cup h_2)$. Here $(e^2_i)_{i \in \mathbb{Z}/{\delta_2}}$ is clearly a component on its own, so it remains to show that $\Gamma(h_1 \cup h_2)$ does not have 3 components.

Suppose otherwise. Let $C_0$ be the component that is $(e^2_i)_{i \in \mathbb{Z}/{\delta_2}}$. In this case, $v^2_{\delta_2}$ must meet components $C_0$ and $C_1$, while $v^2_1$ must meet components $C_0$ and $C_2$, where $C_1 \neq C_2$. 

In particular $e^1_1$ lies in $C_1$ and $e^1_{\delta_1+1}$ lies in $C_2$. But then following along the sides of $s$, we see that $C_1=C_2$, giving us a contradiction. 
\end{proof}

\begin{lemma} \label{lemma:doublehookonecycleothernondeep}
Suppose that:
\begin{itemize}
    \item $s$ satisfies (FRC)
    \item $h_1$ is not deep
    \item $e^2_{\delta_2+1}=e^2_1$
\end{itemize}
Then (II) in \Cref{prop:doublehookresolution} is true.
\end{lemma}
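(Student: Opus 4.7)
The plan is to reduce the statement to \Cref{lemma:doublehookoneshortcycle} by establishing that $\Gamma(h_1\cup h'_2)$ is connected and then invoking that lemma directly.

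Observe first that, since $e^2_{\delta_2+1}=e^2_1$, we are in the degenerate case of \Cref{defn:hookcircuit} with $k_2=1$. The identification of these two edges (matching their tails and heads) forces $v_0=v^2_{\delta_2}$ and identifies $v^2_1$ with the top vertex of $s$. After resolving the full hook $h_2$, the circuit $D=(e^2_i)_{i\in\mathbb{Z}/\delta_2}$ lifts to a closed loop in $\Gamma(h_1\cup h_2)$, and the two extra resolutions at $v^2_1$ and $v^2_{\delta_2}$ sever it from the rest of the graph. Thus $D$ is already a component of $\Gamma(h_1\cup h_2)$, and it remains only to show that its complement forms a single component.

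The key intermediate claim is that $\Gamma(h_1\cup h'_2)$ is connected. To prove it, apply \Cref{prop:Alexanderduality} to the $2$-complex $Q=Q(h_1\cup h'_2)$, which consists of the quads $q^1_{k_1},\ldots,q^1_{\delta_1}$ from side $1$ (with $q^1_1$ excluded because $h_1$ is not deep, so $k_1\geq 2$) and $q^2_2,\ldots,q^2_{\delta_2-1}$ from side $2$ (with $q^2_1$ and $q^2_{\delta_2}$ excluded since $h'_2$ omits its endpoint vertices). By the blue-edge accounting in the proof of \Cref{lemma:singledeephookfan}, the only quad in $Q$ carrying a blue edge is the A-quad $q^1_{\delta_1}$. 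Were $\Gamma(h_1\cup h'_2)$ disconnected, \Cref{prop:Alexanderduality} would give a nonzero $2$-cycle $x\in H_2(Q,\partial_v Q)$ with coefficients $\pm 1$ in the kernel to $H_2(T_f,\partial T_f)$. The single blue edge contributed by $q^1_{\delta_1}$ has nothing to cancel against in $\partial x$, so $q^1_{\delta_1}$ cannot appear in $x$; but then $x$ is supported on $Q\setminus\{q^1_{\delta_1}\}\subseteq Q(h'_1\cup h'_2)$, yielding a nonzero element in the kernel of $H_2(Q(h'_1\cup h'_2),\partial_v)\to H_2(T_f,\partial T_f)$. This contradicts $(\mathrm{FRC})$ via \Cref{prop:Alexanderduality}.

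With $\Gamma(h_1\cup h'_2)$ connected and $e^2_{\delta_2+1}=e^2_1$ in hand, \Cref{lemma:doublehookoneshortcycle} immediately yields scenario (II) of \Cref{prop:doublehookresolution}. The delicate point in this proof is the blue-edge inventory of $Q(h_1\cup h'_2)$: it is precisely the hypothesis that $h_1$ is not deep that removes $q^1_1$ from $Q$, leaving $q^1_{\delta_1}$ as the sole carrier of a blue edge and pinning it as the unique obstruction to transplanting $x$ into $Q(h'_1\cup h'_2)$. Boundary cases such as $k_1=\delta_1$ (when $q^1_{\delta_1}$ is the only side-$1$ quad) or $\delta_2\leq 2$ (when the side-$2$ contribution is vacuous) do not affect the argument, since the contradiction rests only on the blue edge of $q^1_{\delta_1}$.
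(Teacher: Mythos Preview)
Your proof is correct and follows essentially the same approach as the paper: show that $\Gamma(h_1\cup h'_2)$ is connected via the blue-edge argument (only $q^1_{\delta_1}$ carries a blue edge once $q^1_1$ is excluded by non-deepness of $h_1$, so any putative $2$-cycle transplants into $Q(h'_1\cup h'_2)$, contradicting (FRC)), then invoke \Cref{lemma:doublehookoneshortcycle}. One minor quibble: your opening paragraph asserts $k_2=1$, but the hypothesis $e^2_{\delta_2+1}=e^2_1$ is a statement about the sector's edge identifications, not about where the bottommost arc lands; fortunately your actual argument in the later paragraphs does not depend on this, and \Cref{lemma:doublehookoneshortcycle} itself handles the reduction to the deep case.
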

\begin{proof}
By assumption, $\Gamma(h'_1 \cup h'_2)$ is connected. So if we resolve one more point $v^1_{\delta_1}$, $\Gamma(h_1 \cup h'_2)$ has at most 2 components. However, $Q(h_1 \cup h'_2)$ only has one blue edge, namely that of $q^1_{\delta_1}$, so there cannot be any 2-cycle containing $q^1_{\delta_1}$. Hence $\Gamma(h_1 \cup h'_2)$ is connected. Now the lemma follows from \Cref{lemma:doublehookoneshortcycle}.
\end{proof}

\section{One boundary component case} \label{sec:oneboundary}

The goal of this section is to prove \Cref{thm:intro16tet}. 
From \Cref{subsec:manybranchcycle} to \Cref{subsec:oneboundarytoggle}, we will devise an arsenal of propositions, each of which provides an adequate bound in a specific circumstance.
With these in place, we will then run an elimination process in \Cref{subsec:EIIRP}, similar to that in \Cref{subsec:singlehookexist} (but even more elaborate), in order to obtain an overall improved bound (\Cref{thm:EIIRP}).
For the reader's convenience, we have provided a flow chart in \Cref{fig:flowchart} summarizing the elimination process, intended to be consulted after a first reading of this section.
Finally, one can deduce \Cref{thm:intro16tet} by substituting $\lambda^{-\chi} = 6.86$.

Each proposition will incorporate the results of \Cref{sec:doublehook}. The recurring idea is that either one of the propositions in \Cref{subsec:doublehookobstruct} applies, or the combinatorics of the triangulation is locally constrained in specific ways. Together with the assumption that the mapping torus has only one boundary component, this will allow us to locate sources of improvement for the estimates in the proof of \Cref{prop:singlehookbound}.

To be more specific about these improvements for estimates, let us summarize the argument of \Cref{prop:singlehookbound} by \Cref{tab:singlehookbound}. Here we think of each vertex of $-c$ as contributing a term in the sum on the right hand side of \Cref{eq:singlehook}, and we table up the quantity and contributions of each type of vertex. What we will do is, with certain knowledge of the stable branched surface, show that the contribution from certain vertices can be improved. When we do so, we will summarize the proof in tables like \Cref{tab:singlehookbound} but with more rows.

\begin{table}[ht]
    \centering
    \caption{The argument in \Cref{prop:singlehookbound}.}
    \begin{tabular}{|C{6cm}|C{6cm}|C{3cm}|}
    \hline
    Vertices of $-c$ & Quantity & Contribution \\
    \hline \hline
    Pairs of vertices that meet a B-resolved hook vertex & $\text{\# hook vertices}-1$ & $2w$ \\
    \hline   
    Remaining non-hook vertices & $2N-2\text{(\# hook vertices)}+1$ & $w$ \\
    \hline
    \end{tabular}
    \label{tab:singlehookbound}
\end{table}

We also outline the flavor of the propositions in each individual subsection:
In \Cref{subsec:manybranchcycle}, \Cref{prop:manybranchcycle} will tackle the case with at least three branch cycles. 
This will allow us to reduce to the case of having one or two branch cycles.
In \Cref{subsec:fewbranchcycle}, \Cref{prop:fewbranchcyclenotranslate} and \Cref{prop:fewbranchcyclefrc} will allow us to further reduce to cases with specific combinatorics at minimum weight sectors.
The improvement for estimates in these propositions will come from the last vertex on each branch cycle visited by the hook circuit.

In \Cref{subsec:oneboundaryfan}, the propositions will address various cases with the aforementioned combinatorial restriction and with a minimum weight fan sector.
Similarly, in \Cref{subsec:oneboundarytoggle}, the propositions will address cases with a minimum weight toggle sector.
The improvement for estimates here will come from pass-throughs of the hook circuit at certain vertices close to the minimum weight sector, which we can locate because of the very specific combinatorial hypotheses.

For the rest of this section, we fix the following setting. Let $T_f$ be the mapping torus of a fully-punctured pseudo-Anosov map $f$, where $T_f$ has only one boundary component. Let $\Delta$ be the veering triangulation on $T_f$ that carries $\Lambda^s$, let $B$ be the stable branched surface of $\Delta$, and let $\Gamma$ be the dual graph of $\Delta$. Finally, let $S$ be a fiber surface. We use the notation as in \Cref{subsec:layeredvt}.

\subsection{When the number of branch cycles is large} \label{subsec:manybranchcycle}

The crucial fact about $T_f$ having only one boundary component is if there are $l$ branch cycles in the stable branched surface, then each branch cycle intersects the fiber surface $-\frac{2}{l} \chi(S)$ times. This is because all the branch cycles are homotopic to the same slope on the boundary of $T_f$ (sometimes known as the \textit{degeneracy slope} in the literature), hence are homotopic to each other. 

Now consider a sector $s$ of $B$ of minimum weight. Let $c$ be a branch cycle that meets a bottom side of $s$. Take the bottom vertex of $s$ to be the basepoint of $c$. Let $\widehat{s_0}$ be the lift of $s$ in $\widehat{B}$ that is of height 0. Lift $c$ to a path $\widehat{c}$ ending at the bottom vertex of $\widehat{s_0}$, then push $\widehat{c}$ \emph{upwards} on the side of $\widehat{s_0}$ and reverse its orientation to get a descending path $\alpha$. The starting point of $\alpha$ lies on $\widehat{s_0}$ which is of weight $w$, while the ending point of $\alpha$ lies on $g^{\frac{2}{l}\chi} \widehat{s}$ which is of weight $\lambda^{-\frac{2}{l}\chi}w$, since $c$ intersects the fiber surface $-\frac{2}{l}\chi$ times. Moreover, at each intersection point of $\alpha$ with the branch locus of $\widehat{B}$, the sector that merges in is of non-positive height, hence has weight $\geq w$.

Therefore if $\lambda^{-\frac{2}{l}\chi} < n+2$ for integer $n$, then $\alpha$ must intersect the branch locus of $\widehat{B}$ at most $n$ times. This implies that the branch cycle $c$ meets at most $n$ vertices of the same color as $s$.
Notice that this immediately implies that $\lambda^{-\frac{2}{l}\chi} < 2$ is impossible. We will not need to use this fact in full but we record it as a proposition anyway.

\begin{prop} \label{prop:verymanybranchcycle}
Let $f$ be a fully-punctured pseudo-Anosov map with normalized dilatation $\lambda^{-\chi}$. Suppose the mapping torus $T_f$ of $f$ only has one boundary component, then the stable branched surface $B$ has less than or equal to $\frac{2 \log \lambda^{-\chi}}{\log 2}$ branch cycles.
\end{prop}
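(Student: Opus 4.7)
My plan is to specialize the already-established discussion immediately preceding the proposition to the case $n=0$; essentially all the substantive work has been done, and what remains is to deduce the logarithmic bound by contradiction.

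First I would select any minimum weight sector $s$ of $B$ and any branch cycle $c$ that traverses one of the two bottom-side edges of $s$. Such a $c$ exists because at every vertex of $\Gamma$ the branching turn is uniquely determined by $(2,2)$-valence, so every directed edge belongs to a unique branch cycle, and each bottom side of $s$ is a directed edge of $\Gamma$. Then I can import the statement already proven in the paragraphs above: whenever $\lambda^{-\frac{2}{l}\chi} < n+2$ for a nonnegative integer $n$, the branch cycle $c$ meets at most $n$ vertices of $B$ of the same color as $s$.

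Next I would argue by contradiction. Suppose toward a contradiction that $\lambda^{-\frac{2}{l}\chi} < 2$. Taking $n=0$ in the implication above forces $c$ to meet zero vertices of the color of $s$. However, \Cref{prop:vbscontradictions} says that every branch cycle of $B$ meets vertices of both colors, so $c$ must meet at least one vertex of the color of $s$. This contradiction shows that we must in fact have $\lambda^{-\frac{2}{l}\chi} \geq 2$.

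Finally, taking logarithms on both sides of $\lambda^{-\frac{2}{l}\chi} \geq 2$ gives $-\frac{2\chi}{l} \log \lambda \geq \log 2$, which rearranges to
$$ l \leq \frac{-2\chi \log \lambda}{\log 2} = \frac{2 \log \lambda^{-\chi}}{\log 2}, $$
which is the claimed bound. There is no real obstacle to this proof; the only point that deserves a line of justification is the existence of the branch cycle $c$ traversing a bottom side of $s$, which follows immediately from the $(2,2)$-valence of $\Gamma$ and the uniqueness of branching turns at each vertex.
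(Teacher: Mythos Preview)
Your proposal is correct and follows essentially the same approach as the paper: specialize the preceding discussion to $n=0$ and derive a contradiction. The only minor difference is in how you obtain the contradiction. You invoke \Cref{prop:vbscontradictions} to say that $c$ must meet a vertex of the color of $s$; the paper's implicit argument is even more direct, since $c$ was chosen to pass through the bottom vertex $v_0$ of $s$, and $v_0$ already has the same color as $s$. Either route works.
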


We will focus on the case when $\lambda^{-\frac{2}{l}\chi} < 4$ in this subsection. This occurs when $l$ is large enough (relative to $\lambda^{-\chi}$), hence the name of the subsection.

Let $s$ be a sector of $B$ of minimum weight and let $c$ be an Eulerian circuit that hooks around $s$, say $c$ contains the hook $h_1$. Such a pair $(s,c)$ exists unless $\Delta =$ \texttt{cPcbbbdxm\_10} by \Cref{prop:singlehookexist}. But for $\Delta =$ \texttt{cPcbbbdxm\_10}, $l=1$ and $\lambda^{-\chi}=\mu^2$ so $\lambda^{-\frac{2}{l}\chi} < 4$ does not hold anyway, thus we can ignore this exceptional case in this subsection. We use the same notation on the edges, vertices, and sectors adjacent to $s$ as in the previous sections.

If $\lambda^{-\frac{2}{l}\chi} < 4$, each branch cycle meeting a bottom side of $s$ meets at most $2$ vertices of the same color as $s$. This implies that $v^1_{\delta_1}$ is equal to the bottom vertex of $s^2_1$ in a way such that $e^1_{\delta_1+1}$ and $v_0$ lie on different sides of $s^2_1$, and similarly, $v^2_{\delta_1}$ is equal to the bottom vertex of $s^1_1$ in a way such that $e^2_{\delta_1+1}$ and $v_0$ lie on different sides of $s^1_1$. See \Cref{fig:manybranchcycle}.

\begin{figure}[ht]
    \centering
    \fontsize{12pt}{12pt}\selectfont
    \resizebox{!}{4.5cm}{
\begingroup%
  \makeatletter%
  \providecommand\color[2][]{%
    \errmessage{(Inkscape) Color is used for the text in Inkscape, but the package 'color.sty' is not loaded}%
    \renewcommand\color[2][]{}%
  }%
  \providecommand\transparent[1]{%
    \errmessage{(Inkscape) Transparency is used (non-zero) for the text in Inkscape, but the package 'transparent.sty' is not loaded}%
    \renewcommand\transparent[1]{}%
  }%
  \providecommand\rotatebox[2]{#2}%
  \newcommand*\fsize{\dimexpr\f@size pt\relax}%
  \newcommand*\lineheight[1]{\fontsize{\fsize}{#1\fsize}\selectfont}%
  \ifx\svgwidth\undefined%
    \setlength{\unitlength}{119.86879844bp}%
    \ifx\svgscale\undefined%
      \relax%
    \else%
      \setlength{\unitlength}{\unitlength * \real{\svgscale}}%
    \fi%
  \else%
    \setlength{\unitlength}{\svgwidth}%
  \fi%
  \global\let\svgwidth\undefined%
  \global\let\svgscale\undefined%
  \makeatother%
  \begin{picture}(1,1.18429918)%
    \lineheight{1}%
    \setlength\tabcolsep{0pt}%
    \put(0,0){\includegraphics[width=\unitlength,page=1]{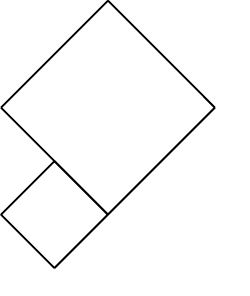}}%
    \put(0.38364879,0.7145134){\color[rgb]{0,0,0}\makebox(0,0)[lt]{\lineheight{1.25}\smash{\begin{tabular}[t]{l}\Huge $s$\end{tabular}}}}%
    \put(0,0){\includegraphics[width=\unitlength,page=2]{manybranchcycle.pdf}}%
    \put(0.72680652,0.08545649){\color[rgb]{0,0,0}\makebox(0,0)[lt]{\lineheight{1.25}\smash{\begin{tabular}[t]{l}$s$\end{tabular}}}}%
    \put(0,0){\includegraphics[width=\unitlength,page=3]{manybranchcycle.pdf}}%
    \put(0.08412792,0.08545649){\color[rgb]{0,0,0}\makebox(0,0)[lt]{\lineheight{1.25}\smash{\begin{tabular}[t]{l}$s$\end{tabular}}}}%
  \end{picture}%
\endgroup%
}
    \caption{If $\lambda^{-\frac{2}{l}\chi} < 4$, each branch cycle meeting a bottom side of $s$ meets at most $2$ vertices of the same color as $s$.}
    \label{fig:manybranchcycle}
\end{figure}

If $s$ fails (FRC), then by \Cref{lemma:doubleprehook}, $v^1_{\delta_1}=v^2_{\delta_2}$. If $s$ satisfies (FRC) but $\Gamma(h_1 \cup h_2)$ is not connected, then $v^{\beta'}_{\delta_{\beta'}}$ is equal to the bottom vertex of $s^\beta_1$ for some $\beta, \beta'$ in a way such that $e^{\beta'}_{\delta_{\beta'}+1}$ and $v_0$ lie on the same side of $s^\beta_1$, by \Cref{lemma:doubledeephookfan}. But in the current setting, this implies that $v^1_{\delta_1}=v^2_{\delta_2}$. So we see that unless $\Gamma(h_1 \cup h_2)$ is connected, we must have $v^1_{\delta_1}=v^2_{\delta_2}$.

\begin{prop} \label{prop:manybranchcycle}
Suppose that $l > \frac{\log \lambda^{-\chi}}{\log 2}$. Then the number of tetrahedra in $\Delta$ is $\leq \max \{\frac{1}{4} \lambda^{-2\chi} +1, \frac{1}{2} (\lambda^{-2\chi} - \frac{\lambda^{-2\chi} - \lambda^{-\frac{2}{l}\chi}}{\lambda^{-\frac{2}{l}\chi} - 1} + l) \}$
\end{prop}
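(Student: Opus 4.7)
I split the argument into two cases based on whether the resolved dual graph $\Gamma(h_1 \cup h_2)$ is connected, where $h_1, h_2$ are the hooks of a minimum-weight sector $s$. The hypothesis $l > \log \lambda^{-\chi}/\log 2$ is equivalent to $B := \lambda^{-\frac{2}{l}\chi} < 4$, placing us in the setting of the structural discussion above the proposition.

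First, if $\Gamma(h_1 \cup h_2)$ is connected, then \Cref{prop:doublehookresolution}(I) produces an Eulerian circuit hooking around $s$ twice, and \Cref{prop:doublehookbound} gives $N \leq \tfrac{1}{4}\lambda^{-2\chi} + 1$, matching the first term in the maximum.

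Otherwise $\Gamma(h_1 \cup h_2)$ is disconnected. The discussion preceding the proposition forces $v^1_{\delta_1} = v^2_{\delta_2}$ (and hence $s^1_{\delta_1} = s^2_{\delta_2}$). I plan to refine the proof of \Cref{prop:singlehookbound}. Take an Eulerian circuit $c$ hooking around $s$ on side $1$ via \Cref{prop:singlehookexist}; the exceptional triangulation \texttt{cPcbbbdxm\_10} is automatically excluded, as it has $l = 1$. Form the descending path $\alpha$ from the top vertex of $\widehat{s_0}$, giving $\lambda^{-2\chi} w = w + W$. A direct rearrangement shows that the desired bound on $N$ is equivalent to
\[
W \geq (2N - 2)w + \sum_{i=1}^{l-1}(B^i - 1)w.
\]

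The improvement over the bound $W \geq (2N-1)w$ from \Cref{prop:singlehookbound} exploits the fact that, because $T_f$ has a single boundary component, the $l$ branch cycles of $B$ are mutually homologous and each intersects $S$ in exactly $-\tfrac{2\chi}{l}$ points; consequently one full period of a lifted branch cycle multiplies the ambient weight by $B$. The plan is to identify $l - 1$ distinct non-hook vertices $u_1, \dots, u_{l-1}$ along $\widehat{c}$ with $u_i$ lying at height at most $\tfrac{2i\chi}{l}$, so that the fin of $u_i$ merging into $\alpha$ has weight at least $B^i w$, contributing $(B^i - 1)w$ beyond the generic bound $\geq w$ at that vertex. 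Combined with the pairing estimate for the B-resolved hook vertices and a concession of one extra $w$ for the A-resolved hook vertex compared to \Cref{prop:singlehookbound}, this yields the displayed inequality. The hard step will be producing the witnesses $u_i$ simultaneously: since $\alpha$ descends by a total of $-2\chi$ over $-2\chi$ fiber crossings, the vertex of $\widehat{c}$ reached just after the $\lceil -2i\chi/l\rceil$-th crossing is a natural candidate, but ensuring that each such vertex is non-hook requires a careful choice of Eulerian circuit $c$ whose descending path sequences the branch cycles in order of decreasing altitude. I expect the Case 2 coincidence $s^1_{\delta_1} = s^2_{\delta_2}$ to furnish precisely the flexibility needed to assemble such a $c$ compatibly with the hook $h_1$.
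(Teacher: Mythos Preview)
Your case split and the target inequality
\[
W \;\geq\; (2N-2)w \;+\; \sum_{i=1}^{l-1}\bigl(\lambda^{-\frac{2i}{l}\chi}-1\bigr)w
\]
are correct, and the first case (connected $\Gamma(h_1\cup h_2)$) is handled exactly as in the paper. The gap is in how you propose to produce the witnesses $u_1,\dots,u_{l-1}$ in the disconnected case.

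Your candidate ``the vertex of $\widehat{c}$ reached just after the $\lceil -2i\chi/l\rceil$-th fiber crossing'' does not work: nothing prevents such a vertex from being a hook vertex, and you acknowledge this. Your proposed remedy---engineering a special Eulerian circuit so that branch cycles are traversed in a prescribed order---is neither carried out nor clearly possible, and the appeal to ``the Case~2 coincidence $s^1_{\delta_1}=s^2_{\delta_2}$'' is misplaced (this coincidence holds throughout the disconnected case, so it cannot by itself supply extra freedom).

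The mechanism the paper uses is different and does not require any special choice of $c$. From $v^1_{\delta_1}=v^2_{\delta_2}$ one sees that the sides of $s$ lie on exactly two branch cycles, say $c_1,c_2$; the remaining branch cycles $c_3,\dots,c_l$ do not meet the sides of $s$ at all. For each $i=3,\dots,l$ take $u_i$ to be the \emph{last} vertex of $-c$ lying on $c_i$. This vertex is automatically non-hook (hook vertices lie on the sides of $s$, hence on $c_1\cup c_2$), and $c$ automatically takes an anti-branching turn there. Moreover, by the time $-c$ reaches $u_i$ it has already traversed every edge of $c_3,\dots,c_i$, hence has crossed the fiber at least $-\tfrac{2(i-2)}{l}\chi$ times past the hook; this gives the height bound you want with no choice involved. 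That accounts for $l-2$ improved contributions. The $(l-1)$-st improvement comes from the very last vertex $u$ of $-c$, and here the paper splits into two subcases (according to whether $v^1_{\delta_1}=v^2_{\delta_2}$ coincides with the bottom vertex of $s$) to justify replacing its contribution by $\lambda^{-\frac{2(l-1)}{l}\chi}w$; in the second subcase $u$ may be paired with a B-resolved hook vertex, which is exactly where the extra ``$-1$'' in your target inequality is spent.

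So the missing idea is: use the branch-cycle decomposition of $\Gamma$, not the fiber-crossing count, to locate the witnesses. No clever choice of Eulerian circuit is needed.
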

\begin{proof}
If $\Gamma(h_1 \cup h_2)$ is connected, then the proposition follows from \Cref{prop:doublehookresolution} and \Cref{prop:doublehookbound}.
If $\Gamma(h_1 \cup h_2)$ is not connected, then by the reasoning before the proposition, we have $v^1_{\delta_1}=v^2_{\delta_2}$. The way we will prove the proposition is to improve the estimates made in \Cref{prop:singlehookbound} when that argument is applied to the Eulerian hook circuit $c$.

First notice that $v^1_{\delta_1}=v^2_{\delta_2}$ implies that the sides of $s$ lie along two branch cycles, say $c_1$ and $c_2$ among the $l$ branch cycles $c_1,...,c_l$. Let $u_i$, $i=3,...,l$ be the last vertex of $-c$ that lies on $c_i$. By definition, $c$ takes an anti-branching turn at each $u_i$, and each $u_i$ does not lie on the sides of $s$. Without loss of generality, suppose that $-c$ meets $u_i$ in the order of increasing $i$.

In the argument of \Cref{prop:singlehookbound}, the contribution of the terms corresponding to each $u_i$ is $w$, since each $u_i$ lies on $c_i$ hence does not meet a hook vertex. But between the last vertex on the hook and $u_i$, $-c$ will have passed through $c_3,...,c_i$ in their entirety, hence will have intersected the fiber surface at least $-\frac{2(i-2)}{l}\chi$ times. So the sector in the term corresponding to $u_i$ is in fact at height $\leq \frac{2(i-2)}{l}\chi$, hence of weight $\geq \lambda^{-\frac{2(i-2)}{l}\chi} w$. This implies that we can obtain a sharper bound if we replace the $w$ contributed by $u_i$ to $\lambda^{-\frac{2(i-2)}{l}\chi} w$.

We can make one more improvement, the source of the improvement differing in two cases. 
Case 1 is if $v^1_{\delta_1}=v^2_{\delta_2}$ is not the bottom vertex of $s$. In this case, the edges of $\Gamma$ that lie on the sides of $s$ are all distinct. Let $p^\beta_k$ be the number of intersection points between $e^\beta_k$ and the fiber surface. Then $\sum_{k=1}^{\delta_1+1} p^1_k + \sum_{k=1}^{\delta_2+1} p^2_k$ is at most the number of intersection points of $c_1$ and $c_2$ with the fiber surface, which is $\frac{2}{l} \cdot (-2\chi)$. But $\sum_{k=1}^{\delta_1+1} p^1_k = \sum_{k=1}^{\delta_2+1} p^2_k$ since every arc has endpoints on different sides of $s$, so $\sum_{k=1}^{\delta_1+1} p^1_k = \sum_{k=1}^{\delta_2+1} p^2_k \leq -\frac{2}{l}\chi$.

Now consider the last vertex of $-c$, which we denote by $u$. $u$ sits at the top vertex of $s$. Between the last vertex on the hook and $u$, $c$ will have intersected the fiber surface for $-2\chi - \sum_{k=1}^{\delta_1+1} p^1_k \geq -\frac{2(l-1)}{l}\chi$ times. Since $v^1_{\delta_1}=v^2_{\delta_2}$ is not the bottom vertex of $s$, $u$ is not a hook vertex. As a result, in the argument of \Cref{prop:singlehookbound}, the contribution of $u$ is $w$, and we can replace this contribution by $\lambda^{-\frac{2(l-1)}{l}\chi} w$ to get a better bound.

Case 2 is if $v^1_{\delta_1}=v^2_{\delta_2}$ is the bottom vertex of $s$. Notice that in this case $s$ must be toggle, otherwise the top sides of $s$ form one or two branch cycles meeting vertices of only one color, contradicting \Cref{prop:vbscontradictions}. See \Cref{fig:manybranchcyclecontradiction}.

\begin{figure}
    \centering
    \fontsize{12pt}{12pt}\selectfont
    \resizebox{!}{3cm}{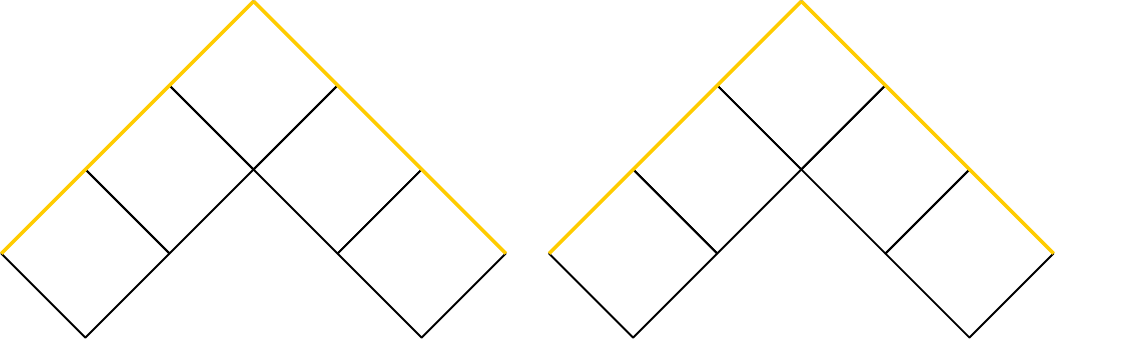}
    \caption{If $v^1_{\delta_1}=v^2_{\delta_2}$, then $s$ has to be toggle, otherwise the top sides of $s$ form one or two branch cycles (in yellow) meeting vertices of only one color.}
    \label{fig:manybranchcyclecontradiction}
\end{figure}

In this case, we consider the last vertex $u$ of $-c$ as well. The term corresponding to $u$ is the weight of a sector of $\widehat{B}$ covering $s$. If we let $p^\beta_k$ be the number of intersection points between $e^\beta_k$ and the fiber surface as above, then the height of this sector will be $2\chi + \sum_{k=1}^{\delta_\beta} p^\beta_k$ for $\beta=1$ or 2 depending on the manner of identification of $v^1_{\delta_1}=v^2_{\delta_2}$ with the bottom vertex of $s$, and whether $c$ takes an anti-branching or branching turn at $u$. In either case, $\sum_{k=1}^{\delta_\beta} p^\beta_k$ is at most the number of intersection points of one branch cycle with the fiber surface, which is $-\frac{2}{l}\chi$. 

We can now replace the contribution of $u$ by $\lambda^{-\frac{2(l-1)}{l}\chi} w$. However, there is a difference with the previous case here. The original contribution of $u$ is not necessarily $w$. If $u$ is paired up with a B-resolved hook vertex, then the contribution of the pair is $2w$, while the contribution of $u$ itself is greater than $w$. So what we do here is remove the contributions of both terms in the pair, and replace them by $\lambda^{-\frac{2(l-1)}{l}\chi} w$.

With these optimizations, \Cref{eq:singlehook} becomes
\begin{align*}
    \lambda^{-2\chi} w &\geq 2Nw + (\lambda^{-\frac{2}{l}\chi} w - w) + ... + (\lambda^{-\frac{2(l-2)}{l}\chi} w - w) + (\lambda^{-\frac{2(l-1)}{l}\chi} w - w) \\
    &= 2Nw + \frac{\lambda^{-2\chi} - \lambda^{-\frac{2}{l}\chi}}{\lambda^{-\frac{2}{l}\chi} - 1}w - (l-1)w
\end{align*}
in case 1 and 
\begin{align*}
    \lambda^{-2\chi} w &\geq 2Nw + (\lambda^{-\frac{2}{l}\chi} w - w) + ... + (\lambda^{-\frac{2(l-2)}{l}\chi} w - w) + (\lambda^{-\frac{2(l-1)}{l}\chi} w - 2w) \\
    &= 2Nw + \frac{\lambda^{-2\chi} - \lambda^{-\frac{2}{l}\chi}}{\lambda^{-\frac{2}{l}\chi} - 1}w - lw
\end{align*}
in case 2.
So in both cases, we have 
\begin{align*}
    \lambda^{-2\chi} &\geq 2N + \frac{\lambda^{-2\chi} - \lambda^{-\frac{2}{l}\chi}}{\lambda^{-\frac{2}{l}\chi} - 1} - l \\
    N &\leq \frac{1}{2} (\lambda^{-2\chi} - \frac{\lambda^{-2\chi} - \lambda^{-\frac{2}{l}\chi}}{\lambda^{-\frac{2}{l}\chi} - 1} + l)
    \qedhere
\end{align*}
\end{proof}

We summarize the part of the proof of \Cref{prop:manybranchcycle} where we improve \Cref{prop:singlehookbound} in \Cref{tab:manybranchcycle}.

\begin{table}[ht]
    \centering
    \caption{The argument in \Cref{prop:manybranchcycle}.}
    \begin{tabular}{|C{6cm}|C{6cm}|C{3cm}|}
    \hline
    Vertices of $-c$ & Quantity & Contribution \\
    \hline \hline
    Last vertex on $c_i$, $i=3,...,l$ & $l-2$ &  $\lambda^{-\frac{2(i-2)}{l}\chi} w$ \\
    \hline
    Last vertex & 1 & $\lambda^{-\frac{2(l-1)}{l}\chi} w$ \\
    \hline
    \multirow{2}{6cm}{\centering Remaining pairs of vertices that meet a B-resolved hook vertex} & Case 1: $\text{\# hook vertices}-1$ & \multirow{2}{3cm}{\centering $2w$} \\
     & Case 2: $\text{\# hook vertices}-2$ & \\
    \hline   
    \multirow{3}{6cm}{\centering Remaining non-hook vertices} & Case 1: $2N-2\text{(\# hook vertices)}-l+2$ & \multirow{3}{3cm}{\centering $w$} \\
     & Case 2: $2N-2\text{(\# hook vertices)}-l+3$ & \\
    \hline
    \end{tabular}
    \label{tab:manybranchcycle}
\end{table}

We note that when $\lambda^{-\chi}=\mu^4$, $l > \frac{\log \lambda^{-\chi}}{\log 2}$ reads $l > 2.78$, that is, $l \geq 3$. So for our goal of proving \Cref{thm:introdilthm}, we must now turn our attention to the case when $l=1$ or $2$.

\subsection{When the number of branch cycles is small} \label{subsec:fewbranchcycle}

As in the last subsection, let $s$ be a sector of minimum weight. Without loss of generality assume that $s$ is blue. Let $c$ be an Eulerian circuit that hooks around $s$, say $c$ contains the hook $h_1$. We use the same notation on the edges, vertices, and sectors adjacent to $s$ as always. Suppose $B$ has $l$ branch cycles, where $l=1$ or $2$.

The key property that $l=1$ or $2$ buys us is \Cref{prop:fewbranchcyclefrc}, which says that we can assume that $s$ satisfies (FRC) hence apply the lemmas in \Cref{subsec:doublehookobstruct}, for otherwise we have an improved bound by other means. Before proving \Cref{prop:fewbranchcyclefrc}, it will be in our advantage to prove \Cref{prop:fewbranchcyclenotranslate} first.

\begin{prop} \label{prop:fewbranchcyclenotranslate}
When $l=1$ or $2$, either $e^1_{\delta_1+1} \neq e^2_1$ and $e^2_{\delta_2+1} \neq e^1_1$, or the number of tetrahedra in $\Delta$ is $\leq \frac{1}{4} \lambda^{-2\chi} +1$.
\end{prop}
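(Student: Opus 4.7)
The strategy is to show that each of the hypothesized identifications forces the conclusion of \Cref{prop:doublehookbound}, so the bound $\frac{1}{4}\lambda^{-2\chi}+1$ follows. Suppose without loss of generality that $e^1_{\delta_1+1}=e^2_1$. As an equality of oriented edges of $\Gamma$, this forces the vertex identifications $v^1_{\delta_1}=v_0$ and $v_\top=v^2_1$, where $v_\top$ denotes the top vertex of $s$. Matching colors via \Cref{prop:vbstogglefan} then pins down $s$ as a toggle sector, since $v^2_1$ is red while $v_\top$ must then be red too.

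Choose the fiber surface so that $s$ is the only deep sector, using \Cref{prop:manyfibersurfaces}, and first handle the subcase where $s$ itself is deep, so that both hooks $h_1,h_2$ are deep. The shared edge $e^1_{\delta_1+1}=e^2_1$ sits at the end of $h_1$ and at the start of $h_2$, so the concatenation $P=(e^1_1,\ldots,e^1_{\delta_1+1},e^2_2,\ldots,e^2_{\delta_2+1})$ is a single simple path containing both hooks as sub-paths; one checks directly (comparing the edge-pair joined at each interior vertex, using the identifications above) that $P$ induces exactly the same resolution as $h_1\cup h_2$. Consequently, showing that $\Gamma(h_1\cup h_2)$ is connected would produce, via case (I) of \Cref{prop:doublehookresolution}, an Eulerian circuit that hooks around $s$ twice, placing us in case (ii) of \Cref{prop:doublehookbound}.

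Connectedness of $\Gamma(h_1\cup h_2)$ is the crux. I would use Alexander duality as in \Cref{subsec:Alexanderduality}: by \Cref{prop:Alexanderduality} the question reduces to ruling out non-trivial 2-cycles in $Q(h_1\cup h_2)$ that vanish in $H_2(T_f,\partial T_f)$. The identification $e^1_{\delta_1+1}=e^2_1$ alters which blue boundary edges of the quad complex are available to cancel, obstructing the 2-cycles identified in \Cref{lemma:doubledeephookfan}. When this direct argument fails due to further exceptional coincidences of edges or vertices, the hypothesis $l\in\{1,2\}$ is used to force the branch cycle carrying $e^1_{\delta_1+1}=e^2_1$ to close up, yielding $e^2_{\delta_2+1}=e^2_1$ or a symmetric identification, at which point \Cref{lemma:doublehookoneshortcycle} or \Cref{lemma:doublehookonecycleothernondeep} produces the Eulerian two-circuit collection of case (i) of \Cref{prop:doublehookbound}. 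The main obstacle is the enumeration and separate verification of these exceptional coincidences, together with the subcase where no choice of fiber surface makes $s$ deep; the smallness of $l$ is what ultimately rules out pathological configurations and closes each branch of the case analysis.
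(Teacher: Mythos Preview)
Your proposal takes a much more circuitous route than the paper, and along the way it accumulates both an unjustified step and a missed structural observation that makes the whole problem far easier.

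The key point you miss is that the identification $e^1_{\delta_1+1}=e^2_1$ forces $v^1_{\delta_1}=v_0$ (as you note), and this makes the cycle $(e^1_i)_{i\in\mathbb{Z}/\delta_1}$ into a \emph{branch cycle}: at every interior vertex $v^1_1,\dots,v^1_{\delta_1-1}$ the turn along the bottom side is branching, and at $v_0=v^1_{\delta_1}$ the branching continuation of $e^1_{\delta_1}$ is $e^1_1$ (since the anti-branching continuation is $e^1_{\delta_1+1}=e^2_1$). Once you see this, the proof is short. If $l=1$ this cycle is the sole branch cycle, hence Eulerian, yet it meets only one blue vertex---impossible, since an Eulerian circuit visits each vertex twice. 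If $l=2$ and also $e^2_{\delta_2+1}=e^1_1$, then $(e^2_i)_{i\in\mathbb{Z}/\delta_2}$ is the other branch cycle, and the concatenation $(e^1_1,\dots,e^1_{\delta_1},e^2_1,\dots,e^2_{\delta_2})$ is already an Eulerian circuit hooking around $s$ twice, so \Cref{prop:doublehookbound} applies directly. If $l=2$ and $e^2_{\delta_2+1}\neq e^1_1$, an intersection-number count (using that all sides of $s$ other than $(e^1_i)$ lie on the second branch cycle $c_2$) shows that all intersections of $c_2$ with the fiber surface occur on $e^2_2,\dots,e^2_{\delta_2+1}$; cutting $c_2$ at $v^2_{\delta_2}$ then produces a $\Gamma$-cycle disjoint from the fiber surface, contradicting \Cref{prop:nonullcycles}. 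No Alexander duality, no (FRC), no case enumeration of quad 2-cycles is needed.

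Your argument has a genuine gap at the invocation of \Cref{prop:manyfibersurfaces}: the proposition is stated for a fixed fiber surface with $s$ a \emph{minimum weight} sector, and this hypothesis is exactly what is needed to apply \Cref{prop:doublehookbound}. If you replace the fiber surface so that $s$ becomes the only deep sector, there is no guarantee that $s$ remains of minimum weight, so \Cref{prop:doublehookbound} no longer applies. (In the paper, \Cref{prop:manyfibersurfaces} is used only under the extra global assumption that every minimum weight sector is deep for every fiber surface.) Separately, your claim that $s$ must be toggle fails when $\delta_2=1$, since then $v^2_1$ is the (blue) side vertex; and your connectedness argument for $\Gamma(h_1\cup h_2)$ is only sketched---you acknowledge yourself that ``the main obstacle is the enumeration and separate verification of these exceptional coincidences,'' which is precisely the work that would have to be done.
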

\begin{proof}
If $l=1$ and, say, $e^1_{\delta_1+1} = e^2_1$, then $(e^1_i)_{i \in \mathbb{Z}/{\delta_1}}$ is the sole branch cycle of $B$. But this branch cycle only meets one blue vertex, contradicting the fact that any Eulerian circuit must meet each vertex twice.

If $l=2$ and, say, $e^1_{\delta_1+1} = e^2_1$, then as above, $(e^1_i)_{i \in \mathbb{Z}/{\delta_1}}$ is a branch cycle of $B$. We denote this branch cycle by $c_1$, and denote the other branch cycle of $B$ by $c_2$. 
If $e^2_{\delta_2+1}=e^1_1$, then $c_2=(e^2_i)_{i \in \mathbb{Z}/{\delta_2}}$. In this case, $(e^1_1,...,e^1_{\delta_1},e^2_1,...,e^2_{\delta_2})$ is an Eulerian circuit that hooks around $s$ twice, implying that the number of tetrahedra in $\Delta$ is $\leq \frac{1}{4} \lambda^{-2\chi}+1$ by \Cref{prop:doublehookbound}.

On the other hand, if $e^2_{\delta_2+1} \neq e^1_1$, then all sides of $s$ other than $(e^1_1,...,e^1_{\delta_1})$ lie on $c_2$. See \Cref{fig:fewbranchcyclenotranslate}. Here we use the fact that if $e^1_{\delta_1+1}=e^1_1$, then $s$ will not be embedded in its interior near $v_0$, which is a contradiction. Moreover, the edges $e^2_k$, $k=1,...,\delta_2+1$ are distinct. Here we use the fact that if $e^2_{\delta_2+1}=e^2_1$, then again $s$ will not be embedded in its interior near $v_0$.

\begin{figure}
    \centering
    \fontsize{12pt}{12pt}\selectfont
    \resizebox{!}{4cm}{
\begingroup%
  \makeatletter%
  \providecommand\color[2][]{%
    \errmessage{(Inkscape) Color is used for the text in Inkscape, but the package 'color.sty' is not loaded}%
    \renewcommand\color[2][]{}%
  }%
  \providecommand\transparent[1]{%
    \errmessage{(Inkscape) Transparency is used (non-zero) for the text in Inkscape, but the package 'transparent.sty' is not loaded}%
    \renewcommand\transparent[1]{}%
  }%
  \providecommand\rotatebox[2]{#2}%
  \newcommand*\fsize{\dimexpr\f@size pt\relax}%
  \newcommand*\lineheight[1]{\fontsize{\fsize}{#1\fsize}\selectfont}%
  \ifx\svgwidth\undefined%
    \setlength{\unitlength}{135.41789137bp}%
    \ifx\svgscale\undefined%
      \relax%
    \else%
      \setlength{\unitlength}{\unitlength * \real{\svgscale}}%
    \fi%
  \else%
    \setlength{\unitlength}{\svgwidth}%
  \fi%
  \global\let\svgwidth\undefined%
  \global\let\svgscale\undefined%
  \makeatother%
  \begin{picture}(1,1.04662902)%
    \lineheight{1}%
    \setlength\tabcolsep{0pt}%
    \put(0,0){\includegraphics[width=\unitlength,page=1]{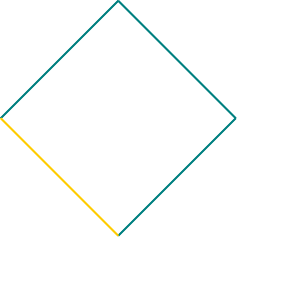}}%
    \put(0.37431883,0.59649542){\color[rgb]{0,0,0}\makebox(0,0)[lt]{\lineheight{1.25}\smash{\begin{tabular}[t]{l}\Hu $s$\end{tabular}}}}%
    \put(0,0){\includegraphics[width=\unitlength,page=2]{fewbranchcyclenotranslate.pdf}}%
    \put(0.59252952,0.17152241){\color[rgb]{0,0,0}\makebox(0,0)[lt]{\lineheight{1.25}\smash{\begin{tabular}[t]{l}\La $s$\end{tabular}}}}%
    \put(0.1194401,0.327242){\color[rgb]{1,0.8,0}\makebox(0,0)[lt]{\lineheight{1.25}\smash{\begin{tabular}[t]{l}\La $c_1$\end{tabular}}}}%
    \put(0.62778154,0.87565266){\color[rgb]{0,0.50196078,0.50196078}\makebox(0,0)[lt]{\lineheight{1.25}\smash{\begin{tabular}[t]{l}\La $c_2$\end{tabular}}}}%
  \end{picture}%
\endgroup%
}
    \caption{If $e^1_{\delta_1+1} = e^2_1$ and $e^2_{\delta_2+1} \neq e^1_1$, then one bottom side of $s$ lies along the branch cycle $c_1=(e^1_i)_{i \in \mathbb{Z}/{\delta_1}}$ (in yellow) while all other sides of $s$ lie on $c_2$ (in teal).}
    \label{fig:fewbranchcyclenotranslate}
\end{figure}

Let $p^\beta_k$ be the number of intersections of $e^\beta_k$ with the fiber surface. We have the following equations:
\begin{itemize}
    \item $\sum_{k=1}^{\delta_1+1} p^1_k = \sum_{k=1}^{\delta_2+1} p^2_k$
    \item $\sum_{k=1}^{\delta_1} p^1_k = -\chi$
    \item $p^1_{\delta_1+1}=p^2_1$
    \item $\sum_{k=1}^{\delta_2+1} p^2_k \leq -\chi$
\end{itemize}
which imply that $p^1_{\delta_1+1}=p^2_1=0$ and $\sum_{k=2}^{\delta_2+1} p^2_k = -\chi$. That is, the intersection points of $c_2$ with the fiber surface are all lie on $e^2_k$, $k=2,...,\delta_2+1$.

Consider cutting and pasting the branch cycle $c_2$ at $v^2_{\delta_2}$. This gives two cycles of $\Gamma$, one containing $e^2_k$, $k=2,...,\delta_2+1$, and the other not containing any of these edges, thus does not intersect the fiber surface. But this latter cycle would then contradict \Cref{prop:nonullcycles}. 
\end{proof}

\begin{prop} \label{prop:fewbranchcyclefrc}
Suppose $l=1$ or $2$ and suppose there is an Eulerian circuit $c$ that hooks around $s$. Then either the minimum weight sector $s$ satisfies (FRC) or the number of tetrahedra in $\Delta$ is $\leq \max \{ \frac{1}{4} \lambda^{-2\chi} +1, \frac{1}{2} (\lambda^{-2\chi}-\lambda^{-\frac{4}{3}\chi}-\lambda^{-\chi})+1\}$.
\end{prop}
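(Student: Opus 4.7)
The plan is to refine the single-hook argument of \Cref{prop:singlehookbound} by improving two of the vertex contributions to the sum $W$ when (FRC) fails, producing terms of size $\lambda^{-\chi}w$ and $\lambda^{-\frac{4}{3}\chi}w$ in place of two copies of $w$. Combined and rearranged, this yields the claimed bound.

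First I would extract the structural input forced by (FRC) failing. By \Cref{lemma:doubleprehook} we have $v^1_{\delta_1}=v^2_{\delta_2}$, and the component $C_1$ of $\Gamma(h'_1\cup h'_2)$ not containing $h'_1\cup h'_2$ contains at least one branch cycle $\gamma$. By \Cref{prop:fewbranchcyclenotranslate} I may further assume $e^1_{\delta_1+1}\neq e^2_1$ and $e^2_{\delta_2+1}\neq e^1_1$, since otherwise the double-hook bound $\frac{1}{4}\lambda^{-2\chi}+1$ already suffices. I would then argue that $\gamma$ is edge-disjoint from the two sides of $s$: the sides lie in $C_0$ as smooth paths extending $h'_1\cup h'_2$ through the unresolved vertices $v_0$, $v^1_{\delta_1}=v^2_{\delta_2}$, and $T$, whereas $\gamma\subset C_1$. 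Since for $l\leq 2$ each $\Gamma$-branch cycle has at least $-\chi$ fiber crossings with $S$, and since the sides of $s$ together occupy a full branch cycle's worth of crossings along the cycle(s) containing them, the edges of $\gamma$ carry at least $-\chi$ fiber crossings, and one obtains a constraint of the form $p_s\leq -\frac{\chi}{2}$ on the number of arcs on $s$ in the main sub-case where both sides lie on a single branch cycle.

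Next I would apply the single-hook argument of \Cref{prop:singlehookbound} to the Eulerian circuit $c$ and refine the lower bound on $W$. The first improvement, adapting the idea of \Cref{prop:manybranchcycle}, takes the last vertex of $-c$ lying on $\gamma$: since $c$ traverses every edge of $\gamma$ and $\gamma$ is disjoint from the hook (which lies on the sides of $s$), by the time $-\widehat{c}$ reaches this vertex the descending path has crossed at least $-\chi$ additional fibers, so the merging sector there has weight at least $\lambda^{-\chi}w$. The second improvement takes the terminal vertex of $-c$, located at the top vertex $T$ of $s$ and non-hook (being an endpoint rather than an interior vertex of $h_1$). Combining the hook's fiber contribution at the start of $-c$, the mandatory traversal of $\gamma$, and the remaining portion of $c$ required to return to $T$, while exploiting the constraint on $p_s$ described above, I would show that the cumulative fiber crossings at the terminal vertex force the merging sector to lie at height at most $\frac{4}{3}\chi$, giving contribution at least $\lambda^{-\frac{4}{3}\chi}w$. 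Summing in place of the two $w$'s they replace yields $W\geq(2N-3)w+\lambda^{-\chi}w+\lambda^{-\frac{4}{3}\chi}w$, whence $\lambda^{-2\chi}\geq 2N-2+\lambda^{-\chi}+\lambda^{-\frac{4}{3}\chi}$, giving the claimed bound.

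The main obstacle will be establishing the factor $\frac{4}{3}$ at the terminal vertex rigorously. Unlike the first improvement, which follows from a direct Eulerian-circuit argument on a single branch cycle, the second requires a delicate accounting of fiber crossings spread over the hook, $\gamma$, and the remaining edges of $c$, all pinned together by the identification $v^1_{\delta_1}=v^2_{\delta_2}$ and the constraint on $p_s$. I expect a case split according to whether $v^1_{\delta_1}=v^2_{\delta_2}$ coincides with the bottom vertex $v_0$ of $s$ (which affects the B-pair structure near the terminal vertex), together with careful tracking of which edge of $-c$ enters $T$ last, to be necessary. Producing the $\frac{4}{3}$ factor uniformly across these cases, and parallel treatment of the $l=1$ case where $\gamma$ arises as a sub-cycle of the unique $\Gamma$-branch cycle split by the resolutions, will likely constitute the main technical content of the proof.
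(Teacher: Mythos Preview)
Your overall approach matches the paper's: improve the single-hook estimate by upgrading the contribution of (i) the last vertex of $-c$ on the branch cycle $c_2$ lying in $C_1$, and (ii) the terminal vertex of $-c$. However, several details are misidentified or missing.

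First, the $l=1$ case is immediate and you have overcomplicated it. If $l=1$ and (FRC) fails, \Cref{lemma:doubleprehook} forces $C_1$ to contain a branch cycle, but the only branch cycle meets $h'_1\cup h'_2\subset C_0$; contradiction. So (FRC) holds automatically when $l=1$, and there is no ``parallel treatment'' needed.

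Second, for $l=2$, your constraint $p_s\le -\chi/2$ on the arc count is the bound for the \emph{easier} sub-case ($v^1_{\delta_1}=v^2_{\delta_2}\neq v_0$), and it yields the stronger exponent $-\tfrac{3}{2}\chi$ at the terminal vertex, not $-\tfrac{4}{3}\chi$. The $-\tfrac{4}{3}\chi$ comes from the \emph{harder} sub-case $v^1_{\delta_1}=v^2_{\delta_2}=v_0$ with $e^1_{\delta_1+1}=e^1_1$ (the case $e^1_{\delta_1+1}=e^2_1$ being disposed of by \Cref{prop:fewbranchcyclenotranslate}). There the relevant quantity is not the arc count but the number of fiber crossings of the hook $h_1$ itself, which the paper bounds by $-\tfrac{2}{3}\chi$ via the system $\sum p^1_k=\sum p^2_k$, $p^1_{\delta_1+1}=p^1_1=0$, $p^2_{\delta_2+1}=p^2_1$, $\sum_{k\le\delta_1}p^1_k+\sum_{k\le\delta_2}p^2_k\le -\chi$.

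Third, your claim that the terminal vertex is non-hook ``being an endpoint rather than an interior vertex of $h_1$'' is not sufficient: in the critical sub-case $e^1_{\delta_1+1}=e^1_1$ one has $T=v^1_1$, which \emph{would} be a hook vertex if $h_1$ were deep. The paper observes that $h_1$ cannot be deep here, since otherwise $(e^1_i)_{i\in\mathbb{Z}/\delta_1}$ would be its own component of $\Gamma(h_1)$, contradicting the existence of the Eulerian hook circuit $c$. This is the step that legitimizes both improved contributions in that sub-case.
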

\begin{proof}
By \Cref{lemma:doubleprehook}, if $s$ does not satisfy (FRC), then $v^1_{\delta_1}=v^2_{\delta_2}$ and the component of $\Gamma(h'_1 \cup h'_2)$ not containing $h'_1 \cup h'_2$ contains at least one branch cycle. This immediately implies that if $l=1$, then $s$ must satisfy (FRC), so we assume that $l=2$ and $s$ does not satisfy (FRC) in the rest of the proof.

In this case, the component not containing $h'_1 \cup h'_2$ contains exactly one branch cycle, which we denote by $c_2$. All the sides of $s$ are contained in the other component of $\Gamma(h'_1 \cup h'_2)$, hence contained in the branch cycle other than $c_2$, which we denote by $c_1$. 
If $v^1_{\delta_1}=v^2_{\delta_2}$ is equal to the bottom vertex of $s$ and $e^1_{\delta_1+1}=e^2_1$ (equivalently, $e^2_{\delta_2+1}=e^1_1$), then the number of tetrahedra in $\Delta$ is $\leq \frac{1}{4} \lambda^{-2\chi} +1$ by \Cref{prop:fewbranchcyclenotranslate}.

If $v^1_{\delta_1}=v^2_{\delta_2}$ is equal to the bottom vertex of $s$ and $e^1_{\delta_1+1}=e^1_1$ (equivalently, $e^2_{\delta_2+1}=e^2_1$), first notice that the hook $h_1$ is not deep, otherwise $\Gamma(h_1)$ is not connected, for $(e^1_i)_{i \in \mathbb{Z}/{\delta_1}}$ would be its own component, contradicting the hypothesis that there is a hook circuit $c$ containing $h_1$. Let $p^\beta_k$ be the number of intersections of $e^\beta_k$ with the fiber surface. Then we have the equations:
\begin{itemize}
    \item $\sum_{k=1}^{\delta_1+1} p^1_k = \sum_{k=1}^{\delta_2+1} p^2_k$
    \item $p^1_{\delta_1+1}=p^1_1=0$
    \item $p^2_{\delta_2+1}=p^2_1$
    \item $\sum_{k=1}^{\delta_1} p^1_k + \sum_{k=1}^{\delta_2} p^2_k \leq -\chi$
\end{itemize}
which imply that the number of times the hook $h_1$ intersects the fiber surface, which is $\sum_{k=2}^{\delta_1+1} p^1_k$, is at most $-\frac{2}{3} \chi$. 

What we can do now is to replace the contribution of the last vertex of $-c$ on $c_2$, which we denote by $u_2$, from $w$ to $\lambda^{-\chi}w$. As in \Cref{prop:manybranchcycle}, the new estimate arises from the fact that $-c$ would have traversed $c_2$ all the way by the time it reached $u_2$, hence intersected the fiber surface at least $-\chi$ times, while the old estimate is $w$ because $u_2$ lies on $c_2$ (and $c$ takes an anti-branching turn at $u_2$) hence cannot meet a hook vertex. 

We also replace the contribution of the last vertex of $-c$, which we denote by $u$, from $w$ to $\lambda^{-\frac{4}{3}\chi}w$. The original contribution is $w$ because $h_1$ is not deep, so $u$ does not meet a hook vertex. The new contribution arises from the fact that between the last vertex on the hook and $u$, $-c$ meets the fiber surface for $\geq -2\chi + \frac{2}{3} \chi = -\frac{4}{3} \chi$ times.

With these improvements, \Cref{eq:singlehook} becomes
\begin{align*}
    \lambda^{-2\chi} w &\geq w + (2N-1)w + (\lambda^{-\chi}w-w) + (\lambda^{-\frac{4}{3}\chi}w-w) \\
    N &\leq \frac{1}{2} (\lambda^{-2\chi}-\lambda^{-\frac{4}{3}\chi}-\lambda^{-\chi})+1
\end{align*}

If $v^1_{\delta_1}=v^2_{\delta_2}$ does not equal to the bottom vertex of $s$, then we can improve the estimates for the same two vertices. Namely, we first replace the contribution of $u_2$, from $w$ to $\lambda^{-\chi}w$, with the same justification as in the last case. For $u$, we can actually replace $w$ by $\lambda^{-\frac{3}{2}\chi}w$. This is because our equations regarding the $p^\beta_k$ now becomes
\begin{itemize}
    \item $\sum_{k=1}^{\delta_1+1} p^1_k = \sum_{k=1}^{\delta_2+1} p^2_k$
    \item $\sum_{k=1}^{\delta_1+1} p^1_k + \sum_{k=1}^{\delta_2+1} p^2_k \leq -\chi$
\end{itemize}
so the number of times the hook $h_1$ intersects the fiber surface is $\leq \sum_{k=1}^{\delta_1+1} p^1_k \leq -\frac{1}{2} \chi$. 

With these improvements, \Cref{eq:singlehook} becomes
\begin{align*}
    \lambda^{-2\chi} w &\geq w + (2N-1)w + (\lambda^{-\chi}w-w) + (\lambda^{-\frac{3}{2}\chi}w-w) \\
    N &\leq \frac{1}{2} (\lambda^{-2\chi}-\lambda^{-\frac{3}{2}\chi}-\lambda^{-\chi})+1 \leq \frac{1}{2} (\lambda^{-2\chi}-\lambda^{-\frac{4}{3}\chi}-\lambda^{-\chi})+1
    \qedhere
\end{align*}
\end{proof}

As before, we summarize the argument in \Cref{prop:fewbranchcyclefrc} using \Cref{tab:fewbranchcyclefrc}.

\begin{table}[ht]
    \centering
    \caption{The argument in \Cref{prop:fewbranchcyclefrc}}
    \begin{tabular}{|C{6cm}|C{6cm}|C{3cm}|}
    \hline
    Vertices of $-c$ & Quantity & Contribution \\
    \hline \hline
    Last vertex on $c_2$ & 1 & $\lambda^{-\chi}w$ \\
    \hline
    Last vertex & 1 & $\lambda^{-\frac{4}{3}\chi}w$ \\
    \hline
    Pairs of vertices that meet a B-resolved hook vertex & $\text{\# hook vertices}-1$ & $2w$ \\
    \hline   
    Remaining non-hook vertices & $2N-2\text{(\# hook vertices)}-1$ & $w$ \\
    \hline
    \end{tabular}
    \label{tab:fewbranchcyclefrc}
\end{table}

\subsection{When the sector is fan} \label{subsec:oneboundaryfan}

In this subsection, we explain some arguments that work when $s$ is fan.
As in the last subsection, let $s$ be a sector of minimum weight. Suppose $s$ is fan. Let $c$ be an Eulerian circuit that hooks around $s$, say $c$ contains the hook $h_1$. 
We use the same notation on the edges, vertices, and sectors adjacent to $s$ as always.

Suppose that $s=s^1_1$, then by \Cref{prop:vbscontradictions}, $e^1_1=e^1_2$. If $e^1_1=e^1_2$ does not intersect the fiber surface, then $(e^1_1)$ is a cycle of $\Gamma$ that does not intersect the fiber surface, contradicting \Cref{prop:nonullcycles}. So $e^1_1=e^1_2$ intersects the fiber surface, and the hook $h_1$ is deep. But then $\Gamma(h_1)$ will not be connected, since $(e^1_1)$ is its own component, contradicting our assumption that the hook circuit $c$ exists.

Now suppose that $s=s^2_1$, then by \Cref{prop:vbscontradictions}, $e^2_1=e^2_2$. Notice that in this case, $h'_2$ is empty, so by \Cref{lemma:doublehookoneshortcycle}, \Cref{prop:doublehookresolution}, and \Cref{prop:doublehookbound}, we know that the number of tetrahedra in $\Delta$ is $\leq \frac{1}{4} \lambda^{-2\chi}+1$.
We record this reasoning as a proposition.

\begin{prop} \label{prop:oneboundaryfannonemb}
Let $s$ be a minimum weight sector. Suppose $s$ is fan and suppose there is an Eulerian circuit $c$ that hooks around $s$. If $s=s^1_1$ or $s^2_1$, then the number of tetrahedra in $\Delta$ is $\leq \frac{1}{4} \lambda^{-2\chi}+1$.
\end{prop}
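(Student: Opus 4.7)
The plan is to dispose of the two cases $s = s^1_1$ and $s = s^2_1$ separately, using different pieces of machinery already in hand. In the first case the hypotheses will collide to produce a contradiction, making the conclusion vacuous; in the second case they trigger the double hook bound through \Cref{lemma:doublehookoneshortcycle}.

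For $s = s^1_1$, I would first argue that this coincidence of blue fan sectors forces the edge identification $e^1_1 = e^1_2$ in $\Delta$. Any consistent matching of the top sides of $s^1_1$ with the top sides $e^1_{\delta_1+1}$ and $e^2_{\delta_2+1}$ of $s$ other than $e^1_1 = e^1_2 = e^1_{\delta_1+1}$ produces a branch cycle meeting vertices of only one color, which \Cref{prop:vbscontradictions} forbids (in particular this pins down $\delta_1 = 1$). Next I would ask whether $e^1_1$ intersects the fiber surface: if not, then $(e^1_1)$ is a cycle of $\Gamma$ disjoint from the fiber surface, violating \Cref{prop:nonullcycles}; so it does intersect the fiber surface, and consequently the hook $h_1$ is deep, reaching all the way down to $e^1_1$. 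But then the single loop $(e^1_1)$ forms its own component in the resolution $\Gamma(h_1)$, disconnecting it, and \Cref{prop:singlehookresolution} rules out the existence of the Eulerian hook circuit $c$, contradicting the hypothesis. Thus $s = s^1_1$ cannot actually occur.

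For $s = s^2_1$, the same reasoning applied on the right side gives $\delta_2 = 1$ and $e^2_1 = e^2_2$, equivalently $e^2_{\delta_2+1} = e^2_1$. Note that $h'_2 = (e^2_2, \ldots, e^2_{\delta_2})$ is then the empty path, so $\Gamma(h_1 \cup h'_2) = \Gamma(h_1)$, which is connected because the Eulerian hook circuit $c$ containing $h_1$ exists (\Cref{prop:singlehookresolution}). Both hypotheses of \Cref{lemma:doublehookoneshortcycle} are therefore met, so the lemma returns scenario (II) of \Cref{prop:doublehookresolution}: an Eulerian collection of two circuits each hooking around $s$. Plugging this into \Cref{prop:doublehookbound} gives the announced bound $N \leq \tfrac{1}{4}\lambda^{-2\chi} + 1$.

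The main subtlety is careful bookkeeping in the degenerate situation $\delta_2 = 1$, where $h'_2$ collapses to the empty path and $h_2$ itself becomes the non-simple degenerate case $(e^2_1)$ of \Cref{defn:hookcircuit}. Once one verifies that \Cref{lemma:doublehookoneshortcycle} goes through in this degeneracy, the argument for the second case is immediate and the first case was already handled by contradiction.
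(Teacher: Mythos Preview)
Your proposal is correct and follows essentially the same approach as the paper's own proof: the $s=s^1_1$ case is shown to be vacuous via the deep-hook contradiction with \Cref{prop:nonullcycles} and \Cref{prop:singlehookresolution}, and the $s=s^2_1$ case is handled by observing $h'_2$ is empty and invoking \Cref{lemma:doublehookoneshortcycle}, \Cref{prop:doublehookresolution}, and \Cref{prop:doublehookbound}. One small wording slip: \Cref{lemma:doublehookoneshortcycle} returns hypothesis (II) of \Cref{prop:doublehookresolution}, not directly conclusion (i); the proposition then yields (i) or (ii), either of which feeds into \Cref{prop:doublehookbound}.
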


We then have the following argument.

\begin{prop} \label{prop:oneboundaryfanemb}
Let $s$ be a sector of minimum weight. Suppose that:
\begin{itemize}
    \item $s$ is fan
    \item $s \neq s^1_1$ or $s^2_1$
    \item $s$ satisfies (FRC) but does not satisfy (TBT)
\end{itemize}
Then the number of tetrahedra in $\Delta$ is $\leq \max \{ \frac{1}{4} \lambda^{-2\chi}+1, \frac{1}{3} \lambda^{-2\chi}+\frac{1}{2} \}$.
\end{prop}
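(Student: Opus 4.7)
Plan: The strategy is to dichotomize based on whether the resolved graph $\Gamma(h_1 \cup h_2)$ is connected, with the two bounds in the statement corresponding to the two cases. By \Cref{lemma:doubledeephookfan} applied to the fan sector $s$ (which satisfies (FRC) but not (TBT)), there are exactly two alternatives: either $\Gamma(h_1 \cup h_2)$ is connected, or the top vertex of $s$ is identified with the bottom vertex $v_0$.

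In the first alternative, scenario (I) of \Cref{prop:doublehookresolution} is satisfied, so \Cref{prop:doublehookbound} immediately yields $N \leq \tfrac{1}{4}\lambda^{-2\chi}+1$.

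In the second alternative, I will first observe that under the top--bottom identification each side of $s$ closes up into a cycle $\kappa_\beta = (e^\beta_1, \dots, e^\beta_{\delta_\beta+1})$ in $\Gamma$, and by \Cref{prop:nonullcycles} each such cycle intersects the fiber surface $S$ at least once. Since the hypothesis $s \neq s^1_1, s^2_1$ rules out the exceptional triangulation \texttt{cPcbbbdxm\_10} (by reasoning parallel to \Cref{prop:oneboundaryfannonemb}), \Cref{prop:singlehookexist} provides an Eulerian hook circuit $c$, which we may take to contain $h_1$. I will then run the single hook argument of \Cref{prop:singlehookbound} on $c$, upgrading the contribution estimates of two distinguished vertices of $-c$ in the spirit of case $2$ in the proof of \Cref{prop:fewbranchcyclefrc}. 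Specifically, the last vertex of $-c$ lying along the side cycle $\kappa_2$ should have its contribution raised from $w$ to $\lambda^{-\chi}w$, since by that point $-c$ must have traversed $\kappa_2$ in its entirety and so crossed $S$ at least once; and the final vertex of $-c$, which lies at the top of $s = v_0$, should have its contribution upgraded by the additional descent accounted for by the closure $\kappa_1$. Tallying these improvements against the standard pairing of $B$-resolved hook vertices should produce the estimate $N \leq \tfrac{1}{3}\lambda^{-2\chi}+\tfrac{1}{2}$.

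The main obstacle will be the bookkeeping in the second alternative: ensuring that the two upgraded vertices are genuinely non-hook so that the improvements do not conflict with the standard hook pairing, and handling the degenerate identifications at $v_0$ (such as $e^\beta_{\delta_\beta+1} = e^\beta_1$ for some $\beta$). The hypothesis $s \neq s^1_1, s^2_1$ is essential here to rule out those pathologies, paralleling its use in \Cref{prop:oneboundaryfannonemb}.
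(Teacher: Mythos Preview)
Your dichotomy via \Cref{lemma:doubledeephookfan} and the treatment of the connected case are correct and match the paper. The second alternative, however, has a genuine gap.

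Your proposed upgrades do not work. Traversing the cycle $\kappa_2$ in its entirety means $-c$ crosses the fiber surface exactly as many times as $\kappa_2$ does, which is the number $b$ of arcs on $s$, not $-\chi$. So the justified upgrade at that vertex is $\lambda^{b}w$, not $\lambda^{-\chi}w$; you have no a priori lower bound on $b$ beyond $b\geq 1$. Even granting two such upgrades, the resulting inequality $\lambda^{-2\chi}\geq 2N+2(\lambda^{b}-1)$ does not yield $\tfrac{1}{3}\lambda^{-2\chi}+\tfrac{1}{2}$. The analogy with \Cref{prop:fewbranchcyclefrc} is misleading: there the cycles are branch cycles and the one-cusp hypothesis pins their intersection with $S$ to $-\chi$; here $\kappa_\beta$ is neither a branch cycle nor is its intersection count controlled.

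The paper's argument in the second alternative is quite different and hinges on the identification being by a \emph{branching} turn (this is exactly what failing (TBT) means). That forces the sector just below $\widehat{s_0}$ in $\widehat B$ to be the translate $g^{-b}\widehat{s_0}$, so if the two fins at $\widehat{v_0}$ have weights $aw\geq a'w$ one gets $\lambda^{b}=1+a+a'$, hence $\lambda^{b}\geq 3$ and $a\geq\tfrac{\lambda^{b}-1}{2}$. The paper then shows $\Gamma(h_\beta\cup\{(v_0,\text{A-resolution})\})$ is connected for each $\beta$ (using that $\Gamma(h_\beta)$ is connected since (TBT) fails), and picks whichever side makes the \emph{larger} fin $aw$ merge in at the last vertex of $-c$. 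That single upgrade gives contribution $\lambda^{-2\chi-b}aw\geq \lambda^{-2\chi}\cdot\tfrac{1-\lambda^{-b}}{2}\geq\tfrac{1}{3}\lambda^{-2\chi}w$, and the bound follows. The hypothesis $s\neq s^1_1,s^2_1$ is used only to ensure this last vertex is not a hook vertex.
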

\begin{proof}
By \Cref{lemma:doubledeephookfan}, $\Gamma(h_1 \cup h_2)$ is connected unless the bottom vertex of $s$ is equal to the top vertex of $s$. When $\Gamma(h_1 \cup h_2)$ is connected, \Cref{prop:doublehookresolution} and \Cref{prop:doublehookbound} imply that the number of tetrahedra in $\Delta$ is $\leq \frac{1}{4} \lambda^{-2\chi}+1$, so we assume in the rest of the proof that the bottom vertex of $s$ is equal to the top vertex of $s$. Since $s$ does not satisfy (TBT), the identification must be such that $(e^1_{\delta_1}, e^1_1)$ takes a branching turn at $v_0$.

Let $\widehat{s_0}$ be the lift of $s$ in $\widehat{B}$ that is at height 0. Let $b$ be the number of arcs on $s$. Then the sector having the bottom vertex of $\widehat{s_0}$, which we denote by $\widehat{v_0}$, as its top vertex is $g^{-b} \widehat{s_0}$. Suppose the weight of the two fins at $\widehat{v_0}$ are $aw$ and $a'w$, with $a \geq a'$. Then we have $\lambda^b w = w + aw + a'w \leq w + 2aw$, which implies that $a \geq \frac{\lambda^b-1}{2}$. Meanwhile, the fins at $\widehat{v_0}$ are at height $\leq 0$, thus $a,a' \geq 1$, and $\lambda^b w = w + aw + a'w \geq 3w$, which implies that $\lambda^b \geq 3$.

Now notice that both $\Gamma(h_1)$ and $\Gamma(h_2)$ are connected, since otherwise $s$ satisfies (TBT) by \Cref{lemma:singledeephookfan}. We claim that both $\Gamma(h_1 \cup \{(v_0,\text{A-resolution})\})$ and $\Gamma(h_2 \cup \{(v_0,\text{A-resolution})\})$ are connected as well. This is because by following along the sides of $s$, we see that the $v_0$ only meets one component of $\Gamma(h_\beta \cup \{(v_0,\text{A-resolution})\})$, so the additional A-resolution at $v_0$ cannot disconnect $\Gamma(h_\beta)$.

Let $c_\beta$ be an Eulerian circuit of $\Gamma$ which is the image of an Eulerian circuit of $\Gamma(h_\beta \cup \{(v_0,\text{A-resolution})\})$. For one of the $c_\beta$, without loss of generality say $c_1$, when we apply the argument of \Cref{prop:singlehookbound}, the last term in \Cref{eq:singlehook} is the weight of a sector which is a translate of the one with weight $aw$ (as opposed to the one with weight $a'w$). See \Cref{fig:oneboundaryfanemb}.

\begin{figure}[ht]
    \centering
    \fontsize{12pt}{12pt}\selectfont
    \resizebox{!}{4cm}{
\begingroup%
  \makeatletter%
  \providecommand\color[2][]{%
    \errmessage{(Inkscape) Color is used for the text in Inkscape, but the package 'color.sty' is not loaded}%
    \renewcommand\color[2][]{}%
  }%
  \providecommand\transparent[1]{%
    \errmessage{(Inkscape) Transparency is used (non-zero) for the text in Inkscape, but the package 'transparent.sty' is not loaded}%
    \renewcommand\transparent[1]{}%
  }%
  \providecommand\rotatebox[2]{#2}%
  \newcommand*\fsize{\dimexpr\f@size pt\relax}%
  \newcommand*\lineheight[1]{\fontsize{\fsize}{#1\fsize}\selectfont}%
  \ifx\svgwidth\undefined%
    \setlength{\unitlength}{141.73228346bp}%
    \ifx\svgscale\undefined%
      \relax%
    \else%
      \setlength{\unitlength}{\unitlength * \real{\svgscale}}%
    \fi%
  \else%
    \setlength{\unitlength}{\svgwidth}%
  \fi%
  \global\let\svgwidth\undefined%
  \global\let\svgscale\undefined%
  \makeatother%
  \begin{picture}(1,1.00000008)%
    \lineheight{1}%
    \setlength\tabcolsep{0pt}%
    \put(0,0){\includegraphics[width=\unitlength,page=1]{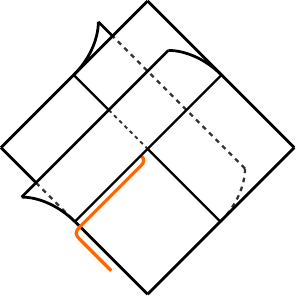}}%
    \put(0.47031062,0.223232){\color[rgb]{0,0,0}\makebox(0,0)[lt]{\lineheight{1.25}\smash{\begin{tabular}[t]{l}\La $s$\end{tabular}}}}%
    \put(0.4695341,0.72487217){\color[rgb]{0,0,0}\makebox(0,0)[lt]{\lineheight{1.25}\smash{\begin{tabular}[t]{l}\La $\s$\end{tabular}}}}%
    \put(0.24983745,0.41004642){\color[rgb]{0,0,0}\makebox(0,0)[lt]{\lineheight{1.25}\smash{\begin{tabular}[t]{l}$aw$\end{tabular}}}}%
    \put(0.63772182,0.41004642){\color[rgb]{0,0,0}\transparent{0.40000001}\makebox(0,0)[lt]{\lineheight{1.25}\smash{\begin{tabular}[t]{l}$a'w$\end{tabular}}}}%
    \put(0,0){\includegraphics[width=\unitlength,page=2]{oneboundaryfanemb.pdf}}%
  \end{picture}%
\endgroup%
}
    \caption{The last term in one of the $c_\beta$ (the orange one in the figure) will be the weight of a sector which is a translate of the one with weight $aw$ (as opposed to the one with weight $a'w$).}
    \label{fig:oneboundaryfanemb}
\end{figure}

More precisely, we can replace the contribution of the last vertex of $c_1$ from $w$ to $\lambda^{-2\chi-b} aw$. Here we use the second item in the hypothesis to ensure that this vertex is not a hook vertex. \Cref{eq:singlehook} then becomes
\begin{align*}
    \lambda^{-2\chi}w &= w + (2N-1)w + (\lambda^{-2\chi-b} aw - w) \\
    &\geq (2N-1)w + \lambda^{-2\chi} \frac{w-\lambda^{-b}w}{2} \\
    &\geq (2N-1)w + \lambda^{-2\chi} \frac{w-\frac{1}{3} w}{2} \\
    &= (2N-1)w + \frac{1}{3} \lambda^{-2\chi}w \\
    N &\leq \frac{1}{3} \lambda^{-2\chi}+\frac{1}{2}
    \qedhere
\end{align*}
\end{proof}

We summarize the main part of the proof of \Cref{prop:oneboundaryfanemb} in \Cref{tab:oneboundaryfanemb}.

\begin{table}[ht]
    \centering
    \caption{The argument in \Cref{prop:oneboundaryfanemb}.}
    \begin{tabular}{|C{6cm}|C{6cm}|C{3cm}|}
    \hline
    Vertices of $-c$ & Quantity & Contribution \\
    \hline \hline
    Last vertex & 1 & $\frac{1}{3} \lambda^{-2\chi}w$ \\
    \hline
    Pairs of vertices that meet a B-resolved hook vertex & $\text{\# hook vertices}-1$ & $2w$ \\
    \hline   
    Remaining non-hook vertices & $2N-2\text{(\# hook vertices)}$ & $w$ \\
    \hline
    \end{tabular}
    \label{tab:oneboundaryfanemb}
\end{table}

Thus we can add to our assumptions that $s$ satisfies (TBT). For the rest of this subsection, we will also assume that $h_1$ is not deep. This assumption will fit into the scheme of the proof in quite an intricate way. In this case the side of the dual edge to $s$ containing $h_1$ must be long. Depending on whether the other side is long, \Cref{prop:oneboundaryfantbtlong} and \Cref{prop:oneboundaryfantbtshort} will conclude our arguments in this subsection.

\begin{prop} \label{prop:oneboundaryfantbtlong}
Let $s$ be a sector of minimum weight. Suppose that there is an Eulerian circuit $c$ containing the hook $h_1$ of $s$, and suppose that:
\begin{itemize}
    \item $s$ is fan
    \item $s \neq s^1_1$ or $s^2_1$
    \item $s$ satisfies (FRC) and (TBT)
    \item $h_1$ is not deep
    \item Both sides of the dual edge $e$ of $s$ are long
\end{itemize}
Then the number of tetrahedra in $\Delta$ is $\leq \min\{\frac{1}{4} \lambda^{-2\chi}+1, \frac{1}{2} \lambda^{-2\chi} - \lambda^{-\chi}\}$, provided that $\lambda^{-\chi} \geq 2$
\end{prop}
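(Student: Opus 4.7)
The plan is to establish the two bounds in the minimum independently, then combine. For the bound $N \leq \tfrac{1}{4}\lambda^{-2\chi}+1$, I would invoke the double hook machinery of \Cref{sec:doublehook}. Since $s$ satisfies (FRC) and $h_1$ is not deep, \Cref{lemma:doubledeephookfan} reduces the analysis to two sub-cases: either $\Gamma(h_1 \cup h_2)$ is already connected, so case (I) of \Cref{prop:doublehookresolution} applies, or the remaining obstruction forces $h_2$ to be deep together with an identification of the form ``$v^{\beta'}_{\delta_{\beta'}}$ equals the bottom vertex of $s^2_1$'' for some $\beta'$. In the latter sub-case I would combine (TBT), which forces $v^1_{\delta_1}=v^2_{\delta_2}=v_0$, with the long-both-sides assumption and $s \neq s^1_1,s^2_1$ to pin down the identifications of the vertices bordering $s$. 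A short case analysis should then either contradict the embedding of $s$ or put us in case (II) of \Cref{prop:doublehookresolution}, i.e.\ show that $\Gamma(h_1\cup h_2)$ has exactly two components, with $h_1$ and $h_2$ lying in different components. Either way, \Cref{prop:doublehookbound} delivers the first bound.

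For the bound $N \leq \tfrac{1}{2}\lambda^{-2\chi}-\lambda^{-\chi}$, I would return to the single hook argument of \Cref{prop:singlehookbound} applied to an Eulerian circuit $c$ containing $h_1$, and tighten the accounting using the wrap-around provided by (TBT). Concretely, $v^1_{\delta_1}=v^2_{\delta_2}=v_0$ means that in $\widehat{T_f}$ the height-$0$ lift $\widehat{s_0}$ has, as its bottom vertex, a copy of a vertex whose other lifts sit below $S_0$; in particular, the two fins on either side of this lift are themselves covers of sectors of $s$ at strictly negative height, hence each has weight $\geq \lambda^{-\chi} w$. Since $\delta_1,\delta_2 \geq 2$ and $s \neq s^1_1,s^2_1$, the two visits of $-c$ to the vertex of $\Gamma$ underlying $v_0$ occur at non-hook vertices of $\widehat{c}$, and their merging fins are precisely these two covers. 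Replacing their contributions in \Cref{eq:singlehook} from the baseline $2w$ by $2\lambda^{-\chi} w$ yields
\begin{equation*}
    \lambda^{-2\chi}w \;\geq\; 2Nw + 2(\lambda^{-\chi}-1)w,
\end{equation*}
and using $\lambda^{-\chi}\geq 2$ to absorb the constant terms rearranges to $N \leq \tfrac{1}{2}\lambda^{-2\chi}-\lambda^{-\chi}$. Summarising this in a table analogous to \Cref{tab:fewbranchcyclefrc} would make the bookkeeping transparent.

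The main obstacle I anticipate is the verification, in the second paragraph, that the two vertices of $-c$ at which the improvement is extracted are genuinely non-hook vertices that are not already being paired up under the B-resolved hook pairing of \Cref{prop:singlehookbound}. The hypotheses $h_1$ not deep, both sides long ($\delta_1, \delta_2 \geq 2$), and $s \neq s^1_1, s^2_1$ are precisely what is needed to separate $v_0$ from the hook; but confirming this, and ruling out the edge case where one of the improved terms coincides with the baseline ``last vertex'' contribution of the single hook argument, will require a careful dynamic plane picture like that of \Cref{fig:oneboundaryfanemb}. I expect the argument of the first paragraph (the double hook case) to be routine given the lemmas already established, and the calculus step $\tfrac{1}{2}\lambda^{-2\chi} - \lambda^{-\chi} \geq 0$ for $\lambda^{-\chi}\geq 2$ to be deferred to \Cref{sec:calculus}.
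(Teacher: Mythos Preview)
Your plan has a genuine gap. A structural point first: the paper establishes the two bounds in \emph{complementary} cases (so the content is really $N\leq\max\{\cdots\}$, which is how the proposition is used in \Cref{thm:EIIRP}), whereas you attempt both simultaneously. More seriously, both halves of your argument rest on the claim that (TBT) forces $v^1_{\delta_1}=v^2_{\delta_2}=v_0$. It does not: in the paper the main case for the second bound is precisely $v^1_{\delta_1}\neq v^2_{\delta_2}$ and $v^1_1\neq v^2_1$, with the complementary cases disposed of via \Cref{lemma:doublehooktophingesequal} and \Cref{lemma:doublehookbottomhingesequal} (yielding the first bound). What (TBT) actually provides here is that the bottom vertex of $s^\beta_1$ equals $v^\beta_{\delta_\beta}$, which says nothing about the two side vertices of $s$ coinciding. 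Your second-bound argument then compounds the error: the two fins at the bottom vertex of $\widehat{s_0}$ are not covers of $s$ (the sector $\widehat{s_0}$ meets that vertex as its \emph{bottom} corner, not along a side, so it is not a fin there), and you have no control over their weights beyond $\geq w$. And even accepting two improvements from $w$ to $\lambda^{-\chi}w$, your displayed inequality only gives $N\leq\tfrac12\lambda^{-2\chi}-\lambda^{-\chi}+1$; the trailing $+1$ cannot be absorbed by $\lambda^{-\chi}\geq 2$.

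The paper's route to the second bound is entirely different. After reducing to $v^1_{\delta_1}\neq v^2_{\delta_2}$ and $v^1_1\neq v^2_1$, it shows $\Gamma(h_1\cup\{(v_0,\text{B-res.}),(v^2_1,\text{A-res.})\})$ is connected (a nontrivial argument using (TBT) and $s\neq s^2_1$), so that $c$ can be chosen with prescribed turns at $v_0$ and $v^2_1$. It then introduces auxiliary sectors $t_i$, sitting above the far top side of $s^i_1$, of weights $a_iw$, proves $\lambda^{p_i}\geq 2/a_i$ and $2b+p_1+p_2\leq -2\chi$ via descending-path estimates, and improves \emph{four} contributions in \Cref{eq:singlehook} (the merges of $t_i$ at $v^i_1$, and the last two vertices of $-c$), obtaining
\[
\lambda^{-2\chi}\ \geq\ 2N-4+a_1+a_2+2\max\Bigl\{\tfrac{2\lambda^{-\chi}}{\sqrt{a_1a_2}},\ \lambda^{-\chi}\Bigr\}.
\]
An optimisation over $a_1,a_2\geq 1$ (this is where $\lambda^{-\chi}\geq 2$ enters, via \Cref{claim:oneboundaryfantbtlong}) shows the right side is at least $2N+2\lambda^{-\chi}$. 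None of these ingredients --- the controlled resolutions, the auxiliary $t_i$, the four-term improvement, the calculus optimisation --- appear in your plan.
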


Here the assumption $\lambda^{-\chi} \geq 2$ is only used to simplify the statement of \Cref{claim:oneboundaryfantbtlong}, and does not play a role in the main argument. Also, as we shall prove in \Cref{thm:introdilthm}, this hypothesis is actually always true, but here we need to include it to avoid circular reasoning.

\begin{proof}[Proof of \Cref{prop:oneboundaryfantbtlong}]
Without loss of generality suppose that $s$ is blue. If $v^1_{\delta_1}=v^2_{\delta_2}$ then by \Cref{lemma:doublehooktophingesequal}, $\Gamma(h_1 \cup h_2)$ is connected, and by \Cref{prop:doublehookresolution} and \Cref{prop:doublehookbound}, $N \leq \frac{1}{4} \lambda^{-2\chi}+1$. Similarly, if $v^1_1=v^2_1$ then by \Cref{lemma:doublehookbottomhingesequal}, \Cref{prop:doublehookresolution}, and \Cref{prop:doublehookbound}, $N \leq \frac{1}{4} \lambda^{-2\chi}+1$. Hence we can assume that $v^1_{\delta_1} \neq v^2_{\delta_2}$ and $v^1_1 \neq v^2_1$ in the rest of this proof.

We claim that $\Gamma(h_1 \cup \{(v_0,\text{B-resolution}),(v^2_1,\text{A-resolution})\})$ is connected. First notice that by following along the sides of $s$ containing $h_1$, we see that $v_0$ only meets one component of $\Gamma(h_1 \cup \{(v_0,\text{B-resolution}),(v^2_1,\text{A-resolution})\})$. Here we use the fact that $s$ satisfies (TBT) and $v^1_1 \neq v^2_1$. Hence it suffices to show that $\Gamma(h_1 \cup \{(v^2_1,\text{A-resolution})\})$ is connected.

Notice that the bottom vertex of $s^2_1$ is $v^2_{\delta_2}$ since $s$ satisfies (TBT), and this is not equal to $v^1_{\delta_1}$ by our assumption in the first paragraph. Hence if $v^2_1$ does not meet the bottom sides of $s^2_1$ then by following along the sides of $s^2_1$ we see that $v^2_1$ only meets one component of $\Gamma(h_1 \cup \{(v^2_1,\text{A-resolution})\})$.

If $v^2_1$ does meet the bottom sides of $s^2_1$ then it must do so on the bottom side other than $s^2_1$. But if the identification is such that $e^2_1$ lies on the bottom side of $s^2_1$, then we will have $s=s^2_1$, contradicting the second item in the hypothesis. See \Cref{fig:oneboundaryfantbtlong1} left. In the other manner of identification, we see that $v^2_1$ only meets one component of $\Gamma(h_1 \cup \{(v^2_1,\text{A-resolution})\})$. See \Cref{fig:oneboundaryfantbtlong1} right.

\begin{figure}[ht]
    \centering
    \fontsize{12pt}{12pt}\selectfont
    \resizebox{!}{4.5cm}{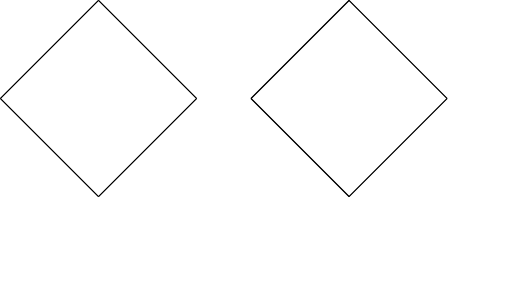}
    \caption{Reasoning that $\Gamma(h_1 \cup \{(v^2_1,\text{A-resolution})\}$ is connected even if $v^2_1$ meets the bottom sides of $s^2_1$.}
    \label{fig:oneboundaryfantbtlong1}
\end{figure}

The point of the claim is that by taking the hook circuit $c$ to be the image of an Eulerian circuit in $\Gamma(h_1 \cup \{(v_0,\text{B-resolution}),(v^2_1,\text{A-resolution})\})$, we can assume that $c$ takes a branching turn at $v_0$ and an anti-branching turn at $v^2_1$. This forces what the last two vertices of $-c$ can be.

We now let $t_i$ be the sector that has the top side of $s^i_1$ other than $e^i_1$ along its bottom side, for $i=1,2$. See \Cref{fig:oneboundaryfantbtlong2}. By the assumption that $v^1_1 \neq v^2_1$, we know that $t_1$ and $t_2$ are distinct from $s$. (But $t_1$ and $t_2$ could be equal.) Let the weight of $t_i$ be $a_i w$, and let the number of times the bottom side of $t_i$ meeting $s^i_1$ intersects the fiber surface be $p_i$. We highlighted these bottom sides in orange in \Cref{fig:oneboundaryfantbtlong2}.

\begin{figure}[ht]
    \centering
    \fontsize{12pt}{12pt}\selectfont
    \resizebox{!}{4.5cm}{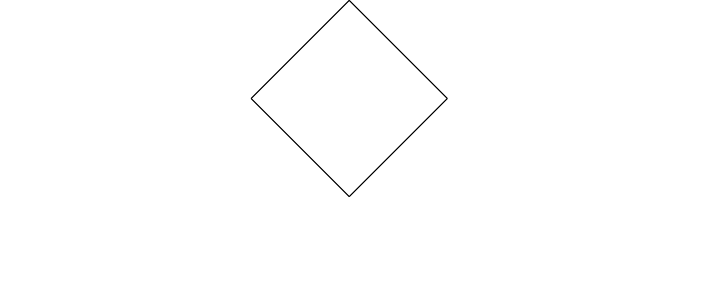}
    \caption{The set up to the main argument of \Cref{prop:oneboundaryfantbtlong}. We first argue that we can assume the ending portion of $-c$ is as indicated. Then we consider the weights of $t_i$ and the highlighted bottom sides of $t_i$ to improve the estimates in \Cref{eq:singlehook}.}
    \label{fig:oneboundaryfantbtlong2}
\end{figure}

We first claim that $\lambda^{p_i} \geq \frac{2}{a_i}$. This can be shown by taking the height $0$ lift of $t_i$, pushing the bottom side we highlighted upwards on the side of $t_i$ and reversing its orientation to get a descending path. The starting point of the path is on a sector of height $p_i$ hence of weight $\geq \lambda^{-p_i} w$ while its ending point is on the height $0$ lift of $t_i$ which is of weight $a_i w$, and the path intersects the branch locus once, where a sector of height $\leq p_i$ hence of weight $\geq \lambda^{-p_i} w$ merges in. This gives the equation $a_i w \geq 2 \lambda^{-p_i} w$ which implies the claim.

Let $b$ be the number of arcs on $s$. We claim that $2b+p_1+p_2 \leq -2\chi$. This is because the bottom sides of the $t_i$ meeting $s^i_1$ are disjoint from the sides of $s$, and the dual graph $\Gamma$ intersects the fiber surface for a total of $-2\chi$ times. Together with the previous claim, this implies that $\lambda^b \leq \lambda^{-\chi} \frac{\sqrt{a_1a_2}}{2}$. When $a_1, a_2 >2$, this bound is actually not ideal, since we can just use the fact that $2b \leq 2b+p_1+p_2 \leq -2\chi$ to write $\lambda^b \leq \lambda^{-\chi}$. So we combine the two inequalities to get $\lambda^b \leq \min \{\lambda^{-\chi} \frac{\sqrt{a_1a_2}}{2}, \lambda^{-\chi}\}$.

Now let $u_i$ be the vertex of the Eulerian hook circuit $c$ sitting at $s^i_1$ for which $t_i$ merges in at the corresponding intersection point with the branch locus. The $u_i$ cannot meet hook vertices since they are the top vertices of blue toggle sectors and $h_1$ is not deep, thus we can improve the contribution of $u_i$ in \Cref{eq:singlehook} from $w$ to $a_i w$.

Meanwhile let $u$ and $u'$ be the last and second-to-last vertex of $-c$ respectively. $u$ is blue hence different from $u_1,u_2,u'$, while $u'$ is distinct from $u_1,u_2$ since we assumed that $v^1_1 \neq v^2_1$ and $c$ takes an anti-branching turn at $v^2_1$. $u$ does not meet a hook vertex by the second item in the hypothesis and $u'$ does not meet a hook vertex since $h_1$ is not deep. Hence their original contributions to \Cref{eq:singlehook} are $w$.

We replace the contribution of $u$ by $\lambda^{-2\chi-b} w$ since the corresponding intersection point has height $-2\chi-b$, and also replace the contribution of $u'$ by $\lambda^{-2\chi-b} w$ since the sector that merges in at the corresponding intersection point is the lift of $s$ at height $-2\chi-b$.

With these modifications, \Cref{eq:singlehook} now reads
\begin{align*}
    \lambda^{-2\chi} &\geq 2N + (a_1-1) + (a_2-1) + 2(\lambda^{-2\chi-b}-1) \\
    &\geq 2N-4 + a_1 + a_2 + 2 \max\{\lambda^{-\chi} \frac{2}{\sqrt{a_1a_2}}, \lambda^{-\chi}\}
\end{align*}

\begin{claim} \label{claim:oneboundaryfantbtlong}
If $\lambda^{-\chi} \geq 2$, the minimum of $2N-4 + a_1 + a_2 + 2 \max\{\lambda^{-\chi} \frac{2}{\sqrt{a_1a_2}}, \lambda^{-\chi}\}$ over $a_1, a_2 \geq 1$ is $2N + 2 \lambda^{-\chi}$.
\end{claim}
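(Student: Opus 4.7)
Let $P = \lambda^{-\chi}$ and, absorbing the constant $2N-4$, it suffices to minimize
$$g(a_1,a_2) \;=\; a_1 + a_2 + 2 \max\!\left\{ \tfrac{2P}{\sqrt{a_1 a_2}},\, P \right\}$$
over the region $a_1, a_2 \geq 1$, and show that the minimum equals $4 + 2P$, attained at $a_1 = a_2 = 2$. The plan is to split the domain along the locus $a_1 a_2 = 4$ where the two arguments of the $\max$ agree, handle each piece by a short calculus / AM--GM argument, and then glue.

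On the region $R_1 = \{a_1 a_2 \geq 4\}$, the $\max$ equals $P$, so $g = a_1 + a_2 + 2P$. By AM--GM, $a_1 + a_2 \geq 2\sqrt{a_1 a_2} \geq 4$, with equality iff $a_1 = a_2 = 2$; this gives $g \geq 4 + 2P$ on $R_1$. On the region $R_2 = \{a_1 a_2 \leq 4,\ a_1, a_2 \geq 1\}$, the $\max$ equals $2P/\sqrt{a_1 a_2}$, so $g = a_1 + a_2 + 4P/\sqrt{a_1 a_2}$. Setting $t = \sqrt{a_1 a_2} \in [1,2]$ and using $a_1 + a_2 \geq 2\sqrt{a_1 a_2} = 2t$, we get $g \geq h(t) := 2t + 4P/t$ on $R_2$.

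The key calculus input is $h'(t) = 2 - 4P/t^2$. Since $t \leq 2$ and $P \geq 2$, we have $t^2 \leq 4 \leq 2P$, so $h'(t) \leq 0$ on $[1,2]$, i.e., $h$ is decreasing there. Therefore $h(t) \geq h(2) = 4 + 2P$, with equality (in both the AM--GM and the $h$-minimization step) iff $a_1 = a_2 = 2$. Combining the two regions, $g \geq 4 + 2P$ on the whole domain, with equality at the common boundary point $a_1 = a_2 = 2$. Adding back the constant $2N-4$ yields the claimed minimum $2N + 2\lambda^{-\chi}$.

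The argument is essentially routine; the only subtle point is the role of the hypothesis $\lambda^{-\chi} \geq 2$, which is exactly what lets us conclude that $h$ is monotone on the relevant interval $[1,2]$ instead of picking up an interior critical point at $t = \sqrt{2P}$. Without this hypothesis the minimizer would slide into $R_2$ and the final expression would change; with it, both regional minima coincide at $a_1 = a_2 = 2$, which is also the unique global minimizer.
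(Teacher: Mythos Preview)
Your proof is correct and takes essentially the same approach as the paper. The paper reduces to the diagonal $a_1 = a_2$ by a terse ``by symmetry'' and then analyzes the one-variable function; you split along the max first and use AM--GM on $a_1 + a_2 \geq 2\sqrt{a_1 a_2}$, which is exactly the content of that symmetry reduction made explicit, and the key calculus step (showing $h(t) = 2t + 4P/t$ is decreasing on $[1,2]$ when $P \geq 2$) is identical.
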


\Cref{claim:oneboundaryfantbtlong} will be shown in \Cref{sec:calculus}.
This implies that
\begin{align*}
    \lambda^{-2\chi} &\geq 2N + 2 \lambda^{-\chi} \\
    N &\leq \frac{1}{2}\lambda^{-2\chi} -\lambda^{-\chi}.
    \qedhere
\end{align*}
\end{proof}

We summarize the main argument of \Cref{prop:oneboundaryfantbtlong} in \Cref{tab:oneboundaryfantbtlong}.

\begin{table}[ht]
    \centering
    \caption{The argument in \Cref{prop:oneboundaryfantbtlong}}
    \begin{tabular}{|C{6cm}|C{6cm}|C{3cm}|}
    \hline
    Vertices of $-c$ & Quantity & Contribution \\
    \hline \hline
    Vertex at $v^i_1$ where $t_i$ merges in & 2 & $a_i w$ \\
    \hline
    Second-to-last vertex & 1 & $\lambda^{-2\chi-b} w$ \\
    \hline
    Last vertex & 1 & $\lambda^{-2\chi-b} w$ \\
    \hline
    Pairs of vertices that meet a B-resolved hook vertex & $\text{\# hook vertices}-1$ & $2w$ \\
    \hline   
    Remaining non-hook vertices & $2N-2\text{(\# hook vertices)}-3$ & $w$ \\
    \hline
    \end{tabular}
    \label{tab:oneboundaryfantbtlong}
\end{table}

\begin{prop} \label{prop:oneboundaryfantbtshort}
Let $s$ be a sector of minimum weight. Suppose that there is an Eulerian circuit $c$ containing the hook $h_1$ of $s$, and suppose that:
\begin{itemize}
    \item $s$ is fan
    \item $s \neq s^1_1$ or $s^2_1$
    \item $s$ satisfies (FRC) and (TBT)
    \item $h_1$ is not deep
    \item One side of the dual edge $e$ of $s$ (necessarily the $\beta=2$ side by the item above) is short
\end{itemize}
Then the number of tetrahedra in $\Delta$ is $\leq \min\{ \frac{1}{2}(\lambda^{-2\chi}-\lambda^{-\frac{4}{3}\chi}-\lambda^{-\frac{2}{3}\chi})+1, \frac{1}{6 \sqrt{3}} \lambda^{-2\chi} +\frac{3}{2} \}$.
\end{prop}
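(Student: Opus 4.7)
The plan is to derive each of the two bounds via a separate modification of the single hook estimate \Cref{eq:singlehook} of \Cref{prop:singlehookbound}, applied to the descending path $\alpha = -c$ obtained from the Eulerian hook circuit $c$ containing $h_1$. Throughout I write $p^\beta_k$ for the number of intersections of $e^\beta_k$ with the fiber surface $S$. The preliminary observation is that because $\beta=2$ is short ($\delta_2=1$) and $s$ is fan satisfying (TBT), the identification $v_0 = v^\beta_{\delta_\beta}$ imposes relations among the $p^\beta_k$. By a counting argument parallel to the one in \Cref{prop:fewbranchcyclefrc}, one can show that the hook $h_1$ meets $S$ at most $-\tfrac{2}{3}\chi$ times.

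For the first bound $\tfrac{1}{2}(\lambda^{-2\chi}-\lambda^{-4\chi/3}-\lambda^{-2\chi/3})+1$, I would improve two contributions along $\alpha$. The last vertex $u$ of $\alpha$ lies deep in the cover: between the last vertex on the hook and $u$, $\alpha$ descends at least $-2\chi -(-\tfrac{2}{3}\chi) = -\tfrac{4}{3}\chi$ layers, so the merging fin at $u$ has weight $\geq \lambda^{-4\chi/3}w$. A second improved vertex, analogous to the $u_2$ vertex in \Cref{prop:fewbranchcyclefrc} but exploiting the short-side structure, gives a contribution $\geq \lambda^{-2\chi/3}w$. The hypotheses $s\neq s^1_1,s^2_1$ and $h_1$ not deep guarantee that neither vertex is a hook vertex, so the substitutions into \Cref{eq:singlehook} are legitimate; rearranging produces the first bound exactly as in \Cref{prop:fewbranchcyclefrc}.

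For the second bound $\tfrac{1}{6\sqrt{3}}\lambda^{-2\chi}+\tfrac{3}{2}$, I would adapt the product-optimization argument of \Cref{prop:oneboundaryfantbtlong} to the presence of only one long side. Let $t_1$ be the sector whose bottom side meets the top side of $s^1_1$ other than $e^1_1$, of weight $a_1 w$, with $p_1$ intersections on that bottom side with $S$. The descending-path estimate gives $\lambda^{p_1}\geq 2/a_1$, while the short-side condition forces a constraint of the form $2b+p_1 \leq -2\chi$, where $b$ is the number of arcs on $s$. Improving the contributions of (a) the vertex of $\alpha$ at $v^1_1$ where $t_1$ merges in, to $a_1 w$, and (b) the last two vertices of $\alpha$, each to $\lambda^{-2\chi-b}w$ (the second-to-last corresponding to a translate of $s$ sitting at the appropriate height), one obtains an inequality of the form $\lambda^{-2\chi}\geq 2N-3+a_1+2\lambda^{-2\chi-b}$. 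Substituting $\lambda^{-2\chi-b}\geq\lambda^{-\chi}\sqrt{2/a_1}$ and minimizing the right-hand side over $a_1\geq 1$ produces the coefficient $\tfrac{1}{6\sqrt{3}}$ after the appropriate algebraic manipulation. The detailed calculus optimization will be relegated to \Cref{sec:calculus}.

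The main obstacle I anticipate is the case split on which side satisfies (TBT): if (TBT) holds for $\beta=2$ then $v_0=v^2_1$ and the identification is through the short side, whereas if it holds for $\beta=1$ then $v_0=v^1_{\delta_1}$ and the identification is through the long side. Each case creates a different constellation of weight and intersection constraints near $s^2_1$, and also requires the hook circuit $c$ to be arranged to take prescribed turns at $v_0$ and at the vertex where $t_1$ merges in, via a connectivity argument on suitable resolutions of $\Gamma$ analogous to the one in the proof of \Cref{prop:oneboundaryfantbtlong}.
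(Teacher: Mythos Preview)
Your proposal misses the central object of the paper's proof and, as a result, both case analyses go astray.

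The paper does not work directly with the intersection count of $h_1$ with $S$. Instead it introduces an auxiliary fan sector $f$: the sector having $e^2_1$ as a top edge. Because $s$ satisfies (TBT) and the $\beta=2$ side is short, $f$ also satisfies (TBT), so the two edges $f^2_1,f^2_2$ on its other side form a $\Gamma$-cycle $(f^2_i)$. The entire argument hinges on this cycle, and the case split is whether $f^2_1\neq f^2_2$ (the cycle is embedded) or $f^2_1=f^2_2$ (it is not). Your anticipated split on ``which side satisfies (TBT)'' is not the relevant dichotomy.

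For the first bound, the paper uses $(f^2_i)$ twice. First, the three cycles $(e^1_i)_{i\in\mathbb{Z}/(\delta_1+1)}$, $(e^2_i)_{i\in\mathbb{Z}/2}$, $(f^2_i)_{i\in\mathbb{Z}/2}$ are simple, pairwise disjoint, and mutually homotopic, giving $3b\leq -2\chi$ where $b$ is the number of arcs on $s$. Second, the extra improved vertex is the \emph{last vertex of $-c$ lying on $(f^2_i)$}; its contribution is $\geq\lambda^b w$ because by then $-c$ has traversed all of $(f^2_i)$. Your ``second improved vertex analogous to $u_2$ in \Cref{prop:fewbranchcyclefrc}'' is not specified, and without $(f^2_i)$ there is no evident source for a $\lambda^{-2\chi/3}w$ contribution.

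For the second bound, your optimization over $a_1$ does not produce the coefficient $\tfrac{1}{6\sqrt{3}}$; carrying out the minimization you describe yields a bound of the shape $\tfrac{1}{2}\lambda^{-2\chi}-C\lambda^{-2\chi/3}+\text{const}$, not $\tfrac{1}{6\sqrt{3}}\lambda^{-2\chi}+\tfrac{3}{2}$. The paper's route is entirely different: when $f^2_1=f^2_2$, it shows that $\Gamma(h_1\cup\{(v_0,\text{A-resolution})\})$ has exactly two components, one of which is the three-edge set $\{e^2_1,e^2_2,f^2_1\}$. Running the hook argument on the \emph{other} component (which has $2N-3$ vertices and meets $S$ exactly $-2\chi-\tfrac{3}{2}b$ times) gives $\lambda^{-2\chi-3b/2}\geq 2N-3$; combining with $\lambda^b\geq 3$ yields $\lambda^{-2\chi}\geq 3\sqrt{3}\,(2N-3)$, whence the stated bound. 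Note finally that the two bounds arise in mutually exclusive cases, so the proposition as applied downstream really uses a $\max$, not a $\min$.
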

\begin{proof}
Let $f$ be the fan sector that has $e^2_1$ as a top edge. Let $f^2_2$ be the other top edge of $f$ and let $f^2_1$ be the bottom side of $f$ below $f^2_2$. See \Cref{fig:oneboundaryfantbtshort} left. By the second item in the hypothesis, $f \neq s$. Also notice that $s$ satisfying (TBT) forces $f$ to satisfy (TBT) as well, in particular $(f^2_i)_{i \in \mathbb{Z}/2}$ is a $\Gamma$-cycle. The proof is divided into two cases depending on whether $f^2_1 = f^2_2$.

\begin{figure}[ht]
    \centering
    \fontsize{12pt}{12pt}\selectfont
    \resizebox{!}{4cm}{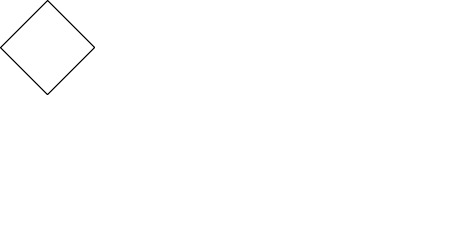}
    \caption{The set up in \Cref{prop:oneboundaryfantbtshort}. Left: when $f^2_1 \neq f^2_2$. Right: when $f^2_1 = f^2_2$.}
    \label{fig:oneboundaryfantbtshort}
\end{figure}

Let us first deal with the case when $f^2_1 \neq f^2_2$. Let $b$ be the number of arcs on $s$. This is equal to the intersection number of the cycle $(e^2_i)_{i \in \mathbb{Z}/2}$ with $S$. But $(e^2_i)$ is homotopic to $(f^2_i)$ and $(e^1_i)_{i \in \mathbb{Z}/{\delta_1+1}}$, and under our assumptions, these cycles are simple and disjoint, so $3b \leq -2\chi$. 

Let $u$ and $u'$ be the last vertex of $-c$ and the last vertex of $-c$ on $(f^2_i)$ respectively. These are blue hence do not meet B-resolved hook vertices, so their original contributions to \Cref{eq:singlehook} are both $w$. We replace the contribution of $u$ by $\lambda^{-2\chi-b} w$ since the corresponding intersection point has height $-2\chi-b$, and replace the contribution of $u'$ by $\lambda^b w$ since at that point $-c$ would have traversed through $(f^2_i)$. \Cref{eq:singlehook} then becomes 
\begin{align*}
    \lambda^{-2\chi} &\geq 2N + (\lambda^b -1) + (\lambda^{-2\chi-b} -1) \\
    &\geq 2N -2 + \lambda^{-\frac{4}{3}\chi} + \lambda^{-\frac{2}{3}\chi}
\end{align*}
Here we use the fact that $\lambda^b + \lambda^{-2\chi-b}$ is decreasing on $b \in [0,-\chi]$.
Hence we have $$N \leq \frac{1}{2}(\lambda^{-2\chi} - \lambda^{-\frac{4}{3}\chi} - \lambda^{-\frac{2}{3}\chi})+1$$ 

We then deal with the case when $f^2_1 = f^2_2$. In this case, $(f^2_i)_{i \in \mathbb{Z}/2}$ is not a simple cycle so the above argument fails.
What we will do instead is modify the argument of \Cref{prop:doublehookbound}.

We first claim that $\Gamma(h_1 \cup \{(v_0,\text{A resolution}\})$ has two components. This is because $\Gamma(h_1)$ is connected and resolving an additional vertex at most creates one more component. Meanwhile $\{e^2_1,e^2_2,f^2_1\}$ forms its own component, as can be inspected from \Cref{fig:oneboundaryfantbtshort} right.

Let $c_1$ be a circuit that is the image of an Eulerian circuit of the component of $\Gamma(h_1 \cup \{(v,\text{A resolution}\})$ containing $h_1$. Let $b$ be the number of arcs on $s$ again. We claim that \Cref{eq:doublehook} applied to $c_1$ reads
$$\lambda^{-2\chi - \frac{3}{2} b} w \geq (2N -3)w$$ 
This is because there are $2N-3$ vertices along $c_1$ (missing the 3 in the other component), $c_1$ intersects the fiber surface $-2\chi - \frac{3}{2} b$ times (missing the $\frac{3}{2} b$ times in the other component, noting that two times $(f^2_1)$ is homotopic to $(e^2_i)$), and all the hook vertices of $h_1$ lie in the same component as $c_1$ (so that the pairing trick works).

Meanwhile we have $\lambda^b \geq 3$ by the same argument as in \Cref{prop:oneboundaryfanemb}, so putting these together, we have $\lambda^{-2\chi} \geq 3 \sqrt{3} (2N-3)$ which implies the second bound in the proposition.
\end{proof}

We summarize the argument in the first case of \Cref{prop:oneboundaryfantbtshort} in \Cref{tab:oneboundaryfantbtshort}.

\begin{table}[ht]
    \centering
    \caption{The argument in the first case of \Cref{prop:oneboundaryfantbtshort}}
    \begin{tabular}{|C{6cm}|C{6cm}|C{3cm}|}
    \hline
    Vertices of $-c$ & Quantity & Contribution \\
    \hline \hline
    Last vertex on $(f^2_i)_{i \in \mathbb{Z}/2}$ & 1 & $\lambda^b w$ \\
    \hline
    Last vertex & 1 & $\lambda^{-2\chi-b} w$ \\
    \hline
    Pairs of vertices that meet a B-resolved hook vertex & $\text{\# hook vertices}-1$ & $2w$ \\
    \hline   
    Remaining non-hook vertices & $2N-2\text{(\# hook vertices)}-1$ & $w$ \\
    \hline
    \end{tabular}
    \label{tab:oneboundaryfantbtshort}
\end{table}

\subsection{When the sector is toggle} \label{subsec:oneboundarytoggle}

In this subsection, we lay out the final propositions we need. These will concern cases when $s$ is toggle.

\begin{prop} \label{prop:oneboundarytoggleemb}
Let $s$ be a sector of minimum weight. Suppose that there is an Eulerian circuit $c$ containing the hook $h_1$ of $s$, and suppose that:
\begin{itemize}
    \item $s$ is toggle
    \item $s$ satisfies (FRC)
    \item $s^1_1 \neq s \neq s^2_1$
\end{itemize}
Then the number of tetrahedra in $\Delta$ is $\leq \min\{\frac{1}{4} \lambda^{-2\chi}+1, \frac{1}{2} \lambda^{-2\chi} - \sqrt{\lambda^{-2\chi}+4\lambda^{-\chi}} +2 \}$.
\end{prop}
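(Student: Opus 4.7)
The plan is to mirror the strategy of \Cref{prop:oneboundaryfantbtlong}, adapted to the case when $s$ is toggle rather than fan. The first bound $\frac{1}{4}\lambda^{-2\chi}+1$ will be obtained in configurations where \Cref{prop:doublehookresolution} can be invoked, and the second bound in the remaining configuration by upgrading the single-hook estimate of \Cref{prop:singlehookbound}.

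First, the hypothesis $s \neq s^1_1$ and $s \neq s^2_1$, together with \Cref{prop:vbscontradictions} and the fact that $s$ must be embedded in its interior, forces $e^\beta_{\delta_\beta+1} \neq e^\beta_1$ for each $\beta$ (otherwise tracing the boundary of $s^\beta_1$ would identify $s$ with $s^\beta_1$). Hence the non-identification hypotheses of \Cref{lemma:doublehooktophingesequal} and \Cref{lemma:doublehookbottomhingesequal} are automatically satisfied. Consequently, whenever $v^1_{\delta_1}=v^2_{\delta_2}$ or $v^1_1=v^2_1$, the graph $\Gamma(h_1 \cup h_2)$ is connected, and \Cref{prop:doublehookresolution}(I) together with \Cref{prop:doublehookbound} yield $N \leq \frac{1}{4}\lambda^{-2\chi}+1$. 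The same bound is obtained directly if $\Gamma(h_1 \cup h_2)$ is connected to begin with.

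Assume therefore that $\Gamma(h_1 \cup h_2)$ is disconnected and that $v^1_{\delta_1} \neq v^2_{\delta_2}$, $v^1_1 \neq v^2_1$. Following the template of \Cref{prop:oneboundaryfantbtlong}, we argue that $\Gamma(h_1 \cup \{(v_0,\text{B-resolution}),(v^2_1,\text{A-resolution})\})$ is still connected: the extra resolutions each meet only a single component, which one verifies using the toggle local combinatorics (\Cref{prop:vbstogglefan}) together with the vertex-inequality assumptions. This lets us choose the Eulerian hook circuit $c$ so that the final two vertices of $-c$ are pinned down at $v_0$ and $v^2_1$. We then identify a sector $t$ adjacent to $s$ (analogous to $t_1,t_2$ in that proof) whose weight is $aw$, and derive, via a descending path argument in the spirit of \Cref{prop:oneboundaryfanemb}, a height estimate relating $a$ to the number of arcs $b$ on $s$; the key feature of the toggle case, in contrast with the fan case, is that this descending path picks up an extra sector whose weight is forced to be $\geq w$, producing an additive improvement in the constraint.

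Replacing two appropriately chosen contributions in the right-hand side of \Cref{eq:singlehook} by $aw$ and $\lambda^{-2\chi-b}w$ gives
\[\lambda^{-2\chi} \geq 2N - 2 + a + \lambda^{-2\chi-b}\]
subject to the derived constraint on $a$ and $\lambda^b$. Optimizing $a+\lambda^{-2\chi-b}$ under this constraint (an AM-GM / Lagrange-multiplier computation to be deferred to \Cref{sec:calculus}) yields
\[a + \lambda^{-2\chi-b} \geq 2\sqrt{\lambda^{-2\chi}+4\lambda^{-\chi}},\]
which rearranges to the claimed second bound. The main obstacle will be executing the case analysis to locate the sector $t$ in the toggle setting: because $s$ is toggle its two bottom sides are adjacent to tetrahedra whose top-vertex colors differ from those in the fan case, yielding more configurations to track. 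Once $t$ is identified and the height estimate is in place, the remainder of the argument parallels \Cref{prop:oneboundaryfantbtlong} and \Cref{prop:oneboundaryfanemb} verbatim.
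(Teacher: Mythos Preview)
Your reduction to the first bound via \Cref{lemma:doublehooktophingesequal} is fine and matches the paper. The remainder, however, does not work as written and diverges from the paper's argument in an essential way.

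The paper does \emph{not} mimic \Cref{prop:oneboundaryfantbtlong} here. Instead of resolving at $v_0$ and $v^2_1$ and looking below $s$, it looks \emph{above} $s$: for each side $\beta$ it takes $t_\beta$ to be the sector containing the top side $e^\beta_{\delta_\beta+1}$ along its bottom. Since $s$ is toggle, the top vertex of $s$ is the side vertex of two genuinely distinct sectors $t_1,t_2$ (unless $t_1=t_2$, which forces $v^1_{\delta_1}=v^2_{\delta_2}$ and returns you to the first bound). With weights $a_iw$ and bottom-side intersection counts $p_i$, one gets $\lambda^{p_i}\ge 2/a_i$ and $2b+p_1+p_2\le -2\chi$, hence $\lambda^b\le\min\{\lambda^{-\chi}\sqrt{a_1a_2}/2,\lambda^{-\chi}\}$. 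The two passes of $c$ through the top vertex of $s$ give vertices $u_1,u_2$ (one of them is the last vertex of $-c$), and the last vertex $u'$ of $-c$ on $h_2$ gives a third improvement. Replacing their contributions by $a_1w$, $a_2\lambda^{-2\chi-b}w$, and $\lambda^b w$ yields
\[
\lambda^{-2\chi}\ \ge\ 2N-4 + a_1+a_2+\lambda^{-2\chi-b}+\lambda^b,
\]
and the minimum of the four-term tail under the above constraint is $2\sqrt{\lambda^{-2\chi}+4\lambda^{-\chi}}$ (this is \Cref{claim:oneboundarytoggleemb}).

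Your version has two concrete gaps. First, the connectedness of $\Gamma(h_1\cup\{(v_0,B),(v^2_1,A)\})$ that you assert relied, in the fan proof, on (TBT) to identify the bottom vertex of $s^2_1$; no such identification is available for a toggle $s$, and the ``toggle local combinatorics'' you invoke do not supply one. Second, with a single auxiliary sector $t$ and only two modified contributions you arrive at $\lambda^{-2\chi}\ge 2N-2+a+\lambda^{-2\chi-b}$; under any constraint you can extract from one descending path (e.g.\ $\lambda^b\le\lambda^{-\chi}\sqrt{a}/2$) the minimum of $a+\lambda^{-2\chi-b}$ is of order $\lambda^{-2\chi/3}$, not $2\sqrt{\lambda^{-2\chi}+4\lambda^{-\chi}}$. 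The missing ingredients are precisely the second sector $t_2$ and the extra $\lambda^b$ term coming from $u'$, which together make the optimization come out right.
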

\begin{proof}
This proof is morally similar to \Cref{prop:oneboundaryfantbtlong}. Let $t_i$ be the sector that has $e^i_{\delta_i+1}$ along its bottom side. By the third item in the hypothesis, $t_1$ and $t_2$ are distinct from $s$.
If $t_1=t_2$, then $v^1_{\delta_1}=v^2_{\delta_2}$. By \Cref{lemma:doublehooktophingesequal}, $\Gamma(h_1 \cup h_2)$ is connected, hence \Cref{prop:doublehookresolution} and \Cref{prop:doublehookbound} implies the first bound in the proposition.

If $t_1 \neq t_2$. Let the weight of $t_i$ be $a_i w$, and let $p_i$ be the number of times the bottom side of $t_i$ not meeting $s$ meets the fiber surface. $\lambda^{p_i} \geq \frac{2}{a_i}$ by the same argument as in \Cref{prop:oneboundaryfantbtlong}. 

\begin{figure}
    \centering
    \resizebox{!}{4cm}{
\begingroup%
  \makeatletter%
  \providecommand\color[2][]{%
    \errmessage{(Inkscape) Color is used for the text in Inkscape, but the package 'color.sty' is not loaded}%
    \renewcommand\color[2][]{}%
  }%
  \providecommand\transparent[1]{%
    \errmessage{(Inkscape) Transparency is used (non-zero) for the text in Inkscape, but the package 'transparent.sty' is not loaded}%
    \renewcommand\transparent[1]{}%
  }%
  \providecommand\rotatebox[2]{#2}%
  \newcommand*\fsize{\dimexpr\f@size pt\relax}%
  \newcommand*\lineheight[1]{\fontsize{\fsize}{#1\fsize}\selectfont}%
  \ifx\svgwidth\undefined%
    \setlength{\unitlength}{425.25356485bp}%
    \ifx\svgscale\undefined%
      \relax%
    \else%
      \setlength{\unitlength}{\unitlength * \real{\svgscale}}%
    \fi%
  \else%
    \setlength{\unitlength}{\svgwidth}%
  \fi%
  \global\let\svgwidth\undefined%
  \global\let\svgscale\undefined%
  \makeatother%
  \begin{picture}(1,0.47065402)%
    \lineheight{1}%
    \setlength\tabcolsep{0pt}%
    \put(0,0){\includegraphics[width=\unitlength,page=1]{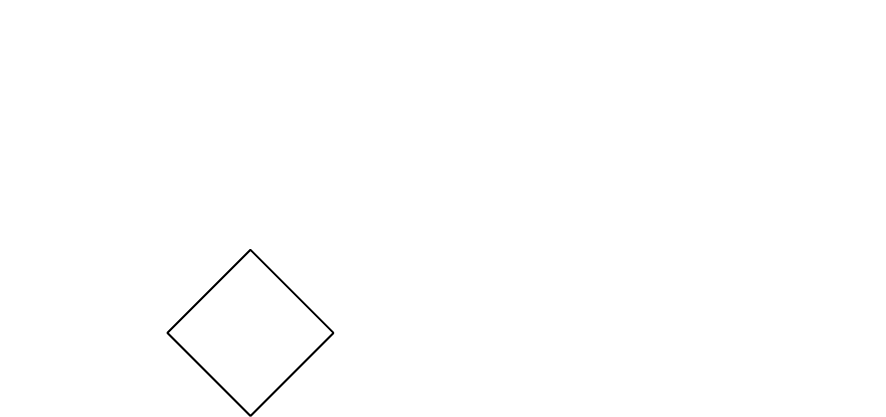}}%
    \put(0.26632515,0.07716089){\color[rgb]{0,0,0}\makebox(0,0)[lt]{\lineheight{1.25}\smash{\begin{tabular}[t]{l}\Huge $s$\end{tabular}}}}%
    \put(0,0){\includegraphics[width=\unitlength,page=2]{oneboundarytoggleemb.pdf}}%
    \put(0.17167543,0.2784045){\color[rgb]{0,0,0}\makebox(0,0)[lt]{\lineheight{1.25}\smash{\begin{tabular}[t]{l}\Huge $t_1$\end{tabular}}}}%
    \put(0,0){\includegraphics[width=\unitlength,page=3]{oneboundarytoggleemb.pdf}}%
    \put(0.61734543,0.0771343){\color[rgb]{0,0,0}\makebox(0,0)[lt]{\lineheight{1.25}\smash{\begin{tabular}[t]{l}\Huge $s$\end{tabular}}}}%
    \put(0.70458803,0.2783778){\color[rgb]{0,0,0}\makebox(0,0)[lt]{\lineheight{1.25}\smash{\begin{tabular}[t]{l}\Huge $t_2$\end{tabular}}}}%
  \end{picture}%
\endgroup%
}
    \caption{The set up in \Cref{prop:oneboundarytoggleemb}. We consider the weights of $t_i$ and the highlighted bottom sides of $t_i$ to improve the estimates in \Cref{eq:singlehook}.}
    \label{fig:oneboundarytoggleemb}
\end{figure}

Let $b$ be the number of arcs in $s$. By the same reason as in \Cref{prop:oneboundaryfantbtlong}, $2b+p_1+p_2 \leq -2\chi$, which implies that $\lambda^b \leq \min\{\lambda^{-\chi} \frac{\sqrt{a_1a_2}}{2}, \lambda^{-\chi} \}$. 

Let $u_i$ be the vertices of the Eulerian hook circuit $c$ sitting at the top vertex of $s$, for which $t_i$ merges in at the corresponding intersection point with the branch locus. By the third item in the hypothesis, $u_i$ do not meet hook vertices, nor are they the last vertex of $c$ on $h_2$. However, notice that one of $u_i$ is the last vertex of $-c$.
For the sake of concreteness we assume that $u_2$ is the last vertex of $-c$. Strictly speaking $u_1$ and $u_2$ do not have symmetric roles here, since we have broken the symmetry by assuming that $c$ contains the hook $h_1$, but the reader can check that this will not matter for the bounds we use below.

Let $u'$ be the last vertex of $-c$ on $h_2$. We modify \Cref{eq:singlehook} by replacing the contributions of $u_1$, $u_2$, and $u'$. For $u_1$, we replace $w$ with $a_1w$. For $u_2$, we replace $w$ with $a_2 \lambda^{-2\chi-b}w$. For $u'$, we replace $w$ with $\lambda^b w$. Then \Cref{eq:singlehook} becomes
\begin{align*}
    \lambda^{-2\chi} &\geq 2N + (a_1-1) + (a_2 \lambda^{-2\chi-b}-1) + (\lambda^b-1) \\
    &\geq 2N + (a_1-1) + (a_2-1) + (\lambda^{-2\chi-b}-1) + (\lambda^b-1) \\
    &= 2N-4 + a_1 + a_2 + \lambda^{-2\chi-b} + \lambda^b
\end{align*}

Here we use the fact that $a_2,\lambda^{-2\chi-b} \geq 1$ thus $(a_2 \lambda^{-2\chi-b}-1) \geq (a_2-1) + (\lambda^{-2\chi-b}-1)$. As in \Cref{prop:oneboundaryfantbtshort}, this last expression is minimized when $b=\min\{\lambda^{-\chi} \frac{\sqrt{a_1a_2}}{2}, \lambda^{-\chi} \}$, since $\lambda^{-2\chi-b} + \lambda^b$ is decreasing for $b \in [0,-\chi]$.

\begin{claim} \label{claim:oneboundarytoggleemb}
The minimum of $2N-4 + a_1 + a_2 + \lambda^{-2\chi-b} + \lambda^b$, where $b=\min\{\lambda^{-\chi} \frac{\sqrt{a_1a_2}}{2}, \lambda^{-\chi} \}$, over $a_1, a_2 \geq 1$ is $2N-4+2\sqrt{\lambda^{-2\chi}+4\lambda^{-\chi}}$.
\end{claim}

\Cref{claim:oneboundarytoggleemb} will be shown in \Cref{sec:calculus}.

This implies that 
\begin{align*}
    \lambda^{-2\chi} &\geq 2N-4 + 2\sqrt{\lambda^{-2\chi}+4\lambda^{-\chi}} \\
    N &\leq \frac{1}{2} \lambda^{-2\chi} - \sqrt{\lambda^{-2\chi}+4\lambda^{-\chi}} +2
    \qedhere
\end{align*}
\end{proof}

We summarize the argument of \Cref{prop:oneboundarytoggleemb} in \Cref{tab:oneboundarytoggleemb}.

\begin{table}[ht]
    \centering
    \caption{The argument in \Cref{prop:oneboundarytoggleemb}}
    \begin{tabular}{|C{6cm}|C{6cm}|C{3cm}|}
    \hline
    Vertices of $-c$ & Quantity & Contribution \\
    \hline \hline
    First vertex at top vertex of $s$ & 1 & $a_1 w$ \\
    \hline
    Second vertex at top vertex of $s$ = Last vertex & 1 & $a_2 \lambda^{2-b}w$ \\
    \hline
    Last vertex of $c$ on $h_2$ & 1 & $\lambda^b w$ \\
    \hline
    Pairs of vertices that meet a B-resolved hook vertex & $\text{\# hook vertices}-1$ & $2w$ \\
    \hline   
    Remaining non-hook vertices & $2N-2\text{(\# hook vertices)}-1$ & $w$ \\
    \hline
    \end{tabular}
    \label{tab:oneboundarytoggleemb}
\end{table}

Similar to \Cref{subsec:oneboundaryfan}, for the rest of the propositions we will assume that $h_1$ is not deep. Depending on whether $e^1_{\delta_1+1}=e^1_1$ and $e^2_{\delta_2+1}=e^2_1$, \Cref{prop:oneboundarytoggleonecycle} and \Cref{prop:oneboundarytogglesbf} will conclude our arguments in this subsection.

\begin{prop} \label{prop:oneboundarytoggleonecycle}
Let $s$ be a sector of minimum weight. Suppose that there is an Eulerian circuit $c$ containing the hook $h_1$ of $s$, and suppose that:
\begin{itemize}
    \item $s$ is toggle
    \item $s$ satisfies (FRC)
    \item $e^2_{\delta_2+1}=e^2_1$
    \item $h_1$ is not deep
\end{itemize}
Then the number of tetrahedra in $\Delta$ is $\leq \frac{1}{4} \lambda^{-2\chi}+1$.
\end{prop}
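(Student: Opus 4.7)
The proof is an almost-immediate application of the machinery already developed in Sections 4.1 and 4.2, and should require no new ideas. The plan is as follows. First, observe that the three operative hypotheses of the proposition -- namely, that $s$ satisfies (FRC), that $h_1$ is not deep, and that $e^2_{\delta_2+1}=e^2_1$ -- are precisely the hypotheses of \Cref{lemma:doublehookonecycleothernondeep}. Applying that lemma yields scenario (II) of \Cref{prop:doublehookresolution}: the resolved dual graph $\Gamma(h_1 \cup h_2)$ decomposes into exactly two connected components, one containing $h_1$ and the other containing $h_2$ (which is in fact the cycle $(e^2_i)_{i \in \mathbb{Z}/\delta_2}$, though we do not need this explicit description).

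Second, \Cref{prop:doublehookresolution} (scenario (II) implies scenario (i)) produces an Eulerian collection of two circuits $\{c_1, c_2\}$ in $\Gamma$, each of which hooks around $s$. Since $s$ was chosen to be a sector of minimum weight, \Cref{prop:doublehookbound} applies and gives the bound $N \leq \tfrac{1}{4}\lambda^{-2\chi} + 1$, as claimed. The hypotheses that $s$ is toggle and that the Eulerian hook circuit $c$ already exists play no role in the argument itself; they are included only to align this statement with the case-by-case organization of the elimination strategy that will be assembled in \Cref{subsec:EIIRP}. Consequently there is no real obstacle here -- all the substantive work was carried out in \Cref{lemma:doublehookoneshortcycle}, \Cref{lemma:doublehookonecycleothernondeep}, and \Cref{prop:doublehookbound}.
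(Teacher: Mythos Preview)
Your proof is correct and matches the paper's own argument exactly: invoke \Cref{lemma:doublehookonecycleothernondeep} to get case (II) of \Cref{prop:doublehookresolution}, then apply \Cref{prop:doublehookbound}. Your observation that the toggle hypothesis and the existence of $c$ are not used in the argument itself is also accurate.
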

\begin{proof}
Under the hypothesis, $\Gamma(h_1 \cup h_2)$ satisfies the hypothesis of \Cref{prop:doublehookresolution} by \Cref{lemma:doublehookonecycleothernondeep}, so the bound follows from \Cref{prop:doublehookbound}.
\end{proof}

We come to the final proposition, whose proof contains the most modifications to \Cref{prop:singlehookbound}. To state the bounds in the proposition we need to define some auxiliary functions.

Let $F_1(x)$ be the maximum of 
$$f_1(x,u) = \frac{1}{2} x^2 - \frac{1}{2} x (u+ u^{-1}) - (\frac{3}{2})^{\frac{4}{3}} u^{\frac{2}{3}} +2 -\frac{1}{2x}$$
over $0 < u \leq 1$. 

Let $F_2(x)$ be the maximum of 
$$f_2(x,a) = \frac{1}{2} x^2 - \frac{1}{2} x (\sqrt{\frac{a}{a+1}} + \sqrt{\frac{a+1}{a}}) - \frac{1}{2} a - a^{-1} +2 -\frac{1}{2x}$$
over $a \geq 1$.

\begin{prop} \label{prop:oneboundarytogglesbf}
Let $s$ be a sector of minimum weight. Suppose that there is an Eulerian circuit $c$ containing the hook $h_1$ of $s$, and suppose that:
\begin{itemize}
    \item $s$ is toggle
    \item $s$ satisfies (FRC)
    \item $e^1_{\delta_1+1}=e^1_1$ but $s \neq s^2_1$
\end{itemize}
Then the number of tetrahedra in $\Delta$ is $\leq \max \{\frac{1}{4} \lambda^{-2\chi}+1, F_1(\lambda^{-\chi}), F_2(\lambda^{-\chi}) \}$, provided that $\lambda^{-\chi} \geq 4\sqrt{2}$.
\end{prop}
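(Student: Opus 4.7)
The plan is to follow the two-pronged template of \Cref{prop:oneboundarytoggleemb} and \Cref{prop:oneboundaryfantbtlong}: first dispatch a sub-case with a double hook bound giving $\tfrac14\lambda^{-2\chi}+1$, and otherwise upgrade several vertex contributions in \Cref{eq:singlehook}. Observe first that the hypothesis $e^1_{\delta_1+1}=e^1_1$ lands us in the degenerate case of \Cref{defn:hookcircuit}, so $h_1=(e^1_i)_{i\in\mathbb Z/\delta_1}$ is a closed loop whose internal turns are all branching; thus $h_1$ is actually a branch cycle of $B$, and in particular it is deep. If in addition $e^2_{\delta_2+1}=e^2_1$, then $h_2$ is a second branch cycle and (using (FRC) to verify connectedness of $\Gamma(h_1\cup h'_2)$) \Cref{lemma:doublehookoneshortcycle} places us in case (II) of \Cref{prop:doublehookresolution}, so \Cref{prop:doublehookbound} gives $N\leq\tfrac14\lambda^{-2\chi}+1$. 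Likewise, if either of the hypotheses of \Cref{lemma:doublehooktophingesequal} or \Cref{lemma:doublehookbottomhingesequal} is satisfied, $\Gamma(h_1\cup h_2)$ is connected and we win the same way; hence we may assume $e^2_{\delta_2+1}\neq e^2_1$, $v^1_1\neq v^2_1$, and $v^1_{\delta_1}\neq v^2_{\delta_2}$.

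Under these reductions, I would take the Eulerian hook circuit $c$ and modify the estimates in \Cref{prop:singlehookbound} in the spirit of \Cref{prop:oneboundarytoggleemb}. Writing $b$ for the number of arcs on $s$ and $t_2$ for the sector containing $e^2_{\delta_2+1}$ along a bottom side (distinct from $s$ by $s\neq s^2_1$), with weight $aw$, the descending-path trick already used in \Cref{prop:oneboundaryfantbtlong} gives $\lambda^{p_2}\geq 2/a$ for the intersection number $p_2$ of the fibre with the far bottom side of $t_2$; combined with the fact that $h_1$ is itself a branch cycle traversing $b$ arcs transverse to $S$, a bookkeeping of $-2\chi=2b+(\text{other intersection numbers})$ bounds $\lambda^b\leq\lambda^{-\chi}\min\{\sqrt{a/2},\,1\}$. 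The last vertex of $-c$ then contributes $\lambda^{-2\chi-b}w$ (its corresponding merging sector being a deep translate of $s$), the last vertex of $-c$ lying on the branch cycle $h_1$ contributes $\lambda^b w$ (having traversed all of $h_1$), and both are verifiably non-hook vertices using respectively $s\neq s^2_1$ and the fact that $h_1$ is a cycle.

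At this point a case split on the identification forced by the failure of the \Cref{lemma:doublehook...} hypotheses should exhaust the remaining geometry in two mutually complementary sub-configurations. In the first, a translation identification among the toggle sectors on side $2$ (analogous to the (TBT)-driven estimate of \Cref{prop:oneboundaryfanemb}) yields the auxiliary bound $\lambda^b\geq 3$ and an additional improvement of the form $(\tfrac32)^{4/3}u^{2/3}w$ with $u\in(0,1]$; writing the optimised \Cref{eq:singlehook} in this sub-case produces $N\leq f_1(\lambda^{-\chi},u)\leq F_1(\lambda^{-\chi})$. In the complementary sub-case, no such collapse occurs and the weight $a\geq 1$ of $t_2$ is itself the operative parameter, and the bookkeeping (with the companion improvement coming from the \emph{side-$1$} mirror of $t_2$) yields $N\leq f_2(\lambda^{-\chi},a)\leq F_2(\lambda^{-\chi})$. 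The hypothesis $\lambda^{-\chi}\geq 4\sqrt{2}$ is needed only so that $\lambda^b+\lambda^{-2\chi-b}$ is monotone on the relevant range of $b$ and the optima in $f_1,f_2$ are attained in the allowed parameter ranges; these monotonicity checks are the routine calculus content deferred to \Cref{sec:calculus}.

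The main obstacle I anticipate is the combinatorial one: showing exhaustiveness of the two sub-cases in Step~3. Under our reductions, the $h_1$-side quad stack collapses heavily (with $q^1_1$ and $q^1_{\delta_1}$ identified in the SBF manner), so the remaining freedom in $\Gamma(h_1\cup h_2)$ must come from a very restricted set of gluings among $q^2_1,\ldots,q^2_{\delta_2}$ and the dual quad of $t_2$. Translating the obstruction-$2$-cycle argument of \Cref{subsec:doublehookobstruct} into this asymmetric setting — where one side already carries a branch cycle — and turning each surviving configuration into either the geometric parameter $u$ of $F_1$ or the weight parameter $a$ of $F_2$ is where most of the effort will go; the analytic step of assembling the improved contributions into $f_1,f_2$ is then mechanical.
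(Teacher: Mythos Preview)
Your opening observation is backwards, and this derails the whole argument. You claim that $e^1_{\delta_1+1}=e^1_1$ forces $h_1$ to be deep and hence a branch cycle. But the degenerate case of \Cref{defn:hookcircuit} only occurs when $k_1=1$ \emph{and} $e^1_{\delta_1+1}=e^1_1$; if both hold, then $(e^1_i)_{i\in\mathbb Z/\delta_1}$ is its own component of $\Gamma(h_1)$, so $\Gamma(h_1)$ is disconnected and by \Cref{prop:singlehookresolution} no Eulerian circuit containing $h_1$ can exist, contradicting the hypothesis. Thus $h_1$ is \emph{not} deep here; the hook $h_1=(e^1_{k_1},\dots,e^1_{\delta_1+1})$ is a genuine proper subpath of the cycle $(e^1_i)$. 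The paper opens with exactly this observation and exploits it heavily: non-deepness is what guarantees that the key vertices $u_1,u_2,u$ do not meet B-resolved hook vertices, and what lets one assume $-c$ begins with $(-e^1_{\delta_1+1},\dots,-e^1_2)$ and then takes an anti-branching turn at $v^1_1$.

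Your reductions via \Cref{lemma:doublehooktophingesequal} and \Cref{lemma:doublehookbottomhingesequal} are also illegitimate: both lemmas carry the hypothesis $e^1_{\delta_1+1}\neq e^1_1$, which is violated here. The paper instead invokes \Cref{lemma:doubledeephookfan} to conclude that either $\Gamma(h_1\cup h_2)$ is connected (giving $\tfrac14\lambda^{-2\chi}+1$) or $e^2_{\delta_2+1}$ lies along the bottom side $t^1_1$ of $t:=s^2_1$; the hypothesis $s\neq s^2_1$ rules out the $e^1_{\delta_1+1}$ possibility.

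Finally, your case split is not the right one. The paper's split is on whether the two sides $t^2_1,t^2_2$ of $t=s^2_1$ are disjoint or satisfy $t^2_1\subset t^2_2$. In the disjoint case one gets $\lambda^q\geq 3/a$ (three sectors merging below $t$), which after \Cref{claim:oneboundarytogglesbf1} yields $F_1$; in the contained case one gets $\lambda^q\geq (a+1)/a$, which after \Cref{claim:oneboundarytogglesbf2} yields $F_2$. In both cases four vertices $u_1,u_2,u',u$ are upgraded (with contributions $aw$, $\lambda^{b'}w$, $\lambda^b w$, $\lambda^{-2\chi-b}w$) and the A-resolved hook vertex contributes an extra $\lambda^{-b}w$; the hypothesis $\lambda^{-\chi}\geq 4\sqrt2$ enters only in \Cref{claim:oneboundarytogglesbf2}. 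Your proposed bookkeeping (a $t_2$ above $s$, a ``side-1 mirror of $t_2$'', a TBT-type $\lambda^b\geq 3$ estimate) does not match this structure and would not produce $f_1,f_2$ as defined.
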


Like \Cref{prop:oneboundaryfantbtlong}, the assumption $\lambda^{-\chi} \geq 4\sqrt{2}$ is used to simplify the statement and does not play a role in the main argument. Here this hypothesis is nontrivial, but it will not matter for our application to the fully-punctured normalized dilatation problem.

\begin{proof}[Proof of \Cref{prop:oneboundarytogglesbf}]
Notice that $e^1_{\delta_1+1}=e^1_1$ implies that $h_1$ is not deep, otherwise $\Gamma(h_1)$ cannot be connected. Since $\Gamma(e^1_2,...,e^1_{\delta_1+1})$ is connected by \Cref{lemma:singlenondeephook}, we can assume that $-c$ starts with $(-e^1_{\delta_1+1},...,-e^1_2)$. Then $-c$ has to take an anti-branching turn at $v^1_1$ otherwise it would not be an Eulerian circuit. Write $t=s^2_1$. Let $t^2_1$ be the top edge of $t$ other than $e^2_1$, let $t^2_1$ be the bottom side of $t$ below $t^2_1$, and let $t^1_1$ be the other bottom side of $t$. 

By \Cref{lemma:doubledeephookfan}, $\Gamma(h_1 \cup h_2)$ is connected unless $e^1_{\delta_1+1}$ or $e^2_{\delta_2+1}$ lies along $t^1_1$. If $e^1_{\delta_1+1}$ lies along $t^1_1$, then by the hypothesis that $e^1_{\delta_1+1}=e^1_1$, we have $t=s$, but this contradicts the hypothesis that $s \neq s^2_1$. So we either have the first bound in the proposition from \Cref{prop:doublehookresolution} and \Cref{prop:doublehookbound} or $e^2_{\delta_2+1}$ lies along $t^1_1$. See \Cref{fig:oneboundarytogglesbf} left, where we also indicate the initial portion of the descending path (obtained from $c$) in yellow then teal. The second and third bounds in the proposition will follow from splitting into cases when $t^2_1$ is disjoint from $t^2_2$ and when it is not.

\begin{figure}[ht]
    \centering
    \fontsize{10pt}{10pt}\selectfont
    \resizebox{!}{5cm}{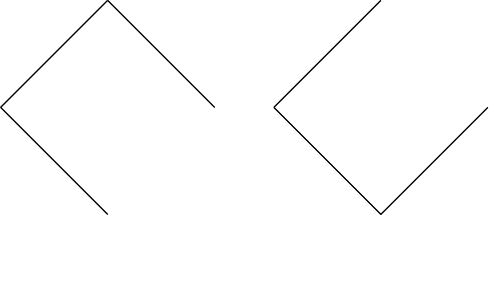}
    \caption{The set up in \Cref{prop:oneboundarytogglesbf}. We can assume that $-c$ first goes through $(-e^1_{\delta_1+1},...,-e^1_2)$ (in yellow) and takes an anti-branching turn at $v^1_1$ (in teal). Left: if $t^2_1$ is disjoint from $t^2_2$. Right: if $t^2_1 \subset t^2_2$.}
    \label{fig:oneboundarytogglesbf}
\end{figure}

We first tackle the case when $t^2_1$ is disjoint from $t^2_2$. Let $b$ be the number of arcs on $s$. Let $b'$ be the number of times $e^2_{\delta_2+1}$ intersects the fiber surface, and let $b''$ be the total number of times $e^2_i$ for $i=1,...,\delta_2$ intersects the fiber surface. Then $b=b'+b''$. Meanwhile let $q$ be the number of arcs on $t$ and let $p+b'$ be the number of times $t^1_1$ intersects the fiber surface. We label these variables on \Cref{fig:oneboundarytogglesbf} left. Finally, let $aw$ be the weight of $t$.

We have $\lambda^q \geq \frac{3}{a}$ by the same argument as in \Cref{prop:oneboundaryfanemb} but applied to $t$. Similarly, $\lambda^{p+b'} \geq \frac{2}{a}$. We claim that $2b+p+q \leq -2\chi$. This follows from the observation that $t^1_1,t^2_1,t^2_2$ are disjoint from the sides of $s$ except for $e^2_{\delta_2+1}$ lying along $t^1_1$, and the fact that $e^1_1$ does not meet the fiber surface, since $h_1$ is not deep. The claim implies that $\lambda^b \leq \min\{\lambda^{-\chi-\frac{p}{2}} \sqrt{\frac{a}{3}}, \lambda^{-\chi}\}$. We also note that we have $\lambda^{b'} \geq \lambda^{-p} \frac{2}{a}$.

Let $u_1$ be the vertex of $-c$ at the top vertex of $s$ for which $t$ merges in. Let $u_2$ be the vertex of $-c$ after it traverses $e^2_{\delta_2+1}$. In other words, $u_1$ and $u_2$ are the first two vertices of $c$ after it traverses $(-e^1_{\delta_1+1},...,-e^1_2)$. This implies that $u_1$ and $u_2$ are distinct from each other and distinct from the last vertex of $-c$ on $h_2$ and the last vertex of $-c$. Let us write $u'$ and $u$ for these last two vertices respectively. Since $t=s^2_1 \neq s$, $u'$ and $u$ are distinct, so $u_1,u_2,u',u$ are all distinct vertices of $-c$.

$u_1$ does not meet a B-resolved hook vertex since $h_1$ is not deep. $u_2$ does not meet a B-resolved hook vertex since it is blue. $u'$ does not meet a B-resolved hook vertex since it is either blue or takes an anti-branching turn. $u$ does not meet a B-resolved hook vertex since $h_1$ is not deep.

Hence we can modify \Cref{eq:singlehook} by replacing the contribution of $u_1$ from $w$ to $aw$, replacing the contribution of $u_2$ from $w$ to $\lambda^{b'} w$, replacing the contribution of $u'$ from $w$ to $\lambda^b w$, and replacing the contribution of $u$ from $w$ to $\lambda^{-2\chi-b}$. We remark that we could have replaced the contribution of $u_1$ from $w$ to $a \lambda^{b'} w$ but it turns out that does not actually buy us any advantage, and would only complicate the arithmetic below.

Finally, we also estimate the first term of \Cref{eq:singlehook}, that is, the term corresponding to the A-resolved hook vertex, by $\lambda^{-b} w$ instead of ignoring it. This estimate comes from the fact that in the proof of \Cref{prop:singlehookbound}, the vertex of $-\widehat{c}$ corresponding to the first term is at height $\leq b$.

Thus \Cref{eq:singlehook} now reads
\begin{align*}
    \lambda^{-2\chi} &\geq 2N + (a-1) + (\lambda^{b'}-1) + (\lambda^b-1) + (\lambda^{-2\chi-b}-1) + \lambda^{-b} \\
    &= 2N-4+\lambda^{-b} + \lambda^b + \lambda^{-2\chi-b} + a + \lambda^{b'} \\
    &\geq 2N-4+\lambda^\chi + \lambda^{-\chi} (\min\{\lambda^{-\frac{p}{2}} \sqrt{\frac{a}{3}},1\} + \min\{\lambda^{-\frac{p}{2}} \sqrt{\frac{a}{3}},1\}^{-1}) + a + \lambda^{-p} \frac{2}{a} \\
    N &\leq \frac{1}{2} \lambda^{-2\chi} - \frac{1}{2} \lambda^{-\chi} (\min\{\} + \min\{\}^{-1}) - \frac{1}{2}(a + \lambda^{-p} \frac{2}{a}) + 2 - \frac{\lambda^\chi}{2}
\end{align*}
where we write $\min\{\}=\min\{\lambda^{-\frac{p}{2}} \sqrt{\frac{a}{3}},1\}$ to save space.

\begin{claim} \label{claim:oneboundarytogglesbf1}
For any $a \geq 1, p \geq 0$, we have 
$$a + \lambda^{-p} \frac{2}{a} \geq 3 (\frac{3}{2})^{\frac{1}{3}} (\lambda^{-\frac{p}{2}} \sqrt{\frac{a}{3}})^{\frac{2}{3}}$$
\end{claim}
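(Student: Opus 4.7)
The plan is to reduce the inequality to a single application of AM--GM after a suitable substitution. To match the form of the right-hand side, I would set
$$x = \lambda^{-\frac{p}{2}} \sqrt{\frac{a}{3}},$$
so that $\lambda^{-p} = \frac{3 x^2}{a}$, and the target inequality becomes
$$a + \frac{6 x^2}{a^2} \geq 3 \left(\frac{3}{2}\right)^{\frac{1}{3}} x^{\frac{2}{3}}.$$
Note that the constraints $a \geq 1$ and $p \geq 0$ play no role after this reduction; the inequality in fact holds for all $a, x > 0$.

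The key step is to split the first term as $a = \frac{a}{2} + \frac{a}{2}$ and apply the AM--GM inequality to the three summands $\frac{a}{2}, \frac{a}{2}, \frac{6 x^2}{a^2}$. This gives
$$\frac{a}{2} + \frac{a}{2} + \frac{6 x^2}{a^2} \geq 3 \sqrt[3]{\frac{a}{2} \cdot \frac{a}{2} \cdot \frac{6 x^2}{a^2}} = 3 \sqrt[3]{\frac{3 x^2}{2}} = 3 \left(\frac{3}{2}\right)^{\frac{1}{3}} x^{\frac{2}{3}},$$
which is exactly what was needed. Substituting $x$ back in terms of $a$ and $p$ yields the claim.

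There is no real obstacle here: the only nontrivial choice is the 2-1 split of the $a$ term, which is forced by matching exponents (we want the geometric mean to produce $x^{2/3}$, so the exponent of $x^2$ in the product must be $\frac{1}{3}$, i.e. one of three equal factors). Equality holds when $\frac{a}{2} = \frac{6 x^2}{a^2}$, i.e.\ when $a^3 = 12 x^2$, which corresponds to a specific relation between $a$ and $\lambda^{-p}$.
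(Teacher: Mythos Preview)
Your proof is correct, and it is a genuinely different (and cleaner) argument than the paper's. Both you and the paper make the same substitution $u = x = \lambda^{-p/2}\sqrt{a/3}$, but from there the routes diverge. The paper rewrites the left-hand side as a function of $u$ and $t = \lambda^p$, namely $3u^2 t + \tfrac{2}{3}u^{-2}t^{-2}$, and then minimizes over $t$ by computing $\partial/\partial t$ and evaluating at the critical point; this yields the bound $3(\tfrac{3}{2})^{1/3} u^{2/3}$. You instead keep $a$ as the second variable, obtain $a + 6x^2/a^2$, and apply AM--GM to the split $\tfrac{a}{2}+\tfrac{a}{2}+\tfrac{6x^2}{a^2}$. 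Your approach is more elementary (no calculus), and the $2$--$1$ split you identify is exactly what is needed to make the $a$'s cancel in the geometric mean; the paper's approach is more mechanical but arrives at the same sharp constant. The two are equivalent in spirit, since AM--GM on three terms encodes the same optimality condition as the vanishing derivative.
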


\Cref{claim:oneboundarytogglesbf1} will be shown in \Cref{sec:calculus}.

This implies that
\begin{align*}
    N &\leq \frac{1}{2} \lambda^{-2\chi} - \frac{1}{2} \lambda^{-\chi} (\min\{\} + \min\{\}^{-1}) - (\frac{3}{2})^{\frac{4}{3}} (\lambda^{-\frac{p}{2}} \sqrt{\frac{a}{3}})^{\frac{2}{3}} + 2 - \frac{\lambda^\chi}{2} \\
    &\leq \frac{1}{2} \lambda^{-2\chi} - \frac{1}{2} \lambda^{-\chi} (\min\{\} + \min\{\}^{-1}) - (\frac{3}{2})^{\frac{4}{3}} \min\{\}^{\frac{2}{3}} + 2 - \frac{\lambda^\chi}{2} \\
    &= f_1(\lambda^{-\chi}, \min\{\}) \\
    &\leq F_1(\lambda^{-\chi})
\end{align*}

Now we tackle the case when $t^2_1$ is contained in $t^1_1$. Let $b$ be the number of arcs on $s$. Let $b'$ be the number of times $e^2_{\delta_2+1}$ intersects the fiber surface, and let $b''$ be the total number of times $e^2_i$ for $i=1,...,\delta_2$ intersects the fiber surface. Then $b=b'+b''$. Meanwhile let $q$ be the times $t^2_1$ intersects the fiber surface and let $p+b'$ be the number of times $t^1_1$ intersects the fiber surface. See \Cref{fig:oneboundarytogglesbf} right. Finally, let $aw$ be the weight of $t$.

Consider $t^2_1$, push it upwards in the direction of $t$ and reverse its orientation to get a descending path. Using this path, we get the inequality $\lambda^q \geq \frac{a+1}{a}$. We reuse the same inequalities $\lambda^{p+b'} \geq \frac{2}{a}$ and $2b+p+q \leq 2$ as above. This implies that $\lambda^b \leq \lambda^{-\chi-\frac{p}{2}} \sqrt{\frac{a}{a+1}}$, and that $\lambda^{b'} \geq \lambda^{-p} \frac{2}{a}$. 

We let $u_1, u_2, u', u$ be the same vertices as above and apply the exact same modifications as above. Then \Cref{eq:singlehook} reads
\begin{align*}
    \lambda^{-2\chi} &\geq 2N + (a-1) + (\lambda^{b'}-1) + (\lambda^b-1) + (\lambda^{-2\chi-b}-1) + \lambda^{-b} \\
    &= 2N-4+\lambda^{-b} + \lambda^b + \lambda^{-2\chi-b} + a + \lambda^{b'} \\
    &\geq 2N-4+\lambda^\chi + \lambda^{-\chi} (\lambda^{-\frac{p}{2}} \sqrt{\frac{a}{a+1}} + (\lambda^{-\frac{p}{2}} \sqrt{\frac{a}{a+1}})^{-1}) + a + \lambda^{-p} \frac{2}{a} \\
    N &\leq \frac{1}{2} \lambda^{-2\chi} - \frac{1}{2} (\lambda^{-\chi} (\lambda^{-\frac{p}{2}} \sqrt{\frac{a}{a+1}} + (\lambda^{-\frac{p}{2}} \sqrt{\frac{a}{a+1}})^{-1}) + a + \lambda^{-p} \frac{2}{a}) + 2 - \frac{\lambda^\chi}{2}
\end{align*}

\begin{claim} \label{claim:oneboundarytogglesbf2}
If $\lambda^{-\chi} \geq 4 \sqrt{2}$, then 
$$\lambda^{-\chi} (\lambda^{-\frac{p}{2}} \sqrt{\frac{a}{a+1}} + (\lambda^{-\frac{p}{2}} \sqrt{\frac{a}{a+1}})^{-1}) +a + \lambda^{-p} \frac{2}{a} \geq \lambda^{-\chi} (\sqrt{\frac{a}{a+1}} + \sqrt{\frac{a+1}{a}}) +a + \frac{2}{a}$$
for all $a \geq 1$.
\end{claim}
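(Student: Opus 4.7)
The plan is to introduce the substitutions $x = \lambda^{-\chi}$, $t = \lambda^{-p/2}$ (so $t \in (0,1]$ since $p \geq 0$), and $r = \sqrt{a/(a+1)}$ (so $r^{-1} = \sqrt{(a+1)/a}$), which turns the desired inequality into
$$g(t) := x\bigl(tr + (tr)^{-1}\bigr) + \tfrac{2t^2}{a} \;\geq\; x\bigl(r + r^{-1}\bigr) + \tfrac{2}{a} \;=\; g(1).$$
Since both sides agree at $t = 1$, the claim reduces to showing that $g$ is non-increasing on $(0, 1]$, uniformly over $a \geq 1$, provided $x \geq 4\sqrt{2}$. So first I would differentiate and write the desired inequality $g'(t) \leq 0$ in a clean form.

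A direct computation gives $g'(t) = xr - \frac{x}{rt^2} + \frac{4t}{a}$. Clearing denominators and substituting $r = \sqrt{a/(a+1)}$ shows that $g'(t) \leq 0$ is equivalent to
$$\frac{x\sqrt{a}\,(a+1-at^2)}{\sqrt{a+1}\,t^3} \;\geq\; 4. \qquad (*)$$
Thus the remaining work is to verify $(*)$ for all $a \geq 1$ and $t \in (0,1]$ under the hypothesis $x \geq 4\sqrt{2}$.

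Next I would estimate the worst case in each variable separately. Treating $t$ as fixed, a computation of $\partial_a$ of $\sqrt{a}\,(a+1-at^2)/\sqrt{a+1}$ (which I would carry out by writing it as $\sqrt{a}(a+1-at^2)(a+1)^{-1/2}$ and collecting terms over $\sqrt{a(a+1)^3}$) shows that the derivative has the form $\bigl((a(1-t^2)+1) + 2a(a+1)(1-t^2)\bigr)/\bigl(2\sqrt{a(a+1)^3}\bigr)$. Both summands in the numerator are non-negative for $a \geq 1$ and $t \in (0,1]$, so the left side of $(*)$ is increasing in $a$ and hence minimized at $a = 1$, where it becomes $x\,(2-t^2)/(\sqrt{2}\,t^3)$.

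It then remains to check that $x \geq 4\sqrt{2}\,t^3/(2-t^2)$ for every $t \in (0,1]$, i.e., that $\sup_{t \in (0,1]} t^3/(2-t^2) \leq 1$. Differentiating gives $\bigl(t^3/(2-t^2)\bigr)' = t^2(6-t^2)/(2-t^2)^2$, which is strictly positive on $(0,1]$, so the supremum is attained at $t = 1$ with value $1$. This produces exactly the threshold $x \geq 4\sqrt{2}$, completing the proof. The main obstacle here is purely bookkeeping: packaging the two-variable optimization so that the worst case in $a$ and the worst case in $t$ line up at the boundary $(a,t) = (1,1)$, which is why the constant in the hypothesis is sharp for this argument.
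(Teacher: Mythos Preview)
Your proof is correct and follows essentially the same approach as the paper: both reduce the claim to showing that the expression is monotone in $t=\lambda^{\pm p/2}$ by computing the $t$-derivative and verifying it has the right sign when $\lambda^{-\chi}\geq 4\sqrt{2}$. The paper's execution is a bit shorter because it bounds the derivative crudely at $t=1$ and then uses the identity $\sqrt{\tfrac{a+1}{a}}-\sqrt{\tfrac{a}{a+1}}=\tfrac{1}{a}\sqrt{\tfrac{a}{a+1}}$ to factor out $a^{-1}$, avoiding your separate optimization over $a$; but the two routes arrive at exactly the same threshold.
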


\Cref{claim:oneboundarytogglesbf2} will be shown in \Cref{sec:calculus}.

This implies that
\begin{align*}
    N &\leq \frac{1}{2} \lambda^{-2\chi} - \frac{1}{2} (\lambda^{-\chi} (\sqrt{\frac{a}{a+1}} + \sqrt{\frac{a+1}{a}}) + a + \frac{2}{a}) + 2 - \frac{\lambda^\chi}{2} \\
    &= f_2(\lambda^{-\chi}, a) \\
    &\leq F_2(\lambda^{-\chi})
    \qedhere
\end{align*}
\end{proof}

We summarize the argument of \Cref{prop:singlehookbound} in \Cref{tab:oneboundarytogglesbf}.

\begin{table}[ht]
    \centering
    \caption{The argument in \Cref{prop:oneboundarytogglesbf}}
    \begin{tabular}{|C{6cm}|C{6cm}|C{3cm}|}
    \hline
    Vertices of $-c$ & Quantity & Contribution \\
    \hline \hline
    Vertex at top vertex of $s$ where $t$ merges in & 1 & $a w$ \\
    \hline
    Vertex after $-c$ traverses $e^2_{\delta_2+1}$ & 1 & $\lambda^{b'}w$ \\
    \hline
    Last vertex on $h_2$ & 1 & $\lambda^b w$ \\
    \hline
    Last vertex & 1 & $\lambda^{-2\chi-b} w$ \\
    \hline
    Pairs of vertices that meet a B-resolved hook vertex & $\text{\# hook vertices}-1$ & $2w$ \\
    \hline   
    Remaining non-hook vertices & $2N-2\text{(\# hook vertices)}-3$ & $w$ \\
    \hline
    A-resolved hook vertex & 1 & $\lambda^{-b} w$ \\
    \hline
    \end{tabular}
    \label{tab:oneboundarytogglesbf}
\end{table}

\subsection{Putting everything together} \label{subsec:EIIRP}

By combining all the arguments we had in the previous subsections, we will prove the following theorem in this subsection.

\begin{thm} \label{thm:EIIRP}
Let $f:S \to S$ be a fully-punctured pseudo-Anosov map with normalized dilatation $\lambda^{-\chi}$. Suppose the mapping torus of $f$ has only one boundary component and suppose $4 \sqrt{2} \leq \lambda^{-\chi}<8$, then the mapping torus of $f$ admits a veering triangulation with the number of tetrahedra less than or equal to
\begin{align*}
    \max \{ & \frac{1}{3}\lambda^{-2\chi}+\frac{1}{2}, \frac{1}{2}\lambda^{-2\chi}-\lambda^{-\chi}, \frac{1}{2}(\lambda^{-2\chi} - \lambda^{-\frac{4}{3} \chi} - \lambda^{-\frac{2}{3} \chi} +3), \\
    & \frac{1}{2} \lambda^{-2\chi} - \sqrt{\lambda^{-2\chi}+4\lambda^{-\chi}}+2, F_1(\lambda^{-\chi}), F_2(\lambda^{-\chi}), 8\log_3 \lambda^{-\chi} \}
\end{align*}
\end{thm}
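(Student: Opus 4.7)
The plan is to combine the various propositions of Sections~\ref{sec:singlehook}--\ref{sec:oneboundarytoggle} via a case-by-case elimination that mirrors, but substantially extends, the one in \Cref{subsec:singlehookexist}. The intended structure is summarised by the flowchart \Cref{fig:flowchart}. Throughout I would fix an Eulerian hook circuit $c$ around a minimum-weight sector $s$ supplied by \Cref{prop:singlehookexist}; the exceptional triangulation \texttt{cPcbbbdxm\_10} has $\lambda^{-\chi} = \mu^2 < 4\sqrt{2}$ and so is outside our range.

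I would first bifurcate on the number $l$ of branch cycles in $B$. When $l \geq 3$, the hypothesis $\lambda^{-\chi} < 8$ gives $l > \log_2 \lambda^{-\chi}$, so \Cref{prop:manybranchcycle} applies. The secondary bound $\tfrac{1}{4}\lambda^{-2\chi}+1$ is already dominated by $\tfrac{1}{3}\lambda^{-2\chi}+\tfrac{1}{2}$ throughout the range. For the principal bound, setting $y = \lambda^{-2\chi/l}$, one rewrites
\[
\tfrac{1}{2}\Bigl(\lambda^{-2\chi} - \tfrac{\lambda^{-2\chi}-y}{y-1} + l\Bigr)
= \tfrac{1}{2}\Bigl(y^l - y(1+y+\cdots+y^{l-2}) + l\Bigr),
\]
which at $l=3$ equals $\tfrac{1}{2}(\lambda^{-2\chi}-\lambda^{-4\chi/3}-\lambda^{-2\chi/3}+3)$, the third listed term. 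A monotonicity check in $l$ (to be deferred to \Cref{sec:calculus}) would show the bound is weaker for $l \geq 4$, so the $l=3$ value dominates.

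When $l \in \{1,2\}$, I would invoke \Cref{prop:fewbranchcyclefrc}: either the bound $\tfrac{1}{2}(\lambda^{-2\chi}-\lambda^{-4\chi/3}-\lambda^{-\chi})+1$ holds (weaker than the third listed term) or some minimum-weight sector $s$ satisfies (FRC). I would then use \Cref{prop:manyfibersurfaces} to arrange $s$ to be the only deep sector and cascade through the combinatorial sub-cases. If $s$ is fan, the possibilities $s = s^i_1$, failure of (TBT), (TBT) with both sides long, and (TBT) with one side short are handled by Propositions \ref{prop:oneboundaryfannonemb} (dominated), \ref{prop:oneboundaryfanemb} (first listed term), \ref{prop:oneboundaryfantbtlong} (second listed term), and \ref{prop:oneboundaryfantbtshort} (dominated by the first and third listed terms), respectively. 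If $s$ is toggle, the generic case $s \neq s^1_1, s^2_1$ is handled by \Cref{prop:oneboundarytoggleemb} (fourth listed term), and the case $s = s^1_1$, $s \neq s^2_1$ (up to relabelling) by \Cref{prop:oneboundarytogglesbf}, giving $F_1$ and $F_2$; the hypothesis $\lambda^{-\chi} \geq 4\sqrt{2}$ of the latter is exactly what fixes the lower end of our range.

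The main obstacle will be the residual case where $s$ is toggle with $s = s^1_1 = s^2_1$, which is not covered by any of the preceding propositions. Here both bottom sides of $s$ wrap around, forcing $(e^\beta_i)_{i\in\mathbb{Z}/\delta_\beta}$ to be branch cycles for $\beta = 1,2$, and so $l=2$. In this highly rigid configuration the previous constructions of hook circuits and double-hook resolutions break down. I would instead run a direct exponential-growth argument in the dynamic plane: each level of merging sectors emanating from $s$ multiplies the cumulative weight by at least a factor of $3$, and since all sector weights lie in the controlled interval determined by $\lambda^{-2\chi}$, one can afford only $O(\log_3 \lambda^{-\chi})$ such levels before exhausting the triangulation, producing the final listed term $N \leq 8 \log_3 \lambda^{-\chi}$. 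The various domination checks between the raw propositional bounds and the listed terms are routine monotonicity/convexity arguments that would be relegated to \Cref{sec:calculus}.
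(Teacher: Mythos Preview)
Your overall architecture is close, but there is a structural gap that makes the argument fail as written. The paper's proof hinges on a dichotomy you have collapsed: either (A) for \emph{some} choice of fiber surface there is a minimum-weight sector with a non-deep hook, or (B) for \emph{every} fiber surface \emph{every} minimum-weight sector is deep. These are handled very differently, and the propositions you are invoking split between them.

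In Case A one fixes such an $s$ with $h_1$ not deep; this hypothesis is then available and is explicitly required in \Cref{prop:oneboundaryfantbtlong}, \Cref{prop:oneboundaryfantbtshort}, \Cref{prop:oneboundarytoggleonecycle}, and (via $e^1_{\delta_1+1}=e^1_1$) in \Cref{prop:oneboundarytogglesbf}. Your proposal instead applies \Cref{prop:manyfibersurfaces} to make $s$ the unique deep sector and then invokes exactly these propositions --- but after that move both hooks of $s$ \emph{are} deep, so none of them apply. Moreover, once you change the fiber surface there is no reason your chosen $s$ remains minimum-weight; that is only guaranteed under the Case B hypothesis. A related slip: in Case A the two sides of $s$ are not symmetric (only $h_1$ is known to be non-deep), so ``up to relabelling'' does not reduce $s=s^2_1$ to $s=s^1_1$; the former is handled by \Cref{prop:oneboundarytoggleonecycle}, not a relabelled \Cref{prop:oneboundarytogglesbf}.

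In Case B the paper does \emph{not} analyse a single sector. It iterates: for \emph{each} sector $s$ one uses \Cref{prop:manyfibersurfaces} to make $s$ the unique deep (hence unique minimum-weight) sector, and then argues that either one of the propositions not requiring a non-deep hook (\Cref{prop:oneboundaryfannonemb}, \Cref{prop:oneboundaryfanemb}, \Cref{prop:oneboundarytoggleemb}) fires, or $s$ satisfies (TBT) if fan, (SBF) if toggle. Running this over \emph{all} sectors yields the global conclusion that every fan sector satisfies (TBT) and every toggle sector satisfies (SBF); only then does \Cref{prop:oneboundarylogbound} produce the $8\log_3 \lambda^{-\chi}$ bound. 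Your ``residual case'' is local (a single toggle sector with $s=s^1_1=s^2_1$), and a dynamic-plane growth argument on that one sector cannot bound the total number of tetrahedra; the logarithmic bound is genuinely a statement about the whole branched surface.
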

\begin{proof}
Suppose we are given $f$ as in the statement. We first check if the number of branch cycles $l$ is greater than $\frac{\log \lambda^{-\chi}}{\log 2}$. If so, we apply \Cref{prop:manybranchcycle}, noting that 
$$\frac{1}{4} \lambda^{-2\chi} +1 \leq \frac{1}{3}\lambda^{-2\chi}+\frac{1}{2}$$
for $\lambda^{-\chi} \geq 4\sqrt{2}$, and 
$$\frac{1}{2} (\lambda^{-2\chi} - \frac{\lambda^{-2\chi} - \lambda^{-\frac{2}{l}\chi}}{\lambda^{-\frac{2}{l}\chi} - 1} + l) \leq \frac{1}{2}(\lambda^{-2\chi} - \lambda^{-\frac{4}{3} \chi} - \lambda^{-\frac{2}{3} \chi} +3)$$
for $l > \frac{\log 4\sqrt{2}}{\log 2}=2.5$, i.e. $l \geq 3$, and we are done. If not, then by the hypothesis that $\lambda^{-\chi}<8$, we have $l=1$ or $2$.

Suppose there exists some choice of fiber surface such that there is a minimum weight sector $s$ that is not deep. Recall (from \Cref{subsec:singlehookexist}) that this means one of the hooks $h_\beta$ of $s$ is not deep. Without loss of generality we assume $h_1$ is not deep. Notice by \Cref{lemma:singlenondeephook}, $\Gamma(h_1)$ is connected, so there exists an Eulerian hook circuit containing $h_1$. If $e^1_{\delta_1+1} = e^2_1$ or $e^2_{\delta_2+1} = e^1_1$, then we apply \Cref{prop:fewbranchcyclenotranslate} and we are done. If $s$ does not satisfy (FRC), then we apply \Cref{prop:fewbranchcyclefrc}, noting that 
$$\frac{1}{2}(\lambda^{-2\chi} - \lambda^{-\frac{4}{3} \chi} - \lambda^{-\chi})+1 \leq \frac{1}{2}(\lambda^{-2\chi} - \lambda^{-\frac{4}{3} \chi} - \lambda^{-\frac{2}{3} \chi} +3)$$
and we are done. Hence we assume that $e^1_{\delta_1+1} \neq e^2_1$, $e^2_{\delta_2+1} \neq e^1_1$, and $s$ satisfies (FRC) from this point onwards.

Suppose $s$ is fan. If $s=s^1_1$ or $s^2_1$, we apply \Cref{prop:oneboundaryfannonemb} and we are done, so we assume that $s \neq s^1_1$ or $s^2_1$ from this point onwards. If $s$ does not satisfy (TBT), then we apply \Cref{prop:oneboundaryfanemb} and we are done. If $s$ satisfies (TBT), we apply \Cref{prop:oneboundaryfantbtlong} and \Cref{prop:oneboundaryfantbtshort}, noting that 
$$\frac{1}{6\sqrt{3}}\lambda^{-2\chi}+\frac{3}{2} \leq \frac{1}{3}\lambda^{-2\chi}+\frac{1}{2}$$
for $\lambda^{-\chi} \geq 4\sqrt{2}$, and we are done.

Suppose on the other hand that $s$ is toggle. If $s \neq s^1_1$ or $s^2_1$ then we apply \Cref{prop:oneboundarytoggleemb}. If $s=s^2_1$, then by the assumption that $e^1_{\delta_1+1} \neq e^2_1$, we must have $e^2_{\delta_2+1} = e^2_1$, and we can apply \Cref{prop:oneboundarytoggleonecycle}. So we can assume that $s=s^1_1$ but $s \neq s^2_1$. By the assumption that $e^2_{\delta_2+1} \neq e^1_1$, we must have $e^1_{\delta_1+1} = e^1_1$, so we can apply \Cref{prop:oneboundarytogglesbf}. 

So we can assume now that for any choice of fiber surface, every minimum weight sector is deep. For a sector $s$ of $B$, use \Cref{prop:manyfibersurfaces} to pick a fiber surface so that $s$ is the only deep sector, thus the only minimum weight sector. If $e^1_{\delta_1+1} = e^2_1$ or $e^2_{\delta_2+1} = e^1_1$, then we apply \Cref{prop:fewbranchcyclenotranslate}. If $s$ does not satisfy (FRC), then we apply \Cref{prop:fewbranchcyclefrc}. Hence we assume that $e^1_{\delta_1+1} \neq e^2_1$, $e^2_{\delta_2+1} \neq e^1_1$, and $s$ satisfies (FRC) from this point onwards.

Suppose $s$ is fan. We claim that either the theorem holds or $s$ satisfies (TBT). If neither $\Gamma(h_1)$ nor $\Gamma(h_2)$ are connected, then by \Cref{lemma:singledeephookfan}, $s$ satisfies (TBT) and we have proved the claim. Hence we can assume that one of $\Gamma(h_\beta)$, say $\Gamma(h_1)$ is connected. If $s=s^1_1$ or $s^2_1$, we apply \Cref{prop:oneboundaryfannonemb}. If $s \neq s^1_1$ or $s^2_1$ and $s$ does not satisfy (TBT), then we apply \Cref{prop:oneboundaryfanemb}. So the remaining case is if $s$ satisfies (TBT), as claimed.

Repeating this argument for all fan sectors, we can assume that all fan sectors of $B$ satisfy (TBT).

Suppose $s$ is toggle. We claim that either the theorem holds or $s$ satisfies (SBF) on some side. If neither $\Gamma(h_1)$ nor $\Gamma(h_2)$ are connected, then by \Cref{lemma:singledeephooktoggle}, $s$ satisfies (BSBF) hence (SBF) and we have proved the claim. Hence we can assume that one of $\Gamma(h_\beta)$, say $\Gamma(h_1)$ is connected. If $s \neq s^1_1$ or $s^2_1$ then we apply \Cref{prop:oneboundarytoggleemb} and the theorem holds. Hence we can assume that $s=s^1_1$ or $s^2_1$. By the assumption that $e^1_{\delta_1+1} \neq e^2_1$ and $e^2_{\delta_2+1} \neq e^1_1$, we must have $e^1_{\delta_1+1}=e^1_1$ or $e^2_{\delta_2+1}=e^2_1$ in the respective cases, that is, $s$ satisfies (SBF). 

Repeating this argument for all toggle sectors, we can assume that all toggle sectors of $B$ satisfy (SBF).
Hence the proof of \Cref{thm:EIIRP} is completed by the following proposition. 
\end{proof}

\begin{prop} \label{prop:oneboundarylogbound}
Let $\Delta$ be a veering triangulation and let $B$ be its stable branched surface. Suppose that:
\begin{itemize}
    \item For any fiber surface, every minimum weight sector is deep. 
    \item Every fan sector satisfies (TBT) and every toggle sector satisfies (SBF) on some side.
\end{itemize}
Then the number of tetrahedra in $\Delta$ is $\leq 8 \log_3 \lambda^{-\chi}$.
\end{prop}
\begin{proof}
For each blue sector $s$ of $B$, we choose a branch cycle $c_s$ as follows: 
\begin{itemize}
    \item If $s$ is fan, take $c_s=(e^\beta_i)_{i \in \mathbb{Z}/{\delta_\beta+1}}$ for some arbitrary choice of $\beta$. Since $s$ satisfies (TBT), $c_s$ is a $\Gamma$-cycle.
    \item If $s$ is toggle, $s$ satisfies (SBF) on some side, so $e^\beta_1=e^\beta_{\delta_\beta+1}$ for some $\beta$. Take $c_s=(e^\beta_i)_{i \in \mathbb{Z}/{\delta_\beta}}$. If $s$ satisfies (BSBF), we take some arbitrary choice of $\beta$.
\end{itemize} 
The $c_s$ for $s$ toggle are disjoint from each other, and also disjoint from $c_s$ for $s$ fan. The $c_s$ for $s$ fan are not necessarily disjoint, but each edge meets at most two such $c_s$. Hence if we let $p_s$ be the number of times $c_s$ intersects a fiber surface, we have $\sum_{s \text{ blue fan}} \frac{1}{2} p_s + \sum_{s \text{ blue toggle}} p_s \leq -2\chi$. 

Meanwhile, for each sector $s$, we can choose a fiber surface so that $s$ is the only deep sector by \Cref{prop:manyfibersurfaces}. By the first assumption, $s$ must be the only minimum weight sector under this choice of fiber surface. We can then bound $p_s$ as demonstrated in the previous subsections. Namely, if $s$ is fan, then we have $\lambda^{p_s} \geq 3$; if $s$ is toggle, then we have $\lambda^{p_s} \geq 2$. Combining this with the inequality we have from the last paragraph, we get $$\lambda^{-2\chi} \geq \prod_{s \text{ blue fan}} \lambda^{\frac{1}{2} p_s} \prod_{s \text{ blue toggle}} \lambda^{p_s} \geq \sqrt{3}^{\text{\# blue sectors}}$$
$$\text{\# blue sectors} \leq 4 \log_3 \lambda^{-\chi}$$

Similarly, $\text{\# red sectors} \leq 4 \log_3 \lambda^{-\chi}$, so
$$\text{\# tetrahedra}=\text{\# sectors} \leq 8 \log_3 \lambda^{-\chi} \eqno\qedhere$$
\end{proof}

In \Cref{fig:flowchart}, we provide a flowchart that illustrates the strategy of the proof of \Cref{thm:EIIRP}.

Showing \Cref{thm:intro16tet} now essentially amounts to substituting $\lambda^{-\chi} = 6.86$ in each of the bounds in \Cref{thm:EIIRP} and checking that they are all less than 17. We relegate this computation to \Cref{sec:calculus}.

\begin{landscape}

\begin{figure}
    \centering
    \tiny
    \begin{tikzpicture}[node distance=2cm] \label{flowchart}
        \tikzstyle{arrow} = [thick,->,>=stealth]
        \node (start) [rectangle, draw, minimum width=2cm, minimum height=0.8cm] {Start};
        
        \node (branchcycle) [diamond, draw, aspect=2, below of=start, yshift=0.2cm] {\# branch cycles $l$};        
        \draw [arrow] (start) -- (branchcycle);
        
        \node (large l) [rectangle, draw, minimum width=2.4cm, minimum height=0.8cm, left of=branchcycle, xshift=-2cm] {\Cref{prop:manybranchcycle}};
        \draw [arrow] (branchcycle) -- node[anchor=north] {$l \geq 3$} (large l);
        
        \node (small l) [diamond, draw, aspect=2, text width=2.4cm, below of=branchcycle, yshift=-0.4cm] {Min wt sector w/ non-deep hook?};
        \draw [arrow] (branchcycle) -- node[anchor=east] {$l \leq 2$} (small l);
        
        \node (nondeep hook) [rectangle, draw, minimum width=3cm, minimum height=0.8cm, text width=3.2cm, left of=small l, xshift=-3cm] 
        {Let $s$ be such a sector.\\
        Suppose $h_1$ is not deep.};
        \node (deep hook) [rectangle, draw, minimum width=3cm, minimum height=1cm, text width=3.2cm, right of=small l, xshift=3cm] 
        {Pick a sector $s$.\\
        Choose fiber surface s.t. $s$ is only deep sector.};
        \draw [arrow] (small l) -- node[anchor=north] {Yes} (nondeep hook);
        \draw [arrow] (small l) -- node[anchor=north] {No} (deep hook);
        
        \node (nondeep translate?) [diamond, draw, aspect=2, text width=2cm, below of=nondeep hook] {Is $e^1_{\delta_1+1}=e^2_1$ or $e^2_{\delta_2+1}=e^1_1$?};
        \draw [arrow] (nondeep hook) -- (nondeep translate?);
        
        \node (nondeep translate) [rectangle, draw, minimum width=2.4cm, minimum height=0.8cm, left of=nondeep translate?, xshift=-2cm]{\Cref{prop:fewbranchcyclenotranslate}};
        \draw [arrow] (nondeep translate?) -- node[anchor=north] {Yes} (nondeep translate);
        
        \node (nondeep FRC?) [diamond, draw, aspect=2, text width=2cm, below of=nondeep translate?, yshift=-0.5cm] {Does $s$ satisfy (FRC)?};
        \draw [arrow] (nondeep translate?) -- node[anchor=east] {No} (nondeep FRC?);
        
        \node (nondeep FRC) [rectangle, draw, minimum width=2.4cm, minimum height=0.8cm, left of=nondeep FRC?, xshift=-2cm]{\Cref{prop:fewbranchcyclefrc}};
        \draw [arrow] (nondeep FRC?) -- node[anchor=north] {Yes} (nondeep FRC);
        
        \node (nondeep fan) [rectangle, draw, minimum width=3cm, text width=4.2cm, below of=nondeep FRC?, xshift=-2.4cm, yshift=-0.4cm]
        {$s = s^1_1$ or $s^2_1$: \Cref{prop:oneboundaryfannonemb} \\
        $s$ does not satisfy (TBT): \\
        \Cref{prop:oneboundaryfanemb} \\
        $s$ satisfies (TBT): \\
        \Cref{prop:oneboundaryfantbtlong}, \Cref{prop:oneboundaryfantbtshort}}; 
        \draw [arrow] (nondeep FRC?) -- node[anchor=east] {No, $s$ fan} (nondeep fan);      
        
        \node (nondeep toggle) [rectangle, draw, minimum width=3cm, text width=4.2cm, below of=nondeep FRC?, xshift=2.4cm, yshift=-0.4cm] 
        {$s \neq s^1_1$ or $s^2_1$: \Cref{prop:oneboundarytoggleemb} \\
        $s=s^2_1$: \Cref{prop:oneboundarytoggleonecycle} \\
        $s=s^1_1$: \Cref{prop:oneboundarytogglesbf}};
        \draw [arrow] (nondeep FRC?) -- node[anchor=east] {No, $s$ toggle} (nondeep toggle);
        
        \node (deep translate?) [diamond, draw, aspect=2, text width=2cm, below of=deep hook] {Is $e^1_{\delta_1+1}=e^2_1$ or $e^2_{\delta_2+1}=e^1_1$?};
        \draw [arrow] (deep hook) -- (deep translate?);
        
        \node (deep translate) [rectangle, draw, minimum width=2.4cm, minimum height=0.8cm, left of=deep translate?, xshift=-2cm]{\Cref{prop:fewbranchcyclenotranslate}};
        \draw [arrow] (deep translate?) -- node[anchor=north] {Yes} (deep translate);
        
        \node (deep FRC?) [diamond, draw, aspect=2, text width=2cm, below of=deep translate?, yshift=-0.5cm] {Does $s$ satisfy (FRC)?};
        \draw [arrow] (deep translate?) -- node[anchor=east] {No} (deep FRC?);
        
        \node (deep FRC) [rectangle, draw, minimum width=2.4cm, minimum height=0.8cm, left of=deep FRC?, xshift=-2cm]{\Cref{prop:fewbranchcyclefrc}};
        \draw [arrow] (deep FRC?) -- node[anchor=north] {Yes} (deep FRC);
        
        \node (deep fan) [rectangle, draw, minimum width=3cm, minimum height=1cm, text width=4.2cm, below of=deep FRC?, xshift=-2.4cm, yshift=-0.2cm]
        {$s = s^1_1$ or $s^2_1$: \Cref{prop:oneboundaryfannonemb} \\
        $s$ does not satisfy (TBT): \Cref{prop:oneboundaryfanemb}}; 
        \draw [arrow] (deep FRC?) -- node[anchor=east] {No, $s$ fan} (deep fan);      
        
        \node (deep toggle) [rectangle, draw, minimum width=3cm, minimum height=0.8cm, text width=4.2cm, below of=deep FRC?, xshift=2.4cm, yshift=-0.2cm] 
        {$s \neq s^1_1$ or $s^2_1$: \Cref{prop:oneboundarytoggleemb}};
        \draw [arrow] (deep FRC?) -- node[anchor=east] {No, $s$ toggle} (deep toggle);
        
        \node (tbt) [rectangle, draw, minimum height=0.8cm, text width=4.2cm, below of=deep fan, yshift=0.2cm]{All fan sectors satisfy (TBT)};
        \draw [arrow] (deep fan) -- node[anchor=east] {$s$ satisfies (TBT)} (tbt);
        
        \node (sbf) [rectangle, draw, minimum height=0.8cm, text width=4.2cm, below of=deep toggle, yshift=0.2cm]{All toggle sectors satisfy (SBF)};
        \draw [arrow] (deep toggle) -- node[anchor=east] {$s$ satisfies (SBF)} (sbf);
        
        \node (merge) [coordinate, below of=tbt, xshift=2.4cm, yshift=1.2cm] {};
        \draw (tbt) |- (merge);
        \draw (sbf) |- (merge);
        
        \node (log) [rectangle, draw, minimum width=3cm, minimum height=0.8cm, below of=merge, yshift=1.2cm]{\Cref{prop:oneboundarylogbound}};
        \draw [arrow] (merge) -- (log);
    \end{tikzpicture}
    \caption{Flowchart for \Cref{thm:EIIRP}.}
    \label{fig:flowchart}
\end{figure}

\end{landscape}

\section{Application to the fully-punctured normalized dilatation problem} \label{sec:dilatation}

In this section, we prove \Cref{thm:introdilthm}. As explained in the introduction, with \Cref{thm:intro16tet}, this amounts to running a computation on veering triangulations in the census \cite{GSS}. Our main task in this section is to explain how to run this computation using the Veering code \cite{Veering} written by Parlak, Schleimer, and Segerman, and SageMath scripts written by the author, and how to use these results to conclude \Cref{thm:introdilthm}.

We assume that the reader is familiar with Thurston-Fried fibered face theory and the Teichmüller polynomial. See \cite[Exposé 14]{FLP79} and \cite{McM00} respectively.

\subsection{Isolated points} \label{subsec:isolatedpoints}

We first classify the isolated points of $\mathcal{D}$, as well as the maps that attain such normalized dilatations. As pointed out in \Cref{thm:introHT}, there are mapping tori with Betti number $\geq 2$ and with minimum normalized dilatation given by $\mu^4$. Hence these isolated points must be strictly less than $\mu^4 \approx 6.854$. 

Let $f$ be a fully-punctured pseudo-Anosov map with normalized dilatation equal to one of these isolated points. The mapping torus $T_f$ must have Betti number $1$. By \Cref{thm:introHT}, any fully-punctured pseudo-Anosov map $f$ with normalized dilatation strictly less than $\mu^4$ must have only one punctured orbit, that is, its mapping torus $T_f$ has only one boundary component. By \Cref{thm:intro16tet}, $T_f$ has a layered veering triangulation with $\leq 16$ tetrahedra.

The task now is to go through all the layered veering triangulation with $\leq 16$ tetrahedra and with Betti number $1$, of which there are $29698$, and compute the normalized dilatations of the corresponding monodromies.
To do this we run the following code, included in the auxiliary file named \texttt{dilatation2.py}, in SageMath.

\begin{spverbatim}
sage: dilatation2.dilatation_script_betti_one()
\end{spverbatim}

This outputs a list of the $29698$ triangulations in the format

\begin{center}
    (number in census, isoSig, normalized dilatation, Euler characteristic of unique fiber)
\end{center}

This list is included as an auxiliary file named \texttt{betti\_one\_compile.txt}.

Since $\mu^4 \approx 6.854$, we look for entries of the output that have logarithm of normalized dilatation less than $6.86$. There are $18$ of these, namely:
\begin{center}
\footnotesize
\begin{tabular}{ccc}
    \texttt{cPcbbbdxm\_10} & \texttt{cPcbbbiht\_12} & \\
    \texttt{dLQacccjsnk\_200} & \texttt{dLQbccchhfo\_122} & \texttt{dLQbccchhsj\_122} \\
    \texttt{eLMkbcdddhhhdu\_1221} & \texttt{eLMkbcdddhhhml\_1221} & \texttt{eLMkbcdddhhqqa\_1220} \\
    \texttt{eLMkbcdddhhqxh\_1220} & \texttt{eLMkbcdddhxqdu\_1200} & \texttt{eLMkbcdddhxqlm\_1200} \\
    \texttt{eLPkaccddjnkaj\_2002} & \texttt{eLPkbcdddhrrcv\_1200} \\
    \texttt{fLLQcbeddeehhnkhh\_21112} & \texttt{fLMPcbcdeeehhhhkn\_12211} & \texttt{fLMPcbcdeeehhhhvc\_12211} \\
    \texttt{gLMzQbcdefffhhhhhit\_122112} & \texttt{gLMzQbcdefffhhhhhpe\_122112}
\end{tabular}
\normalsize
\end{center}

For each of these, we can then compute the exact normalized dilatation by calculating the Teichm\"uller polynomial as follows

\begin{spverbatim}
sage: sig1='cPcbbbdxm_10'
sage: taut_polynomial.taut_polynomial_via_fox_calculus(sig1)
sage: sig2='cPcbbbiht_12'
sage: taut_polynomial.taut_polynomial_via_fox_calculus(sig2)
...
\end{spverbatim}

and computing the largest root.

$5$ of these triangulations have normalized dilatation $\mu^4$ which is not an isolated point. The rest of them have normalized dilatation strictly less than $\mu^4$, hence do determine isolated points of $\mathcal{D}$. These $13$ triangulations and their normalized dilatation are recorded in \Cref{tab:introisolateddil}. 
The descriptions of the maps in \Cref{tab:introisolateddil} follow by analyzing the structure of the veering triangulation and the topology of the underlying $3$-manifold.

\subsection{The minimum accumulation point} \label{subsec:accumpoint}

By \Cref{thm:introHT}, to show that the minimum accumulation point of $\mathcal{D}$ is $\mu^4$, one has to show that there are no fully-punctured pseudo-Anosov maps $f$ with normalized dilatation strictly less than $\mu^4$, whose mapping torus $T_f$ has only one boundary component but has Betti number $\geq 2$.

By \Cref{thm:intro16tet}, such a mapping torus $T_f$ would have a layered veering triangulation with $\leq 16$ tetrahedra. So the task now is to go through all the layered veering triangulations with $\leq 16$ tetrahedra and with one boundary component and Betti number $\geq 2$, of which there are $381$, and compute the minimum normalized dilatations of the corresponding monodromies.

We wrote two scripts, \texttt{dilatation\_betti\_two\_fibred} and \texttt{dilatation\_betti\_two\_fibred\_eucl}, again included in the auxiliary file \texttt{dilatation2.py}, to carry out the computation for the $374$ triangulations among the $381$ that have $b_1=2$.
The first script is in general faster but fails on a handful of triangulations; the second script is used for those outlying triangulations. See \Cref{sec:code} for an explanation of what it means for the first script to fail, and how the two scripts differ.

We first run

\begin{spverbatim}
sage: dilatation2.dilatation_script_one_cusp_betti_two()
\end{spverbatim}

On the author's run of this line, the script failed on 8 triangulations:

\begin{center}
\footnotesize
\begin{tabular}{c}
    \texttt{pLLLPwLLMQQcegeehjmkonoomnnqhqxqvqcsqpqqsta\_022210001222100} \\ \texttt{pLLvLAMPPAQbefgikjjimlnnoooxxhvcqrfrhfjrmla\_211120020212120} \\
    \texttt{qLLLLwzMAAQkacfighlkmkkopnpopjkglwlfvbjkduajrc\_2002121012100202} \\
    \texttt{qLLLLzLQwMQkbegfjlimkionnnoppxxmxxmwhdsephterr\_1022101100112222} \\
    \texttt{qLLvAALzQzQkbeghfilkmlnmnpopphhxagbqqqokbjqagb\_0111022020111020} \\ \texttt{qLLvLMvzQQQkbdjgjminpkloopmopdwbwbagpadbssrjos\_2101022222110001} \\
    \texttt{qLLvMLzzAQQkbefgjkionmplnmnpphhqqaqfhxbawvbnha\_0111022001111210} \\
    \texttt{qLLvzzwPPQQkcdekjnokljmpnnopphshepahphegbgbvnn\_1222011112220200} \\
\end{tabular}
\normalsize
\end{center}

So we run 

\begin{spverbatim}
sage: sig1='pLLLPwLLMQQcegeehjmkonoomnnqhqxqvqcsqpqqsta_022210001222100'
sage: dilatation2.dilatation_betti_two_fibred_eucl(sig1)
sage: sig2='pLLvLAMPPAQbefgikjjimlnnoooxxhvcqrfrhfjrmla_211120020212120'
sage: dilatation2.dilatation_betti_two_fibred_eucl(sig2)
...
\end{spverbatim}

We compile the result of these computations as a list in the format

\begin{center}
    (number in census, isoSig, min normalized dilatation, gcd of norms of spanning rays)
\end{center}

and include this list as an auxiliary file named \texttt{one\_cusp\_betti\_two\_compile.txt}.

The smallest value for the minimum normalized dilatation among these 374 triangulations is 17.944. In particular all of them are strictly greater than $\mu^4$.

The remaining 7 triangulations out of the 381 have $b_1=3$. For these triangulations we did the computations entirely by hand. A fact that made these computations manageable was that the minimum normalized dilatation for all 7 triangulations are attained at the center of the fibered face.

Below we show the results of the computations. Similarly as above, each line records a triangulation as 

\begin{center}
    (number in census, isoSig, min normalized dilatation, gcd of norms of spanning rays)
\end{center}

\footnotesize
\begin{verbatim}
21390 ovLLLLPMQQceeekjmlimmnnllnfssfjhhshhhahhh_20110222222110 582.6871 3
21444 ovLLLMPPPQccdjfghlijnmnlmnnkqxnkavkaxhhcc_12020111111202 582.6871 3
42251 pvLLLMPzPQQcdjfghlinonolmonnkqxnkavhaxhhccv_120201111112002 1124.3809 5
66862 qLLvLQwLQPMkbefgigilnkmnnopppxxxgbrglheabnphwr_1022101010011222 1523.2123 5
80635 qLvvAMQvAQPkbhighhkjmnolmppophharrwarqqbbraxgh_2111220020111110 2867.8560 7
86454 qvLvvLPAQQQkekjinlolnpmpmopongiwwvwaoflflfipmo_2100100211112211 1523.2123 5
86954 qvvLPAMzMQMkfhfghjlmlononmpppqhqxaxaqhaqqhhxha_2100122222210102 1153.9991 4
\end{verbatim}
\normalsize

Again, all of the minimum normalized dilatations are strictly greater than $\mu^4$. 
As explained at the beginning of this subsection, this shows that $\mu^4$ is the minimum accumulation point of $\mathcal{D}$.

We now classify the fully-punctured maps $f$ that have normalized dilatation $\mu^4$. For such a map $f$ whose mapping torus has only one boundary component, the computations in this and the last subsection show that the corresponding layered veering triangulation on the mapping torus must be one of the following $5$ triangulations:

\begin{center}
\footnotesize
\begin{tabular}{cc}
    \texttt{eLMkbcdddhxqdu\_1200} & \texttt{eLMkbcdddhxqlm\_1200} \\
    \texttt{fLLQcbeddeehhnkhh\_21112} \\
    \texttt{gLMzQbcdefffhhhhhit\_122112} & \texttt{gLMzQbcdefffhhhhhpe\_122112}
\end{tabular}
\normalsize
\end{center}

The descriptions of the maps giving rise to these triangulations in \Cref{tab:introminaccumdil} follow by analyzing the structure of the veering triangulations.

For such a map $f$ whose mapping torus has at least two boundary components, the statement of \Cref{thm:introHT} shows that $f$ must be defined on a surface $S$ with $\chi(S)=-2$. There are only two such surfaces, namely the 4-punctured sphere $S_{0,4}$ and the 2-punctured torus $S_{1,2}$. 

For $S_{0,4}$, its mapping classes are well-understood. See, for example, the appendix of \cite{Gue06}. In particular, it is straightforward to check that the only pseudo-Anosov map with dilatation $\mu^2$ is the one induced by $\begin{bmatrix} 2 & 1 \\ 1 & 1 \end{bmatrix}$ as recorded in \Cref{tab:introminaccumdil}.

For $S_{1,2}$, we can fill in one of the punctures to get a map on the once-punctured torus $S_{1,1}$ with dilatation $\mu^2$ as well. The mapping classes on $S_{1,1}$ are well understood. See for example, \cite{Gue06}. In particular, it is straightforward to check that the only pseudo-Anosov maps with dilatation $\mu^2$ are the ones induced by $\begin{bmatrix} 2 & 1 \\ 1 & 1 \end{bmatrix}$ and $\begin{bmatrix} -2 & -1 \\ -1 & -1 \end{bmatrix}$.
The filled-in puncture is some fixed point of this map on $S_{1,1}$. But one can check that the map induced by $\begin{bmatrix} 2 & 1 \\ 1 & 1 \end{bmatrix}$ has no fixed points, so the map $f$ must be that induced by $\begin{bmatrix} -2 & -1 \\ -1 & -1 \end{bmatrix}$ as recorded in \Cref{tab:introminaccumdil}.

The veering triangulations associated to these two maps can be recovered from the descriptions of $S_{0,4}$- and $S_{1,1}$-bundles in \cite{Gue06}. These are as recorded in \Cref{tab:introminaccumdil}.

\section{Discussion and further questions} \label{sec:questions}

\subsection{Further questions about the set $\mathcal{D}$}

One can interpret the minimum accumulation point of $\mathcal{D}$ as the minimum element of 
$$\mathcal{D}_2 := \{\text{Normalized dilatations of fully-punctured maps $f$ with $b_1(T_f) \geq 2$}\}$$
Motivated by this, one can define 
$$\mathcal{D}_k := \{\text{Normalized dilatations of fully-punctured maps $f$ with $b_1(T_f) \geq k$}\}$$
and ask

\begin{quest} \label{quest:higherbettinumber}
What is the minimum element of $\mathcal{D}_k$ for $k \geq 3$? What are the maps that attain these normalized dilatations?
\end{quest}

One should compare \Cref{quest:higherbettinumber} with \cite[Question 8.3]{HT22}, which asks for the minimum normalized dilatations among fully-punctured maps $f$ whose mapping torus has at least $k$ boundary components.

Notice that these questions, at least in part, can be solved via the same approach of using veering triangulations, provided that one can improve \Cref{thm:introsinglehook} or improve the technology in generating census of veering triangulations.

\subsection{The golden ratio conjecture}

Given \Cref{thm:introdilthm}, one approach for proving the golden ratio conjecture (\Cref{conj:goldenratio}) is to prove the following conjecture.

\begin{conj} \label{conj:uniformsing}
There exists a sequence of pseudo-Anosov maps $f_g:S_{g,0} \to S_{g,0}$ realizing the minimum dilatations $\delta_{g,0}$ and which have a uniformly bounded number of singularities. 
\end{conj}

We remark that the examples in \cite{Hir10}, \cite{AD10}, and \cite{KT13} do have a uniformly bounded number of singularities, giving some evidence towards \Cref{conj:uniformsing}.

One can also consider using the approach of veering triangulations again. However, this approach generally becomes much weaker in the closed case. The reason is that veering triangulations can only exist on fully-punctured mapping tori, and so one has to fully puncture the pseudo-Anosov map before applying the notion. Without good knowledge of the number and types of singularities, one can in general only bound the Euler characteristic of the punctured surface by 3 times the Euler characteristic of the original closed surface, making the exponent on the bounds 3 times as worse as in \Cref{cor:singlehook}.

\subsection{Improvements on bounds}

As remarked at the start of \Cref{sec:doublehook}, \Cref{prop:doublehookbound} provides a bound better than \Cref{prop:singlehookbound} by a factor of 2. Even though we are unable to show so, we suspect that \Cref{prop:doublehookbound} can always be applied.

\begin{conj} \label{conj:doublehookexist}
For every layered veering triangulation, there exists a fiber surface such that the hypothesis of \Cref{prop:doublehookbound} is satisfied for some minimum weight sector. 
\end{conj}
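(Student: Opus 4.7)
\section*{Proof proposal for \Cref{conj:doublehookexist}}

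The plan is to mirror the proof strategy of \Cref{prop:singlehookexist}: rather than working with a fixed fiber surface, vary it using \Cref{prop:manyfibersurfaces} so that any chosen sector $s$ can be made the unique deep, hence unique minimum weight, sector; then use the obstruction lemmas of \Cref{subsec:doublehookobstruct} to verify either (I) or (II) of \Cref{prop:doublehookresolution} for some such $s$. Concretely, I would first run through the same preliminary reductions used in \Cref{thm:EIIRP}: if there is a fiber surface $S$ and a minimum weight sector $s$ with $h_1$ or $h_2$ non-deep, then \Cref{lemma:doublehookonecycleothernondeep} (together with \Cref{lemma:doublehooktophingesequal} and \Cref{lemma:doublehookbottomhingesequal}) is likely to deliver (I) or (II) outright, so the real work concerns the scenario in which, for every fiber surface, every minimum weight sector has both hooks deep. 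Fix $s$ and take $S$ so that $s$ is the only deep sector.

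Next I would divide into cases based on whether $s$ satisfies (FRC). If $s$ satisfies (FRC), apply \Cref{lemma:doubledeephookfan}: either $\Gamma(h_1 \cup h_2)$ is connected (so (I) holds and we are done), or we obtain one of the identifications pictured in \Cref{fig:doubledeephookfan}. Each such identification, together with \Cref{lemma:doublehooktophingesequal} and \Cref{lemma:doublehookbottomhingesequal}, forces either $v^1_{\delta_1}=v^2_{\delta_2}$ or $v^1_1=v^2_1$ or the degenerate loop conditions $e^\beta_{\delta_\beta+1}=e^\beta_1$ that are already handled by \Cref{lemma:doublehookoneshortcycle}. If $s$ fails (FRC), then by \Cref{lemma:doubleprehook} we have $v^1_{\delta_1}=v^2_{\delta_2}$ and a branch cycle in the non-principal component of $\Gamma(h'_1\cup h'_2)$. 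The plan is to translate each of these structural obstructions into a \emph{local} combinatorial signature of $s$ that propagates to neighboring sectors, just as condition (TBT) did in \Cref{lemma:singledeephookfan}. After applying the analysis to every sector, one should obtain a list of identifications which together over-constrain the triangulation, analogously to the deduction in \Cref{prop:m003} forcing $\Delta=$ \texttt{cPcbbbdxm\_10}; the hope is that this global over-constraint is in fact an inconsistency, so the conjecture holds without exceptions.

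The hard part will be controlling the branch cycle appearing in the non-principal component when (FRC) fails, and more generally dealing with the fact that the double-hook obstructions are much less rigid than the single-hook ones. In \Cref{sec:oneboundary} this difficulty was circumvented by \Cref{prop:verymanybranchcycle}, which forces few branch cycles under the one-boundary hypothesis; in the general setting the number of branch cycles is unbounded, and any one of them can in principle sit inside the bad component. A promising way to attack this is to use the symmetry with the unstable branched surface (\Cref{rmk:unstablebs}): each minimum weight sector $s$ admits a dual pair of hooks coming from the unstable side, and an obstruction to connectedness on the stable side is expected to force a complementary ``top = bottom'' identification on the unstable side, giving two sources of constraint on the local combinatorics at $s$.

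To complete the argument I would combine the above with a counting step. The total number of deep-sector obstructions grows linearly in the number of sectors (hence tetrahedra), while each forces an identification of a bottom vertex of $s$ with a vertex several levels up; stacking these identifications together with \Cref{prop:vbscontradictions} should yield a branch cycle meeting vertices of only one color, which is impossible. The step I expect to be the main obstacle is the propagation lemma: showing that the failure of (I) and (II) at a single deep sector $s$ actually transfers to its neighbors in a uniform way. Unlike the single hook case, where failure gave the clean local picture (TBT) (respectively (BSBF) on toggle sectors), the double-hook failure gives a pair of possible identifications, only one of which looks local; taming the other branch in full generality is what seems to have resisted \Cref{sec:oneboundary}'s techniques and is what a complete proof of \Cref{conj:doublehookexist} must resolve.
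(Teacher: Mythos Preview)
The statement you are addressing is \Cref{conj:doublehookexist}, which the paper explicitly records as an \emph{open conjecture} in \Cref{sec:questions}; there is no proof in the paper to compare against. What you have written is not a proof but a research outline, and you yourself flag the two places where it breaks down (the propagation step and the non-(FRC) case). So the honest summary is: this is a plausible plan of attack, not a proof, and the gaps you identify are exactly the ones the paper was unable to close.

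A few of your early reductions are looser than they appear. In the first paragraph you say that when some $h_\beta$ is non-deep, \Cref{lemma:doublehookonecycleothernondeep} together with \Cref{lemma:doublehooktophingesequal} and \Cref{lemma:doublehookbottomhingesequal} is ``likely to deliver (I) or (II) outright''. But \Cref{lemma:doublehookonecycleothernondeep} has the side hypothesis $e^2_{\delta_2+1}=e^2_1$, and \Cref{lemma:doublehooktophingesequal}/\Cref{lemma:doublehookbottomhingesequal} require the vertex coincidences $v^1_{\delta_1}=v^2_{\delta_2}$ or $v^1_1=v^2_1$; none of these is implied merely by $h_1$ being non-deep. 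Likewise, in your (FRC) paragraph you assert that the identification coming out of \Cref{lemma:doubledeephookfan} ``forces either $v^1_{\delta_1}=v^2_{\delta_2}$ or $v^1_1=v^2_1$ or the degenerate loop conditions''. That lemma only says some $v^{\beta'}_{\delta_{\beta'}}$ equals the bottom vertex of some $s^\beta_1$; this does not in general produce either of the two coincidences needed to feed into \Cref{lemma:doublehooktophingesequal} or \Cref{lemma:doublehookbottomhingesequal}. In \Cref{sec:oneboundary} the paper only extracts those coincidences by combining the lemma with the extra geometric input that each branch cycle meets at most two vertices of a given color (available only when $\lambda^{-\frac{2}{l}\chi}<4$).

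The structural obstacle you name is the right one. The single-hook argument worked because failure of connectedness at a deep hook produced a single rigid local picture ((TBT) for fan, (BSBF) for toggle), and piling these up forced $\Delta=\texttt{cPcbbbdxm\_10}$. For double hooks, \Cref{lemma:doubleprehook} and \Cref{lemma:doubledeephookfan} give a menu of possible identifications rather than a unique one, and in the non-(FRC) case the obstruction is a branch cycle living in the wrong component, which is a global rather than local datum. Your suggestion to exploit the stable/unstable symmetry of \Cref{rmk:unstablebs} is reasonable, but note that the paper already implicitly uses this symmetry in several places (e.g.\ the upward/downward induction remark in \Cref{lemma:singledeephooktoggle}) without it resolving the double-hook case. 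The final counting paragraph is too vague to evaluate: ``stacking these identifications together with \Cref{prop:vbscontradictions} should yield a branch cycle meeting vertices of only one color'' is a hope, not an argument, and it is not clear why the double-hook identifications would conspire to produce a monochromatic branch cycle any more than the single-hook ones did in the general (multi-boundary) setting.
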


Another approach to improving the bound would be to find a way to bypass the cases when we have a minimum weight sector that is not deep. As seen in \Cref{subsec:singlehookexist} and \Cref{subsec:EIIRP}, when one can assume that all minimum weight sectors are deep, one can use the flexibility granted by \Cref{prop:manyfibersurfaces} to strongly constrain the triangulation. 
That is, a positive answer to the following question would likely lead to sharper bounds.

\begin{quest} \label{quest:deepminweightsector}
Let $\Delta$ be a layered veering triangulation. Is it true that for every sector $s$, there is a fiber surface such that $s$ is a minimum weight sector?
\end{quest}

It is instructive to consider the particular case of veering triangulations on once-punctured torus bundles.
In these triangulations, the normalized dilatation grows at worse linearly in the number of tetrahedra. 
Moreover, in this case, the normalized dilatation seems to behave differently with respect to the number of fan and toggle tetrahedra; morally, it grows linearly with the former and grows exponentially with the latter. 
From this, we suspect that one can obtain bounds that treat the number of fan and toggle tetrahedra separately. These would be of a different nature than \Cref{thm:singlehook}, but they should be significantly sharper for applications.

Of course, it is interesting to know the best possible bound one can hope for at all.

\begin{quest} \label{quest:bestexponent}
What is the smallest exponent $\alpha$ such that the number of tetrahedra at worse grows as the $\alpha^{\text{th}}$ power of the normalized dilatation, across all layered veering triangulations?
\end{quest}

The triangulations on once-punctured torus bundles show that the smallest exponent is at least 1 and \Cref{thm:singlehook} shows that it is at most 2.

\appendix

\section{Calculus exercises} \label{sec:calculus}

\begin{proof}[Proof of \Cref{claim:oneboundaryfantbtlong}]
By symmetry, the minimum is attained when $a_1=a_2$, so we have to calculate the minimum of $2N-4 + 2a + 2\lambda^{-\chi} \max\{ \frac{2}{a}, 1 \}$.

Let 
$$h(a)=2N-4 + 2a + \frac{4\lambda^{-\chi}}{a}.$$ 
Then 
$$h'(a)=2-\frac{4\lambda^{-\chi}}{a^2}>0 \Leftrightarrow a>\sqrt{2\lambda^{-\chi}}.$$

If $\lambda^{-\chi} \geq 2$, then $\sqrt{2\lambda^{-\chi}} \geq 2$, which is when the second term in the maximum takes over. So the minimum of the whole expression is $2N+ 2\lambda^{-\chi}$.
\end{proof}

\begin{proof}[Proof of \Cref{claim:oneboundarytoggleemb}]
By symmetry, the minimum is attained when $a_1=a_2$, so we have to calculate the minimum of $2N-4+2a+\lambda^{-\chi} (\min\{\frac{a}{2},1 \}^{-1} + \min\{\frac{a}{2},1 \})$.

Let 
\begin{align*}
    h(a) &= 2N-4+2a+\lambda^{-\chi}(\frac{2}{a}+\frac{a}{2}) \\
    &= 2N-4+(2+\frac{\lambda^{-\chi}}{2})a+\frac{2\lambda^{-\chi}}{a}.
\end{align*}
Then 
$$h'(a)=2+\frac{\lambda^{-\chi}}{2}-\frac{2\lambda^{-\chi}}{a^2}>0 \Leftrightarrow a>\sqrt{\frac{4\lambda^{-\chi}}{\lambda^{-\chi}+4}}.$$

Now $\sqrt{\frac{4\lambda^{-\chi}}{\lambda^{-\chi}+4}}$ is always less than $2$, which is where the first term in the minimum in $b$ takes over. So the minimum of the whole expression is $2N-4+2\sqrt{\lambda^{-2\chi}+4\lambda^{-\chi}}$.
\end{proof}

\begin{proof}[Proof of \Cref{claim:oneboundarytogglesbf1}]
We perform a variable change $t=\lambda^{p}, u=\sqrt{\frac{a}{3t}}$, so that $$a + \lambda^{-p} \frac{2}{a} = 3u^2t+\frac{2}{3}u^{-2}t^{-2}.$$
Letting $h(u,t)$ be this last expression, we compute 
$$\frac{\partial h}{\partial t} = 3u^2-\frac{4}{3}u^{-2}t^{-3}>0 \Leftrightarrow t>(\frac{2}{3})^{\frac{2}{3}} u^{-\frac{4}{3}}.$$
Hence 
\begin{align*}
    h(u,t) &\geq h(u,(\frac{2}{3})^{\frac{2}{3}} u^{-\frac{4}{3}}) \\
    &= 3(\frac{3}{2})^{\frac{1}{3}} u^{\frac{2}{3}} \\
    &= 3 (\frac{3}{2})^{\frac{1}{3}} (\lambda^{-\frac{p}{2}} \sqrt{\frac{a}{3}})^{\frac{2}{3}}.
    \qedhere
\end{align*}
\end{proof}

\begin{proof}[Proof of \Cref{claim:oneboundarytogglesbf2}]
Let 
$$h(t,a)=\lambda^{-\chi} (t^{-1} \sqrt{\frac{a}{a+1}} + t \sqrt{\frac{a+1}{a}}) +a + t^{-2} \frac{2}{a}$$
where $t \geq 1$. Then 
\begin{align*}
    \frac{\partial h}{\partial t} &= \lambda^{-\chi} (-t^{-2} \sqrt{\frac{a}{a+1}} + \sqrt{\frac{a+1}{a}}) - 4t^{-3}a^{-1} \\
    &\geq \lambda^{-\chi} (-\sqrt{\frac{a}{a+1}} + \sqrt{\frac{a+1}{a}}) - 4a^{-1} \\
    &= \lambda^{-\chi} \sqrt{\frac{a}{a+1}} (-1+\frac{a+1}{a}) - 4a^{-1} \\
    &= (\lambda^{-\chi} \sqrt{\frac{a}{a+1}}-4) a^{-1} \geq 0
\end{align*}
for $a \geq 1$ if $\lambda^{-\chi} \geq 4\sqrt{2}$.

Hence
\begin{align*}
    &\lambda^{-\chi} (\lambda^{-\frac{p}{2}} \sqrt{\frac{a}{a+1}} + (\lambda^{-\frac{p}{2}} \sqrt{\frac{a}{a+1}})^{-1}) +a + \lambda^{-p} \frac{2}{a} \\
    &= h(\lambda^{\frac{p}{2}}, a) \\
    &\geq h(1,a) \\
    &= \lambda^{-\chi} (\sqrt{\frac{a}{a+1}} + \sqrt{\frac{a+1}{a}}) +a + \frac{2}{a} 
    \qedhere
\end{align*}
\end{proof}

\begin{proof}[Proof of \Cref{thm:intro16tet}]
Let $\Delta$ be the veering triangulation on the mapping torus of $f$. If $\lambda^{-\chi} \leq 4\sqrt{2}$, then by \Cref{thm:introsinglehook}, $\Delta$ has less than or equal to $\frac{1}{2} (4\sqrt{2})^2=16$ tetrahedra.
If $\lambda^{-\chi} \geq 4\sqrt{2}$, we can apply \Cref{thm:EIIRP} to $\Delta$. Our task is to show that each of the bounds in \Cref{thm:EIIRP} is strictly less than $17$ when we substitute a value of $\lambda^{-\chi}$ between $4\sqrt{2}$ and $6.86$.

We first claim that each of the bounds in \Cref{thm:EIIRP} that are not $F_i(\lambda^{-\chi})$ is an increasing function in $\lambda^{-\chi}$ for $\lambda^{-\chi} \geq 4\sqrt{2}$. This would imply that we only have to check that the bounds are strictly less $17$ when $\lambda^{-\chi}=6.86$. 

The claim is clear for $\frac{1}{3}\lambda^{-2\chi}+\frac{1}{2}$, $\frac{1}{2}\lambda^{-2\chi}-\lambda^{-\chi}$, and $8\log_3 \lambda^{-\chi}$. 

Let 
$$h_1(x)=x^2-x^{\frac{4}{3}}-x^{\frac{2}{3}}+3.$$
Then 
\begin{align*}
    h'_1(x)&=2x-\frac{4}{3}x^{\frac{1}{3}}-\frac{2}{3}x^{-\frac{1}{3}} \\
    &\geq 2x-\frac{4}{3}x-\frac{2}{3}x=0
\end{align*}
for $x \geq 1$. This shows that $\frac{1}{2}(\lambda^{-2\chi} - \lambda^{-\frac{4}{3} \chi} - \lambda^{-\frac{2}{3} \chi} +3)$ is an increasing function in $\lambda^{-\chi}$. 

Let 
$$h_2(x)=\frac{1}{2} x^2 - \sqrt{x^2+4x}+2.$$
Then 
$$h'_2(x)=x-\frac{x+2}{\sqrt{x^2+4x}} \geq 0 \Leftrightarrow x^2(x^2+4x) \geq x^2+4x+4$$
which is evidently true for $x \geq 2$. This shows that $\frac{1}{2} \lambda^{-2\chi} - \sqrt{\lambda^{-2\chi}+4\lambda^{-\chi}}+2$ is an increasing function in $\lambda^{-\chi}$.

Now substituting in $\lambda^{-\chi}=6.86$, we have
\begin{align*}
    \frac{1}{3} 6.86^2 + \frac{1}{2} &\approx 16.187 \\
    \frac{1}{2} 6.86^2 - 6.86 &\approx 16.670 \\
    \frac{1}{2}(6.86^2 - 6.86^{\frac{4}{3}} - 6.86^{\frac{2}{3}} + 3) &\approx 16.707 \\
    \frac{1}{2} 6.86^2 - \sqrt{6.86^2+4 \times 6.86}+2 &\approx 16.898 \\
    8 \log_3 6.86 &\approx 14.023.
\end{align*}

We now move on to $F_1(\lambda^{-\chi})$ and $F_2(\lambda^{-\chi})$. The strategy is the same: We first show that these are increasing functions then evaluate them at $\lambda^{-\chi}=6.86$.

Recall that $F_1(x)$ is the maximum of 
$$f_1(x,u) = \frac{1}{2} x^2 - \frac{1}{2} x (u+ u^{-1}) - (\frac{3}{2})^{\frac{4}{3}} u^{\frac{2}{3}} +2 -\frac{1}{2x}$$
over $0 < u \leq 1$. We compute 
$$\frac{\partial f_1}{\partial u} = -\frac{1}{2} x (1-u^{-2}) - (\frac{3}{2})^{\frac{1}{3}}u^{-\frac{1}{3}}.$$
Note that $\frac{\partial f_1}{\partial u}  \geq -\frac{1}{2} x + (\frac{1}{2} x - (\frac{3}{2})^{\frac{1}{3}}) u^{-2} \geq \frac{3}{2} x - 4 (\frac{3}{2})^{\frac{1}{3}} >0$ for $u \leq \frac{1}{2}$, and $\frac{\partial f_1}{\partial u}$ is negative for $u$ close to $1$, so the maximum is attained in the interior of $[\frac{1}{2},1]$. If we let $u(x)$ be the point where this maximum is attained for fixed $x$, then 
\begin{align*}
    \frac{df_1(x,u(x))}{dx} &= \frac{\partial f_1}{\partial x}+\frac{\partial f_1}{\partial u}u'(x) = \frac{\partial f_1}{\partial x} \\
    &= x-\frac{1}{2} (u+ u^{-1}) +\frac{1}{2x^2} \\
    &\geq x-\frac{5}{4} +\frac{1}{2x^2}>0
\end{align*}
for $x \geq 4\sqrt{2}$. This shows that $F_1(\lambda^{-\chi})$ is an increasing function for $\lambda^{-\chi} \geq 4\sqrt{2}$.
Using a computer algebra system, we check that $F_1(6.86) \approx 16.966$.

Similarly, recall that $F_2(x)$ is the maximum of 
$$f_2(x,a) = \frac{1}{2} x^2 - \frac{1}{2} x (\sqrt{\frac{a}{a+1}} + \sqrt{\frac{a+1}{a}}) - \frac{1}{2} a - a^{-1} +2 -\frac{1}{2x}$$
over $a \geq 1$. We compute 
$$\frac{\partial f_2}{\partial a} = - \frac{1}{2} x (\frac{1}{2a^{\frac{1}{2}}(a+1)^{\frac{3}{2}}} - \frac{1}{2a^{\frac{3}{2}}(a+1)^{\frac{1}{2}}}) - \frac{1}{2} + a^{-2} \to \frac{1}{8\sqrt{2}}x+\frac{1}{2} \geq 0$$
as $a \to 1^+$, so the maximum is attained in the interior of $(1,\infty)$.

If we let $a(x)$ be the point where this maximum is attained for fixed $x$, then 
\begin{align*}
    \frac{df_2(x,a(x))}{dx} &= \frac{\partial f_2}{\partial x}+\frac{\partial f_2}{\partial a}a'(x)=\frac{\partial f_2}{\partial x} \\
    &= x - \frac{1}{2} (\sqrt{\frac{a}{a+1}} + \sqrt{\frac{a+1}{a}}) +\frac{1}{2x^2} \\
    &\geq x - \frac{3}{2\sqrt{2}} +\frac{1}{2x^2} > 0
\end{align*}
for $x \geq 4\sqrt{2}$. This shows that $F_2(\lambda^{-\chi})$ is an increasing function for $\lambda^{-\chi} \geq 4\sqrt{2}$.
Using a computer algebra system, we check that $F_2(6.86) \approx 16.975$.
\end{proof}

\section{Explanation of code used for computation} \label{sec:code}

The scripts we use are included in \texttt{dilatation2.py} in the auxiliary files. Among these, the three main ones are \texttt{dilatation\_betti\_one\_fibred}, \texttt{dilatation\_betti\_two\_fibred}, and \texttt{dilatation\_betti\_two\_fibred\_eucl}. 

\texttt{dilatation\_betti\_one\_fibred} takes in a layered veering triangulation with $b_1=1$ and outputs the associated normalized dilatation. The workings of this script are as follows: 
\begin{itemize}
    \item It computes the Alexander polynomial and the taut polynomial of the triangulation using Fox calculus. See \cite[Proposition 5.7]{Par21}.
    \item It computes the Euler characteristic of the fiber surface as the span of the Alexander polynomial minus 1.
    \item It computes the dilatation of the monodromy as the largest root of the taut polynomial using the SageMath function \texttt{real\_roots}. This uses the fact the taut polynomial equals the Teichmüller polynomial, see \cite[Theorem 7.1]{LMT24}.
\end{itemize}

\texttt{dilatation\_betti\_two\_fibred} takes in a layered veering triangulation with $b_1=2$ and outputs the minimum normalized dilatation. The workings of this script are as follows: 
\begin{itemize}
    \item It computes the Alexander polynomial and the taut polynomial of the triangulation (as above).
    \item It computes two spanning vectors of the fibered cone.
    \item It computes the Euler characteristic of the surface corresponding to each spanning ray, using the fact that the Thurston norm equals the Alexander norm in a fibered cone, see \cite[Theorem 7.1]{McM00}. Using this information, it parametrizes the fibered face $F$ by one parameter $t$.
    \item It removes some cyclotomic factors from the taut polynomial $\Theta$ for simpler computations, and computes its derivative $\frac{\partial \Theta}{\partial t}$ along $F$.
    \item It checks whether $\frac{\partial \Theta}{\partial t}=0$ at the midpoint of $F$. \\[0.5em]
    If yes, then the minimum normalized dilatation occurs at the midpoint of $F$, so the script does the following:
    \begin{itemize}
        \item It computes the single-variable polynomial obtained by restricting the taut polynomial to the mid-ray of the fibered cone.
        \item It computes the minimum normalized dilatation as the largest root of this polynomial using the SageMath function \texttt{real\_roots}.
    \end{itemize}
    Otherwise the scripts attempts to solve the system $\begin{cases} \Theta=0 \\ \frac{\partial \Theta}{\partial t}=0 \end{cases}$ as follows:
    \begin{itemize}
        \item It performs variable changes such that $\Theta$ and $\frac{\partial \Theta}{\partial t}$ are polynomials with relatively prime exponents.
        \item It uses the SageMath function \texttt{solve} to solve the simplified system.
    \end{itemize}
\end{itemize}

As mentioned in \Cref{subsec:accumpoint}, \texttt{dilatation\_betti\_two\_fibred} works for most triangulations. The main problem with it, however, is that the SageMath function \texttt{solve} is not guaranteed to succeed; it might only simplify the system symbolically or might get stuck and show no sign of terminating.

On the author's run of the script, this happens for the $8$ triangulations mentioned in \Cref{subsec:accumpoint}. We remark that on a more powerful system, the script may terminate and succeed for some, if not all, of these triangulations.

For us to deal with these triangulations, we need a more robust way of solving the equation $\begin{cases} \Theta=0 \\ \frac{\partial \Theta}{\partial t}=0 \end{cases}$. For this we use the following simple algebraic fact.

\begin{lemma} \label{lemma:euclalgo}
Suppose $a,b \in \mathbb{R}[x,y], p, q \in \mathbb{R}[x]$. Then any root of the system $\begin{cases} a(x,y)=0 \\ b(x,y)=0 \end{cases}$ is a root of the system $\begin{cases} a(x,y)=0 \\ p(x)a(x,y)-q(x)b(x,y)=0 \end{cases}$
\end{lemma}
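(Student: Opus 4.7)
The plan is to observe that this is an immediate substitution argument, essentially the base case underlying any Euclidean-style elimination. First I would take an arbitrary common root $(x_0, y_0) \in \mathbb{R}^2$ of the first system, so that by hypothesis $a(x_0, y_0) = 0$ and $b(x_0, y_0) = 0$. Since $p$ and $q$ are polynomials in $x$ alone, $p(x_0)$ and $q(x_0)$ are well-defined real numbers, and evaluating the linear combination at $(x_0, y_0)$ gives $p(x_0) a(x_0, y_0) - q(x_0) b(x_0, y_0) = p(x_0) \cdot 0 - q(x_0) \cdot 0 = 0$.

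Thus the only two things to verify are that $a(x_0, y_0) = 0$ (which is immediate from the assumption) and that the linear-combination equation is satisfied (which is the calculation above). Since I am not claiming the converse, there is nothing further to show. There is no main obstacle; the lemma is included precisely because the forward direction is the one needed in the subsequent Euclidean-algorithm-style reduction of the system $\{\Theta = 0,\ \partial \Theta/\partial t = 0\}$, where $p$ and $q$ are chosen to reduce the $y$-degree of $b$ while preserving $a$.

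The proposal is therefore a one-line substitution, which I would present in a single short paragraph in the paper.
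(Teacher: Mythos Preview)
Your proposal is correct; the lemma is an immediate substitution argument exactly as you describe. The paper itself does not supply a proof for this lemma, treating it as self-evident, so your one-line verification is entirely in keeping with the paper's treatment.
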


In the setting of the lemma, we can consider $a$ and $b$ as polynomials of $y$ with coefficients in $\mathbb{R}[x]$. By taking $p$ to be the leading coefficient of $b$ and $q$ to be the leading coefficient of $a$, $p(x)a(x,y)-q(x)b(x,y)$ will have a smaller $y$-degree and in passing from $\begin{cases} a(x,y)=0 \\ b(x,y)=0 \end{cases}$ to $\begin{cases} a(x,y)=0 \\ p(x)a(x,y)-q(x)b(x,y)=0 \end{cases}$ we have reduced the complexity of the system in terms of its total $y$-degree. Repeating this procedure inductively, we eventually arrive at a system where one equation is only a polynomial in $x$. We can then compute the roots of this polynomial, substitute these back in the polynomial containing $y$, and compute the corresponding values for $y$. This process will of course produce many extraneous solutions, but we can substitute these into the original system to verify whether they are true solutions.
We wrote the script \texttt{eucl\_eq\_solver} to exactly implement this process. 

The script \texttt{dilatation\_betti\_two\_fibred\_eucl} differs from \texttt{dilatation\_betti\_two\_fibred} by replacing \texttt{solve} by \texttt{eucl\_eq\_solver}. \texttt{dilatation\_betti\_two\_fibred\_eucl} works for the $8$ triangulations that \texttt{dilatation\_betti\_two\_fibred} fails on.

In general, \texttt{eucl\_eq\_solver} is very slow because of its iterative nature. Hence we have chosen to tackle most of the cases using the faster \texttt{dilatation\_betti\_two\_fibred}.

\bibliographystyle{alpha}

\bibliography{bib.bib}

\end{document}